\documentclass[12pt]{amsart}

\date{October 7, 2026}
\title{Braidings of Self-Equivalences and Bordism}

\author{Ian Hambleton}

\address{Department of Mathematics, McMaster University	L8S 4K1, Hamilton, Ontario, Canada}
\email{hambleton@mcmaster.ca }

\author{K\"ur\c sat S\"ozer}
\address{Universit\' e de Lille, CNRS, UMR 8524, Laboratoire Paul Painlev\' e, Lille, F-59000, France}
\email{kursat.sozer@univ-lille.fr}

\thanks{The authors were partially supported at McMaster by an NSERC Discovery Grant. KS was supported at University of Lille by CDP C2EMPI and the CPER WaveTech@HdF}

\usepackage[T1]{fontenc}
\usepackage{lmodern}
\usepackage[final]{microtype}
\usepackage[mathscr]{euscript}

\usepackage{color}
\usepackage[all]{xy}
\usepackage{amscd,amsmath,latexsym,amssymb}
\usepackage[title]{appendix}
\usepackage{mathrsfs}
\usepackage{tikz}
\usepackage{tikz-cd}
\usepackage{enumerate}
\usepackage{mathtools}

\usepackage{quiver}
\usepackage[normalem]{ulem}

\usepackage{xparse}
\usepackage{etoolbox}
\usepackage{aliascnt}
\usepackage{hyperref}
\hypersetup{
	colorlinks=true,
	linkcolor=blue,
	filecolor=blue,
	citecolor = blue,
	urlcolor=blue,
}

\usepackage[textsize = footnotesize]{todonotes}
\usepackage{comment}

\calclayout
\newtheorem{dummy}{anything}[section]
\newtheorem{theorem}[dummy]{Theorem}
\newtheorem*{thma}{Theorem A}
\newtheorem*{thmb}{Theorem B}
\newtheorem{lemma}[dummy]{Lemma}
\newtheorem{proposition}[dummy]{Proposition}
\newtheorem*{proposition*}{Proposition}
\newtheorem{corollary}[dummy]{Corollary}

\theoremstyle{definition}
\newtheorem{definition}[dummy]{Definition}
  \newtheorem{example}[dummy]{Example}
  
  \newtheorem{remark}[dummy]{Remark}
   \newtheorem*{nonum}{Theorem}
   
   \newtheorem*{assumption}{Assumption}
   \newtheorem*{convention}{Convention}
    \newtheorem*{question}{Question}
  \newtheorem*{acknowledgement}{Acknowledgement}

\newcommand
{\eqncount}{\setcounter{equation}{\value{dummy}}%
\addtocounter{dummy}{1}}

\newcommand{\cB}{\mathcal B}
\newcommand{\cC}{\mathcal C}

\newcommand{\cE}{\mathcal E}

\newcommand{\cH}{\mathcal H}

\newcommand{\cK}{\mathscr K}

\newcommand{\cW}{\mathscr W}

\newcommand{\bZ}{\mathbb Z}

\newcommand{\bbQ}{\mathbb Q}
\newcommand{\bbR}{\mathbb R}
\newcommand{\bR}{\mathbb R}

\newcommand{\bbZ}{\mathbb Z}

\newcommand{\wG}{\widehat  G}
\newcommand{\wb}{\widehat b}
\newcommand{\wm}{\widehat m}
\newcommand{\La}{\Lambda}

\newcommand{\wOmega}{\widehat{\Omega}}

\newcommand{\wF}{\widehat F}
\newcommand{\wH}{\widehat H}

\DeclareMathOperator{\Aut}{Aut}

\DeclareMathOperator{\Map}{Map}

\DeclareMathOperator{\Image}{Im}

\newcommand{\cy}[1]{\bZ/{#1}}

\newcommand{\la}{\langle}
\newcommand{\ra}{\rangle}

\newcommand{\vv}{\, | \,}
\newcommand{\bd}{\partial}
\DeclareMathOperator{\id}{id}
\DeclareMathOperator{\hatid}{{\widehat\id}}

\def\O{\Omega}

\DeclareMathOperator{\hepta}{Aut}

\newcommand{\hept}[1]{\hepta_{\bullet}(#1)}

\newcommand{\heqpt}[1]{\cE_{\bullet} (#1)}

\newcommand{\htildeMk}[1]{\widetilde \cH_{#1}(\Ml)}
\newcommand{\htildeBk}[1]{\widetilde \cH_{#1}(\Bl)}
\newcommand{\htilde}[1]{\widetilde \cH(#1)}
\newcommand{\htildek}[2]{\widetilde{\cH}_{#1}(#2)}

\newcommand{\wM}{\widetilde M}

\newcommand{\heqptk}[2]{\cE_{\bullet} (#1\la#2\ra)}
\newcommand{\heqptkc}[2]{\cE_{\bullet}^c (#1\la#2\ra)}
\newcommand{\post}[2]{P_{#2}({#1})}

\newcommand{\Bw}{B\langle w_2\rangle}
\newcommand{\Mw}{M\negthinspace\langle w_2\rangle}
\newcommand{\Mk}{M\la \ell \ra}
\newcommand{\Bk}{B\la \ell \ra}
\newcommand{\Ml}{M\la \ell \ra}
\newcommand{\Bl}{B\la \ell \ra}
\newcommand{\MlT}{M\la \ell \ra_{TOP}}
\newcommand{\BlT}{B\la \ell \ra_{TOP}}

\newcommand{\Fl}{\on{Fib}\la \ell \ra}

\newcommand{\wxi}{\tilde\xi}
\newcommand{\wpp}{\tilde p}
\newcommand{\hatpp}{\hat p}
\newcommand{\wqq}{\tilde q}

\newcommand{\lmin}{\ell_{\text{min}}}

\newcommand{\trf}{tr{\hskip-1.4pt}f}

\newcommand\on[1]{\operatorname{#1}}
\newcommand\nr[1]{\medskip\noindent \textbf{#1}}

\newcommand{\bX}{X_\bullet}

\newcommand{\sDelta}{\Delta_{\on{inj}}}
\newcommand{\sDeltaop}{\sDelta^{\on{op}}}
\newcommand{\Deltaop}{\Delta^{\on{op}}}
\newcommand{\holim}{\underset{\longleftarrow}{\operatorname{holim}}\hspace{1.5pt}}

\DeclareMathOperator{\Mock}{Mock}
\DeclareMathOperator{\wMock}{\widehat\Mock}

\begin{document}

\begin{abstract}
Let $M$ be a closed, smooth or topological $n$-manifold, with $n \geq 4$. We construct a homotopy highly cartesian  square relating the space  $\heqpt{\Ml}$ of homotopy self-equivalences of $M$  (in a suitable range) over the Postnikov $\ell$-sections of its stable normal microbundle,
	and an  $(\infty + n)$-fold loop space representing an associated (normal) bordism theory. This implies the existence of braids of interlocking exact sequences involving the homotopy
	groups of $\heqpt{\Ml}$ and certain Lashof bordism groups, leading to a conceptual explanation and broad generalization of earlier work of Hambleton--Kreck for closed, oriented $4$-manifolds.
\end{abstract}

\maketitle

\section{Introduction}\label{section:introduction}
Understanding the space of homotopy self-equivalences of a manifold is a central topic in algebraic topology, deeply connected to both the classification of manifolds and homotopy theory.
 For a topological space $X$, the space of pointed homotopy self-equivalences, denoted by $\heqpt{X}$, is a (group-like) topological monoid  representing the homotopically invertible maps from $X$ to itself. 
For a smooth or topological $n$-manifold, 
these spaces of self-equivalences  play a key role in understanding the intrinsic symmetries of the manifold through surgery theory, mapping class groups, and homological stability for moduli spaces (see \cite{Berglund:2020}, \cite{Bustamante:2024}, \cite{Galatius:2020}).

Previous work by Hambleton--Kreck \cite{Hambleton:2004, Hambleton:2009}  developed a framework to investigate the group $\hept{M}\coloneqq\pi_0(\heqpt{M})$ of connected components of the space of orientation and base point preserving self-equivalences for a closed oriented $4$-manifold. Their approach involved the hands-on construction and analysis of a braid of interlocking exact sequences relating the group $\hept{M}$ to certain bordism groups associated with $M$. In this paper, we construct a  homotopy cartesian square of spaces that leads to a conceptual explanation for this braid, a broad generalization beyond the $\pi_0$-level, and some  new applications.

\subsection{Self-equivalences and bordism} 
Let $\ell \geq 0$ be a parameter, and let $M$ be a closed, connected, topological $n$-manifold, with $n \geq 4$. We obtain a map $\gamma_{\ell}\colon M \to P_\ell(BTOP)$  
 from the stable normal microbundle $\nu_M\colon M \to BTOP$ by composition with the $\ell$-th Postnikov truncation $w_\ell\colon BTOP \to P_\ell(BTOP)$. Let $\Ml$ denote the homotopy pullback given by these maps:
$$\Ml \coloneqq \left (BTOP \xrightarrow{w_\ell} P_\ell(BTOP) \xleftarrow{\gamma_\ell} M\right )$$
 and let $\xi_\ell \colon \Ml \to BTOP$ denote the fibration induced by the pullback diagram.

 In this work, we will be studying the space $\heqpt{\Ml}$ of all \emph{based homotopy self-equivalences} of $M$ preserving the Postnikov $\ell$-section  of $\nu_M$ up to prescribed homotopies. This space consists of homotopy commutative diagrams:
\begin{center}
	\begin{tikzcd}[ampersand replacement=\&]
		M \&\& M \\
		{} \& {P_{\ell}(BTOP)}
		\arrow["f", from=1-1, to=1-3]
		\arrow["{\gamma_{\ell}}"', from=1-1, to=2-2]
		\arrow["{\gamma_{ \ell}}", from=1-3, to=2-2]
	\end{tikzcd}
\end{center}
where $f\colon M \to M$ is a base point and local orientation preserving homotopy self-equivalence  (see  Definition \ref{def:orientation-preserving-maps}), equipped with a homotopy $H_f\colon M \times I \to P_\ell(BTOP)$ between $ \gamma_{\ell}\circ f$ and  $\gamma_{\ell}$ (see Section \ref{sec:three}).  For $\ell = \infty$,  we obtain the space $\heqpt{M, \nu_M}$ of based
 homotopy self-equivalences preserving the stable normal bundle of $M$.
 
 Similarly, we let $\Bl$ denote the homotopy pullback given by the maps 
 $$\Bl \coloneqq \left (BTOP \xrightarrow{w_\ell} P_\ell(BTOP) \xleftarrow{\nu_\ell} P_\ell(M)\right ),$$
 where $\nu_\ell = P_\ell(\nu_M)$, and let $\wxi_\ell \colon \Bl \to BTOP$ denote the fibration induced by the pullback diagram.
We consider the space $\heqpt{\Bl}$ of all base point preserving  homotopy self-equivalences.  For $\ell \geq 2$, the space $\heqpt{\Bl}$ admits a weakly equivalent model (Theorem \ref{thm:threeeight}) consisting of base point preserving homotopy self-equivalences
\begin{center}
 \begin{tikzcd}[ampersand replacement=\&]
		P_\ell(M) \&\& P_\ell(M) \\
		{} \& {P_{\ell}(BTOP)}
		\arrow["\phi", from=1-1, to=1-3]
		\arrow["{\nu_{\ell}}"', from=1-1, to=2-2]
		\arrow["{\nu_{ \ell}}", from=1-3, to=2-2]
	\end{tikzcd}
\end{center} 
 of the $\ell$-th Postnikov stage $P_\ell(M)$, preserving  $\nu_\ell = P_\ell(\nu_M)$  up to prescribed homotopies.

A point $(f, H_f) \in \heqpt{\Ml}$  can equivalently be described by a map $\hat{f} \colon M \to \Ml$  into the total space 
  of the fibration $\xi_\ell \colon \Ml \to BTOP$, that satisfies $\xi_\ell \circ \hat{f} = \nu_M$  (see diagram \eqref{diag:threezero} and  Definition \ref{def:thickaut}).
A similar description holds for points in $\heqpt{\Bl}$ in terms of maps $\hat{f}\colon M \to \Bl$.

The fibrations $\xi_\ell \colon \Ml \to BTOP$ and  $\wxi_\ell \colon \Bl \to BTOP$  define Lashof bordism theories $\Omega^\infty Q(\xi_\ell)$ and $\Omega^\infty Q(\wxi_\ell)$, where a bordism element in $\Omega_n(\xi_\ell)$ is represented by a pair $(N, \hat g)$, where $N$ is a closed $n$-manifold and $\hat g\colon N \to \Ml$ is a map such that $\xi_\ell \circ \hat g = \nu_N$, and similarly for elements in $\Omega_n(\wxi_\ell)$  (see Section \ref{sec:four} for more details, or  \cite[Chapter 2]{Stong:1968}, \cite{Lashof:1963}).

\subsection{Highly cartesian squares and braidings}
The last two paragraphs in the previous subsection suggest the existence of two maps of spaces $$\alpha\colon \heqpt{\Ml} \to \Omega^{\infty+n} Q(\xi_\ell) \text{ and } \beta\colon \heqpt{\Bl} \to \Omega^{\infty+n} Q(\wxi_\ell)$$
relating the spaces of self-equivalences and bordism. In Section \ref{sec:ss_models}, we use the $\Delta$-set theory of Rourke and Sanderson
\cite{Buoncristiano:1976} to model the involved bordism theories as $\Omega$-spectra and extract semi-simplicial models for their associated  $(\infty+n)$-fold loop spaces $\Omega^{\infty+n} Q(\xi_\ell)$ and $\Omega^{\infty+n} Q(\tilde{\xi}_\ell)$. This allows us to define $\alpha$ and $\beta$ in Section \ref{subsection:semi-simplicial-set-and-construction-of-square}.

There are, furthermore, two forgetful maps $$\wpp_\ell\colon \heqpt{\Ml} \to  \heqpt{\Bl} \text{ and } \wqq_\ell \colon \Omega^{\infty+n} Q(\xi_\ell) \to \Omega^{\infty+n} Q(\wxi_\ell)$$
leading to a commutative square of spaces.  The following is our main result (in which $[a]$ denotes the greatest integer function).

\begin{thma}
	Let $M$ be a closed topological manifold of dimension $n \geq 4$. For $k \geq 0$ and any $\ell \geq  \lmin\coloneqq [(n+k+1)/2]$, there exists a homotopy $(k+1)$-cartesian square:
	\begin{center}
		\begin{tikzcd}
			\heqpt{\Ml} \arrow[d, "\alpha"] \arrow[rr,"\wpp_\ell"] && \heqpt{\Bl} \arrow[d, "\beta"]\\
			\wOmega^{\infty+n} Q(\xi_\ell) \arrow[rr, "\wqq_\ell"] & & \Omega^{\infty+n} Q(\tilde{\xi}_\ell)
		\end{tikzcd}
	\end{center}
	where $\wOmega^{\infty+n} Q(\xi_\ell) \subseteq \Omega^{\infty+n} Q(\xi_\ell)$
	is the union of all connected components  consisting of elements of degree one.
\end{thma}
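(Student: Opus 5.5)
Our plan is to compare the homotopy fibres of the two horizontal maps. A square is homotopy $(k+1)$-cartesian exactly when the induced map on homotopy fibres of $\wpp_\ell$ and $\wqq_\ell$ is $(k+1)$-connected for every choice of base point.

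First we identify the homotopy fibres. Using the description of points of $\heqpt{\Ml}$ as lifts $\hat f\colon M \to \Ml$ over $\nu_M$, and those of $\heqpt{\Bl}$ as lifts $M\to \Bl$ (Theorem \ref{thm:threeeight}), the fibre of $\wpp_\ell$ over a point $\phi$ is the space of lifts of a fixed map $M\to \Bl$ through the map $\Ml\to\Bl$, relative to $BTOP$ and the base point. That map $\Ml \to \Bl$ is the pullback of $M \to P_\ell(M)$, so its homotopy fibre is $(\ell+1)$-connected with homotopy concentrated in degrees $>\ell$. An obstruction-theoretic count then shows that this lifting space is $(2\ell-n)$-connected in the relevant sense. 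This alone does not give the result; we still need to compare it with the fibre on the bordism side.

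On the bordism side we use the semi-simplicial $\Omega$-spectrum models of Section \ref{sec:ss_models}. The homotopy fibre of $\wqq_\ell$ is $\Omega^{\infty+n}$ of the cofibre spectrum, so its homotopy groups are relative Lashof bordism groups $\Omega_{n+i+1}(\wxi_\ell,\xi_\ell)$. By Pontryagin--Thom, these are the stable homotopy of the relative Thom spectrum $M\wxi_\ell/M\xi_\ell$. The key steps, in order, are as follows.

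(1) Describe $\alpha$ and $\beta$ on fibres. A point of the fibre, i.e.\ a family of lifts parametrised by $S^i$, goes to the bordism class of $M\times D^{i+1}$ (or $M \times S^i$) with the induced normal $\ell$-structure.

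(2) Compute the map on homotopy groups $\pi_i$ of the fibres. Its source, the lifting-space homotopy, is a twisted cohomology group $H^{\ast}(M;\pi_\ast(\text{fibre}))$. Its target is the relative bordism group. The comparison goes through Poincaré duality with the $\nu_M$-twisted Thom isomorphism, $H^{j}(M;\pi_{m})\cong H_{n-j}(M;\pi_m\otimes w)$, followed by the Atiyah--Hirzebruch/Serre identification of relative bordism in low degrees with the homology of $P_\ell(M)$ relative to $M$. Because the fibre of $\Ml\to\Bl$ starts in degree $\ell+1$, both sides agree in a stable range: the first nontrivial $k$-invariants and the cross-terms appear around degree $2(\ell+1)$. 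This should give an isomorphism for $i\le k$ and a surjection for $i=k+1$ exactly when $2\ell \ge n+k$, which matches $\ell\ge\lmin=[(n+k+1)/2]$. The degree-one condition in $\wOmega^{\infty+n}Q(\xi_\ell)$ ensures that the base points correspond, giving components over every point.

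(3) Check that the square commutes up to a specified homotopy, so that the fibre map is well defined. This should follow directly from the constructions in Section \ref{subsection:semi-simplicial-set-and-construction-of-square}.

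The main obstacle is step (2): showing that the geometric map $\alpha$ actually realises the duality isomorphism, not merely that the two sides are abstractly isomorphic. We would first try a Freudenthal/Blakers--Massey style argument. The lifting space is the section space of a fibration over $M$, and Atiyah duality identifies its stable version with maps out of the Thom spectrum $M^{\nu_M}$. The unstable section space agrees with the stable one in the range $2\ell-n$, by the connectivity of the fibre together with the dimension of $M$.
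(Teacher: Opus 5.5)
You can compare horizontal fibres instead of vertical ones; the criterion of Lemma~\ref{lem:p-cartesian-square} is symmetric. On $\pi_i$, your fibre map is exactly the map $b_{i+1}\colon \pi_{i+1}(\heqpt{\Bl},\heqpt{\Ml})\to\wOmega_{n+i+1}(\Bl,\Ml)$ of Section~\ref{subsec:input_braid}. The paper only obtains that this map is an isomorphism as a \emph{consequence} of Theorem A.

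The gap is that step (2), which carries the whole theorem, is only asserted. The homotopy of the lifting space is not a single twisted cohomology group but the abutment of an obstruction spectral sequence. Even a termwise match of its $E_2$-page with the Atiyah--Hirzebruch page for $\Omega_*(\Bl,\Ml)$ after Poincar\'e duality would not show that the \emph{particular} map induced by $\alpha$ and $\beta$ is an isomorphism. For that you need a parametrized Pontryagin--Thom statement identifying $\hat f\mapsto[(M,\hat f)]$ with fibrewise stabilization followed by Atiyah duality, and you leave this at ``we would first try''.

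Your plan also does not address $\pi_0$. The fibre of $\wpp_\ell$ over $y$ consists only of those components of the lifting space whose underlying map is a homotopy equivalence, and it can be empty. For $(k+1)$-connectivity you must then show that the fibre of $\wqq_\ell$ over $\beta(y)$ is empty too (its source contains only degree-one components). Concretely, you need two facts:
\begin{itemize}
\item if $\beta_0(y)$ comes from a degree-one $\xi_\ell$-class, then $y$ lifts to $\heqpt{\Ml}$;
\item degree-one lifts are homotopy equivalences, which is a Poincar\'e duality argument using $\ell\ge[(n+1)/2]$.
\end{itemize}
No comparison of homotopy groups sees either fact.

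Finally, your claim that the lifting space is $(2\ell-n)$-connected is false. The fibre of $c_\ell$ is $\ell$-connected, not $(\ell+1)$-connected, so the section space is in general only $(\ell-n)$-connected. For $\ell=n-2$, its $\pi_0$ over $\hat\nu_\ell$ surjects onto $\hept{\Ml\vv M_0}=\ker(\wpp_\ell)_*$ (Theorem~\ref{thm:sixtwo}), which is nontrivial in general (e.g.\ Baues' pinch-map elements).

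The paper avoids any space-level duality by comparing the \emph{vertical} fibres. Their homotopy groups are the geometric fibre-bordism groups $\htildek{k}{\Ml,\wm}$ and $\htildek{k}{\Bl,\wpp_\ell(\wm)}$ (Lemma~\ref{lemma:geometric-homotopy-fiber}). This turns the comparison into a lifting problem on a single manifold $(Z,\wG)$ with $\bd Z=S^k\times M$:
\begin{itemize}
\item surgery below the middle dimension makes $\wG\colon Z\to\Bl$ $\lmin$-connected, so $\bd_+Z\hookrightarrow Z$ is $(\lmin-1)$-connected;
\item the obstructions to lifting $\wG$ to $\Ml$ rel $\bd_-Z$ lie in $H^{i+1}(Z,\bd_-Z;\pi_i(\Fl))\cong H_{n+k-i}(Z,\bd_+Z;\pi_i(\Fl))$ by Lefschetz duality, and these vanish for $i>\ell$ (Proposition~\ref{proposition:k-hml-iso-k-hbl});
\item injectivity, and surjectivity in degree $k+1$, use the same argument on a bordism;
\item a separate lemma, using the same surgery-plus-lifting argument together with Poincar\'e duality, shows that the fibres of $\alpha$ over $x$ and of $\beta$ over $\wqq_\ell(x)$ are empty simultaneously.
\end{itemize}

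Your stable-homotopy route might well be completable, and the numbers are consistent: fibrewise Freudenthal makes $\Gamma(F)\to\Gamma(Q_MF)$ $(2\ell+1-n)$-connected, and $2\ell+1-n\ge k+1$. As written, though, the step that would make it a proof is missing.
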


From the defining properties of a homotopy $(k+1)$-cartesian square,  we obtain a (sign-commutative) braid of interlocking  exact sequences, in a range of dimensions depending on the parameter $\ell\geq 2$. The terms appearing in the braid are:
\begin{itemize}
	\item the homotopy groups of $\heqpt{\Ml}$ and $\heqpt{\Bl}$;
	\item the bordism groups $\widehat{\Omega}_{n+k}(\Ml) = \pi_k (\widehat{\Omega}^{\infty+n} Q(\xi_\ell))$, $\Omega_{n+k}(\Bl) = \pi_k (\Omega^{\infty+n} Q(\tilde{\xi}_\ell))$ and $\widehat{\Omega}_{n+k+1}(\Bl,\Ml) $ in a Lashof bordism exact sequence	
	   (see Section \ref{sec:four} for the details and, in particular, Definition \ref{def:restricted-infty-plus-n-bordism-spaces} and Remark \ref{rem:sixfourteen} for an explanation of the  ``degree one'' hat-condition, which only applies if $k =0$);  
	\item the fibre bordism groups $\widetilde{ \cH}_k(\Ml)$ modelling the homotopy fibre of the map $\alpha$ in the square of Theorem A
	(see Definition \ref{def:the-bordism-group-htildemk} for an explicit description). 
\end{itemize}
For $n = 4$, $k = 0$ and $\ell = 2$, our next result recovers work of Hambleton--Kreck in the setting of oriented $4$-manifolds \cite{Hambleton:2004, Hambleton:2009}, which was the starting point of our investigation  (see Section \ref{sec:HKthmB}, and compare \cite{Pamuk:2026} for a related braid under some additional assumptions).

\begin{thmb}
	Let $M$ be a closed, smooth or topological manifold of dimension $n \geq 4$. For  $k \geq 0$ and $\ell \geq \lmin\coloneqq [(n+k+1)/2]$,	
  there exists an interlocking braid of exact sequences: 
	\vskip .2cm
	\begin{center}
	$$\begin{matrix}
		\resizebox{\displaywidth}{!}{
			\xymatrix@!C@C-60pt{
			\widehat\Omega_{n+k+1}(\Ml)    \ar[dr] \ar@/^2pc/[rr]   &&
					\widetilde{ \cH}_k(\Ml)  \ar[dr] \ar@/^2pc/[rr] &&
					\pi_k(\heqpt{\Bl})\ar[dr]^{\beta_k}  \\
					& \Omega_{n+k+1}(\Bl) \ar[dr] \ar[ur]  &&
		\pi_k(\heqpt{\Ml})  \ar[dr]^{\alpha_k}\ar[ur] &&
		 \Omega_{n+k}(\Bl) \\
		\pi_{k+1}(\heqpt{\Bl}) \ar[ur] \ar@/_2pc/[rr] &&
		 \widehat \Omega_{n+k+1}(\Bl,\Ml)
					\ar[ur]^{\gamma_k} \ar@/_2pc/[rr]
					&&
					\widehat \Omega_{n+k}(\Ml)\ar[ur] }
		}\end{matrix}
		$$
	\end{center}
	\vspace{0.5cm}
	For $k \geq 1$ all the maps in the diagram are group homomorphisms and the braid continues to the right, but for $k=0$ the maps $\alpha_0$, $\beta_0$ and $\gamma_0$ are only maps of pointed sets.
\end{thmb}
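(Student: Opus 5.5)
Theorem B will follow from Theorem A by the standard passage from a homotopy cartesian square to a braid of long exact sequences. Take the square of Theorem A with $\ell \geq \lmin = [(n+k+1)/2]$, made strictly commutative by replacing $\beta$ and $\wqq_\ell$ by fibrations. Let $F_\alpha$ and $F_\beta$ be the homotopy fibres of $\alpha$ and $\beta$, and $F_{\wpp}$ and $F_{\wqq}$ the homotopy fibres of $\wpp_\ell$ and $\wqq_\ell$. Being homotopy $(k+1)$-cartesian means that the induced maps $F_{\wpp}\to F_{\wqq}$ and $F_\alpha\to F_\beta$ are $(k+1)$-connected. So in degrees $\le k$ the fibre homotopy groups are identified.

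I would then write down the four long exact sequences. Two are the fibration sequences of $\alpha$ and of $\wqq_\ell$. The other two come from composing along the two routes around the square: the sequence of $\beta$, with fibre group $\pi_j(F_\beta)\cong\pi_j(F_\alpha)$, and the sequence of $\wpp_\ell$, with fibre group $\pi_j(F_{\wpp})\cong \pi_j(F_{\wqq})$. The standard lemma (cf.\ Wall's braid lemma or Hambleton--Kreck) asserts that a homotopy cartesian square gives a commuting braid whose four strands are these sequences. Its proof is a diagram chase using the map of fibration sequences, together with the Mayer--Vietoris type sequence
$$\cdots \to \pi_{j}(\heqpt{\Ml}) \to \pi_j(\heqpt{\Bl})\oplus \pi_j(\wOmega^{\infty+n}Q(\xi_\ell)) \to \pi_j(\Omega^{\infty+n}Q(\wxi_\ell)) \to \cdots.$$
The connectivity $k+1$ is exactly what keeps the braid exact through the terms $\pi_{k+1}$ down to $\pi_0$, as displayed. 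I would also note that the braid is sign-commutative, which the paper allows.

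Next, identify the terms. Since $\pi_j\Omega^{\infty+n}Q(\xi)=\Omega_{n+j}(\xi)$, we get $\pi_k(\wOmega^{\infty+n}Q(\xi_\ell))=\widehat\Omega_{n+k}(\Ml)$ and $\pi_{k+1}=\Omega_{n+k+1}(\Ml)$. Here $\widehat\Omega_{n+k+1}(\Ml)$ coincides with $\Omega_{n+k+1}(\Ml)$ when $k\ge1$, and the hat only matters in degree $n$. The fibre of $\wqq_\ell$ has $\pi_k(F_{\wqq})\cong\widehat\Omega_{n+k+1}(\Bl,\Ml)$ via the Lashof relative bordism exact sequence of Section \ref{sec:four}. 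This is the step I expect to need most care: one must check that the $\Omega$-spectrum map $Q(\xi_\ell)\to Q(\wxi_\ell)$ has homotopy cofibre representing relative bordism, and that the shift conventions match. By Definition \ref{def:the-bordism-group-htildemk}, $\widetilde\cH_k(\Ml)$ is $\pi_k(F_\alpha)$. Substituting all of this into the braid gives exactly the diagram in the statement.

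Finally, the algebraic structure. For $k\geq1$ all spaces involved are group-like $H$-spaces, namely the topological monoids $\heqpt{-}$ and the infinite loop spaces. The maps are $H$-maps, or at least become so after the semi-simplicial modelling, so every homomorphism in the braid is a group homomorphism. Because the square is $(k+1)$-cartesian, the braid also continues to the right down to $\pi_0$. For $k=0$ the groups $\pi_0$ are groups, but $\alpha$ and $\beta$ need not be monoid maps: composing self-equivalences does not correspond to adding bordism classes. Hence $\alpha_0$, $\beta_0$ and the boundary $\gamma_0$ are only maps of pointed sets, and exactness there is meant in the sense of pointed sets. Verifying this pointed-set exactness at the $\pi_0$ end, including the degree-one restriction in $\wOmega$, is the main technical check. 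I would handle it with the standard exactness of $\pi_1(B)\to\pi_0(F)\to\pi_0(E)\to\pi_0(B)$ for a fibration, applied to each strand.
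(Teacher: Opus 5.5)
In the topological category your route is the paper's: Theorem A combined with the braid lemma, Lemma \ref{lem:twoseven}. Two small corrections. First, the identification of $\widetilde{\cH}_k(\Ml)$ with $\pi_k$ of the homotopy fibre of $\alpha$ is Lemma \ref{lemma:geometric-homotopy-fiber}, not the definition. Second, $\alpha$ and $\beta$ are not $H$-maps, since composition of self-equivalences does not go to disjoint union. You do not need that claim: for $k\geq 1$ the homomorphism property is just functoriality of $\pi_k$.

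The genuine gap is that Theorem B is also asserted for \emph{smooth} $M$. In that case $\Ml$ and $\Bl$ are pullbacks over $P_\ell(BO)$, $\heqpt{\Ml}$ preserves $\gamma_\ell\colon M\to P_\ell(BO)$, and the bordism groups are smooth Lashof bordism groups over $BO$. Theorem A is only proved over $BTOP$. The semi-simplicial models $\Mock(\xi,n)$ of Section \ref{sec:quinnadtwo} are built from topological manifolds and microbundles, and Remark \ref{rem:smooth_model} says explicitly that no smooth version of the square is available. Nor can you simply regard a smooth $M$ as topological. Theorem A then produces the braid for $\MlT$, $\BlT$ and topological bordism, whose absolute terms are not the smooth ones in the statement; only the relative groups are known to be insensitive to $BO\to BTOP$.

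The missing ingredient is the paper's substitute argument in Section \ref{subsec:corb}, which never builds a smooth square of spaces. One first constructs by hand two ladders of long exact sequences.
The horizontal ones are the sequence of the pair $(\heqpt{\Bl},\heqpt{\Ml})$ and the Lashof sequence of $(\Bl,\Ml)$.
The vertical ones are $\pi_{k+1}(\heqpt{\Ml})\to\wOmega_{n+k+1}(\Ml)\to\widetilde{\cH}_k(\Ml)\to\pi_k(\heqpt{\Ml})\to\wOmega_{n+k}(\Ml)$ and its analogue for $\Bl$, proved geometrically in Lemmas \ref{lem:eightone} and \ref{lem:eighttwo}.

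One then shows that every third comparison map is bijective.
\begin{enumerate}
\item $\psi_k\colon\widetilde{\cH}_k(\Ml)\to\widetilde{\cH}_k(\Bl)$ is an isomorphism and $\psi_{k+1}$ is onto. This comes from rerunning the surgery and obstruction-lifting proof of Proposition \ref{proposition:k-hml-iso-k-hbl} in the smooth category.
\item $b_{k+1}\colon\pi_{k+1}(\heqpt{\Bl},\heqpt{\Ml})\to\wOmega_{n+k+1}(\Bl,\Ml)$ is an isomorphism, by reduction to the topological case. Both relative groups are unchanged under $BO\to BTOP$ (Lemmas \ref{lem:eight_iso_one} and \ref{lem:eight_iso_two}). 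The reason is that the homotopy fibres of $\Ml\to\Bl$ and of $\heqpt{\Ml}\to\heqpt{\Bl}$ agree with those of $M\to P_\ell(M)$ and $\heqpt{M}\to\heqpt{P_\ell(M)}$, together with an Atiyah--Hirzebruch comparison for the relative bordism. Finally, $b^{TOP}_{k+1}$ is an isomorphism by Theorem A.
\end{enumerate}
Setting $\gamma_k=\bd_{k+1}\circ b_{k+1}^{-1}$, the smooth braid then follows formally, as in Lemma \ref{lem:twoseven}.
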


For topological manifolds this is an immediate application of Theorem A. The smooth case  follows from it by  
the results of Section \ref{subsec:corb}.
 In that case, we use the stable normal bundle $\nu_M \colon M \to BO$ and replace $BTOP$ by $BO$ in all constructions.

\begin{remark}\label{rem:smooth_model}
	We expect that there is an  analogue of Theorem A for smooth manifolds, but the techniques for modelling smooth bordism theories by using, for example, semi-simplicial sets, are still under development  (see \cite{Ayala:2025,Laures:2024} for recent work on the Quinn spectra \cite[\S 6]{Quinn:1995}).
\end{remark}

\subsection{Applications}

In Section \ref{sec:nine} we survey explicit calculations for $\hept{M}$ in the literature using the braid,  and indicate new applications of the braids to the topology of manifolds. Theorem \ref{thm:sixone} extends a result of Dror, Dwyer and Kan \cite[Theorem 1.1]{Dror:1979} to our setting, and 
Theorem \ref{thm:sixtwo} is a generalization of the work of Barcus and Barratt
\cite[\S 6]{Barcus:1958} and Baues \cite[Theorem 8.14]{Baues:1996a} to non-simply connected manifolds. In Theorem \ref{thm:sevensixteen}, we obtain a ``commensurability'' result  (under certain conditions) for $\hept{\Ml} = \pi_0(\heqpt{\Ml})$, generalizing the work of Triantafillou  \cite[Theorem 2]{Triantafillou:1995} and Bustamante, Krannich and Kupers \cite[Theorem D]{Bustamante:2024}.

\begin{acknowledgement} 
The authors would like to thank Alexander Kupers, Larry Taylor, and Oscar Randal-Williams for helpful comments and correspondence. We also want to express our particular thanks to Robin J.~Sroka, who contributed very significantly to this project since its start at McMaster in 2023, but declined authorship.
\end{acknowledgement}

\setcounter{tocdepth}{1}
\tableofcontents

\section{Topological preliminaries}\label{sec:two}

In this section we collect general homotopy theoretic tools and fix  some notation. 
In particular, we explain
\begin{itemize}
	\item the connection between Theorem A and Theorem B in Lemma \ref{lem:twoseven};
	\item the construction of the fibrations $\xi_\ell$ and $\wxi_\ell$ which are the input for our two Lashof bordism theories and the key Lemma \ref{lem:normalk-type} that this work is based on;
	\item our conventions for working with homotopy groups of Kan semi-simplicial sets.
\end{itemize}

\begin{convention}
	 Throughout this work, a space $X$ is called \emph{$n$-connected} if for all $-1 \leq k \leq n$ every map $S^k \to X$ can be extended to a map $D^{k+1} \to X$ (where $S^{-1} = \emptyset$). Similarly, $X$ is \emph{$n$-co-connected} if for all $k \geq n$ every map $S^k \to X$ can be extended to a map $D^{k+1} \to X$. 
	
	A map $f\colon X \to Y$ is called \emph{$n$-connected} if its homotopy fibre   $\on{hofib}_y(f)$ is $(n-1)$-connected for every choice of $y \in Y$. A map $f\colon X \to Y$ is called \emph{$n$-co-connected} if its homotopy fibre $\on{hofib}_y(f)$ is $n$-co-connected for every choice of $y \in Y$. 
\end{convention}

\begin{remark} In \cite[Proposition 2.6.9]{MunsonVolic:2015} there is a useful alternate formulation: a map $f\colon X\to Y$ is $n$-connected if and only if the pair $(M_f, X)$ is $n$-connected, where  $M_f$ is the mapping cylinder of $f$. If $X \neq \emptyset$, either condition is equivalent to assuming that for any $x \in X$, the induced map  $f_*\colon \pi_k(X, x) \to \pi_k(Y, f(x))$ is an isomorphism for $0 \leq k <n$ and a surjection for $k =n$. If $X \neq \emptyset$ and $n \geq 1$, we say that  $f\colon X\to Y$ \emph{is an $n$-equivalence} if any one of these three conditions holds.
\end{remark}

\subsection{Relation between highly cartesian squares and braidings}

In this subsection, we summarize the homotopy theoretic tools that we will use to prove Theorem A and Theorem B. We refer the reader to \cite{MunsonVolic:2015} for related background.

To prove Theorem A we will check the following characterization of $(k+1)$-cartesian squares.

\begin{lemma}[{\cite[Prop 3.3.18]{MunsonVolic:2015}}]\label{lem:p-cartesian-square}
	Let $k \geq 0$. A commutative square of spaces
	$$
	\xymatrix{W \ar[r]^p\ar[d]^\alpha & Y\ar[d]^\beta\\
		X \ar[r]^q & Z}
	$$
	is homotopy $(k+1)$-cartesian if and only if for all $x \in X$, the map
	$$
	\on{hofib}_x(\alpha) \to \on{hofib}_{q(x)}(\beta)
	$$
	is 	$(k+1)$-connected. 
\end{lemma}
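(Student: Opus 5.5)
The plan is to work directly from the definition of a homotopy $(k+1)$-cartesian square: the comparison map $W \to P \coloneqq \on{holim}(X \xrightarrow{q} Z \xleftarrow{\beta} Y)$ is $(k+1)$-connected. Both implications will come from comparing homotopy fibres over $X$. The standard model is
$$
P = \{(x,\omega,y) : \omega\colon I\to Z,\ \omega(0)=q(x),\ \omega(1)=\beta(y)\},
$$
with projection $\pi\colon P\to X$. The canonical map $c\colon W\to P$ commutes with the projections to $X$. So for each $x\in X$ there is an induced map $\on{hofib}_x(\alpha)\to\on{hofib}_x(\pi)$.

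The first key step is to identify $\on{hofib}_x(\pi)$ with $\on{hofib}_{q(x)}(\beta)$. The fibre of $\pi$ over $x$ is exactly the path-space model of $\on{hofib}_{q(x)}(\beta)$. Since $\pi$ is a fibration (as a pullback of the path fibration), the strict fibre agrees with the homotopy fibre. Under this identification, the induced map becomes the map in the statement.

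The second step is to compare connectivity of $c$ with connectivity of the fibrewise maps. Consider the map of fibre sequences
$$
\on{hofib}_x(\alpha)\to W\to X \quad\text{and}\quad \on{hofib}_x(\pi)\to P\to X,
$$
which is the identity on the base $X$. A map over $X$ is $(k+1)$-connected if and only if it is $(k+1)$-connected on homotopy fibres over every point $x\in X$. The cleanest way to see this is the transitivity of homotopy fibres: $\on{hofib}_p(c)\simeq \on{hofib}(\on{hofib}_x(\alpha)\to\on{hofib}_x(\pi))$, taken over the corresponding point, where $x=\pi(p)$. Since every point of $P$ lies over some $x\in X$, the $k$-connectivity of all homotopy fibres of $c$ is equivalent to the $k$-connectivity of all homotopy fibres of the fibrewise maps. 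By the Convention, this is exactly the required equivalence.

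I expect the main obstacle to be the bookkeeping at low degrees, especially $\pi_0$ and empty fibres. I must check that the convention, which allows $S^{-1}=\emptyset$, handles basepoints correctly. I must also check that quantifying over all $x\in X$, rather than only over basepoints of components, covers every point of $P$. This is handled by the iterated-fibre identification, which I would verify explicitly using path-space models instead of the five lemma, since the five lemma is unreliable at $\pi_0$ and $\pi_1$.
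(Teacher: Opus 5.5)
Your proposal is correct: the paper gives no argument of its own beyond citing Munson--Voli\'c, Prop.~3.3.18. Your route is the standard proof of that result: view $W\to\on{holim}(X\to Z\leftarrow Y)$ as a map over $X$, identify the fibre of the projection $\pi$ over $x$ with $\on{hofib}_{q(x)}(\beta)$, and compute the total homotopy fibre in two ways. The low-degree bookkeeping you flag is harmless, because $\on{hofib}_z\bigl(\on{hofib}_x(\alpha)\to\on{hofib}_x(\pi)\bigr)\simeq\on{hofib}_u(c)$ holds for every $z=(u,\delta)\in\on{hofib}_x(\pi)$ with $u\in P$, not just for $z=(u,\mathrm{const}_x)$, so emptiness and connectivity of the two families of fibres match in both directions.
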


 \begin{remark} More explicitly, the condition we need later for Proposition \ref{proposition:k-hml-iso-k-hbl} is that the induced map on homotopy fibres should induce a bijection on $\pi_0$, an isomorphism on $\pi_i$, for $i \leq k$, and an epimorphism on $\pi_{k+1}$. Since $k\geq 0$, these conditions hold even in the case of empty fibres; for  example, the unique map $\emptyset \to \emptyset$ is $n$-connected for every $n$ since there are no points in the target over which the homotopy fibre conditions must be checked. 
 \end{remark}

To construct the braid for topological manifolds displayed in Theorem B, we apply the following lemma once we have completed the proof of Theorem A.  

\begin{lemma}
	\label{lem:twoseven}
	A homotopy $(k+1)$-cartesian  square as in \textup{Lemma \ref{lem:p-cartesian-square}}, give rise to an interlocking braid of (sign) commutative diagrams up degree $i \leq k$ of the following form.
	
	$$\xymatrix@R-8pt@C-8pt{
					&\pi_{i+1}(X)    \ar[dr] \ar@/^2pc/[rr]   &&
					\pi_i(\on{hofib}_x(\alpha))  \ar[dr] \ar@/^2pc/[rr] &&
					\pi_i(Y)\ar[dr]  \\
					\pi_{i+1}(W)\ar[ur]\ar[dr]&& \pi_{i+1}(Z) \ar[dr] \ar[ur]  &&
					\pi_i(W)  \ar[dr]\ar[ur] &&  \pi_{i}(Z) \\
					&\pi_{i+1}(Y) \ar[ur] \ar@/_2pc/[rr] && \pi_i(\on{hofib}_z(q))
					\ar[ur]^{\gamma_i} \ar@/_2pc/[rr]&&
					\pi_i(X)\ar[ur] }
		$$
	\vskip 0.5 cm
	For $i \geq 1$, the braid continues to the right. 
\end{lemma}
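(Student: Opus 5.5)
The braid will be assembled from four long exact sequences of homotopy groups, glued together using the comparison of homotopy fibres from Lemma \ref{lem:p-cartesian-square}. Fix a base point $w \in W$ and set $x = \alpha(w)$, $y = p(w)$, $z = q(x) = \beta(y)$. The four fibration sequences are
\[
\on{hofib}_x(\alpha) \to W \to X, \qquad \on{hofib}_z(\beta) \to Y \to Z,
\]
\[
\on{hofib}_y(p) \to W \to Y, \qquad \on{hofib}_z(q) \to X \to Z .
\]
Each gives a long exact sequence of homotopy groups, ending in pointed sets at $\pi_0$.

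The strands of the braid are chosen as follows:
\begin{itemize}
\item the sequence of $\alpha$: $\pi_{i+1}(W)\to\pi_{i+1}(X)\to \pi_i(\on{hofib}_x(\alpha))\to \pi_i(W)\to\pi_i(X)$;
\item the sequence of $q$: $\pi_{i+1}(X)\to\pi_{i+1}(Z)\to\pi_i(\on{hofib}_z(q))\to\pi_i(X)\to\pi_i(Z)$;
\item the sequence of $\beta$, in which $\pi_i(\on{hofib}_z(\beta))$ is replaced by $\pi_i(\on{hofib}_x(\alpha))$;
\item the sequence of $p$, in which $\pi_i(\on{hofib}_y(p))$ is replaced by $\pi_i(\on{hofib}_z(q))$.
\end{itemize}
These replacements are where the cartesian hypothesis enters. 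By Lemma \ref{lem:p-cartesian-square}, the induced map $\on{hofib}_x(\alpha)\to\on{hofib}_z(\beta)$ is $(k+1)$-connected, so it is an isomorphism on $\pi_j$ for $j\le k$ and a surjection on $\pi_{k+1}$. The square is symmetric: the total homotopy fibre equals both the homotopy fibre of the map between the vertical fibres and that of the map between the horizontal fibres. Hence the map $\on{hofib}_y(p)\to\on{hofib}_z(q)$ is also $(k+1)$-connected. With these identifications, the connecting map $\gamma_i$ is the composite $\pi_i(\on{hofib}_z(q))\cong\pi_i(\on{hofib}_y(p))\to\pi_i(W)$, and $\pi_{i+1}(Z)\to\pi_i(\on{hofib}_x(\alpha))$ is the $\beta$-boundary followed by the inverse isomorphism.

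Exactness along each strand then follows from the four long exact sequences in degrees $\le k$. At the top degree, exactness at $\pi_{k+1}$-terms uses only surjectivity, which is what the $(k+1)$-connectivity provides.

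The remaining step, which I expect to be the main obstacle, is commutativity of the braid cells. The cells involving only induced maps commute by naturality. The cells containing two boundary maps, for example
\[
\pi_{i+1}(Z)\to\pi_i(\on{hofib}(\alpha))\to\pi_i(W) \quad\text{versus}\quad \pi_{i+1}(Z)\to\pi_i(\on{hofib}(q))\to\pi_i(W),
\]
compare two connecting homomorphisms. I would handle these by mapping both into the homotopy groups of the total fibre, which sits in the $3\times3$ diagram of fibres, and checking on representatives: a map $(D^{i+1},S^i)\to(Z,\ast)$ lifted through the two different paths. The two lifts differ by the order of the loop coordinates in $\Omega Z$ versus the fibre, so the cell commutes only up to the sign $(-1)$, which is the stated sign-commutativity.

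Finally, for $i\ge1$ all terms are groups and all maps are homomorphisms, and the same argument applied at $i-1$ continues the braid to the right. At $i=0$ the sequences terminate as exact sequences of pointed sets.
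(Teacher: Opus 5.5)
Your proposal is correct and follows essentially the same route as the paper. You use the four fibre sequences of the square, the identifications $\pi_i(\on{hofib}_x(\alpha))\cong\pi_i(\on{hofib}_z(\beta))$ and $\pi_i(\on{hofib}_y(p))\cong\pi_i(\on{hofib}_z(q))$ for $i\le k$ supplied by the cartesian hypothesis, the same composite defining $\gamma_i$, strict commutativity of the triangles by naturality, and sign-commutativity of the boundary squares comparing the two ways of computing $\pi_{i+1}(Z)\to\pi_i(W)$. The only differences are that you justify the second fibre identification via the symmetry of the total homotopy fibre, which the paper uses implicitly, and that you propose to check the sign on representatives, where the paper cites Wall's formal argument and the Eilenberg--Steenrod hexagonal lemma.
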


\begin{proof} After picking compatible base points, $q(x) = z = \beta(y)$, a homotopy $p$-cartesian square of spaces provides four exact sequences of homotopy groups given by the horizontal and vertical maps in the diagram. In a range of dimensions, depending on $k$, a formal argument shows that these are contained in an interlocking braid of (sign) commutative diagrams  (see Wall \cite{Wall:1966}). The triangular sub-diagrams are all strictly commutative, since the maps involved are induced by the maps in
Lemma \ref{lem:p-cartesian-square}. 
    The map labelled $\gamma_k$ in the Theorem B braid is given by the composite  (for $i \leq k$)
$$\gamma_k \colon \pi_{k+1}(Z, X)\cong  \pi_k(\on{hofib}_z(q))  \xleftarrow{\cong}  \pi_k(\on{hofib}_y(p)) \cong \pi_{k+1}(Y, W) \to \pi_k(W).$$ 
  In the range $i\leq k$,  the ``boundary'' square sub-diagrams have the form: $$\xymatrix{\pi_{i+1}(Z )\ar[rr]\ar[d] \ar@{-->}[ddrrr]&&\pi_{i}(\on{hofib}_{z}(\beta))&\\
 \pi_{i}(\on{hofib}_z(q)) &&&\pi_{i}(\on{hofib}_x(\alpha))\ar[ul]_\cong\ar[d]^\bd\\
 &\pi_{i}(\on{hofib}_y(p))\ar[ul]^{\cong} \ar[rr]^\bd && \pi_i(W)
}$$
and they are  only sign-commutative. They contain the two different ways of computing the boundary map $\pi_{i+1}(Z) \to \pi_i(W)$, as in the induced Mayer-Vietoris sequence (see \cite[Lemma 15.1]{Eilenberg:1952} for the ``hexagonal lemma''). 
\end{proof}

\subsection{Construction of a Moore--Postnikov factorization}\label{subsec:moore_post}

In this subsection, we record properties of the fibrations $\xi_\ell\colon \Ml \to BCAT$ and $\wxi_\ell\colon \Bl \to BCAT$  obtained from $\nu_M\colon M \to BCAT$, for $CAT = O$ or $TOP$.   

For our applications, we will let  $Y = BO$ or $Y=BTOP$ in the following general statement. We refer the reader to \cite{Hatcher:2002, MunsonVolic:2015} for related background. 

 \begin{lemma}\label{lem:normalk-type}
	Let $\nu\colon M \to Y$ be a base point preserving map between path-connected spaces and $\ell \geq 0$. Then there exists a commutative diagram of base point preserving maps
	\eqncount
	\begin{equation}\label{eq:normalk-diag}
		\vcenter{\xymatrix{M \ar[dd]^{\nu} \ar[rrr]^{\hatid_M}\ar@{-->}[drr]^{\hat\nu_\ell}
				&&&\Ml\ar[dd]^{\xi_\ell}\ar[dl]^{\wpp_\ell}\\
				&&\Bl\ar[dr]^{\wxi_\ell}&\\
				Y \ar@{=}[rrr]&&&Y
		}}
	\end{equation}
	where
	\begin{itemize}
		\item $\xi_\ell$ and $\wxi_\ell$ are fibrations;
		\item $\Ml$ and $\Bl$ are path-connected; and
		\item $\Ml\xrightarrow{\ \wpp _\ell\ } \Bl \xrightarrow{\ \wxi_\ell \ } Y$ is an  $(\ell+1)$-th stage of a Moore-Postnikov factorization of the map $\xi_\ell \colon \Ml \to Y$. In other words, the map $\wpp_\ell$ is $(\ell+1)$-connected and the map $\wxi_\ell$ is $(\ell+1)$-co-connected (see \ \cite[p.414]{Hatcher:2002}).
	\end{itemize}
\end{lemma}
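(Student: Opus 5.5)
We build the diagram directly from the definitions in the Introduction, replacing maps by fibrations where needed. Fix functorial Postnikov truncations $w_\ell\colon Y \to P_\ell(Y)$ and $w_\ell\colon M \to P_\ell(M)$, and put $\gamma_\ell = w_\ell\circ\nu$ and $\nu_\ell = P_\ell(\nu)$. Naturality of Postnikov truncation gives $\nu_\ell\circ w_\ell = \gamma_\ell$ on $M$. First we turn $w_\ell\colon Y\to P_\ell(Y)$ into a fibration $\tilde w_\ell\colon \tilde Y \to P_\ell(Y)$, with $\tilde Y \simeq Y$ via the path-space construction. We then set
\[
\Ml = M\times_{P_\ell(Y)} \tilde Y, \qquad \Bl = P_\ell(M)\times_{P_\ell(Y)}\tilde Y.
\]
These are homotopy pullbacks. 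The maps $\xi_\ell$ and $\wxi_\ell$ are the projections to $\tilde Y$, composed with the evaluation $\tilde Y\to Y$. To make them honest fibrations we either arrange the model so the projection to $Y$ is already a fibration, or apply a fibrant replacement over $Y$ functorially to both sides. Since $\tilde Y = \{(y,\omega)\}$ with $\omega$ a path in $P_\ell(Y)$ starting at $w_\ell(y)$, projecting to $y$ is a fibration up to the pullback factor. To be safe, we replace each total space by its mapping path fibration over $Y$. We take $\wpp_\ell = w_\ell\times\id$, which is compatible over $Y$. We define $\hatid_M(m) = (m, (\nu(m), \text{const}))$, and set $\hat\nu_\ell = \wpp_\ell\circ\hatid_M$. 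Commutativity of \eqref{eq:normalk-diag} is then by construction, and base points are carried along.

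For path-connectedness, note that $\tilde Y\to P_\ell(Y)$ has fibre the $\ell$-connected cover of $Y$ (its hofibre), which is connected when $\ell\ge 0$. Hence the pullback fibration $\Ml\to M$ has connected fibre over a connected base, so $\Ml$ is path-connected. The same argument works for $\Bl$, since $P_\ell(M)$ is connected.

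For connectivity, $\wpp_\ell$ is the base change of $w_\ell\colon M\to P_\ell(M)$ along the fibration $\Bl\to P_\ell(M)$. Homotopy fibres are preserved under pullback, so $\on{hofib}(\wpp_\ell)\simeq \on{hofib}(w_\ell\colon M\to P_\ell M)$, which is $\ell$-connected. Thus $\wpp_\ell$ is $(\ell+1)$-connected.

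For co-connectivity, the homotopy fibre of $\wxi_\ell$ over $y$ is computed via the cube of pullbacks: $\on{hofib}_y(\wxi_\ell)\simeq \on{hofib}_{w_\ell(y)}(\nu_\ell\colon P_\ell M\to P_\ell Y)$. This is a homotopy fibre of a map between spaces whose homotopy vanishes in degrees $>\ell$. The long exact sequence then shows that $\pi_j$ of the fibre vanishes for $j\ge \ell+1$, over every component. This is exactly $(\ell+1)$-co-connectedness in the paper's convention: maps $S^j\to F$ extend for $j\ge\ell+1$. Finally, the composite $\wxi_\ell\circ\wpp_\ell = \xi_\ell$ holds by construction.

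The main obstacle is bookkeeping rather than homotopy theory. We must arrange genuine fibrations over $Y$ with strictly commuting triangles, and keep base points preserved, without breaking the pullback identifications used in the fibre computations. We handle this by doing all constructions functorially and fibrantly replacing once at the end.
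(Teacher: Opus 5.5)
Your proposal is correct and follows the paper's proof. Your $M\times_{P_\ell(Y)}\tilde Y$ is exactly the paper's path-space model of the homotopy pullback, and you use the same $\wpp_\ell$, $\hatid_M$ and $\hat\nu_\ell$. The arguments for path-connectedness, $(\ell+1)$-connectivity of $\wpp_\ell$ and $(\ell+1)$-co-connectivity of $\wxi_\ell$ are the same homotopy-fibre comparisons.

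The one superfluous step is your hedge about fibrant replacement. In this model the projection $\Ml\to Y$ is already a fibration: it is the pullback along $w_\ell$ of the standard path-fibration replacement of $\gamma_\ell$, which is what \cite[Prop.~3.2.5]{MunsonVolic:2015} gives and what the paper uses. So no extra mapping-path construction is needed.
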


We start by recalling that the Postnikov tower $\{\post{X}{\ell}\}_{\ell \geq 0}$ of a space $X$ can be constructed functorially (see e.g.\ \cite[Section 1.1.3]{Galatius:2014}), that the space $\post{X}{\ell}$ is obtained from $X$ by attaching cells of dimension $\geq \ell+2$, and that the functors $\{\post{-}{\ell}\}_{\ell \geq 0}$ come with natural transformations $(-) \Longrightarrow \post{-}{\ell}$ and $\post{-}{\ell+1} \Longrightarrow \post{-}{\ell}$ that are given by inclusion maps.

We now consider the map $\nu \colon M \to Y$, let $\ell \geq 0$ and begin the construction of the commutative diagram in Lemma \ref{lem:normalk-type} .
Using the functor $\post{-}{\ell}$ and the natural transformation $(-) \Longrightarrow \post{-}{\ell}$, we obtain a strictly commutative diagram of base point preserving maps
\eqncount
\begin{equation}
	\label{diag:twoone}
	\begin{tikzcd}
		M \arrow[r, "c_\ell"] \arrow[d, "\nu"] \arrow[dr, "\gamma_{\ell}", dashed] & \post{M}{\ell} \arrow[d, "\nu_\ell"]\\
		Y \arrow[r, "w_\ell"] & \post{Y}{\ell}
	\end{tikzcd}
\end{equation}
where $\nu_\ell\coloneqq P_\ell(\nu)$ and the dashed diagonal map $\gamma_\ell$ is defined as the composition $\nu_\ell \circ c_\ell = w_{\ell} \circ \nu$.

\begin{definition}\label{def:hpull}
	We use the construction in \cite[Definition 3.2.4]{MunsonVolic:2015} to define the based spaces $\Ml$ and $\Bl$ as models for the  homotopy pullbacks in the following two homotopy commutative diagrams of pointed maps:
	\begin{center}
		\begin{tikzcd}[ampersand replacement=\&]
			\Ml \& M\\
			Y \& P_\ell(Y)
			\arrow["j", from=1-1, to=1-2]
			\arrow["\xi_\ell"', from=1-1, to=2-1]
			\arrow["w_\ell", from=2-1, to=2-2]
			\arrow["\gamma_\ell", from=1-2, to=2-2]
		\end{tikzcd}
		\hspace{1cm}
		\begin{tikzcd}[ampersand replacement=\&]
			\Bl \& P_\ell(M)\\
			Y \& P_\ell(Y)
			\arrow["j", from=1-1, to=1-2]
			\arrow["\wxi_\ell"', from=1-1, to=2-1]
			\arrow["w_\ell", from=2-1, to=2-2]
			\arrow["\nu_\ell", from=1-2, to=2-2]
		\end{tikzcd}
	\end{center}
	The spaces are given by
	\begin{eqnarray*}
		& \Ml \coloneqq \{(y, x, \theta) \in Y \times M \times P_\ell(Y)^I\vv \theta(0) = w_\ell(y), \theta(1) = \gamma_\ell(x)\}
				\\[.2cm]
		& \Bl \coloneqq \{(y, x, \delta) \in Y \times  P_\ell(M)  \times P_\ell(Y)^I\vv \delta(0) = w_\ell(y), \delta(1) = \nu_\ell(x)\}
	\end{eqnarray*}
	with base points $m_{\ell}\coloneqq(y_0, x_0, C_{z_0})$ and $b_{\ell}\coloneqq(y_0, c_{\ell}(x_0),  C_{z_0})$, where  $y_0 = \nu(x_0) \in Y$, $x_0 \in M$, $z_0 = \gamma_\ell(x_0) \in  P_\ell(Y)$ are the base points and $C_{z_0}$ denotes the constant path at $z_0 \in  P_\ell(Y)$. The \emph{structure maps}  ($\xi_\ell$ or $\wxi_\ell$, and the two $j$-labeled maps) in this model are defined by the obvious projections. The canonical homotopies (between $\gamma_\ell \circ j$ and $w_\ell \circ \xi_\ell$, and between $\nu_\ell \circ j$ and $w_\ell \circ \wxi_\ell$) are given by the formula $H((y,x,\gamma),t) = \gamma(t)$.
\end{definition}

These two homotopy pullbacks fit into the following homotopy commutative diagram, in which all three squares are homotopy pullback squares.
\eqncount
\begin{equation}\label{diag:twotwo}
	\vcenter{
		\xymatrix@C+5pt@R+5pt{
			M\la \ell\ra \ar[r]^{j}\ar[d]^{\wpp_\ell} \ar@/_2pc/[dd]_(0.3){\xi_\ell}
			& M\ar[d]^{c_\ell}\ar@/^2pc/[dd]^(0.3){\gamma_\ell}\\
			B\la \ell\ra \ar[r]^(0.5){j}\ar[d]^{\wxi_\ell} & \post{M}{\ell}\ar[d]^{\nu_\ell}\\
			Y\ar[r]^{w_\ell}& \post{Y}{\ell}
		}
	}
\end{equation}
Indeed, the map $c_\ell$ induces the map $\wpp_\ell \colon \Ml \to \Bl$ by setting $\wpp_\ell((y,x,\theta))= (y, c_\ell (x), \theta)$. The upper square is strictly commutative, and hence homotopy commutative by the constant homotopy.   Then \cite[Prop 3.3.20]{MunsonVolic:2015} implies that it is also a homotopy pullback square.

For the final step of the construction, we observe that a base point preserving map $W \to \Ml$ is equivalent to the data of  base point preserving maps $\psi_M \colon W \to M$ and $\psi_Y \colon W \to Y$ along with a  based homotopy from $w_\ell \circ \psi_Y$ to $\gamma_\ell \circ \psi_M$.
   A similar observation applies to maps $W \to \Bl$.  We then define $\hatid_{M} \colon M \to \Ml$ as the map defined by the homotopy commutative diagram
\begin{center}
	\begin{tikzcd}
		M \arrow[r, "\on{id}_M"] \arrow[d, "\nu"] & M \arrow[d, "\gamma_\ell"]\\
		Y \arrow[r, "w_\ell"] & \post{Y}{\ell}
	\end{tikzcd}
\end{center}
with prescribed homotopy constant at $\gamma_\ell$. We obtain the desired commutative diagram displayed in diagram \eqref{eq:normalk-diag} by defining the map $\hat\nu_\ell$ as the composition of $\hatid_M$ and $\wpp_\ell$.

\begin{proof}[Proof of Lemma \ref{lem:normalk-type} ]
	For the first part, note  that by \cite[Prop 3.2.5]{MunsonVolic:2015} the structure maps   ($\xi_\ell$ or $\wxi_\ell$, and the two $j$-labeled maps), are fibrations in the model for the homotopy pullback used in Definition \ref{def:hpull}. 
	
	For the second part, we observe that the two $j$-labeled fibrations in Definition \ref{def:hpull} have path-connected base space (since $M$ is path-connected) and are surjective (since $Y$ is path-connected). To see that their total spaces are path-connected, it hence suffices to check that their fibres are path-connected. This holds because their fibres are homotopy equivalent to the homotopy fibre of $w_\ell\colon Y \to \post{Y}{\ell}$, which is $\ell$-connected with $\ell \geq 0$ and hence path-connected.
	
	For the third part, we start by arguing that the map $\wpp _\ell\colon \Ml \to \Bl$ is an $(\ell+1)$-equivalence. This holds because the homotopy fibre of $\wpp_\ell$ is homotopy equivalent to that of $c_\ell$ and hence $\ell$-connected.  Next, we argue that the map $\wxi_\ell\colon \Bl \to Y$ is $(\ell+1)$-co-connected. This holds because the homotopy fibre of $\wxi_\ell$ is homotopy equivalent to that of $\nu_\ell$ and hence its homotopy groups vanish for $i \geq \ell+1$.
\end{proof}

\subsection{Homotopy groups of Kan semi-simplicial sets} \label{subsec:semi-simplicial-sets-and-kan-condition} 
In this subsection, we briefly explain our conventions for working with homotopy groups of Kan semi-simplicial sets. We refer readers to \cite{Ebert:2019}, \cite{May:1992} and standard references such as \cite{Rourke:1971}, \cite{Rourke:1982}, and \cite{Nicas:1982} for related background.

We recall the basic setup from the first paragraphs of \cite[\S 1]{Ebert:2019}. Let $\Delta$ denote the category with objects the linearly order sets $[n] = \{0 < 1< \dots < n\}$ of integers, and with morphisms $[n] \mapsto [m]$ the monotone functions, with composition given by composition of functions. Let $\sDelta \subset \Delta$ denote the subcategory containing all objects, but only the injective monotone maps. 

\begin{definition} A \emph{simplicial object} in a category $\cC$ is a functor $ \Deltaop \to \cC$. A \emph{semi-simplicial object} in  $\cC$ is a functor $\sDeltaop \to \cC$. We denote such a (semi-)simplicial object by $X_\bullet$ , and write 
$X_p = X_\bullet([p])$.
\end{definition}

The simplicial objects in a category $\cC$, together with natural transformations, form a category $s\cC$, and the semi-simplicial objects form a category $ss\cC$. There is a functor $F\colon s\cC \to ss\cC$ defined by restricting functors along $\sDeltaop \subset \Deltaop$.

\begin{remark} These notions were introduced by Eilenberg and Zilber \cite{Eilenberg:1950}, and developed by Kan \cite{Kan:1957} under the name ``complete semi-simplicial complexes'' or \emph{css complexes}. Kan \cite[Definition 1.1]{Kan:1957}  identified the ``extension condition'', now called the \emph{Kan condition}, as the key to defining homotopy groups of css complexes. In the work of Rourke and Sanderson \cite{Rourke:1971}, they used the term Kan $\Delta$-set to mean a css complex satisfying the extension condition.
\end{remark}

Throughout this work, we let $\Delta^n_{\on{inj}, \bullet}$ denote the semi-simplicial set  $\Delta^n_{\on{inj}, p} \coloneqq \sDelta([p],[n])$. Note that $\Delta^n_{\on{inj}, n}$ contains a unique $n$-simplex. For every $0 \leq k \leq n$, we consider the subobject
  $\Lambda^{n,k}_{\on{inj}, \bullet} \subset \Delta^n_{\on{inj}, \bullet}$, called the $k$-th horn of $\Delta^n_{\on{inj}, \bullet}$, generated by all faces of the unique $n$-simplex except its $k$-th face. A semi-simplicial set $*_\bullet$ is terminal if $*_p$ is a singleton for all $p \geq 0$.

\begin{definition}
A morphism of semi-simplicial sets $p_\bullet \colon E_\bullet\longrightarrow B_\bullet$ is a \emph{Kan fibration} if every commutative square
	$$
	\xymatrix{\Lambda^{n,k}_{\on{inj}, \bullet}  \ar[r]^{\sigma_0} \ar[d] & E_\bullet \ar[d]^{p_\bullet}\\
		\Delta^{n}_{\on{inj}, \bullet}  \ar[r]^{\bar\sigma} \ar@{-->}[ur]^{\sigma}& B_\bullet
	}
	$$
	of morphisms admits a lift $\sigma\colon \Delta^{n}_{\on{inj}, \bullet}  \to E_\bullet$ for  $n \geq 1$ and $0 \leq k \leq n$.
	\begin{enumerate}
	\item A \emph{semi-simplicial set $\bX$ is Kan} if its terminal map $\bX \to \ast_\bullet$ is a Kan fibration (equivalently, if for every   $n \geq 1$ and $0 \le k \leq n$, every map $\Lambda^{n,k}_{\on{inj}, \bullet} \to \bX$ extends to a map $\Delta^{n}_{\on{inj}, \bullet}  \to \bX$). 
	
	\item A \emph{base point in a Kan semi-simplicial set} $\bX$ is a point sub-complex
	  $x_{\bullet} \colon *_\bullet \to \bX$, meaning a sequence $x_\bullet =(x_n)_{n \geq 0}$ of simplices $x_n \in X_n$ such that $d_i x_n=x_{n-1}$ for all $0 \leq i \leq n$ and $n \geq 1$ (see \cite[Section 1.3]{Nicas:1982}).
	  \end{enumerate}
	  \end{definition}

	We will now define the homotopy groups of a pointed Kan semi-simplicial set $(\bX, x_\bullet)$. For this, we use a relative version (fixing $x_\bullet$) of a theorem of Rourke--Sanderson \cite[Theorem 5.7]{Rourke:1971} to equip $(\bX, x_\bullet)$ with degeneracy operators that promote it to a pointed simplicial set $(s\bX, sx_\bullet)\colon \Delta^{\on{op}} \to \on{Set}$, without adding or removing any simplices.

	\begin{remark}  Since we working relative to the point subcomplex in constructing these degeneracies, this condition must be added in the proof of \cite[Theorem 5.7]{Rourke:1971}. The same argument based on  \cite[Theorem 5.3]{Rourke:1971} then holds, as that result is already stated in a form relative to a given subcomplex $J \subset X$.
\end{remark}

\begin{definition}	
	For any $m \geq 1$, we define the homotopy groups of $\bX$ as the simplicial homotopy groups of $s\bX$,
$$\pi_m(\bX,x) \coloneqq \pi_m(s\bX,s x_\bullet) = \left\{a \in X_m  \vv  d_i a= x_{m-1} \text{ for all } 0 \leq i \leq m \right\}/ \sim
$$
where $a \sim a'$ if there exists $b \in X_{m+1}$ such that $d_m b = a, \ d_{m+1}b=a'$, and $d_i b=x_m$ for all $0 \leq i \leq m-1$. 

\begin{enumerate}
\item For $m=0$, define $\pi_0(\bX)$ as the set of vertices modulo path relation, so that $a \sim a' \Leftrightarrow$ if there exists $b \in X_1$ with $d_0 b=a$, and $d_1 b=a'$)
\item $\pi_m(X,x)$  is a group for $m \geq 1$  and a pointed set for $m=0$, when equipped with the operation $[a] \cdot [b] = [d_m c]$, where $c \in X_{m+1}$ is an extension of the $m$-horn $\Lambda^{m+1,m}_{\on{inj}, \bullet}$ defined by the face vector $(x_m, \dots, x_m, a, -, b)$,
\end{enumerate}
\end{definition}

We will follow the setup of \cite[Section 1.2]{Ebert:2019}, whenever we talk about the \emph{geometric realization} $\|\bX\|$ of a semi-simplicial set $\bX$. By \cite[Lemma 1.7]{Ebert:2019},  the quotient map $\|\bX\| \to |s\bX|$ is a homotopy equivalence and the simplicial homotopy groups are isomorphic under the induced map with those of $\|\bX\|$.  In particular, the definitions above do not depend on the degeneracy operators that we picked by the virtue of \cite[Theorem 5.7 and Remark 6.5]{Rourke:1971}.

\begin{definition}
A map $h \colon A_\bullet \to B_\bullet$ of Kan semi-simplicial sets is  $n$-connected if its homotopy fibre $\on{hofib}_{b_\bullet}(h)$ is $(n-1)$-connected at all base points $b_\bullet \in B_\bullet$. If $n > 0$, this is equivalent to asking that $h_*\colon \pi_k(A_\bullet) \to \pi_k(B_\bullet)$ 
 is a bijection for $k = 0$ and, for all choices of base points, an isomorphism for $1\leq k < n$ and surjection for $k = n$.
\end{definition}

\begin{convention}
	From now on we will denote semi-simplicial sets (or spaces) by $X$ and not $\bX$, to avoid confusion with our base point notation for spaces.
\end{convention}

\section{Fundamental classes and non-orientable manifolds} \label{sec:one}
In this short section, we review Poincar\'e duality and the definition of (twisted) fundamental classes for smooth manifolds. A similar discussion applies to topological manifolds if one replaces the stable normal bundle by the stable normal microbundle over (see Milnor \cite{Milnor:1964}, Kister \cite{Kister:1964}, Rourke and Sanderson \cite{Rourke:1970} for foundational material about topological microbundles). 
For the rest of the paper, we will work in the following general setup.

\begin{convention} We will assume that all spaces are equipped with a base point,  and that all maps are based point preserving. All manifolds considered are compact, 
smooth (or locally flat topological) embedded submanifolds of $\bR^{\infty}$.
We fix an orientation of the canonical bundle over $BO$ (or $BTOP$) at the base point, and equip all manifolds with a local orientation at the base point by pullback over the classifying map of the stable normal bundle (or microbundle). 
\end{convention}

Let  $\ell \geq 1$ and consider the Moore-Postnikov factorization $\wxi_\ell \colon \Bl \to BO$ associated to $\nu_M\colon  M \to BO$  (see Section \ref{subsec:moore_post}). Let $\hat\nu_\ell \colon M \to  \Bl$ denote the classifying map of the $\wxi_\ell$-structure on $M$ with $\wxi_\ell \circ \hat\nu_\ell = \nu_M$, and let  $\lambda \colon \Bl \to {K( \pi_1(M, x_0),1)}$  
be a map inducing an isomorphism of fundamental groups so that  $c\coloneqq\lambda\circ \hat\nu_\ell$ classifies the universal covering of $M$. 

Note that $\hat\nu_\ell \colon M \to  \Bl$ induces an isomorphism on fundamental groups, because $\hat\nu_\ell = \wpp_\ell \circ \hatid_M$ (see diagram \eqref{eq:normalk-diag}), $\wpp_\ell$ is an $(\ell+1)$-equivalence with $\ell \geq 1$ (by Lemma \ref{lem:normalk-type} ), and $\hatid_M$ is an isomorphism on fundamental groups (because $\on{id}_M = j \circ \hatid_M$ and $j$ is $(\ell+1)$-connected as discussed in the proof of Lemma \ref{lem:normalk-type}).

The orientation character $w\colon \pi_1(M, x_0) \to \{\pm 1\}$ of $M$, defined by the first Stiefel-Whitney class $w_1(M) \in H^1(M; \cy 2)$, now gives rise to a local 
coefficient system $\bZ^w$ on $M$ by pulling back along $c\colon M \to K(\pi_1(M, x_0), 1)$. A choice of generator 
  $\mu_M \in H_n(M;\bZ^w) \cong \bZ$ is called a (twisted) fundamental class for $M$ associated to the local coefficient system   $\bZ^w$ 
  (see \cite[\S 6]{Taylor:2008}). 
   A choice of fundamental class can be regarded as an ``orientation'' on $M$, since cap product with this class induces Poincar\'e duality for $M$ as a non-orientable manifold\footnote{Note Larry Taylor's remark ``non-orientable manifolds can not be oriented'' \cite[\S 5]{Taylor:2008}.}{}. Note that a \emph{preferred} choice of generator   $\mu_M\in H_n(M; \bZ^w)$ is determined by pulling back a (fixed) local orientation at a base point for the canonical bundle over $BO$.

\begin{definition}\label{def:orientation-preserving-maps}
	A pair $(M, \mu_M)$ consisting of a closed $n$-manifold $M$ and a choice of fundamental class $\mu_M \in H_n(M;\bZ^w)$ will be called a \emph{(locally) oriented $n$-manifold}. A map $f\colon (N, \mu_N) \to (M, \mu_M)$ is \emph{(locally) orientation-preserving} if  $f_*(\mu_N) = \mu_M$ and $f^*(w_1(M)) = w_1(N)$.
\end{definition}

The preferred local coefficient system $\bZ^w$
pulled back by $\lambda$ from $K( \pi_1(M, x_0),1)$ gives a  preferred local coefficient system on $\Bl$. 
Under the Thom isomorphism induced by the collapse map  $\varphi\colon S^{n+k}\to T(\nu_M)$,  for large $k$, the cap product $\varphi_*[S^{n+k}] \cap U(\nu_M)$ with a Thom class gives a generator of $H_n(M;\bZ^w)$.  Hence a choice of fundamental class $[M] \in H_n(M;\bZ^w)$ determines  a preferred  generator  $U(\nu_M)\in H^k(T(\nu_M); \bZ^w) \cong \bZ$, and conversely. 

\begin{remark}\label{rem:fourfive}  Let $X$ be an $n$-dimensional Poincar\'e complex and set $\Lambda\coloneqq \bZ\pi_1(X, x_0)$. We will follow the conventions of \cite[Chap.~2]{Wall:1999}, and use the notation $H_*^t(X; \cB) = H(C_*(X)\otimes_{\Lambda} \cB)$ for homology with coefficients $\cB$ in a right $\Lambda$-module. To define the tensor product,  $\cB$ is given a left $\Lambda$-module structure by the involution induced by $g\mapsto w(g) g^{-1}$, for $g \in \pi_1(X)$. 
Then $[X] \in H_n^t(X;\bZ) \equiv H_n(X;\bZ^w)$, and duality for a Poincar\'e  complex $X$ is an isomorphism
$\cap [X] : H^r(X; \cB) \to H_{n-r}^t(X; \cB)$ induced by cap product with the fundamental class. 
\end{remark}

\section{Self-equivalences}
\label{sec:three}

Let $M$ be a closed, smooth, embedded $M \subset \bR^\infty$, connected, based $n$-manifold with a preferred orientation $(M,\mu_M)$ as defined in Definition \ref{def:orientation-preserving-maps}. In this section, we introduce two variants $\heqpt{\Ml}$ and $\heqpt{\Bl}$ of the classical spaces of base point-preserving homotopy self-equivalences  $\heqpt{M}$ and $\heqpt{P_\ell(M)}$, respectively. These variants encode information about the normal Gauss map  $\nu_M \colon M \to BO$  up to the $\ell$-th Postnikov stage. Their construction is based on Lemma \ref{lem:normalk-type} and the pullback diagram \eqref{diag:twotwo} with $M$ the manifold  and $Y=BO$.  For convenience, we present the version for smooth manifolds over $BO$ and work in the general setup explained in Section \ref{sec:one}.  In particular, all spaces and maps are pointed. We leave the changes needed for topological manifolds over $BTOP$ to the reader.

\subsection{Self-equivalences of $M$ over $P_{\ell}(BO)$}

First, observe that a map $\hat{f} \colon M \to \Ml$ can be described by a triple $(\nu_f, f, H_f)$ where
\begin{enumerate}[(i)]
	\item $\nu_f \colon M \to BO$ is a  map such that $\nu_f = \xi_{\ell} \circ \hat{f}$,
	\item $f \colon (M,m_0) \to (M,m_0)$ satisfies $f= j \circ \hat{f}$, and
	\item $H_f \colon M \times I \to P_\ell(BO)$ is a homotopy with boundary conditions $H_f|_{M \times \{0\}}= w_{\ell} \circ \nu_f$ and $H_f|_{M \times \{1\}}= \gamma_{\ell} \circ f$.
\end{enumerate}
Equivalently, we have the following diagram:
\eqncount
\begin{equation}\label{diag:threezero}
	\begin{tikzcd}[ampersand replacement=\&]
		M \\
		\& {M\langle \ell \rangle} \&\& M \\
		\\
		\& BO \&\& {P_{\ell}(BO)}
		\arrow["{\hat{f}}"{description}, dotted, from=1-1, to=2-2]
		\arrow["f", curve={height=-12pt}, from=1-1, to=2-4]
		\arrow["{\nu_f}"', curve={height=18pt}, from=1-1, to=4-2]
		\arrow["\circlearrowright"{description}, draw=none, from=1-1, to=4-2]
		\arrow["j"', from=2-2, to=2-4]
		\arrow["\circlearrowright"{pos=0.1}, shift left=4, draw=none, from=2-2, to=2-4]
		\arrow["{\xi_\ell}"', from=2-2, to=4-2]
		\arrow["{H_f}", shift left=5, curve={height=18pt}, shorten <=30pt, shorten >=30pt, Rightarrow, 2tail reversed, no head, from=2-4, to=4-2]
		\arrow["{\gamma_{\ell}}", from=2-4, to=4-4]
		\arrow["{w_{\ell}}"', from=4-2, to=4-4]
	\end{tikzcd}
\end{equation}

The space $\Map_{\bullet}(M,\Ml)$ of pointed maps is equipped with the compact-open topology.

\begin{definition}\label{def:thickaut}
	For  $\ell \geq 1$, let $\heqpt{\Ml}$ denote the subspace of $\Map_{\bullet}(M,\Ml)$ containing  maps $\hat{f}=(f, H_f) \colon M \to \Ml$ where $\nu_M= \xi_\ell \circ \hat{f}$ is the classifying map of the stable normal bundle of $M$ and $f=j\circ \hat f \in \heqpt{M}$ is a  \emph{based homotopy self-equivalence}, meaning that it is  base point and local orientation preserving (see  Definition \ref{def:orientation-preserving-maps}). 
	\end{definition}

The base point of $\heqpt{\Ml}$ is $\widehat{\text{id}}_M = (\text{id}_M, C_{\gamma_\ell})$ where $C_{\gamma_{\ell}}=\gamma_{\ell} \circ p_M \colon M \times I \to P_{\ell}(BO)$ is the constant homotopy at $\gamma_\ell$. The next remark explains why this space is precisely the space of based homotopy self-equivalences of $M$ that preserve $\gamma_\ell$ up to prescribed homotopy, which is how we will think about it.

\begin{remark}\label{rem:threethree}
	Points of $\heqpt{\Ml}$ are in bijection with pairs $(f,H_f)$ where $f \in \heqpt{M}$ is a based homotopy self-equivalence and $H_f \colon M \times I \to P_\ell(BO)$ is a base point and local orientation preserving homotopy between $\gamma_\ell$ and $\gamma_\ell \circ f$ (note that we have $\gamma_{\ell}= w_{\ell} \circ \nu_M$, by diagram \ref{diag:twoone}). This homotopy ensures that the following diagram commutes up to homotopy:
	\[\begin{tikzcd}[ampersand replacement=\&]
		M \&\& M \\
		\\
		{} \& {P_{\ell}(BO)}
		\arrow["f", from=1-1, to=1-3]
		\arrow["\simeq"', from=1-1, to=1-3]
		\arrow["{\gamma_{\ell}}"', from=1-1, to=3-2]
		\arrow["{\gamma_{\ell}}", from=1-3, to=3-2]
		\arrow["{H_f}"{pos=0.6}, shorten <=62pt, shorten >=40pt, Rightarrow, from=3-1, to=1-3]
	\end{tikzcd}\]
\end{remark}

Consider two elements $\hat{f},\hat{g} \in \heqpt{\Ml}$ where $\hat{f}=(f,H_f)$ and $\hat{g}=(g,H_g)$. We define their product as
$$
(g,H_g) \bullet (f,H_f)\coloneqq(g \circ f, H_{g \circ f}) \in \heqpt{\Ml}
$$
where $g \circ f$ is the composition of homotopy self-equivalences, and $H_{g \circ f} \colon M \times I \to P_\ell(BO)$ is a homotopy connecting $\gamma_{\ell}$ to $\gamma_{\ell} \circ g \circ f$. This homotopy is explicitly defined by concatenation as
\begin{displaymath}
	H_{g \circ f}(m,t) = \left\{
	\begin{array}{lr}
		H_f(m,2t) & t \in [0,1/2]\ \\
		H_g (f(m), 2t-1) &  t \in [1/2,1].
	\end{array}
	\right.
\end{displaymath}
This operation equips $\heqpt{\Ml}$ with the structure of an $H$-space which is, in fact, group-like (meaning that each element has a homotopy inverse). Therefore, the set $\hept{\Ml} = \pi_0(\heqpt{\Ml})$ of path components inherits the structure of a group.

\begin{theorem}\label{thm-being-group}
	$(\heqpt{\Ml}, \bullet)$ is a group-like $H$-space with homotopy identity element $\hatid_M$.
\end{theorem}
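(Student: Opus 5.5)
The plan is to verify the $H$-space axioms directly from the concatenation formula for $\bullet$, and then prove group-likeness by constructing homotopy inverses. Throughout, the defining constraint $\xi_\ell\circ\hat f=\nu_M$ must be preserved.

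The first step is to check that $\bullet$ is well defined and continuous. Given $(f,H_f)$ and $(g,H_g)$, the composite $g\circ f$ is again a based, local orientation preserving homotopy self-equivalence: orientation classes compose, since $(gf)_*\mu_M=\mu_M$ and $(gf)^*w_1=w_1$. The two halves of $H_{g\circ f}$ agree at $t=1/2$, because $H_f(m,1)=\gamma_\ell f(m)=H_g(f(m),0)$. In the triple description $(\nu_f,f,H_f)$, the $BO$-component is unchanged and equals $\nu_M$, so the product lies in $\heqpt{\Ml}$. Continuity in the compact-open topology follows because $M$ is compact Hausdorff, so composition and evaluation are continuous.

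Next, to show that $\hatid_M=(\id_M,C_{\gamma_\ell})$ is a two-sided homotopy unit, compute $(\id,C)\bullet(f,H_f)=(f,H_f\ast C)$ and $(f,H_f)\bullet(\id,C)=(f,C\ast H_f)$. The standard reparametrization homotopies $H_f\ast C\simeq H_f$ rel endpoints, together with the constant path on $f$, give paths in $\heqpt{\Ml}$. These paths stay based and keep $\nu_M$ fixed. The base point of $\heqpt{\Ml}$ is fixed, since $H_f$ restricted to the base point is constant (the maps are based), and so it remains so under reparametrization. Homotopy associativity follows in the same way from the usual reparametrization of triple concatenations. This step is not needed for "$H$-space", but it is needed to make $\pi_0$ a group.

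The main obstacle is group-likeness, i.e.\ producing an inverse for $(f,H_f)$ up to homotopy. The plan is as follows:
\begin{enumerate}
\item Choose a based homotopy inverse $g$ of $f$, together with a based homotopy $K\colon gf\simeq\id_M$. Basedness is possible since $(M,m_0)$ is a well-pointed CW-type pair. The map $g$ is automatically local orientation preserving.
\item Define $H_g(m,t)$ to be the concatenation of $\gamma_\ell K(m,1-t)$, running from $\gamma_\ell$ to $\gamma_\ell gf$ via $K$ reversed, with $\overline{H_f}(g(m),\cdot)$, running from $\gamma_\ell f g$ back to $\gamma_\ell g$.

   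To make this typecheck, replace $K$ by a homotopy $fg\simeq \id$. Concretely, set $H_g$ to be the concatenation of $\gamma_\ell K'(m,1-t)$ (from $\gamma_\ell$ to $\gamma_\ell fg$) with the reverse of $H_f(g(m),t)$ (from $\gamma_\ell fg$ to $\gamma_\ell g$). This produces a homotopy from $\gamma_\ell$ to $\gamma_\ell g$.
\item Show that $(g,H_g)\bullet(f,H_f)$ lies in the component of $\hatid_M$. Use the homotopy $K$ on the $M$-factor. At the same time, contract the $P_\ell(BO)$-homotopy $H_f\ast H_g(f(-),\cdot)$ to the constant one.
\end{enumerate}
The compatibility between $K$ and $K'$ in step 3 is delicate. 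The cleanest route avoids choosing it: show that $\pi_0(\heqpt{\Ml})$ is a group-like monoid by proving that left multiplication by $(f,H_f)$ is a homotopy equivalence of $\heqpt{\Ml}$. This is done via the fibration $\heqpt{\Ml}\to\heqpt{M}$, $(f,H_f)\mapsto f$, whose fibre over $f$ is a space of homotopies, i.e.\ a torsor under based self-homotopies of $\gamma_\ell$.

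Alternatively, one can appeal to the standard fact that a homotopy associative $H$-space whose $\pi_0$ is a group is group-like. It then suffices to show that $\pi_0$ is a group. Every class has a right inverse $(g,H_g)$ by the construction above, where one adjusts $H_g$ by the loop class needed, using that the component of the product over $\id_M$ can be corrected by precomposing $H_g$ with a path in the fibre. A monoid in which every element has a right inverse is a group.
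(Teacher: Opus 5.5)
Your final route is exactly the paper's argument: take $H_g=(\gamma_\ell\circ\overline{K'})\ast\overline{H_f\circ(g\times\id_I)}$ with $K'\colon fg\simeq\id_M$, show that $(g,H_g)$ is a right inverse in $\pi_0$, and use that an associative unital monoid with right inverses is a group. The ``delicate'' $K$/$K'$ compatibility arises only because your step 3 checks the left product $(g,H_g)\bullet(f,H_f)$; for the right product $(f,H_f)\bullet(g,H_g)=(fg,\,H_g\ast(H_f\circ(g\times\id_I)))$ the last two pieces cancel rel endpoints, leaving $\gamma_\ell\circ\overline{K'}$, and $s\mapsto\bigl(K'_s,\ \gamma_\ell K'(-,1-t(1-s))\bigr)$ is an explicit path to $\hatid_M$, so you need neither the loop-class correction nor the torsor argument.
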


\begin{proof}
Using Remark \ref{rem:threethree}, we identify a point of $\heqpt{\Ml}$ with a pair $\hat{f}=(f,H_f)$. The formula defining $\bullet$ is well-defined since the two homotopies being concatenated agree at their common endpoint. It is continuous with respect to the compact-open topology, since composition of maps and the usual concatenation of paths are continuous.

Homotopy associativity follows from the standard reparametrization homotopy between the two ways of concatenating three paths. Similarly, the left and right unit homotopies are obtained by continuously removing the constant path occurring in $\widehat{\id}_M \bullet \hat{f}$ and $\hat{f} \bullet \widehat{\id}_M$. All these homotopies are through based maps.

It remains to check that the resulting $H$-space is group-like. Let $\hat f=(f,H_f)$ and choose a based homotopy inverse $h$ of $f$, together with a based homotopy $A$ from $f \circ h$ to $\id_M$. The map $h$ is again locally orientation preserving. Writing bars for reversed homotopies, we define a based homotopy from $\gamma_{\ell}$ to $\gamma_{\ell} \circ h$ by concatenation
$$
H_h = (\gamma_{\ell} \circ \overline{A})* \overline{\,H_f \circ (h\times \id_I)\,}.
$$
Thus $\hat h=(h,H_h)$ is an element of $\heqpt{\Ml}$. The homotopy associated to $\hat{f} \bullet \hat{h}$ is
$$
(\gamma_{\ell} \circ \overline{A})* \overline{\,H_f \circ (h \times \id_I)\,} * (H_f \circ (h \times \id_I))
$$
which is homotopic relative to its endpoints to $\gamma_{\ell} \circ \overline{A}$, since the last two homotopies are reverses of each other. Moreover, writing $A_s= A(-,s)$, we define $K_s(m,t) = \gamma_{\ell} \bigl( A(m, 1-t(1-s)) \bigr)$. For every $s \in I$, the map $K_s$ is a homotopy from $\gamma_{\ell}$ to $\gamma_{\ell} \circ A_s$. Hence, the pairs $(A_s,K_s)$ define a path from $(f \circ h, \gamma_{\ell} \circ \overline{A})$ to $(\id_M,C_{\gamma_{\ell}})$. This gives $[\hat{f} \bullet \hat{h}] = [\widehat{\id}_M]$ in $\pi_0(\heqpt{\Ml})$ meaning that every element of $\pi_0(\heqpt{\Ml})$ has a right inverse. Since $\pi_0(\heqpt{\Ml})$ is an associative unital monoid, it is a group. Hence, $\heqpt{\Ml}$ is a group-like $H$-space.
\end{proof}

\subsection{Self-equivalences of $P_{\ell}(M)$ over $P_{\ell}(BO)$}

We now define an analogous space of self-equivalences $\heqpt{\Bl}$ for $\Bl$, and discuss how it encodes self-equivalences of $P_{\ell}(M)$ preserving $\nu_\ell$  over $P_{\ell}(BO)$. For the notation of the maps see diagram \eqref{diag:twotwo}.

\begin{definition}\label{def:thickautB}
	For  $\ell \geq 1$, let $\heqpt{\Bl}$ denote the subspace of $\Map_{\bullet}(M,\Bl)$ containing base point preserving maps $\hat{f}=(f, \widetilde{H}_f) \colon M \to \Bl$ where $\nu_M= \wxi_\ell \circ \hat{f}$ is the classifying map of the stable normal bundle of $M$, $f\coloneqq j\circ \hat{f}$ is a base point preserving  $(\ell + 1)$-equivalence, and $\widetilde{H}_f$ is a base point preserving homotopy from $\gamma_{\ell}= \nu_{\ell} \circ c_{\ell}$ to $\nu_{\ell} \circ f$.
\end{definition}

The base point of $\heqpt{\Bl}$ is given by 
 $\hat\nu_\ell = (c_\ell, \widetilde{C}_{\gamma_\ell})$  where $\widetilde{C}_{\gamma_\ell} \colon M \times I \to P_{\ell}(BO)$ is the constant homotopy at $\gamma_\ell$. Notice that the bundle $\wxi_\ell \colon \Bl \to BO$ inherits a local orientation by pullback.

The full space $\heqpt{\Bl}$ as defined above does not seem to admit a natural $H$-space structure, in contrast to $\heqpt{\Ml}$. However, we can define a closely related weakly equivalent space which does.
  In particular, $\Aut_{\bullet}(\Bl)=\pi_0(\heqpt{\Bl})$ does admit a natural group structure.

\begin{definition}\label{def:thickautBc}
	Let $\heqptkc{B}{\ell}$  denote the  space of homotopy commutative diagrams:
	\[\begin{tikzcd}[ampersand replacement=\&]
		\post{M}{\ell} \&\& \post{M}{\ell} \\
		\\
		{} \& {P_{\ell}(BO)}
		\arrow["\phi", from=1-1, to=1-3]
		\arrow["\simeq"', from=1-1, to=1-3]
		\arrow["{\nu_{\ell}}"', from=1-1, to=3-2]
		\arrow["{\nu_{\ell}}", from=1-3, to=3-2]
		\arrow["{H_\phi}"{pos=0.6}, shorten <=62pt, shorten >=40pt, Rightarrow, from=3-1, to=1-3]
	\end{tikzcd}\]
	given by pairs
	$(\phi, H_\phi)$ such that $\phi\colon P_\ell(M) \to P_\ell(M)$ is a base point preserving  homotopy equivalence, and
	$H_\phi \colon \post{M}{\ell} \times I \to \post{BO}{\ell}$ is a base point preserving homotopy $\nu_{\ell} \simeq \nu_{\ell} \circ \phi$.
\end{definition}

\begin{remark}\label{rem:foursix}
Notice that the existence of the prescribed homotopy $H_{\phi}$ implies that $\phi$ preserves the orientation character ($\phi^* w= w$) and so it induces an endomorphism of $H_n(P_{\ell}(M);\bZ^w)$.
\end{remark}
We can, once again, use composition to define an $H$-space structure $(\heqptkc{B}{\ell}, \bullet)$.

\begin{theorem}
	$(\heqptkc{B}{\ell}, \bullet)$ is a group-like $H$-space.
\end{theorem}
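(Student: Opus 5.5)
The plan is to mirror the proof of Theorem \ref{thm-being-group}, replacing $M$ by $\post{M}{\ell}$ and $\gamma_\ell$ by $\nu_\ell$. First I fix the product. For $(\phi,H_\phi)$ and $(\psi,H_\psi)$ in $\heqptkc{B}{\ell}$, set $(\psi,H_\psi)\bullet(\phi,H_\phi) \coloneqq (\psi\circ\phi, H_{\psi\circ\phi})$. Here $H_{\psi\circ\phi}$ is the concatenation of $H_\phi$, a homotopy from $\nu_\ell$ to $\nu_\ell\circ\phi$, followed by $H_\psi\circ(\phi\times\id_I)$, a homotopy from $\nu_\ell\circ\phi$ to $\nu_\ell\circ\psi\circ\phi$. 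This product is well defined because the endpoints match. It is continuous in the compact-open topology because composition and concatenation are continuous. Composition of maps is only continuous when the source is locally compact, so I would first replace $\post{M}{\ell}$ by a CW model, or work in compactly generated spaces. The identity element is $(\id_{\post{M}{\ell}}, C_{\nu_\ell})$.

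Next I establish the $H$-space axioms. Homotopy associativity comes from the standard reparametrization homotopy between $(a*b)*c$ and $a*(b*c)$. One must check that whiskering by composition commutes with reparametrization, that is, $(H_\psi\circ(\phi\times\id))\circ(\chi\times\id) = H_\psi\circ((\phi\circ\chi)\times\id)$. This holds strictly, so the three-fold products agree up to a reparametrization. The unit laws come from contracting the constant path. All homotopies are based, since every homotopy involved is based.

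Finally I show the $H$-space is group-like, copying the argument of Theorem \ref{thm-being-group}. Choose a based homotopy inverse $\eta$ of $\phi$ and a based homotopy $A$ from $\phi\circ\eta$ to $\id$; for the based version, $\post{M}{\ell}$ is a CW complex with well-pointed basepoint. Define
$$H_\eta = (\nu_\ell\circ\overline{A}) * \overline{H_\phi\circ(\eta\times\id_I)}.$$
This is a homotopy from $\nu_\ell$ to $\nu_\ell\circ\eta$, so $(\eta,H_\eta)$ lies in $\heqptkc{B}{\ell}$. The product $(\phi,H_\phi)\bullet(\eta,H_\eta)$ carries a homotopy that cancels, relative to its endpoints, to $\nu_\ell\circ\overline{A}$. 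The path $s\mapsto (A_s,K_s)$, with $K_s(x,t)=\nu_\ell(A(x,1-t(1-s)))$, then joins this product to the unit. So $\pi_0$ is a monoid in which every element has a right inverse, hence a group, and the $H$-space is group-like.

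The main obstacle I expect is the continuity and based-homotopy bookkeeping for the non-compact space $\post{M}{\ell}$. I would handle it by taking a CW model with finite skeleta in each degree, or simply by working in compactly generated spaces, where composition is continuous. The algebra carries over verbatim from Theorem \ref{thm-being-group}.
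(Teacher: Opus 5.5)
Your proposal is correct and is exactly the paper's argument: the paper proves this by repeating the proof of Theorem~\ref{thm-being-group} verbatim, with $M$ and $\gamma_\ell$ replaced by $\post{M}{\ell}$ and $\nu_\ell$. Your caveat about continuity of composition on the non-compact space $\post{M}{\ell}$ is a fair point that the paper leaves implicit, but only your compactly generated fix works, since a CW complex with finite skeleta (e.g.\ $\mathbb{CP}^\infty$) is still not locally compact.
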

\begin{proof}
    This follows by exactly the same proof as Theorem \ref{thm-being-group}, replacing $M,\gamma_{\ell}$ by $P_{\ell}(M), \nu_{\ell}$ respectively.
\end{proof}
There is a natural map 
$$
c\colon \heqptkc{B}{\ell} \to \heqpt{\Bl}: (\phi, H_\phi) \mapsto (\phi\circ c_\ell, H_\phi\circ (c_\ell \times \id)),
$$
which is well-defined because $\phi\circ c_\ell\colon M \to P_\ell(M)$ is an $(\ell +1)$-equivalence.

\begin{theorem}\label{thm:threeeight}
	If $\ell \geq 2$, the map $c$ is a weak equivalence between $\heqptkc{B}{\ell}$ and $\heqpt{\Bl}$.
\end{theorem}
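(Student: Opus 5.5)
The plan is to compare both spaces with an auxiliary mapping space over $P_\ell(BO)$. Let $\mathcal{N}$ denote the space of pairs $(\psi, H_\psi)$, where $\psi\colon P_\ell(M)\to P_\ell(M)$ is a based map and $H_\psi$ is a based homotopy $\nu_\ell\simeq \nu_\ell\circ\psi$. Equivalently, $\mathcal{N}$ is the space of based lifts of $\nu_\ell$ through the fibration $\Bl\to BO$ pulled back along $w_\ell$. Let $\mathcal{N}'$ be the analogous space of pairs $(g,\widetilde H_g)$ with $g\colon M\to P_\ell(M)$ arbitrary and $\widetilde H_g$ a based homotopy $\gamma_\ell\simeq \nu_\ell\circ g$. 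By the description of maps into homotopy pullbacks used in the proof of Lemma \ref{lem:normalk-type}, $\mathcal{N}'$ is the space of based maps $\hat g\colon M\to \Bl$ with $\wxi_\ell\circ\hat g=\nu_M$. Precomposition with $c_\ell$ gives a map $c^\sharp\colon \mathcal{N}\to\mathcal{N}'$. The spaces $\heqptkc{B}{\ell}$ and $\heqpt{\Bl}$ are unions of path components of $\mathcal{N}$ and $\mathcal{N}'$ respectively, and $c$ is the restriction of $c^\sharp$. It therefore suffices to show two things: that $c^\sharp$ is a weak equivalence, and that it matches the selected components.

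For the first point, write $\mathcal{N}$ and $\mathcal{N}'$ as homotopy fibres. The restriction map $\Map_\bullet(P_\ell(M),P_\ell(M))\to \Map_\bullet(P_\ell(M),P_\ell(BO))$, $\psi\mapsto\nu_\ell\circ\psi$, has homotopy fibre over $\nu_\ell$ equal to $\mathcal{N}$. Similarly $\mathcal{N}'$ is the homotopy fibre over $\gamma_\ell$ of $\Map_\bullet(M,P_\ell(M))\to\Map_\bullet(M,P_\ell(BO))$. Precomposition with $c_\ell$ maps the first fibration sequence to the second. I then invoke the standard fact that for a based CW complex $M$ and a space $Z$ with $\pi_i(Z)=0$ for $i>\ell$, the map $c_\ell^*\colon\Map_\bullet(P_\ell(M),Z)\to\Map_\bullet(M,Z)$ is a weak equivalence. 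This holds because $P_\ell(M)$ is obtained from $M$ by attaching cells of dimension $\geq\ell+2$. By obstruction theory, or by induction over the relative skeleta, the restriction map is a fibration with fibre $\Map_\bullet(P_\ell(M)/M,Z)$, and that fibre is weakly contractible: $P_\ell(M)/M$ is $(\ell+1)$-connected and $Z$ is $\ell$-coconnected. Apply this with $Z=P_\ell(M)$ and $Z=P_\ell(BO)$. Comparing the long exact sequences of the two fibrations, or using homotopy invariance of homotopy fibres, $c^\sharp$ is a weak equivalence on every component.

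For the second point, I need that $(\psi,H_\psi)$ lies in $\heqptkc{B}{\ell}$, i.e.\ $\psi$ is a homotopy equivalence, if and only if $\psi\circ c_\ell$ is an $(\ell+1)$-equivalence. Since $c_\ell$ is an $(\ell+1)$-equivalence, $\psi\circ c_\ell$ is one precisely when $\psi_*$ is an isomorphism on $\pi_i$ for $i\le\ell$ and surjective on $\pi_{\ell+1}$. But $\pi_i(P_\ell(M))=0$ for $i>\ell$, so this is exactly the condition that $\psi$ is a weak equivalence, hence a homotopy equivalence by Whitehead. Being a homotopy equivalence is invariant along paths, so the selected components correspond bijectively. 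Base points also match: $c$ sends $(\id,\widetilde C_{\nu_\ell})$ to $\hat\nu_\ell$.

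The main obstacle is the role of $\ell\geq2$ together with the local orientation condition implicit in the definitions (Definition \ref{def:orientation-preserving-maps}, Remark \ref{rem:foursix}). I would check that every $\psi$ lying over $\nu_\ell$ automatically preserves $w$, since $w_1$ factors through $P_\ell(BO)$ once $\ell\ge1$. I would also check that $\ell\ge2$ is used to ensure that $P_\ell(M)$ correctly carries the Postnikov data needed for the mapping-space argument at $\pi_0$ and $\pi_1$ (base points and fundamental group action), so that the based statement holds without extra components. If a degree or orientation condition on $H_n$ is secretly required on either side, I would verify that it is transported by $c_\ell$ using the preferred coefficient system $\bZ^w$ on $\Bl$.
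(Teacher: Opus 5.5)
Your proposal is correct, but it argues differently from the paper. The paper works with families directly. Lemma \ref{lem:threesix} extends an $X$-family $X\times M\to P_\ell(M)$, already given over $A$, to an $X$-family of self-equivalences of $P_\ell(M)$. It then extends the accompanying homotopy into $P_\ell(BO)$ by the homotopy extension property and obstruction theory. This gives surjectivity on homotopy groups. Lemma \ref{lem:threeeleven} lifts a homotopy $c\circ\wF_1\simeq c\circ\wF_2$ by the same extension argument, which gives injectivity.

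You instead identify both spaces as unions of components of the homotopy fibres of $\nu_\ell\circ(-)$ on $\Map_\bullet(P_\ell(M),P_\ell(M))$ and $\Map_\bullet(M,P_\ell(M))$. You then invoke the universal property of the Postnikov section twice, with $Z=P_\ell(M)$ and $Z=P_\ell(BO)$. You conclude by homotopy invariance of homotopy fibres, plus the observation that $\psi$ is a homotopy equivalence if and only if $\psi\circ c_\ell$ is an $(\ell+1)$-equivalence.

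Your route packages both lemmas into one standard fact. It yields the bijection on $\pi_0$ and the isomorphisms on $\pi_k$ at every base point simultaneously. It also makes automatic the compatibility of the extended $P_\ell(BO)$-homotopy with the prescribed one on $X\times M$, which the paper has to arrange by hand in the last step of the proof of Lemma \ref{lem:threesix}. The paper's version, in exchange, stays entirely within explicit obstruction theory for families.

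Two small points.

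First, the restriction fibration $\Map_\bullet(P_\ell(M),Z)\to\Map_\bullet(M,Z)$ has fibre $\Map_\bullet(P_\ell(M)/M,Z)$ only over the constant map. Over a general $g$ the fibre is the space of extensions of $g$. Since $\Map_\bullet(M,Z)$ need not be connected, you should check that each of these fibres is weakly contractible. This follows from the same count: relative cells of dimension $\geq\ell+2$ against $\pi_i(Z)=0$ for $i>\ell$, with local coefficients as in \cite{Olum:1950}.

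Second, your last paragraph can be dropped. Nothing in your argument uses $\ell\geq 2$. Definitions \ref{def:thickautB} and \ref{def:thickautBc} impose no degree condition, and preservation of $w$ is automatic by Remark \ref{rem:foursix}.
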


The following lemmas provide the necessary extension results for the proof.

\begin{definition}
	We say that a map $\phi\colon A \times Y \to Z$ is an \emph{$A$-family of $k$-equivalences} (respectively an \emph{$A$-family of homotopy self-equivalences}), if $\phi(a,-)\colon Y \to Z$ is a (based) $k$-equivalence (respectively, a (based) homotopy self-equivalence in case $Y=Z$)  for each $a \in A$.
\end{definition}

\begin{lemma}\label{lem:threesix}
	Let $(X,A)$ be a CW pair, and let $\phi_A\colon A \times P_\ell(M) \to P_\ell(M)$ be an $A$-family of homotopy self-equivalences,  for $\ell \geq 2$. Let  $f \colon X \times M \to P_\ell(M)$ be a map such that the restriction $f|_{A \times M} = \phi_A \circ (\id_A \times c_\ell)$. Then  there is a unique extension of $\phi_A$
	up to base point preserving homotopy, relative to $(X \times M)\cup (A \times P_{\ell}(M))$
$$
    \phi_f \colon X \times P_\ell(M) \to P_\ell(M)
    $$ 
    such that $\phi_f\circ (\id_X \times c_\ell )= f$.  If $f$ is  an $X$-family of $(\ell +1)$-equivalences, then $\phi_f$ is an $X$-family of homotopy self-equivalences.
	In addition,
	if $\nu_\ell \circ f \simeq \nu_\ell \circ c_\ell \circ p_M$ where $p_M\colon X \times M \to M$ is the projection, then $\nu_\ell \circ \phi_f \simeq \nu_\ell \circ p_{\post{M}{\ell}}$. 	
\end{lemma}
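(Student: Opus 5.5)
The plan is to use obstruction theory for extending maps into the Postnikov stage $P_\ell(M)$, which has vanishing homotopy groups above degree $\ell$. Model $P_\ell(M)$ as $M$ with cells of dimension $\geq \ell+2$ attached, with $c_\ell$ the inclusion. Then $X\times P_\ell(M)$ is obtained from the subcomplex $(X\times M)\cup(A\times P_\ell(M))$ by attaching cells $e\times e'$, where $e$ is a cell of $X\setminus A$ and $e'$ is a cell of $P_\ell(M)\setminus M$. Each such product cell has dimension $\geq \ell+2$.

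\textbf{Existence and uniqueness.} Define $\phi$ on the subcomplex by $f$ on $X\times M$ and $\phi_A$ on $A\times P_\ell(M)$. These agree on the overlap $A\times M$ precisely because $f|_{A\times M}=\phi_A\circ(\id_A\times c_\ell)$. We then extend cell by cell. The obstruction to extending over a cell of dimension $d\geq \ell+2$ lies in $\pi_{d-1}(P_\ell(M))$, and $d-1\geq \ell+1$, so this group vanishes. Hence an extension $\phi_f$ exists.

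For uniqueness, take two extensions $\phi,\phi'$ that agree on the subcomplex. Consider the pair
\[
\bigl(X\times P_\ell(M)\times I,\; (X\times P_\ell(M)\times\partial I)\cup(\text{subcomplex}\times I)\bigr).
\]
Its relative cells have dimension $\geq \ell+3$, and the obstructions lie in $\pi_{\geq \ell+2}(P_\ell(M))=0$. So $\phi\simeq\phi'$ relative to the subcomplex. Base points are respected throughout: the base point lies in $M$, and we work with based maps, or equivalently include $X\times\{x_0\}$ in the subcomplex where $f$ is already defined and based. The identity $\phi_f\circ(\id_X\times c_\ell)=f$ holds by construction.

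\textbf{Homotopy self-equivalences.} Fix $x\in X$ and set $\psi=\phi_f(x,-)$, so $\psi\circ c_\ell=f(x,-)$. If $f(x,-)$ is an $(\ell+1)$-equivalence, then, since $c_\ell$ is also an $(\ell+1)$-equivalence, $\psi_*$ is an isomorphism on $\pi_i$ for $i\leq \ell$. The surjectivity at $i=\ell+1$ is vacuous, because $\pi_{\ell+1}(P_\ell(M))=0$. Above degree $\ell$ all homotopy groups of $P_\ell(M)$ vanish, so $\psi$ is a weak equivalence. We need this at every base point, but $P_\ell(M)$ is connected, so checking at the base point suffices.

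Because $P_\ell(M)$ is a CW complex, Whitehead's theorem makes $\psi$ a homotopy equivalence. Based maps between well-pointed CW complexes that are homotopy equivalences are based homotopy equivalences, so $\psi$ is a based homotopy self-equivalence.

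\textbf{Normal-type statement.} Set $F=\nu_\ell\circ\phi_f$ and $G=\nu_\ell\circ p_{P_\ell(M)}$, both maps $X\times P_\ell(M)\to P_\ell(BO)$. We want $F\simeq G$. A homotopy is already given on $X\times M$ by hypothesis, since $F\circ(\id\times c_\ell)=\nu_\ell\circ f$ and $G\circ(\id\times c_\ell)=\nu_\ell\circ c_\ell\circ p_M$. The target $P_\ell(BO)$ also has vanishing $\pi_{\geq \ell+1}$. Extending a homotopy over the relative cells of $(X\times P_\ell(M),X\times M)$, which are $e\times e'$ with $e'$ of dimension $\geq \ell+2$, meets obstructions in $\pi_{\geq \ell+2}(P_\ell(BO))=0$. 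So the homotopy extends.

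If one wants the homotopy to restrict to a prescribed one on $A\times P_\ell(M)$, the obstructions again vanish by the same dimension count. The statement, however, only asks for existence of some homotopy, so we need not impose this.

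\textbf{Main obstacle.} The main subtlety is bookkeeping rather than obstruction theory: making sure the CW structure on $P_\ell(M)$ really has $c_\ell$ as a subcomplex inclusion, and handling base points compatibly. This is where the functorial Postnikov construction, which attaches cells of dimension $\geq \ell+2$, is used. The hypothesis $\ell\geq 2$ plays no role in the argument above; it is presumably needed later, in the proof of Theorem~\ref{thm:threeeight}.
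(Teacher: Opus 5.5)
Your proposal is correct and follows essentially the same route as the paper: obstruction theory over relative cells of dimension $\geq \ell+2$, using $\pi_{\geq \ell+1}(P_\ell(M))=0$, gives existence and uniqueness, and the self-equivalence claim comes from isomorphisms on $\pi_{\leq \ell}$, vanishing above $\ell$, and Whitehead's theorem. The one difference is cosmetic and sits in the normal-type statement. The paper first extends the given homotopy using the homotopy extension property of $\id_X\times c_\ell$, then identifies the far end with $\nu_\ell\circ\phi_f$ by a further obstruction argument. You instead extend $F\cup T\cup G$ over $X\times P_\ell(M)\times I$ in one step, and the dimension count is the same.
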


\begin{proof}
	The inclusion $c_{\ell} \colon M \to P_{\ell}(M)$ is a cofibration and $(\ell+1)$-connected. Consider the relative CW complex $(P_{\ell}(M),M)$, which is obtained by attaching cells of dimensions  $\geq \ell +2$ and hence has no cells in dimensions $\leq \ell +1$. The obstructions to extend $f \colon X \times M \to P_{\ell}(M)$ over $X \times P_{\ell}(M)$ relative to $(X\times M \cup (A \times P_\ell(M))$  lie in
	$$H^{k+1}(X \times P_{\ell}(M),X \times M \cup A \times P_{\ell}(M); \pi_k(P_{\ell}(M)))$$ for $k \geq \ell +1$.  Since $\pi_k (P_{\ell}(M))=0$ for $k \geq \ell+1$, all obstructions vanish and the map $f$ extends to $\phi_f \colon X \times P_{\ell}(M) \to P_{\ell}(M)$ (see \cite{Olum:1950} for obstruction theory in a  setting where the target is not a simple space).
	
	Let $\phi_f$ and $\phi_f'$ be two extensions of $f$. Then obstructions to homotope $\phi_f$ to $\phi_f'$ (relative to $A \times M$) lie in
	$H^{k}(X \times P_{\ell}(M),X \times M \cup A \times P_{\ell}(M); \pi_{k}(P_{\ell}(M)))$ for $k \geq \ell+1$, and they all vanish. Therefore, $\phi_f$ is unique up to base point preserving homotopy.
	
	Assume that $f(x,-)$ is an $(\ell+1)$-equivalence for all $x \in X$. Since $c_{\ell}$ is also $(\ell+1)$-equivalence, it follows that for all $x \in X$, the map $\phi_f(x,-)$ induces an isomorphism on $\pi_i$ for $i \leq \ell$ and a surjection on $\pi_{\ell+1}$. Since $\pi_k(P_{\ell}(M))=0$  
for	 $k \geq \ell+1$, it follows for any $x \in X$ that the extension $\phi_f(x,-)$ induces isomorphism on all homotopy groups of $P_{\ell}(M)$. By Whitehead's theorem, $\phi_f$ is an $X$-family of homotopy equivalences.
	
	Now assume that there is a homotopy $T\colon  X \times M \times I \to \post{BO}{\ell}$ for $\nu_\ell \circ f \simeq \nu_\ell \circ c_\ell \circ p_M$.   Using the property $f = \phi_f \circ (\on{id}_X \times c_{\ell})$ and that $c_\ell \circ p_M = p_{\post{M}{\ell}} \circ (\on{id}_X \times c_{\ell})$, this means that  $T$ gives a homotopy
	$$\nu_{\ell}\circ \phi_f \circ (\on{id}_X \times c_{\ell}) \simeq \nu_{\ell} \circ p_{\post{M}{\ell}} \circ (\on{id}_X \times c_{\ell}).$$
	Since $(\on{id}_X \times c_{\ell}) \colon X \times M \to X \times P_{\ell}(M)$ is a cofibration, by applying the homotopy extension property to this homotopy we extend the homotopy $T$ from  $X \times M \times I$ to $$\widehat T\colon X \times P_{\ell}(M)\times I \to\post{BO}{\ell},$$
	 starting with the extension 
	$\widehat T_0 =\nu_{\ell} \circ p_{\post{M}{\ell}}$ at $X \times \post{M}{\ell} \times 0$.

	  The resulting map $\widehat T_1\colon X \times  \post{M}{\ell} \to  \post{BO}{\ell}$ restricted to the subspace $X \times M$ is homotopic via $T$ to $\nu_{\ell}\circ \phi_f \circ (\on{id}_X \times c_{\ell})$. Since $(X\times P_{\ell}(M),X \times M)$ has no relative cells in dimensions $\leq \ell+1$ and $\pi_i(P_{\ell}(BO))=0$ for $i > \ell$, we conclude that $\widehat T_1 \simeq \nu_\ell \circ \phi_f$.
	 Thus, we obtain a homotopy
	$\nu_\ell \circ \phi_f \simeq \nu_\ell \circ p_{\post{M}{\ell}}$. \end{proof}

\begin{lemma}\label{lem:threeeleven}
	Suppose that $\wF_1, \wF_2\colon X \to \heqptkc{B}{\ell} $
	satisfy $c \circ \wF_1 \simeq c \circ \wF_2$. Then $\wF_1 \simeq \wF_2$.
\end{lemma}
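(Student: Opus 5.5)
We would prove the lemma by turning the given homotopy $c\circ\wF_1\simeq c\circ\wF_2$ into a relative extension problem over $X\times I$. Then we apply the uniqueness part of Lemma~\ref{lem:threesix}, together with an analogous obstruction-theoretic argument for the homotopy component. Throughout we take $X$ to be a CW complex, which suffices for a weak equivalence statement.

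Write $\wF_i(x)=(\phi_i(x),H_i(x))$. Adjointly, $\phi_i\colon X\times P_\ell(M)\to P_\ell(M)$ is an $X$-family of homotopy self-equivalences, and $H_i\colon X\times P_\ell(M)\times I\to P_\ell(BO)$ is a family of based homotopies from $\nu_\ell$ to $\nu_\ell\circ\phi_i(x)$. The homotopy $G\colon X\times I\to \heqpt{\Bl}$ from $c\circ\wF_1$ to $c\circ\wF_2$ gives two pieces of data. The first is a map $f\colon (X\times I)\times M\to P_\ell(M)$, an $(X\times I)$-family of $(\ell+1)$-equivalences, restricting to $\phi_i\circ(\id\times c_\ell)$ over $X\times\{i\}$. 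The second is a family of homotopies $\widetilde H\colon (X\times I)\times M\times I\to P_\ell(BO)$ restricting to $H_i\circ(c_\ell\times\id)$ at the ends.

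First step: apply Lemma~\ref{lem:threesix} to the CW pair $(X\times I, X\times\partial I)$, with $\phi_A$ given by $\phi_1\sqcup\phi_2$. This yields an extension $\Phi\colon X\times I\times P_\ell(M)\to P_\ell(M)$ with $\Phi\circ(\id\times c_\ell)=f$, and $\Phi$ is a family of homotopy self-equivalences. This is a homotopy $\phi_1\simeq\phi_2$ through based self-equivalences.

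Second step, which we expect to be the main obstacle: extend the homotopy data. We must produce $\widehat H\colon X\times I\times P_\ell(M)\times I\to P_\ell(BO)$ that restricts to $\widetilde H$ on $X\times I\times M\times I$, to $H_i$ on $X\times\{i\}\times P_\ell(M)\times I$, to $\nu_\ell$ at homotopy-time $0$, and to $\nu_\ell\circ\Phi$ at homotopy-time $1$. The boundary data agree on overlaps, so this is an extension problem. The relative pair is
\[
\bigl(X\times I\times P_\ell(M)\times I,\; X\times I\times M\times I\,\cup\, (X\times \partial I\times P_\ell(M)\times I)\,\cup\,(X\times I\times P_\ell(M)\times\partial I)\bigr).
\]
Its relative cells arise as products with cells of $(P_\ell(M),M)$, which have dimension $\geq\ell+2$. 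Each such cell meets the rest only in these products, so every relative cell has dimension $\ge\ell+2$. The obstructions to extending then lie in $H^{k+1}(\,\cdot\,;\pi_k(P_\ell(BO)))$ with $k\ge\ell+1$, and these groups vanish. This mirrors the last paragraph of the proof of Lemma~\ref{lem:threesix}.

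The pair $(\Phi,\widehat H)$ is then a path in $\heqptkc{B}{\ell}$ parametrized by $X\times I$, giving $\wF_1\simeq\wF_2$. The subtle points to verify are that base points are respected and that the target $P_\ell(BO)$ need not be simple; the latter is handled by \cite{Olum:1950} as before. We also expect $\ell\geq 2$ to enter only through Lemma~\ref{lem:threesix}.
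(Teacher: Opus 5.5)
Your proposal is correct and follows the paper's proof. You extend the $P_\ell(M)$-component via Lemma~\ref{lem:threesix} applied to the pair $(X\times I, X\times\partial I)$, extend the $P_\ell(BO)$-component by the same vanishing-obstruction argument (which the paper also attributes to Lemma~\ref{lem:threesix}), and conclude with the family-of-equivalences statement. Your explicit bookkeeping of the boundary conditions for $\widehat H$ is slightly more careful than the paper's, and the non-simplicity concern does not arise for that step, since $P_\ell(BO)$ is an $H$-space and hence simple.
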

\begin{proof}

	If we view the maps $\wF_i$ as pairs $\wF_i = (F_i, T_i)$ where $F_i\colon X \times \post{M}{\ell} \to \post{M}{\ell}$ and $T_i\colon X \times I \times \post{M}{\ell} \to \post{BO}{\ell}$, then $c \circ \wF_i = (F_i \circ (\on{id}_X \times c_\ell), T_i \circ (\on{id}_{X \times I} \times c_\ell))$ (using analogous notation to that in Definition \ref{def:thickautB}).	Now, let
	 $\widehat{Q} \colon X \times I \to \heqptk{B}{\ell}$ be a homotopy $c \circ \wF_1 \simeq c \circ \wF_2$. We view $\widehat{Q}$ as a pair $\widehat Q = (Q, T)$ where $Q \colon X \times M \times I \to P_\ell(M)$
	is a homotopy $F_1 \circ (\on{id}_X \times c_\ell) \simeq F_2 \circ (\on{id}_X \times c_\ell)$ and $T\colon X \times M \times I^2 \to P_\ell(BO)$ is a homotopy $T_1 \circ (\on{id}_{X \times I} \times c_\ell) \simeq T_2 \circ (\on{id}_{X \times I} \times c_\ell)$ (in the first direction) and a homotopy $\nu_\ell \circ Q \simeq w_\ell \circ \nu_M \circ p_M$ with $p_M \colon X \times M \times I \to M$ denoting the projection (in the second direction). Finally, we consider the following diagram:
	$$\xymatrix{ X \times M \times I \ar[dd]_{\nu_M \circ p_M} \ar[dr]\ar@/^2pc/[drr]^Q&&\\
		\save [] + <1.5cm, -0.55 cm>*{T}\restore &X \times P_\ell(M) \times I \ar@{-->}[r]^(0.6){Q'} \ar[d]\save [] + <1.0cm, -0.55 cm>*{T'}\restore& P_\ell(M)\ar[dl]^{\nu_\ell}\\
		BO \ar[r]_{w_\ell} & P_\ell(BO)&
	}$$
	We apply the first statement of Lemma \ref{lem:threesix} to show  that (i)
	that there is an extension $Q'\colon X \times P_\ell(M) \times I \to P_\ell(M)$ of the given homotopy $Q$, and (ii) that there is an extension $T' \colon X \times P_\ell(M) \times I^2 \to P_\ell(BO)$ of the given homotopy $T$. The pair $\widehat Q' = (Q', T')$ then defines the desired homotopy $\widehat Q' \colon X \times I \to \heqptkc{B}{\ell} $ for $\wF_1 \simeq \wF_2$ by the last two statements of Lemma \ref{lem:threesix} (note that $w_\ell \circ \nu_M \circ p_M = \nu_\ell \circ c_\ell \circ p_M$ by diagram (\ref{diag:twoone})). 
\end{proof}

\begin{proof}[The proof of Theorem \ref{thm:threeeight}] We apply Lemma \ref{lem:threesix} to show surjectivity on homotopy groups and Lemma \ref{lem:threeeleven} to show injectivity.
\end{proof}

\section{Bordism}
\label{sec:four}

We recall the definitions of the bordism groups that appear in the braid.  Let $M$ be a closed connected smooth $n$-dimensional manifold, and its normal bundle determines the classifying map $\nu_M \colon M \to BO$. The corresponding version for topological manifolds with $\nu_M \colon M \to BTOP$ is defined similarly. Recall that, by convention, all manifolds are
 smooth (or locally flat topological) embedded submanifolds of $\bR^{\infty}$, and  locally oriented via the classifying map of the stable normal bundle.

\subsection{Lashof bordism and stable Thom classes}\label{subsec:lashofone}
In the following sections, we will need a uniform way to assign preferred fundamental classes as the dimensions of the manifolds vary. Recall that Stong \cite[Chapter~2]{Stong:1968} and Lashof \cite{Lashof:1963} define a
$(B,f)$-structure on a smooth $n$-manifold $M$ using a sequence of fibrations $f_r \colon B_r \to BO_r $ and maps $g_r \colon B_r \to B_{r+1}$ satisfying $f_{r+1}\circ g_r=j_r\circ f_r$ where $j_r\colon BO_r\to BO_{r+1}$ is the usual inclusion.

By a $(B_r,f_r)$-structure on the normal bundle of $M \subset \bR^{n+r}$, we mean a lift of its unstable classifying map $\nu_M \colon M \to BO_r$ to $B_r$. The inclusion $\bR^{n+r} \subset \bR^{n+r+1}$, together with the stabilization map $g_r$, induces a $(B_{r+1},f_{r+1})$-structure. A \emph{$(B,f)$-structure on $M$} is an equivalence class of compatible sequences of such structures, where two sequences are equivalent if they agree for all sufficiently large $r$. A $(B,f)$-manifold is a manifold equipped with a $(B,f)$-structure.

One can form the \emph{cobordism category of $(B,f)$-manifolds}, whose objects are closed smooth manifolds with $(B,f)$-structure and whose morphisms are compact smooth bordisms with compatible $(B,f)$-structure. Bordism classes form graded abelian groups $\Omega_*(B,f)$, with addition given by disjoint union.

Let $\gamma^r \to BO_r$ be the universal $r$-plane bundle. We write $TBO_r = T(\gamma^r)$ and $TB_r=T(f_r^*\gamma^r)$ for the corresponding Thom spaces. In our applications, we set
$$
\Bl_r \coloneqq \Bl \times_{BO} BO_r
$$
and let $\widetilde{\xi}_\ell^{(r)} \colon \Bl_r \to BO_r$ denote the pullback of the fibration $\widetilde{\xi}_{\ell} \colon \Bl \to BO$.

\begin{proposition}\label{prop:stableThom}
The sequence $(B,f)$ admits a stable Thom class
$$
U=(U_r),\qquad U_r\in \widetilde{H}^r(TB_r;\bZ^w),
$$
such that, for every $b\in B_r$, the restriction of $U_r$ to the Thom space of the fibre over $b$ is a generator. Moreover, under the natural suspension identification, these classes satisfy
$$
(Tg_r)^*(U_{r+1})=\Sigma U_r .
$$
\end{proposition}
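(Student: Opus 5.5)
The plan is to build $U_r$ from a Thom class on $TBO_r$ with twisted coefficients, pull it back along $f_r$, and then check the suspension compatibility. The relevant local system on $B_r$ is the orientation local system of the pulled-back bundle $f_r^*\gamma^r$. In our application $B_r=\Bl_r$, and this local system coincides with $\bZ^w$ pulled back via $\lambda$. The reason is that $w_1$ of $\wxi_\ell^*\gamma$ restricts along $\hat\nu_\ell$ to $w_1(\nu_M)=w_1(M)$, and $\hat\nu_\ell$ is a $\pi_1$-isomorphism (shown in Section \ref{sec:one} using Lemma \ref{lem:normalk-type} with $\ell\ge1$). Hence $H^1(\Bl;\cy2)\to H^1(M;\cy2)$ is injective, and the two orientation characters agree.

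\textbf{Step 1 (universal case).} On $BO_r$, let $\cO_r$ be the orientation local system of $\gamma^r$. I will invoke the twisted Thom isomorphism: for any rank $r$ vector bundle $E\to X$ with orientation system $\cO_E$, $\widetilde H^{r}(T(E);\cO_E)\cong H^0(X;\bZ)$. This is proved by the Serre spectral sequence, or by Mayer--Vietoris over trivializing charts, which is the standard argument. Applying it with $X=BO_r$ connected, I take $U^{BO}_r$ to be the class corresponding to $1\in H^0$. Its restriction to each fibre Thom space $S^r$ is a generator of $\widetilde H^r(S^r;\bZ)$. The sign of this generator is pinned down by the fixed local orientation of the canonical bundle at the base point, which is the convention fixed in Section \ref{sec:one}.

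\textbf{Step 2 (pull back).} Set $U_r=(Tf_r)^*U_r^{BO}$. Since $Tf_r\colon TB_r\to TBO_r$ restricts to a homeomorphism on each fibre Thom space, and $f_r^*\cO_r$ is the orientation system of $f_r^*\gamma^r$, which we identified with $\bZ^w$, the fibrewise generator property is inherited. This holds for every $b$, including points in different path components if $B_r$ is not connected.

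\textbf{Step 3 (stability).} Write $T(E\oplus\epsilon)\cong\Sigma T(E)$. The stabilization $j_r^*\gamma^{r+1}=\gamma^r\oplus\epsilon^1$ gives $(Tj_r)^*U_{r+1}^{BO}\in\widetilde H^{r+1}(\Sigma TBO_r;\cO_r)$. By the Thom isomorphism this class is determined by its fibre restriction. The fibre restrictions of $(Tj_r)^*U^{BO}_{r+1}$ and $\Sigma U^{BO}_r$ agree because the orientation convention at the base point is compatible with adding the trivial line. It follows that $(Tj_r)^*U^{BO}_{r+1}=\Sigma U^{BO}_r$. Then $f_{r+1}\circ g_r=j_r\circ f_r$, together with naturality of $T(-)$ and of suspension, gives $(Tg_r)^*U_{r+1}=\Sigma U_r$.

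The main obstacle is the bookkeeping with twisted coefficients. One must check that the orientation systems form a compatible sequence under $g_r$, which holds since $w_1$ is stable, and that the identification with $\bZ^w$ is the one fixed earlier. The uniqueness part of the Thom isomorphism, that a class is determined by its fibre restrictions, is what makes the sign comparisons in Steps 1 and 3 rigorous. It has to be applied componentwise when the relevant base is not connected.
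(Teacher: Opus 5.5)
Your proposal is correct and follows the paper's proof: it pulls back the universal twisted Thom class along $Tf_r$, then deduces $(Tg_r)^*U_{r+1}=\Sigma U_r$ from $f_{r+1}\circ g_r=j_r\circ f_r$ together with the universal identity $(Tj_r)^*u_{r+1}=\Sigma u_r$. Your additional checks are details the paper leaves implicit: identifying the orientation system of $f_r^*\gamma^r$ with $\bZ^w$ via the $\pi_1$-isomorphism $\hat\nu_\ell$, and deriving the universal stabilization identity from uniqueness in the Thom isomorphism.
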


\begin{proof}
Let $u_r \in \widetilde{H}^r(TBO_r;\bZ^w)$ denote the universal twisted Thom class of the universal $r$-plane bundle, and define $U_r=(Tf_r)^*(u_r)$. By naturality of the Thom class, the restriction of $U_r$ to the Thom space of each fibre is a generator. The identity $f_{r+1}\circ g_r=j_r\circ f_r$ induces the commutative diagram \cite[p.~18]{Stong:1968}:
$$
\xymatrix@C+5pt{
\Sigma TB_r
   \ar[d]_{\Sigma Tf_r}
   \ar[r]^{Tg_r}
&
TB_{r+1}
   \ar[d]^{Tf_{r+1}}
\\
\Sigma TBO_r
   \ar[r]_{Tj_r}
&
TBO_{r+1}.
}
$$
The universal Thom classes are compatible under stabilization, so that $(Tj_r)^*(u_{r+1})=\Sigma u_r$. Consequently,
$$
\begin{aligned}
(Tg_r)^*(U_{r+1})
&=(Tg_r)^*(Tf_{r+1})^*(u_{r+1})  \\
&=(\Sigma Tf_r)^*(Tj_r)^*(u_{r+1}) \\
&=(\Sigma Tf_r)^*(\Sigma u_r)
 =\Sigma U_r ,
\end{aligned}
$$
which proves the required compatibility. 
\end{proof}

\subsection{The bordism exact sequence in the braid}\label{subsec:lashoftwo}
We use the Lashof bordism theory to define  the exact sequences in the braid of Theorem B.   Note that by  Proposition \ref{prop:stableThom}, there is a coherent way to assign Thom classes used to transport local orientations from $BO_r$  or $BTOP_r$ for the normal bundles of manifolds embedded in codimension $r$.

\begin{definition} For each $k \geq 0$,
\begin{enumerate}
    \item the bordism group $\Omega_{n+k}(\Ml)$ consists of bordism classes of pairs $(N,\hat{f})$ where
\begin{itemize}
    \item $N$ is a closed $(n+k)$-manifold with stable normal bundle $\nu_N$,
    \item $\hat{f} \colon N \to \Ml$ is a map satisfying $\xi_\ell\circ \hat{f} =\nu_N$. 
\end{itemize}
Two such pairs $(N,\hat{f})$ and $(N',\hat{f'})$ are \emph{bordant} if there exists a compact  $(n+k+1)$-manifold $R$ with boundary $\partial R = N \sqcup (-N')$
 and a map $\wF\colon R \to \Ml$ such that $\wF|_{\partial R}= \hat f \sqcup \hat f'$ and $\xi_\ell\circ \wF= \nu_R$  is the stable normal bundle of $R$.
    \item Similarly, for the space $\Bl$, the bordism group $\Omega_{n+k}(\Bl)$
    is defined using $\wxi_\ell$.
    \item The relative bordism group $\Omega_{n+k+1}(\Bl,\Ml)$
     consists of bordism classes of 
     triples $(W,\wF,\bd \wF)$  where
    \begin{itemize}
        \item $W$ is a compact $(n+k+1)$-manifold with boundary $\bd W$,
        \item $\wF \colon W \to \Bl$ is a map satisfying $\wxi_{\ell} \circ \wF=\nu_W$ where $\nu_W$ is the stable normal bundle of $W$,
        \item $\bd \wF\colon \bd W \to \Ml$ is a lift of $\wF|_{\bd W}$ to $\Ml$, i.e., $\wF|_{\bd W} = \tilde{p}_\ell \circ \bd \wF$ (see diagram \ref{diag:twotwo}), defining a $\xi_{\ell}$-structure on $\bd W$.
    \end{itemize}

Two such triples $(W,\wF, \bd \wF)$ and $(W',\wF',\bd \wF')$ are bordant if there is a compact $(n+k+2)$-manifold with corners $Y$ whose boundary faces are $\bd_0 Y = W \sqcup (-W')$ and $\bd_1 Y = V$, where $\bd V = \bd W \sqcup (-\bd W')$, together with maps $\widehat{T} \colon Y \to \Bl$, $\bd \widehat{T} \colon V \to \Ml$, which extend the given data and satisfy $\widehat{T}|_V  = \tilde{p}_{\ell} \circ \bd \widehat{T}$.

    \item The group operation on these bordism groups is given by disjoint union and the inverse is obtained by reversing the chosen local orientation.
    \end{enumerate}
\end{definition}

Since $\tilde{p}_{\ell} \colon \Ml \to \Bl$ is a map of normal structures $\xi_{\ell}$ and $\tilde{\xi}_{\ell}$ by $\xi_{\ell}= \tilde{\xi}_{\ell} \circ \tilde{p}_{\ell}$, these groups fit into the Lashof bordism exact sequence
$$
\cdots \to \O_{n+k+1}(\Ml) \xrightarrow{(\wqq_{\ell})_*}
\O_{n+k+1}(\Bl) \to \O_{n+k+1}(\Bl,\Ml) \xrightarrow{\bd} \O_{n+k}(\Ml) \to \cdots
$$
where  $(\wqq_{\ell})_*$ is induced by postcomposition with $\tilde{p}_{\ell}$, and the boundary homomorphism is given by $\bd (W,\wF,\bd \wF) = (\bd W,\bd \wF)$.

\begin{remark}\label{rem:fourone} If $W\subset \bbR^\infty$ is a (locally flat) embedding of a compact manifold with boundary, we assume that $(W, \bd W) \subset (\bbR^N \times \bbR_{+}, \bbR^{N} \times 0)$ is \emph{neatly embedded},  for some large $N > 0$, meaning that $W \cap \bbR^{N} \times 0= \bd W$ and the embedding is a product on a collar neighbourhood of the boundary. 
\end{remark}

\subsection{Fibre bordism groups} \label{sec:the-bordism-group-htildemk}

Let $M\subset \bbR^\infty$ be a closed, smooth, connected  $n$-dimensional manifold  (as above) and fix an element $\wm \in \heqpt{\Ml}$. We define bordism models for the homotopy fibres of the maps $\alpha$ and $\beta$ in the square of Theorem A.

\begin{convention}
	Let $k \geq 0$. We equip the standard smooth $k$-sphere $S^k$ with the cell structure  given by expressing $S^k = D^k_+ \cup D^k_-$ as the union of two $k$-disks  glued along their boundary. We choose the base point $s_0 = D^0_{-} \in S^0$, which determines a base point $s_0 \in S^k$ for all $k \geq 0$.  If $M$ has  the base point $m_0$, then $S^k \times M$ has the natural base point $(s_0, m_0)$. For each $k \geq 0$, $D^k_-$ has $s_0$ as its base point. For $k >0$, $D^k_+$ also has $s_0$ as its base point, and for $k=0$, $D^0_+$ is a point and serves as its own base point.
\end{convention}

\begin{definition}\label{def:the-bordism-group-htildemk}
For $k \geq 0$, define $\htildek{k}{\Mk, \wm}$ to be the set of bordism classes of pairs $(W, \wF)$, where
\begin{itemize}
	\item $W\subset \bbR^\infty$ is a compact, 
	 $(n+k+1)$-manifold equipped with a fixed identification $\bd W \cong S^k \times M$,
	\item the boundary $\bd W$ decomposes as $\bd W= \bd_- W \cup \bd_+ W$ where $\bd_{\pm} W= D^k_{\pm} \times M$,
	\item $\wF \colon W \to \Ml$ is a continuous map satisfying
	\begin{enumerate}
		\item $\wF|_{\{x_+\} \times M} \in \heqpt{\Ml}$ for every $x_+ \in D^k_+$,
		\item $\wF|_{D^k_- \times  M}= \wm \circ p_M$ where $p_M$ is the projection onto $M$.
		\item $\nu_W = \xi_{\ell} \circ \wF$  is the stable normal bundle of $W$.
	\end{enumerate}
\end{itemize}
Two such pairs $(W,\wF)$ and $(W',\wF')$ are \emph{bordant} if there exist
\begin{itemize}
	\item a  homotopy $\hat{h}\colon S^k \times M \times I \to \Ml$ between $\wF|_{\bd W}$ and $\wF'|_{\bd W'}$ satisfying
	\begin{enumerate}
		\item $\hat{h}|_{D^k_- \times M \times \{t\}}= \wm \circ p_M$ for all $t \in I$,
		\item $\hat{h}|_{\{x_+\} \times M \times \{t\}} \in \heqpt{\Ml}$ for all $x_+ \in D^k_+$ and $t \in I$,
	\end{enumerate}
    \item a compact, $(n+k+2)$-manifold with corners $Y$,  and boundary identified as $\bd Y= W \cup (S^k \times M \times I) \cup -W'$, 
	\item a continuous map $\wG \colon Y \to \Ml$ extending $\wF$, $\hat{h}$, and $\wF'$ and satisfying $\xi_{\ell} \circ \wG=\nu_Y$ where $\nu_Y \colon Y \to BO$ is the stable normal bundle of $Y$.
\end{itemize}
In other words, the bordism group element 
$$
[(W \cup_{\partial W= S^k \times M \times \{0\}} (S^k \times M \times I) \cup_{\partial W' = S^k \times M \times \{1\}} -W', \wF \cup \hat{h} \cup  \wF'] \in \Omega_{n+k+1}(\Ml )
$$
obtained by gluing $W$ and $W'$ to the boundary components of the cylinder $S^k \times M \times I$ represents zero.  If $\wm = \hatid_M$, then we write  $\htildeMk{k} = \htildek{k}{\Mk, \hatid_M}$. The elements of $\htildeMk{k}$ can be thought of as bordism classes of manifolds $W$ with boundary $S^k \times M$, equipped  with maps to $\Ml$ that are compatible with the identity of $M$ on one hemisphere and homotopy self-equivalences of $M$ on the other.  The reference map $\wF$ induces a preferred local orientation on $W$ compatible with the given local orientation on $M$.
We refer to Section \ref{subsec:groups} and Section \ref{subsec:groupstwo} for the definition of the group structure on $\htildeMk{k}$.
\end{definition}

Similarly, we define $\htildeBk{k}$ in the context of the space $\Bl$. Let $M\subset \bbR^\infty$ be a connected, locally oriented $n$-dimensional manifold and fix an element $\wb \in \heqpt{\Bl}$. 

\begin{definition}\label{def:the-bordism-group-htildebk}
For $k \geq 0$, we define $\htildek{k}{\Bl, \wb}$ to be the set of bordism classes of pairs $(Z,\wG)$, where  
\begin{itemize}
	\item $Z\subset \bbR^\infty$ is a compact,   
	$(n+k+1)$-manifold equipped with a fixed identification $\partial Z \cong S^k \times M$,
	\item the boundary decomposes as $\partial Z = \partial_- Z \cup \partial_+ Z$ where $\partial_{\pm} Z = D^k_{\pm} \times M$,
	\item $\wG \colon Z \to \Bl$ is a continuous map satisfying
	\begin{enumerate}
		\item $\wG|_{D^k_- \times  M} = \wb \circ p_M$ where $p_M \colon D^k_- \times M \to M$ is the projection,
		\item for each $x_+ \in D^k_+$, the restriction $\wG|_{x_+ \times M} \in \heqpt{\Bl}$.
		\item $\nu_Z = \tilde\xi_{\ell} \circ \wG$ where $\nu_Z \colon Z \to  BO$ is the stable normal bundle of $Z$.
	\end{enumerate}
\end{itemize}
Two such pairs $(Z,\wG)$ and $(Z',\widehat{G'})$ are \emph{bordant} if there exist
\begin{itemize}
	\item a  homotopy $\hat{h} \colon S^k \times M \times I \to \Bl$ between $\wG|_{\bd Z}$ and $\wG'|_{\bd Z'}$ satisfying
	\begin{enumerate}
		\item $\hat{h}|_{D^k_- \times M \times \{t\}}= \wb \circ p_M$ for all $t \in I$,
		\item $\hat{h}|_{\{x_+\} \times M \times \{t\}} \in \heqpt{\Bl}$ for each $x_+ \in D^k_+$ and $t \in [0,1]$,
	\end{enumerate}
	\item a compact,   $(n+k+2)$-manifold $V$ with boundary $\bd V= Z \cup (S^k \times M \times I) \cup -Z'$,
	\item a continuous map $\wH \colon V \to \Bl$ extending $\wG$, $\hat{h}$, and $\wG'$ satisfying $\wxi_{\ell} \circ \wH=\nu_V$ where $\nu_V$ is the stable normal bundle of $V$.
\end{itemize}
In other words, the bordism group element
$$
[(Z \cup_{\bd Z= S^k \times M \times \{0\}} (S^k \times M \times I) \cup_{\bd Z' = S^k \times M \times \{1\}} -Z', \wG \cup \hat{h} \cup -\widehat{G'})] \in \Omega_{n+k+1}(\Bl )
$$
obtained by gluing $Z$ and $Z'$ to the boundary components of the cylinder $S^k \times M \times I$ represents zero.
If $\wb = \wpp_\ell(\hatid_M)= \hat\nu_\ell$, then we write $\htildeBk{k} = \htildek{k}{\Bk, \wpp_\ell(\hatid_M)}$.
\end{definition}

\subsubsection{The group structures on $\htildeMk{k}$ and $\htildeBk{k}$ for $k \geq 1$}\label{subsec:groups}
Write $S^k = D_-^k \cup_{S^{k-1}} D_+^k$ and let $\Delta \colon S^k \to S^k\vee S^k$ be the standard pinch map. We choose $\Delta$ compatibly with the hemisphere decompositions, so that the negative hemisphere of the domain is mapped into the union of the negative hemispheres of the two wedge summands. A geometric realization of $\Delta$ is given by the $(k+1)$-dimensional pair-of-pants cobordism 
$$
P^{k+1} = S^{k+1} \setminus \bigcup_{i=1}^3\operatorname{int}(B_i^{k+1}) \qquad \partial P^{k+1}
= (S^k\sqcup S^k)\sqcup(-S^k).
$$ 
Choose a map $\rho \colon P^{k+1} \to S^k\vee S^k$ whose restrictions to the two incoming boundary components are the inclusions of the first and second wedge summands, and whose restriction to the outgoing boundary is $\Delta$. We furthermore choose $\rho$ so that the negative hemisphere of the outgoing boundary is mapped into the wedge of the negative hemispheres of the incoming boundary components.

Let $[(W,\wF)],[(W',\wF')] \in \htildeMk{k}$, and let $u,u'\colon S^k \to \heqpt{\Ml}$ be the based maps adjoint to the boundary restrictions of $\wF$ and $\wF'$, respectively. Thus $u(x)(m)=\wF(x,m)$ and $u'(x)(m)=\wF'(x,m)$. We form 
$$
W'' = W \cup_{\partial W} (P^{k+1}\times M) \cup_{\partial W'}W'
$$
where the boundaries of $W$ and $W'$ are identified with the two incoming boundary components of $P^{k+1}\times M$. On the middle piece, define $\wF_P(p,m) = \bigl((u\vee u')(\rho(p))\bigr)(m)$. This agrees with $\wF$ and $\wF'$ on the two incoming boundary components, and hence determines a reference map $\wF''\colon W''\to \Ml$. On the outgoing boundary, the adjoint of $\wF''$ is the composite 
$$
S^k \xrightarrow{\Delta} S^k \vee S^k \xrightarrow{u\vee u'} \heqpt{\Ml}
$$
which represents the sum of $[u]$ and $[u']$ in $\pi_k(\heqpt{\Ml})$. The condition imposed on the negative hemisphere ensures that this outgoing family is again based in the sense of Definition \ref{def:the-bordism-group-htildemk}. We define $[(W,\wF)]+[(W',\wF')]:= [(W'',\wF'')]$. The same construction, with $\Ml$ replaced by $\Bl$, defines the group operation on $\htildeBk{k}$. Standard pair-of-pants bordisms show that these operations are well-defined and associative. The identity elements are represented by $[(D^{k+1} \times M, \widehat{\id}_M \circ p_M)]$ and $[ (D^{k+1} \times M,\hat{\nu}_{\ell} \circ p_M)]$, respectively.

\begin{lemma}\label{lem:fiveseven}
	For $k \geq 1$, the set $\htildeMk{k}$ with the operation defined above is a group.
\end{lemma}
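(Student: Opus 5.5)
We need to check four things: that the operation is well defined on bordism classes, that it is associative, that $[(D^{k+1}\times M,\hatid_M\circ p_M)]$ is a two-sided identity, and that every class has an inverse. Throughout we work with representatives $(W,\wF)$ and the boundary families $u\colon (S^k,s_0)\to(\heqpt{\Ml},\hatid_M)$ adjoint to $\wF|_{\bd W}$, with $u|_{D^k_-}$ constant at $\hatid_M$. The guiding principle is that every required bordism is obtained by gluing standard cobordisms of the form $Q\times M$, where $Q$ is a $(k+1)$- or $(k+2)$-dimensional bordism between disjoint unions of spheres. Such a $Q$ is equipped with a map to a wedge of copies of $S^k$ that realizes a based homotopy between pinch maps. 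The structure map on $Q\times M$ is then the composite with $u\vee u'\vee\cdots$, evaluated on $M$. Since $\nu_{Q\times M}=\nu_M\circ p_M$ (for $Q\subset\bR^{k+2}$ with trivial stable normal bundle, framed compatibly with the embedding convention), condition (3) in Definition \ref{def:the-bordism-group-htildemk} holds on these pieces automatically, because $\xi_\ell\circ\hat f=\nu_M$ for each $\hat f\in\heqpt{\Ml}$.

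\emph{Well-definedness.} Suppose $(W_1,\wF_1)\sim(W_2,\wF_2)$ via a homotopy $\hat h$ and a manifold $Y$. Thicken $P^{k+1}\times M$ to $P^{k+1}\times M\times I$, with structure map given by $\rho$ followed by the adjoint of $\hat h\vee \mathrm{const}$. Gluing this to $Y$ and to the product bordism $W'\times I$ yields the required bordism between the two sums. The same argument handles a change of representative in the second slot. Independence of the choice of $\rho$ follows because any two choices are homotopic rel boundary through maps respecting the hemisphere condition, and such a homotopy is again a product piece.

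\emph{Associativity and identity.} For associativity we compare the two ways of gluing pairs of pants, $P\cup P$. Both are diffeomorphic rel boundary to the complement of four balls in $S^{k+1}$. The maps to $S^k\vee S^k\vee S^k$ restrict to $(\Delta\vee 1)\Delta$ and $(1\vee\Delta)\Delta$ respectively, and these are homotopic as based maps preserving the hemisphere condition. This gives a $(k+2)$-dimensional bordism. For the identity, $W\cup(P\times M)\cup(D^{k+1}\times M)$ is $W$ with a collar $S^k\times I\times M$ attached. On this collar the map is $(1\vee *)\circ\Delta\simeq \mathrm{id}$, so the sum is bordant to $(W,\wF)$ via $W\times I$ together with that homotopy.

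\emph{Inverses.} This is the step I expect to be the main obstacle. The candidate inverse of $(W,\wF)$ is $(W,\wF\circ(r\times \id))$, where $r$ is a reflection of $S^k$ fixing $D^k_-$ setwise and swapping hemispheres of the equator appropriately. For $k\geq1$ such an $r$ exists because $D^k_+$ contains a $(k-1)$-sphere's worth of room, and $u\circ r$ represents $-[u]$. We also reverse the local orientation of $W$, as for the bordism groups in Section \ref{sec:four}, although the reference map $\wF$ forces the orientation near $\bd W$. The sum $W\cup(P\times M)\cup \overline W$ should then bound the manifold $Y=W\times I$, viewed as a manifold with corners. Its boundary is $W\cup(\bd W\times I)\cup \overline W$, and $\bd W\times I$ is identified with $P\times M$ union a disk piece, once $(S^k\times I)$ is regarded as $P$ capped by $D^{k+1}$. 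The delicate point is to arrange the corner-smoothing and the map so that the new boundary portion is exactly of the form $S^k\times M\times I$ carrying a homotopy from $(u\vee u r)\Delta$ to the constant map, and so that condition (1) holds for every $x_+\in D^k_+$. This requires the null-homotopy of $(u\vee ur)\Delta$ to stay inside $\heqpt{\Ml}$ and to be constant on $D^k_-$. I would construct it explicitly as the standard based null-homotopy obtained by folding $S^k$ along the reflection $r$, which stays in the image of $u$ and hence in $\heqpt{\Ml}$. Alternatively, one can observe that left cancellation plus associativity and a right identity already yield a group, and that the existence of right inverses follows from this folding construction alone.
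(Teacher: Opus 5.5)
The paper gives no argument for this lemma. It leaves the details to the reader, remarking only that standard pair-of-pants bordisms give well-definedness and associativity. Your outline is the intended one: the pair of pants for the sum, $D^{k+1}\times M$ as unit, and $\overline W=-W$ with its boundary identification precomposed by a reflection $r$ preserving $D^k_\pm$ as the candidate inverse.

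Your well-definedness and unit arguments are fine. The orientation worry you raise is resolved by $r$ itself: on $\bd(-W)=-(S^k\times M)$ the reversal is absorbed by the sphere factor, so the $M$-slices keep their local orientation.

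\textbf{The gap.} The inverse step is the only non-formal point, and you leave it at ``should then bound''. On $Y=W\times I$ the structure map $\wF\circ\mathrm{pr}_W$ restricts on $\bd W\times I\cong S^k\times I\times M$ to $u\circ\mathrm{pr}_{S^k}$. The bordism relation, however, requires a normal form there: $(u\vee ur)\circ\rho$ on $P\times M$, a null-homotopy of $(u\vee ur)\Delta$ on a collar, and $\hatid_M\circ p_M$ on the cap. So you must homotope $u\circ\mathrm{pr}_{S^k}$ into that form rel $\bd W\times\bd I$, and push the homotopy into a collar of $\bd W\times I$ in $Y$.

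Your folding construction only secures membership in $\heqpt{\Ml}$ and constancy on $D^k_-$; it does not address this compatibility. Prescribing the null-homotopy in advance over-determines the problem:
\begin{itemize}
\item For $k\geq 2$, with the whole boundary of $S^k\times I\times[0,1]$ prescribed, the obstruction to the required homotopy is a class in $\pi_{k+1}(S^k)$. Null-homotopies of $\nabla(1\vee r)\Delta$ differ precisely by such classes, so you have not shown that the folding one is a compatible choice.
\item For $k=1$ there is in addition a twist obstruction in $H_1(S^1\times I;\pi_1(S^1))\cong\bZ$. It must be removed by re-choosing the identification $S^1\times I\cong P\cup D^2$ by a Dehn twist.
\end{itemize}
If you leave the collar map free and let obstruction theory produce the null-homotopy, the only remaining obstruction group involves $\pi_1(S^k)$, so this works for $k\geq 2$.

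\textbf{A cleaner repair that avoids $W\times I$.} Put $x=[(W,\wF)]$ and let $\bar x$ be your candidate.
\begin{enumerate}
\item The boundary family of $x+\bar x$ is $u\circ\nabla\circ(1\vee r)\circ\Delta$. Since $r$ has degree $-1$ on $S^k/D^k_-$, your folding null-homotopy $H$ gives a null-homotopy $u\circ H$ that is constant on $D^k_-$ and has all slices in $u(S^k)\subset\heqpt{\Ml}$.
\item Glue $W''\cup(S^k\times M\times I)\cup(D^{k+1}\times M)$ using $u\circ H$ to get a closed $\xi_\ell$-manifold $N$. Then $N\times I$ is a bordism showing $x+\bar x=[N\sqcup D^{k+1}\times M]$. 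This is the argument for exactness at $\htildeMk{k}$ in Lemma \ref{lem:eightone}, which does not use the group structure.
\item Capping two legs of $P$ gives a disc carrying the constant family. Hence $[N\sqcup D^{k+1}\times M]+[-N\sqcup D^{k+1}\times M]=[(N\sqcup -N)\sqcup D^{k+1}\times M]$, which is the unit because $N\sqcup -N=\bd(N\times I)$.
\item By associativity, $\bar x+[-N\sqcup D^{k+1}\times M]$ is a right inverse of $x$. A monoid in which every element has a right inverse is a group.
\end{enumerate}

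\textbf{Two smaller corrections.}
\begin{itemize}
\item Left cancellation plus associativity and a one-sided identity does not yield a group; consider $(\mathbb N,+)$.
\item Two choices of $\rho$ need not be homotopic rel $\bd P$. Their difference lies in $H^{k+1}(P,\bd P;\pi_{k+1}(S^k\vee S^k))\cong\pi_{k+1}(S^k\vee S^k)$, which is nonzero for $k\geq 2$. This is harmless, because $\rho$ is fixed once and for all. Associativity only needs a homotopy rel the incoming boundaries with the outgoing end free (subject to the hemisphere condition), and that is unobstructed for $k\geq 2$.
\end{itemize}
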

\begin{proof} 
	The details are left to the reader.
\end{proof}

The group operation on $\htildeBk{k}$ for $k \geq 1$ is similarly  defined using the pair of pants manifold and appropriate gluing constructions. The identity element is represented by $[D^{k+1} \times M, \hat\nu_{\ell} \circ p_M]$.
\begin{lemma}\label{lem:fiveeight}
	For $k \geq 1$, the set $\htildeBk{k}$ with the operation described above is a group.
\end{lemma}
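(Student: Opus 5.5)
The plan is to check the group axioms for the pair-of-pants operation on $\htildeBk{k}$ directly. Most of this can be transported verbatim from the argument for $\htildeMk{k}$ in Lemma \ref{lem:fiveseven}, replacing $\Ml$, $\xi_\ell$, $\heqpt{\Ml}$ by $\Bl$, $\wxi_\ell$, $\heqpt{\Bl}$. Since $\heqpt{\Bl}$ is not an $H$-space, I will avoid any use of a product on the target space. The operation only uses the co-$H$ structure of $S^k$ (the pinch $\Delta$ and the pair of pants $P^{k+1}$), which is available for $k\geq 1$.

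First, well-definedness. Suppose $(Z_1,\wG_1)\sim(Z_1',\wG_1')$ via $(\hat h_1, V_1, \wH_1)$. Then $(Z_1'',\wG_1'')$ and $(Z_1'''',\ldots)$, obtained by gluing with a fixed $(Z_2,\wG_2)$, are bordant. The required boundary homotopy is $(u_1\vee u_2)\circ\Delta$ varied along $\hat h_1\vee \mathrm{const}$. The bordism is $V_1\cup (P^{k+1}\times M\times I)\cup (Z_2\times I)$, with the reference map on the middle piece given by composing $\rho\times\id_I$ with the adjoint of $\hat h_1\vee u_2$. Conditions (1) and (2) are preserved because $\rho$ maps negative hemispheres to negative hemispheres. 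Each homotopy in the bordism relation must be through families in $\heqpt{\Bl}$ on $D^k_+$ and constant $\wb\circ p_M$ on $D^k_-$. This holds because the pieces come from $u_i$ or $\hat h_1$, each of which lands in $\heqpt{\Bl}$ pointwise. The normal-structure condition $\wxi_\ell\circ\wH=\nu$ holds on $P^{k+1}\times M$ because points of $\heqpt{\Bl}$ satisfy $\wxi_\ell\circ\hat f=\nu_M$, and $\nu_{P\times M}$ is stably $\nu_M$ pulled back, since $P^{k+1}\subset S^{k+1}$ is stably parallelizable. Making this choice of stable framing coherent is a detail I would handle using the neat embeddings of Remark \ref{rem:fourone}.

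Second, associativity and unit. Associativity follows because the two gluings $W_1\cup P\cup(W_2\cup P\cup W_3)$ and $(W_1\cup P\cup W_2)\cup P\cup W_3$ are related by a diffeomorphism of the two composite three-legged pants that is isotopic to the identity rel boundary. The two reference maps then differ only by the coassociativity homotopy of $\Delta$ rel the negative hemisphere, which gives a bordism of the above type. For the unit, gluing $(D^{k+1}\times M,\hat\nu_\ell\circ p_M)$ to one leg yields $Z\cup (P\cup D^{k+1})\times M$. Here $P\cup D^{k+1}\cong S^k\times I$, and the map is a homotopy from $(u\vee \mathrm{const})\circ\Delta$ to $u$, which is constant on $D^k_-$. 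This is exactly a bordism relation with cylinder $Y=Z\times I$.

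Third, inverses, which I expect to be the main obstacle. Given $(Z,\wG)$, take $\bar Z$ to be $Z$ with its identification $\partial Z\cong S^k\times M$ precomposed with $r\times\id_M$. Here $r$ is a reflection of $S^k$ preserving the hemispheres $D^k_\pm$ and fixing $s_0$, which is possible for $k\geq1$. Then $Z\cup P\cup\bar Z$ with its reference map should bound $Z\times I$ with corners rounded, i.e.\ the ``doubling'' bordism. The boundary family is $(u\vee u\circ r)\circ\Delta$, which is null-homotopic rel $D^k_-$ through based families. The delicate point is to show that the nullhomotopy can be taken through maps whose $D^k_+$-restrictions stay in $\heqpt{\Bl}$, and that it matches the reference map on $Z\times I$. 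I would arrange this by realizing the nullhomotopy as the standard homotopy $u\circ (\id\cup r)\circ\Delta\simeq u\circ\mathrm{fold}$, which only reparametrizes the $S^k$-variable of the family $u$ and hence never leaves the image of $u$. The orientation reversal on $\bar Z$ is then absorbed by the reflection, so that $\partial(Z\times I)$ matches $Z\cup P\cup\bar Z$.
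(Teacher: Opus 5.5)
Your outline is essentially the argument the paper intends. The paper only says that standard pair-of-pants bordisms give well-definedness and associativity, names the units, and leaves the rest to the reader. Your key observation is the right one: every boundary homotopy you need comes from the given families by reparametrizing the $S^k$-variable through maps keeping $D^k_-$ in $D^k_-$. Such homotopies therefore stay in $\heqpt{\Bl}$ over $D^k_+$, and no product on $\heqpt{\Bl}$ is ever used.

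A few small repairs are needed:
\begin{enumerate}
\item $\bar Z$ must carry the reversed local orientation, since it is the end $Z\times\{1\}$ of the cylinder $Z\times I$.
\item For $k=1$, no orientation-reversing homeomorphism of $S^1$ preserves $D^1_\pm$ while fixing $s_0\in D^1_-\cap D^1_+$, so your claim that such a reflection exists for all $k\geq 1$ fails there. You only need $r(D^k_\pm)=D^k_\pm$, because $u$ is constant on $D^k_-$. For $k=1$, take the reflection exchanging the two points of $S^0$.
\item Be prepared to change the null-homotopy by an element of $\pi_{k+1}(S^k)$. This is needed so that the glued-in map $S^k\times I\to S^k$ is homotopic rel ends to the projection, which is what lets the reference map extend over $Z\times I$.
\end{enumerate}
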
 
\begin{proof}
	The details are left to the reader.
\end{proof}

\subsubsection{The group structures on   $\htildeMk{0}$ and  $\htildeBk{0}$}
\label{subsec:groupstwo}
When $k=0$, the sphere $S^0$ consists of two points, $s_0$ and $s_1$, with $s_0$ serving as the base point. In this case, elements of $\htildeMk{0}$ are represented by pairs $(W,\wF)$ where $W$ is a compact,   $(n+1)$-manifold with boundary $\bd W= -M \sqcup M$, decomposed as $\bd_- W= \{s_0\} \times M$ and $\bd_+ W= \{s_1\} \times M$, and $\wF \colon W \to \Ml$ satisfies $\wF|_{\bd_-W} =\widehat{\text{id}}_M$ and $\wF|_{\bd_+ W}=\hat{f} \in \heqpt{\Ml}$. 

The group structure in this case differs in that it also involves the monoid structure on the space of self-equivalences. 
Given two such pairs $(W,\wF)$ and $(W',\wF')$, we define their composition 
  $(W \bullet W',\wF \bullet \wF')$ where 
  $W \bullet W'$ is the manifold obtained by gluing $W$ and $W'$ along $\bd_+W$ and $\bd_-W'$ via the identity map on $M$. The map 
   $\wF\bullet \wF' \colon W \bullet W' \to \Ml$ is defined to be $\wF$ on $W$ and $\hat{f} \bullet \wF'$ on $W'$ where $\hat{f}= \wF|_{\bd_+ W}$ and $\hat{f} \bullet \wF'=(f \circ F', \nu_{W'}, H_{f \circ F'})\colon W' \to \Ml$ with $f = j \circ \hat{f}$, $F'= j \circ \wF'$, $\nu_{W'} \colon W' \to  BO$ is the stable normal bundle of $W'$, and $H_{f \circ F'} \colon W' \times I \to P_{\ell}(BO)$ is the homotopy from $w_{\ell}\circ \nu_{W'}$ to $\gamma_{\ell} \circ f \circ F'$ defined by
\begin{displaymath}
	H_{f \circ F'}(w,t) = \left\{
	\begin{array}{lr}
		H_{F'}(w,2t) & t \in [0,1/2]\ \\
		H_f (F'(w), 2t-1) &  t \in [1/2,1]
	\end{array}
	\right..
\end{displaymath}

Note that the restriction of 
 $\wF \bullet \wF'$ to $\bd_+W'$ is $\hat{f} \bullet \hat{f}'$ where $\hat{f}'=\wF'|_{\bd_+ W'}$. The identity element for this operation is $(M \times I, \widehat{\text{id}}_M \circ p_M)$ where $p_M \colon M \times I \to M$ is the projection. 

To show that each element in $\htildeMk{0}$ has an inverse, consider $[(W,\wF)] \in \htildeMk{0}$. Let $\hat{f}^{-1} \in \heqpt{\Ml}$ be a homotopy inverse of $\hat{f}= F|_{\bd_+ W}$. Define a representative
$(W^{-1}, \wF^{-1})$ of $[(W,\wF)]^{-1}$ as
$$(-W, \hat{f}^{-1} \bullet\wF) \cup (M \times I, \wH)$$
where $\wH$ is a homotopy between
$\hat{f}^{-1} \bullet \hat{f}$ and $\hatid_M$ (compare \cite[2.2]{Hambleton:2004}).
The identification of boundary components of the composition 
$(W^{-1},\wF^{-1}) \bullet (W,\wF)$ yields the double of the manifold 
$(W^{-1}, \wF^{-1}$) which bounds.  Therefore, this composition is equivalent to the identity element $(M \times I, \widehat{\text{id}}_M \circ p_M)$, showing that $\htildeMk{0}$ is indeed a group.

There is a morphism $\Omega_{n+1}(\Ml,\xi_{\ell}) \to \htildeMk{0}$ sending a closed manifold $N$ equipped with a map $\wF\colon N \to \Ml$ to the class $[N \sqcup (M \times I), \wF \sqcup (\hatid_M \circ p_M)]$, or equivalently to the internal connected sum $[N \# (M \times I), \wF \# (\hatid_M\circ p_M)] $. We also have a morphism $\htildeMk{0} \to \hept{\Ml} = \pi_{0}(\heqpt{\Ml})$ sending $[(W,\wF)]$ to $[\wF|_{\bd_+ W}=\hat{f}] \in \hept{\Ml}$. 

\begin{lemma}\label{lem:fivenine}
	The two maps $\Omega_{n+1}(\Ml ) \to \htildeMk{0}$ and $\htildeMk{0} \to \hept{\Ml}$ are group homomorphisms.
\end{lemma}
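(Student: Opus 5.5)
We check each map separately, directly from the definitions of the operations. Call the first map $\iota\colon \Omega_{n+1}(\Ml)\to \htildeMk{0}$, $[N,\wF]\mapsto [N\sqcup (M\times I), \wF\sqcup(\hatid_M\circ p_M)]$. Call the second $r\colon \htildeMk{0}\to \hept{\Ml}$, $[(W,\wF)]\mapsto [\wF|_{\bd_+W}]$.

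\emph{Well-definedness.} For $r$: if $(W,\wF)$ and $(W',\wF')$ are bordant, the bordism data contains a homotopy $\hat h$ on $S^0\times M\times I$. Its restriction to $\{s_1\}\times M\times I$ is a path in $\heqpt{\Ml}$ from $\wF|_{\bd_+W}$ to $\wF'|_{\bd_+W'}$, so the class in $\pi_0$ is unchanged. For $\iota$: a $\xi_\ell$-bordism $V$ between $(N,\wF)$ and $(N',\wF')$ gives $V\sqcup (M\times I\times I)$, with constant $\hat h=\hatid_M\circ p_M$. This is a bordism in the sense of Definition~\ref{def:the-bordism-group-htildemk}.

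\emph{$r$ is a homomorphism.} By definition, $\wF\bullet\wF'$ restricts on $\bd_+(W\bullet W')=\bd_+W'$ to $\hat f\bullet\hat f'$. Hence $r([W]\bullet[W'])=[\hat f\bullet \hat f']=[\hat f]\cdot[\hat f']$ in $\pi_0(\heqpt{\Ml})$. The group structure on $\hept{\Ml}$ is the one induced by $\bullet$ (Theorem~\ref{thm-being-group}), so this is exactly the product. Compatibility with identities is immediate, since $r(M\times I,\hatid_M\circ p_M)=[\hatid_M]$. The only point to get right is the order convention: the formula for $\wF\bullet\wF'$ gives $\hat f\bullet \hat f'$ in the order matching $[W]\bullet[W']$.

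\emph{$\iota$ is a homomorphism.} The sum $\iota(N)\bullet\iota(N')$ is represented by $N\sqcup N'\sqcup (M\times I)\cup_M(M\times I)$. On the second cylinder the map is $\hatid_M\bullet(\hatid_M\circ p_M)$, and on $N'$ it is $\hatid_M\bullet \wF'$. Two steps remain:
\begin{enumerate}
\item The glued cylinder is $M\times[0,2]\cong M\times I$. The map $\hatid_M\bullet(\hatid_M\circ p_M)$ differs from $\hatid_M\circ p_M$ only by concatenation with a constant homotopy. A reparametrization homotopy, fixed to be $\hatid_M$ on the ends, gives a bordism (a product with $I$) to the identity representative.
\item On the closed component $N'$, the map $\hatid_M\bullet\wF'=(F',\nu_{N'},C*H_{F'})$ is homotopic to $\wF'$ through $\xi_\ell$-structures, by removing the constant path. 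Homotopic $\xi_\ell$-structures are bordant via $N'\times I$.
\end{enumerate}
Hence $\iota(N)\bullet\iota(N')=\iota(N\sqcup N')$.

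The identity is handled as follows: $\iota(\emptyset)$ is the identity element. Inverses are then automatic once multiplicativity holds, since both sides are groups. The main (mild) obstacle is step (2): making sure that "homotopic structure maps give bordant elements" is compatible with the normal-bundle condition $\xi_\ell\circ\wG=\nu$. This holds here because the homotopy only changes the $H$-component and leaves $\nu_{N'}$ fixed. Commutativity of the image of $\iota$ is not needed.
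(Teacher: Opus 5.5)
Your proposal is correct, and it is exactly the routine check from the definitions of $\bullet$ on $\htildeMk{0}$ and of $\htildeMk{0}$-bordism that the paper leaves to the reader. One harmless slip: by the paper's concatenation formula, $\hatid_M\bullet\wF'$ carries the homotopy $H_{F'}$ followed by the constant path, so the constant piece comes second, not first as in your $C*H_{F'}$; also, on the glued cylinder $\hatid_M\bullet(\hatid_M\circ p_M)$ is literally equal to $\hatid_M\circ p_M$, so your step (1) needs no reparametrization at all.
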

\begin{proof}
	The details are left to the reader.
\end{proof}

As in the case of $\htildeMk{0}$, we can define a group structure on $\htildeBk{0}$. The output of the group operation 
  $(W \bullet W',\wF \bullet \wF')$ is again given by the manifold 
   $W \bullet W'$ obtained by gluing $W$ and $W'$ along $\bd_+W$ and $\bd_-W'$ via the identity map on $M$, and the map 
    $\wF \bullet \wF' \colon W \bullet W' \to \Bl$ which is $\wF$ on $W$ and $\hat{f} \bullet \wF'$ on $W'$. However, this time we replace $\hat{f}= \wF|_{\bd_+ W}$ by a map $\hat{f}^c \in \heqptkc{B}{\ell}$ with the property that its image under $c\colon \heqptkc{B}{\ell} \to \heqpt{\Bl}$ is in the connected component of $\hat{f}$ and define $\hat{f} \bullet \wF'=(f^c \circ F', \nu_{W'}, H_{f^c \circ F'})\colon W' \to \Bl$.  
The identity element in this group is represented by  $[(M \times I, \hat\nu_{\ell} \circ p_M)]$. There are similarly defined morphisms $\Omega_{n+1}(\Bl ) \to \htildeBk{0}$ and $\htildeBk{0} \to \hept{\Bl} = \pi_{0}(\heqpt{\Bl})$.

\begin{lemma}\label{lem:fiveten}
	The two maps $\Omega_{n+1}(\Bl) \to \htildeBk{0}$ and $\htildeBk{0} \to \hept{\Bl}$ are group homomorphisms.
\end{lemma}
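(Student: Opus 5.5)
We check the two assertions of Lemma \ref{lem:fiveten} directly from the definitions of the group operations, in the same way one proves Lemma \ref{lem:fivenine}. The one new ingredient is the auxiliary choice of a lift $\hat f^c \in \heqptkc{B}{\ell}$. Theorem \ref{thm:threeeight} and Lemma \ref{lem:threeeleven} will be used to show that nothing depends on this choice up to bordism.

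\emph{Step 1: the map $\htildeBk{0} \to \hept{\Bl}$.} This map sends $[(W,\wF)]$ to the class of $\wF|_{\bd_+W}$. It is well defined: the homotopy $\hat h$ in a bordism stays inside $\heqpt{\Bl}$ on $\bd_+$, so it gives a path in $\heqpt{\Bl}$. For the product, the restriction of $\wF\bullet\wF'$ to $\bd_+ W'$ is $c(\hat f^c)\bullet$-composed with $\hat f'$. More precisely, it is the pair $(f^c\circ f', H_{f^c\circ f'})$, which equals $c(\hat f^c\bullet \hat\phi')$ whenever $\hat f' = c(\hat\phi')$. Since $\pi_0(c)$ is a bijection by Theorem \ref{thm:threeeight}, the group structure on $\hept{\Bl}$ is the one transported from $\pi_0(\heqptkc{B}{\ell})$. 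Hence the class of this restriction is the product $[\hat f]\cdot[\hat f']$. To apply this we choose $\hat\phi'$ in the component of $\hat f'$ and use a path $c(\hat\phi')\simeq \hat f'$ composed with $f^c$. This composition is continuous in the path, so the resulting classes agree in $\pi_0$.

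\emph{Step 2: the map $\Omega_{n+1}(\Bl)\to\htildeBk{0}$.} This map sends $[N,\wF]$ to $[N\sqcup(M\times I),\wF\sqcup \hat\nu_\ell\circ p_M]$. It is well defined because a bordism $R$ between closed $N,N'$ yields a bordism $R\sqcup (M\times I\times I)$ with $\hat h$ constant. For additivity, consider the product of $N_1\sqcup(M\times I)$ and $N_2\sqcup(M\times I)$. Its boundary-positive end is $\hat\nu_\ell$, so we may choose $\hat\nu_\ell^c = (\id_{P_\ell(M)}, C_{\nu_\ell})$. With this choice, the operation on the second piece precomposes with the identity up to reparametrisation of the concatenated homotopy. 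The glued manifold is then $(N_1\sqcup N_2)\sqcup (M\times[0,2])$, with a map homotopic rel boundary to the image of $[N_1\sqcup N_2]$. Reparametrising $H_{\id\circ F'}$ to $H_{F'}$ gives the required homotopy, and a homotopy of reference maps rel boundary gives a bordism via the cylinder.

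\emph{Main obstacle: independence of the choice of $\hat f^c$.} This is needed for the operation to be well defined and is used in both steps. Suppose $\hat f^c_0,\hat f^c_1$ both lie over the component of $\hat f$. Then $c(\hat f^c_0)\simeq c(\hat f^c_1)$, so by Lemma \ref{lem:threeeleven} (with $X$ a point) we get $\hat f^c_0\simeq \hat f^c_1$ in $\heqptkc{B}{\ell}$ through a path $\hat f^c_t$. Composing $\wF'$ with $\hat f^c_t$ gives a homotopy of reference maps on $W'$. Note that $\wF\bullet\wF'$ requires matching $\wF$ with $c(\hat f^c)$ on $\bd_+W$, and not with $\hat f$ itself. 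We handle this by inserting the collar $M\times I$, carrying a path in $\heqpt{\Bl}$ from $\hat f$ to $c(\hat f^c_t)$. We then check that the resulting glued objects are bordant by the cylinder $(W\bullet W')\times I$ with these homotopies along the boundary, exactly as allowed in Definition \ref{def:the-bordism-group-htildebk}. The pieces to verify are compatibility on $\bd_-$ (everything is $\hat\nu_\ell\circ p_M$ there) and on $\bd_+$ (the family stays in $\heqpt{\Bl}$). Once this is established, both homomorphism checks above reduce to the cases already treated for $\Ml$ in Lemma \ref{lem:fivenine}.
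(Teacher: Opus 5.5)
Your Steps 1 and 2 are correct, and they are exactly the direct check from the definitions that the paper leaves to the reader. Since the paper asserts that $\htildeBk{0}$ is a group, the convenient choice $\hat\nu_\ell^c=(\id_{P_\ell(M)},C_{\nu_\ell})$ in Step 2 is legitimate without further argument, and Step 1 in fact works for any choice of $\hat f^c$.

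Your extra paragraph on independence of choices is not needed for the lemma, and as written it is incomplete. The collar path $\rho$ from $\hat f$ to $c(\hat f^c)$ is itself a choice. With the product structures on $W\times I$ and $W'\times I$, the cylinder $(W\bullet W')\times I$ extends over the middle square $M\times I\times I$ only if the loop $\rho_0 * c(\hat f^c_t) * \bar\rho_1$, with $t$ the path parameter, bounds a family of $\wxi_\ell$-structures. You can arrange this by choosing the path $\hat f^c_t$ using surjectivity of $\pi_1(c)$ from Theorem \ref{thm:threeeight}; the $\pi_0$-injectivity from Lemma \ref{lem:threeeleven} at a point is not enough. Invariance under change of bordism representatives would also still have to be checked.
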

\begin{proof}
	The details are left to the reader.
\end{proof}

\section{Semi-simplicial models and the square}\label{sec:ss_models}

\subsection{Semi-simplicial models for spaces of self-equivalences}\label{sec:quinnad}

\begin{definition}  We will abuse notation and let  $\heqpt{\Ml}$ also denote the Kan semi-simplicial set whose $d$-simplices are  the  singular $d$-simplices  $x_d\colon  \Delta^d \to \heqpt{\Ml}$, such that
	 there exists $\varepsilon >0$ such that  
	$x_d|_{\Delta^d_i(\varepsilon)} = x_d|_{\Delta^{d-1}_i} \circ \pi_i(\varepsilon)$, for every $0 \leq i \leq d$,
		   where $\Delta^d_i(\varepsilon)$ is a thickening of
 the $i$-th face $\Delta_i^{d-1}$ in $\Delta^d$ and $\pi_i(\varepsilon)\colon \Delta^d_i(\varepsilon) \to \Delta_i^{d-1}$ is the projection  (see the notation below).  Similar conventions apply for $\heqpt{\Bl}$.
	These are weakly equivalent \emph{face-collared} sub-semi-simplicial sets of the usual  singular complex of the spaces in Definition \ref{def:thickaut} and Definition \ref{def:thickautB}.
\end{definition}

In the following, we will mainly work with these face-collared models and rely on the context to distinguish them from the spaces in Definition \ref{def:thickaut} and Definition \ref{def:thickautB}.

\subsection{Semi-simplicial models for Lashof bordism}\label{sec:quinnadtwo}
In this section, we use ``mock bundles'' \cite{Buoncristiano:1976} to generalize the setting of \cite[Section 4.2]{Randal-Williams:2024} to that of Lashof-type bordism theories, which are based on topological manifolds with extra structure on the normal microbundle (see \cite{Lashof:1963} and \cite[Chapter II]{Stong:1968}). Our goal is to describe a Kan semi-simplicial model for the $n$-fold loop spaces of these bordism theories.

Our notation and setup is chosen close to that of \cite[Section 4.2]{Randal-Williams:2024}. We let $\Delta^p$ denote the standard $p$-simplex, let $\Delta^{p-1}_i \subset \Delta^p$ denote its $i$-th face, and for $\varepsilon  > 0$ write $\Delta^p_i(\varepsilon )$ for the thickened $i$-th face
$\Delta^p_i(\varepsilon ) = \{(t_0, \dots, t_p) \in \Delta^p \vv 0 \leq t_i < \varepsilon \}$. There is a projection map
$$\pi_i(\varepsilon )\colon  \Delta^p_i(\varepsilon ) \to \Delta^{p-1}_i.$$

\begin{definition}\label{def:fiveone}
Let $\xi \colon X \to \on{BTOP}$ be a fibration classifying the stable normal bundle data of a Lashof bordism theory. For $n \in \mathbb{Z}$, we define $\on{Mock}(\xi, n)$ to be the semi-simplicial set with $p$-simplices given by pairs $(W, \hat f)$ consisting of
	\begin{itemize}
		\item a locally flat compact topological $(n+p)$-submanifold $W \subset \Delta^p \times \mathbb{R}^\infty$
		\item \emph{a $\xi$-structure on $W$} given by a map $W \to X$ and a stable microbundle trivialization of 
		 $\tau_W \oplus \hat{f}^*(\xi)$,
		where $\tau_W$ is the tangent microbundle of $W$.  In other words, we have a triangle of continuous maps
		$$
		\begin{tikzcd}
			W \arrow[r, "\hat f"] \arrow[dr, "\nu_W", swap] & X \arrow[d, "\xi"]\\
			& \on{BTOP}
		\end{tikzcd}
		$$
		which commutes  	where $\nu_W$ is the classifying map of the stable normal microbundle of $W \subset \Delta^p \times \mathbb{R}^\infty$ (see Kirby and Siebenmann \cite[IV.8]{Kirby:1977}, Milnor \cite{Milnor:1964}, Kister \cite{Kister:1964}, Rourke and Sanderson \cite{Rourke:1970}).
	\end{itemize}
	For each $(W, \hat f)$  there  exists   some $k \in \mathbb{N}$ such that the following conditions hold for every $0 \leq i \leq p$:
	\begin{enumerate}
		\item $W$ is topologically transverse to each $\Delta_i^{p-1} \times \mathbb{R}^k$.
		\item It holds that $\partial W = W \cap (\partial \Delta^p \times \mathbb{R}^k)$.
		We write $\partial_i W = W \cap (\Delta_i^{p-1} \times \mathbb{R}^k)$ and
		$\partial_i^\varepsilon  W = W \cap (\Delta_i^{p}(\varepsilon ) \times \mathbb{R}^k)$.
		\item For each face, there exists an $\varepsilon  > 0$
		such that $\bd_i^\varepsilon  W= (\pi_i(\varepsilon) \times \id_{\mathbb{R}^k})^{-1} \partial_i W$  the composite
		$$
        \partial_i^\epsilon(\hat f)\colon \partial_i^\varepsilon  W \xrightarrow{\pi_i(\varepsilon )}
		\partial_i W \xrightarrow{\partial_i(\hat f)} X
        $$
		is compatible with the restriction of the $\xi$-structure on $W$ to the (thickened) faces $\bd_i^\varepsilon W$.
		The $i$-th face map $d_i\colon  \on{Mock}(\xi,n)_p \to \on{Mock}(\xi,n)_{p-1}$
		is given by restriction to $\partial_i W = W \cap (\Delta_i^{p-1} \times \mathbb{R}^\infty)$.	
	\end{enumerate}
\end{definition}
\begin{remark} If $N^n \subseteq \bR^{n+k}$ is a compact (locally flat) topological $n$-submanifold, with $k \gg n$, then the tangent microbundle $\tau_N$ of $N$ has a complementary normal microbundle $\nu_N$ with $\tau_N \oplus \nu_N \cong \epsilon^{n+k}$. By stability, this normal microbundle admits a unique topological $\bR^k$-bundle structure, and (by \cite[Corollary 5.5]{Rourke:1970}) an associated $D^k$-bundle with a Thom collapse map $S^{n+k} \to T(\nu_N)$. If there exists a map $f\colon N \to X$, as above, such that $f^*(\xi) \cong \nu_N$ (after stabilization), topological transversality provides an element in the Lashof bordism group $\Omega_n(X, \xi) \cong \pi^s_n(T(\xi))$. This construction provides an alternate description of the bundle data condition in Definition \ref{def:fiveone}.
\end{remark}

\begin{lemma}
	$\on{Mock}(\xi,n)$ is a Kan semi-simplicial set.
\end{lemma}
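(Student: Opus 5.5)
To show $\on{Mock}(\xi,n)$ is Kan, I would follow the standard argument for bordism-type $\Delta$-sets (Rourke--Sanderson, Buoncristiano--Rourke--Sanderson, and \cite[Section 4.2]{Randal-Williams:2024}). The point is that a horn can be filled by "thickening" the union of the given faces. Fix $p \geq 1$, $0 \leq k \leq p$, and a map $\Lambda^{p,k}_{\on{inj},\bullet} \to \on{Mock}(\xi,n)$. This is a compatible family of $(p-1)$-simplices $(W_i,\hat f_i)$ for $i \neq k$, with $W_i \subset \Delta^{p-1}_i \times \bR^\infty$. Compatibility means they agree, together with their $\xi$-structures, on common codimension-two faces. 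Because each simplex is face-collared, the union $W_\Lambda = \bigcup_{i\neq k} W_i$ is a locally flat topological $(n+p-1)$-manifold with boundary $\bd W_\Lambda = W_\Lambda \cap (\Delta^{p-1}_k\times\bR^K)$. It lies in $\Lambda^{p,k}\times\bR^K$ for some finite $K$, and it carries a $\xi$-structure $\hat f_\Lambda$ obtained by gluing the $\hat f_i$.

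\emph{Construction of the filler.} Choose a homeomorphism of pairs $h\colon (\Delta^p, \Lambda^{p,k}) \cong (\Lambda^{p,k}\times[0,1], \Lambda^{p,k}\times 0)$. It should be the standard radial projection from a point beyond the barycentre of the missing face, and it should respect collars: near each face $\Delta^{p-1}_i$ with $i\neq k$ it should be compatible with $\pi_i(\varepsilon)$. Then take $W = h^{-1}(W_\Lambda \times [0,1])$, sitting inside $\Delta^p\times\bR^K$, with $\xi$-structure $\hat f = \hat f_\Lambda\circ \mathrm{pr}\circ h$. Since the structure is pulled back along a projection, the stable microbundle trivialization of $\tau_W\oplus \hat f^*\xi$ is inherited from that of $W_\Lambda$ via $\tau_W \cong \mathrm{pr}^*\tau_{W_\Lambda}\oplus\epsilon^1$. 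This uses only stability and the fact that an interval factor adds a trivial line. With this choice, $d_i(W,\hat f) = (W_i,\hat f_i)$ for $i\neq k$, and the new face $d_k(W,\hat f)$ is $W_\Lambda\times\{1\}$ transported by $h$.

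\emph{Verification of conditions (1)--(3).} Transversality to each $\Delta^{p-1}_i\times\bR^K$ and the identity $\bd W = W\cap(\bd\Delta^p\times\bR^K)$ follow from the product form of $W$ and from the corresponding properties of the $W_i$. The collar condition (3) holds on the old faces because $h$ was chosen to preserve the $\varepsilon$-collars, after shrinking $\varepsilon$. On the new face $k$, the product structure in the $[0,1]$ direction gives a collar automatically.

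\textbf{Main obstacle.} I expect the main difficulty to be the corners: arranging $h$ so that it is simultaneously compatible with all the collar projections $\pi_i(\varepsilon)$, and so that $W_\Lambda$ is a genuine locally flat manifold near codimension-two strata. I would handle this in three steps. First, reparametrize so that each $W_i$ is a product near every face, using the face-collared hypothesis and a common $\varepsilon$. Second, use the standard piecewise-linear collapse $\Delta^p \searrow \Lambda^{p,k}$, modified to be a product inside the $\varepsilon$-neighbourhoods. Third, for the locally flat and microbundle statements, invoke the topological collar and product structure results of Kirby--Siebenmann \cite{Kirby:1977}, exactly as in the mock bundle theory of \cite{Buoncristiano:1976}.
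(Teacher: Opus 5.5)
Your proposal is sound in outline, but it fills the horn by a different device from the paper. You pull back $W_\Lambda$ along a global retraction $\on{pr}\circ h\colon \Delta^p \to \Lambda^{p,k}$ coming from a prism structure $\Delta^p \cong \Lambda^{p,k}\times[0,1]$. That retraction then has to be built by hand so that it agrees with every collar projection $\pi_i(\varepsilon)$ at once.

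The paper, following \cite[Remark 2.2.2]{Hebestreit:2021}, avoids any global product structure. By the face-collar condition (iii), the union $\bigcup_{i\neq k}(\pi_i(\varepsilon)\times\id)^{-1}W_i$ is already a $\xi$-structured $(n+p)$-manifold over the open neighbourhood $\Lambda^{p,k}(\varepsilon)=\bigcup_{i\neq k}\Delta^p_i(\varepsilon)$. Its pieces agree on overlaps precisely because of (iii). The filler is then the pullback along an embedding $\Delta^p\hookrightarrow\Lambda^{p,k}(\varepsilon)$ that is the identity near $\Lambda^{p,k}$. This makes the corner compatibility you identify as the main obstacle essentially automatic, because the filler sits inside a region where the structure is by construction a pullback along the given collar projections. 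Only the shape of the embedding remains to be chosen.

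Your route buys an explicit product description $W\cong W_\Lambda\times[0,1]$, in the spirit of the classical Rourke--Sanderson argument, at the price of constructing $h$ explicitly. If you pursue it, note two small inaccuracies.
\begin{itemize}
\item The radial projection from a point beyond the missing face is not of the form $\on{pr}\circ h$ for a homeomorphism $h$, since its fibres over $\bd\Lambda^{p,k}=\bd\Delta^{p-1}_k$ degenerate to points. You can use it directly as the retraction, or modify it.
\item For a genuine prism $h$, the new face is $h^{-1}\bigl((W_\Lambda\times 1)\cup(\bd W_\Lambda\times[0,1])\bigr)$ rather than $W_\Lambda\times 1$ alone. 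This is harmless, since it is homeomorphic to $W_\Lambda$.
\end{itemize}
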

\begin{proof}
	This argument is analogous to the one in \cite[Remark 2.2.2]{Hebestreit:2021} (see also the discussion after \cite[Definition 2.3.1]{Hebestreit:2021}), which is why we will be brief.
	
	Let $\Lambda^{p,j} \to \on{Mock}(\xi,n)$ be some horn. By definition,
	this  is equivalent  to the existence of some $k \in \mathbb{N}$ and a $((p-1)+n)$ dimensional submanifold $W' \subset \Lambda^{p,j} \times \mathbb{R}^k$ that is equipped with a $\xi$-structure $\hat f' \colon  W' \to X$ satisfying the conditions above. Note that  $\Lambda^{p,j}$ is a union of $(p-1)$-simplices (the union of codimension one faces of $\Delta^p$ except the $j$-th face) and each satisfies the condition (iii) above. 
	
	As in \cite[Remark 2.2.2]{Hebestreit:2021}, we let $\varepsilon > 0$ denote a minimum value, so that condition (iii) holds for all these $(p-1)$-simplices. We then consider the neighborhood $\Lambda^{p,j}(\varepsilon ) = \bigcup_{i \neq j} \Delta^p_i(\varepsilon )$ of $\Lambda^{p,j}$ in $\Delta^p$ and the $(p+n)$-dimensional submanifold $W'(\varepsilon) \subset \Lambda^{p,j}(\varepsilon ) \times \mathbb{R}^k$ given by 
	 $\bigcup_{i \neq j} (\pi_i(\varepsilon) \times \id_{\mathbb{R}^k})^{-1} \partial_i W'.$ 
	 		These thickenings are compatible under overlaps by definition of  the horn $\Lambda^{p,j}(\varepsilon )$ and condition (iii) above. 
	
	We note that the map $\hat{f}'$ extends over the thickening to a  $\xi$-structure $\hat{f}'(\varepsilon)\colon W'(\varepsilon) \to X$ as a post-composition with projections $\pi_i(\varepsilon)$, and that this extension continues to satisfy the conditions above by its definition.
	As in \cite[Remark 2.2.2]{Hebestreit:2021}, we may now pick an isotopy between the identity on $\Delta^p$ and a suitable embedding $\Delta^p \hookrightarrow \Lambda^{p,j}(\varepsilon )$ which restricts to the identity on a neighborhood of $\Lambda^{p,j}$ to obtain a $(p+n)$-submanifold $W \subset \Delta^p \times \bbR^k$ and a $\xi$-structure $f\colon  W \to X$ that defines a lift $\Delta^{p} \to \on{Mock}(\xi,n)$ of $\Lambda^{p,j} \to \on{Mock}(\xi,n)$.
\end{proof}

\begin{corollary}\label{cor:fivefour}  For $i \geq 0$, $\pi_{i}(\on{Mock}(\xi,n))\cong  \Omega_{n+i}(X,\xi)$.
\end{corollary}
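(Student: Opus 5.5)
We identify the simplicial homotopy groups of $\on{Mock}(\xi,n)$ with bordism groups directly from the definition of $\pi_i$ for a pointed Kan semi-simplicial set, following \cite[Section 4.2]{Randal-Williams:2024}. Take the base point $x_\bullet$ to be the empty submanifold in every degree. The empty manifold, with the empty map to $X$, satisfies all the conditions of Definition \ref{def:fiveone}, and $d_i\emptyset=\emptyset$. By the Kan property established in the preceding lemma, $\pi_i(\on{Mock}(\xi,n),\emptyset)$ is the set of $i$-simplices $(W,\hat f)$ with all faces empty, modulo the relation given by $(i+1)$-simplices $b$ with $d_i b=a$, $d_{i+1}b=a'$ and all other faces empty.

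First, we define a map $\Phi\colon \pi_i(\on{Mock}(\xi,n))\to\Omega_{n+i}(X,\xi)$. An $i$-simplex with all faces empty is a compact $(n+i)$-submanifold $W\subset\Delta^i\times\bR^k$ with $\partial W=W\cap(\partial\Delta^i\times\bR^k)=\emptyset$. It therefore lies in $\operatorname{int}(\Delta^i)\times\bR^k$, is closed, and carries a $\xi$-structure $\hat f$. Since $\operatorname{int}\Delta^i\cong\bR^i$, the normal microbundle data in $\Delta^i\times\bR^k$ agree stably with the normal data in $\bR^{i+k}$, so $(W,\hat f)$ represents a class in $\Omega_{n+i}(X,\xi)$.

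Next, we check that $\Phi$ respects the homotopy relation. Let $b=(V,\hat g)$ be an $(i+1)$-simplex witnessing $a\sim a'$. Then $V\subset\Delta^{i+1}\times\bR^k$ is a compact manifold meeting $\partial\Delta^{i+1}$ only in the $i$-th and $(i+1)$-st faces, and there in $W$ and $W'$. Using the collar condition (iii), $V$ is a $\xi$-bordism between $W$ and $W'$, with orientation signs governed by the face ordering. This argument runs in both directions. If $W$ and $W'$ are $\xi$-bordant through $V$, we can re-embed $V$ neatly (Remark \ref{rem:fourone}) into $\Delta^{i+1}\times\bR^k$ with collars at the two faces, after identifying $\Delta^{i+1}$ minus the other faces with a half-space. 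For $i=0$ the relation is exactly bordism of closed $n$-manifolds, since $d_0b=a$ and $d_1b=a'$.

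Then $\Phi$ is well defined and injective, and it is surjective because any closed $\xi$-manifold embeds in $\operatorname{int}\Delta^i\times\bR^k$ for $k$ large. To see that $\Phi$ is a homomorphism for $i\ge1$, fix representatives $a,b$ and embed them disjointly, say in the slabs near two different vertices. We then fill the horn $(\emptyset,\dots,\emptyset,a,-,b)$ by the disjoint union $a\times$(collar onto face $m$) $\sqcup$ $b\times$(collar onto face $m+1$). Each piece is a product bordism bent around an edge of the simplex. The resulting $d_m$ face is $a\sqcup b$, which is the sum in bordism.

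The main obstacle is the geometric bookkeeping of corners. This includes straightening $\Delta^{i+1}$ into $\Delta^i\times I$ near the relevant faces, keeping the face collars and transversality to $\Delta^{p-1}_i\times\bR^k$ in place, and matching stable normal data between $\Delta^p\times\bR^k$ and $\bR^{p+k}$. We would handle these as in \cite[Remark 2.2.2]{Hebestreit:2021}, with an isotopy of $\Delta^p$ supported away from a neighborhood of the relevant faces, together with topological transversality \cite{Kirby:1977}. The signs are absorbed by the orientation convention of Proposition \ref{prop:stableThom}.
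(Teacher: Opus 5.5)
Your proposal is correct and follows the same route as the paper, which simply cites Buoncristiano--Rourke--Sanderson for the three identifications you check by hand: the empty manifold is the base point, $i$-simplices with all faces empty are closed $(n+i)$-dimensional $\xi$-manifolds, and witnessing $(i+1)$-simplices are exactly $\xi$-bordisms. There is one small index slip: in the horn $(\emptyset,\dots,\emptyset,a,-,b)$ the class $a$ sits on face $m-1$ and $b$ on face $m+1$, so both product pieces should be bent onto the missing face $m$ (more simply, any filler $c$ is itself, after rounding corners, a $\xi$-bordism between $a\sqcup b$ and $d_m c$).
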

\begin{proof} This example is explained in \cite[Chapter IV.2, p.~83]{Buoncristiano:1976}. In particular, the base point is the empty manifold in every simplicial degree, an $i$-simplex whose faces are all empty is a closed $(n+i)$-manifold with a $\xi$-structure, and an $(i+1)$-simplex realizing a semi-simplicial homotopy is precisely a $\xi$-bordism between such representatives.
\end{proof}

We may similarly define the following ``thickened'' version of $\on{Mock}(\xi, n)$.

\begin{definition}
	Let $\xi\colon  X \to \on{BTOP}$ be a fibration. For $n \in \mathbb{Z}$, we define $\on{Mock}_\partial(\xi, n+1)$ to be the semi-simplicial set with $p$-simplices given by pairs $(W, \hat f)$ consisting of
	\begin{itemize}
		\item a locally flat compact topological $(n+p+1)$-submanifold $W \subset \Delta^p \times \mathbb{R}^\infty \times [0, \infty)$
		\item \emph{a $\xi$-structure on $W$} given by a map $W \to X$ and a stable microbundle trivialization of $\tau_W \oplus \hat f^*(\xi)$.
		In other words, we have a commutative triangle of continuous maps, 		
		$$
		\begin{tikzcd}
			W \arrow[r, "\hat f"] \arrow[dr, "\nu_{W}", swap] & X \arrow[d, "\xi"]\\
			& \on{BTOP}
		\end{tikzcd}
		$$
		where $\nu_{W}$ is the   classifying map of the stable normal microbundle of $W \subset \Delta^p \times \mathbb{R}^\infty \times [0, \infty)$.
	\end{itemize}
	For each $(W, \hat f)$ there exists some $k \in \mathbb{N}$ such that the conditions (i) and (iii) in the previous definition hold for every $0 \leq i \leq p$. In addition we have that:
	\begin{enumerate}
		\item[(iv)] $W$ is topologically transverse to $\Delta^p \times \mathbb{R}^k \times 0$, and writing $\partial_v W = W \cap (\Delta^p \times \mathbb{R}^k \times 0)$, there exists a $\delta > 0$ such that $W \cap (\Delta^p \times \mathbb{R}^k \times [0, \delta)) = \partial_v W \times [0, \delta)$ $= ( \id_{\Delta^p} \times \id_{\mathbb{R}^k} \times \pi_v^{\delta})^{-1} \bd_{v}W$. If we  denote this collar by $\partial_v^\delta W$, then $\partial W = (W \cap (\partial \Delta^p \times \mathbb{R}^k \times [0, \infty)) \cup \partial_v W$ and the composite
		$$\partial_v^\delta (\hat f)\colon \partial^\delta_v W  \xrightarrow{\ \pi_v(\delta )\ }
		\partial_v W \xrightarrow{\ \partial_v(\hat f)\ } X$$
		is compatible with the restriction of the $\xi$-structure on $W$ to the (thickened) faces $\partial_v^\delta W$. 
	\end{enumerate}
\end{definition}

These constructions lead to the following description of the Lashof bordism theory:
\begin{theorem}\label{thm:lashof_space} For any fibration $\xi\colon X \to BTOP$, there is an $\Omega$-spectrum $\on{Mock}(\xi)$ with the property that
	$$\Omega^{\infty + n} \on{Mock}(\xi) \simeq \|\on{Mock}(\xi, n)\|, \text{ for all } n \in \mathbb{Z},$$
	such that on the level of homotopy groups, it holds that
	$$\pi_i(\Omega^{\infty + n} \on{Mock}(\xi)) = \Omega_{n+i}(X, \xi) \text{ for all } n \in \mathbb{Z}.$$
\end{theorem}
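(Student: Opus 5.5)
We assemble the spectrum levelwise from the semi-simplicial sets $\on{Mock}(\xi,n)$, following \cite[Section 4.2]{Randal-Williams:2024} and the mock bundle formalism of \cite{Buoncristiano:1976}. Set $E_m := \|\on{Mock}(\xi,-m)\|$ for $m \in \bZ$; the level $E_m$ should correspond to $\Omega^{\infty-m}$, so $\Omega^{\infty+n}\on{Mock}(\xi) = E_{-n}$. The task is to build weak equivalences $E_{m} \xrightarrow{\simeq} \Omega E_{m+1}$, that is,
$$\|\on{Mock}(\xi,n)\| \simeq \Omega\|\on{Mock}(\xi,n-1)\| \quad\text{for all } n\in\bZ.$$
Once these structure maps exist, the homotopy group statement follows directly from Corollary \ref{cor:fivefour}, together with the fact that geometric realization of a Kan semi-simplicial set preserves homotopy groups (\cite[Lemma 1.7]{Ebert:2019}).

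To produce the loop-space equivalence, we use the thickened object $\on{Mock}_\partial(\xi,n)$, whose $p$-simplices are manifolds in $\Delta^p\times\bR^\infty\times[0,\infty)$ with a collared vertical boundary $\partial_v W$. Restriction to the vertical boundary, $W \mapsto \partial_v W$, defines a semi-simplicial map
$$r\colon \on{Mock}_\partial(\xi,n)\to \on{Mock}(\xi,n-1),$$
using the collar condition (iv) to induce the $\xi$-structure. Its fibre over the empty base point is $\on{Mock}(\xi,n)$, via the inclusion of closed manifolds lying in $\Delta^p\times\bR^\infty\times(0,\infty)$. Three claims then need to be checked.

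\textbf{(a)} The map $r$ is a Kan fibration. Lifting horns proceeds as in the Kan property proof for $\on{Mock}(\xi,n)$. First thicken the horn data along the faces by $\varepsilon$-collars and along $\partial_v$ by the $\delta$-collar; these collars are compatible by conditions (iii) and (iv). Then use the isotopy $\Delta^p\hookrightarrow\Lambda^{p,j}(\varepsilon)$, now extended over the prescribed vertical boundary simplex.

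\textbf{(b)} The total space $\on{Mock}_\partial(\xi,n)$ is contractible. We show that every map $\partial\Delta^{p}_{\on{inj},\bullet}\to\on{Mock}_\partial(\xi,n)$ extends over $\Delta^p_{\on{inj},\bullet}$. The extension is the ``coning/pushing off to infinity'' trick: take the product $W\times[0,\infty)$ in an extra coordinate, reparametrized so that its part over the interior of the simplex is translated to infinity in the $[0,\infty)$ direction. Equivalently, a nullhomotopy is given by stretching the collar.

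\textbf{(c)} The map $r$ is surjective on $\pi_0$, or at least every component of $\on{Mock}(\xi,n-1)$ is hit. This holds because any $W$ is the vertical boundary of the collar $W\times[0,1)$, pushed off to infinity.

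Given (a)--(c), the long exact sequence of the fibration, or the standard identification of the fibre of a fibration with contractible total space, gives $\on{Mock}(\xi,n)\simeq\Omega\on{Mock}(\xi,n-1)$ after realization. The realization step uses that realization of a Kan fibration of Kan semi-simplicial sets is a quasifibration (\cite{Ebert:2019}). For negative $n$ nothing changes, since the definitions make sense for all $n\in\bZ$ and only the dimension bookkeeping shifts.

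The main obstacle is step (b), together with checking that the fibre over the base point really identifies with $\on{Mock}(\xi,n)$. Both require controlling the collars and the $\xi$-structures (the microbundle trivializations) under reparametrization of the $[0,\infty)$ coordinate in the topological, locally flat setting. Here we rely on topological transversality \cite{Kirby:1977} and the mock bundle results of \cite[Ch.~II, IV]{Buoncristiano:1976} rather than redoing them. As a consistency check, the resulting homotopy groups $\pi_i(\|\on{Mock}(\xi,n)\|)\cong\Omega_{n+i}(X,\xi)\cong\pi_{n+i}^s(T(\xi))$ agree with the Pontryagin--Thom description. This identifies $\on{Mock}(\xi)$ with the Thom spectrum up to weak equivalence, although the theorem does not require that identification.
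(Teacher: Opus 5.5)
Your architecture matches the paper's: the restriction $r\colon \on{Mock}_\partial(\xi,n)\to\on{Mock}(\xi,n-1)$, a Kan fibration by thickening the horn together with the vertical collar, fibre $\on{Mock}(\xi,n)$ over $\emptyset$, contractible total space, then Corollary \ref{cor:fivefour}. The way you obtain contractibility, however, does not work as written.

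\textbf{Claim (c) is false.} It fails whenever $\Omega_{n-1}(X,\xi)\neq 0$. A vertex of $\on{Mock}_\partial(\xi,n)$ is a \emph{compact} $\xi$-manifold $W$ whose whole boundary is $\partial_v W$. So $r(W)=\partial_v W$ is null-bordant, and $r$ only hits the component of $\emptyset$ in $\pi_0\on{Mock}(\xi,n-1)\cong\Omega_{n-1}(X,\xi)$. Your witness $W\times[0,1)$ is not compact, and $W\times[0,1]$ has the extra boundary $W\times 1$ off the wall, so neither is a simplex. In fact (b) and (c) together would force $\Omega_{n-1}(X,\xi)=0$.

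Claim (c) is also unnecessary. For a Kan fibration with contractible total space, the fibre over the base point is the loop space of the base at that point, whatever happens on $\pi_0$.

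\textbf{The same non-compact intuition undermines (b).} A compact manifold cannot be translated off to infinity in finite parameter time. Translating $W$ upward detaches $\partial_v W$ from $\Delta^p\times\bR^\infty\times 0$, violating condition (iv). If you keep it attached by adding $\partial_v W\times[0,c]$, the simplex never becomes empty.

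\textbf{The repair is the opposite move: push $W$ down through the wall.} Over a cone parameter $t\in[0,1)$, replace $W$ by $\{(x,s)\colon s\geq 0,\ (x,s+c(t))\in W\}$, with $c\equiv 0$ near $t=0$, $c$ increasing, and $c(t)\to\infty$. The part of $W$ that has crossed the wall becomes vertical boundary, and by compactness the result is empty near the cone point. Note that the vertical collar must be re-straightened, since $W$ is not a product near intermediate heights; the face collars need similar adjustment. After that, this cones off any $\partial\Delta^p\to\on{Mock}_\partial(\xi,n)$, either radially or via a face-compatible cone operator plus the Kan property, which gives (b).

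\textbf{Comparison with the paper.} With this repair your route genuinely differs from the paper's. The paper never constructs a contraction. It identifies the homotopy groups of base and fibre with $\Omega_{n+i}(X,\xi)$ via Corollary \ref{cor:fivefour}. It then observes that the connecting map $\pi_i(\on{Mock}(\xi,n))\to\pi_{i-1}(\on{Mock}(\xi,n+1))$ is the identity: a closed manifold lifts as the vertical boundary of a collar, bent so that its free end becomes the last face. The long exact sequence then forces $\on{Mock}_\partial$ to be contractible.

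Both arguments rest on essentially the same bending move. Your version gets the loop-space equivalence without using the bordism identification, at the cost of collar bookkeeping for a coherent null-homotopy. The paper only needs the move on individual representatives.
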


\begin{proof} The following argument is analogous to that in \cite[Section 4]{Randal-Williams:2024}.
	One can first check as above that $\on{Mock}_\partial(\xi, n+1)$ is a Kan semi-simplicial set. Restricting along the inclusion $\Delta^p \times \mathbb{R}^\infty \times 0 \hookrightarrow \Delta^p \times \mathbb{R}^\infty \times [0, \infty)$ defines a map of semi-simplicial sets
	$$\on{Mock}_\partial (\xi,n+1) \to \on{Mock}(\xi, n)$$
	which is a Kan fibration with fibre $\on{Mock}(\xi, n+1)$. To see this, fix a horn $\Lambda^{p,j} \to \on{Mock}_{\partial}(\xi,n+1)$ whose restriction in $\on{Mock}(\xi,n)$ extends to a $p$-simplex $\Delta^{p} \to \on{Mock}(\xi,n)$ which we denote by a pair $(W,\hat{f})$.

    Write $(Z_i,\wF_i)$, for $i\neq j$, for the compatible $(p-1)$-simplices defining the horn. Thus $Z_i \subset \Delta_i^{p-1} \times \mathbb{R}^\infty \times[0,\infty)$, and their vertical boundaries agree with the corresponding faces of the prescribed simplex $(W,\hat{f})$; that is, $\bd_v Z_i=\bd_i W$. Choose $\varepsilon,\delta>0$ sufficiently small so that all horizontal and vertical collar conditions hold simultaneously. We first thicken the horn in the simplex direction by setting
    $$
    Z_{\Lambda}(\varepsilon) = \bigcup_{i\neq j} \bigl( \pi_i(\varepsilon) \times \id_{\mathbb{R}^{\infty}} \times \id_{[0,\infty)} \bigr)^{-1}(Z_i).
    $$
    We then combine this horizontal thickening with the vertical collar $W \times [0,\delta) \subset \Delta^p \times \mathbb{R}^{\infty} \times [0,\delta)$. These manifolds agree on their overlaps. Indeed, for each $i\neq j$, the vertical collar condition on $Z_i$ gives 
    $$
    Z_i \cap \bigl( \Delta_i^{p-1} \times \mathbb{R}^{\infty} \times [0,\delta) \bigr) = \bd_v Z_i \times [0,\delta) = \bd_i W \times [0,\delta).
    $$
    It follows that
    $$
    \bigl( \pi_i(\varepsilon) \times \id_{\mathbb{R}^{\infty}} \times \id_{[0,\infty)} \bigr)^{-1}(Z_i) \cap
    \bigl( \Delta_i^p (\varepsilon) \times \mathbb{R}^{\infty} \times [0,\delta) \bigr) = \bigl( (\pi_i(\varepsilon)\times\id_{\mathbb{R}^{\infty}})^{-1} (\bd_i W) \bigr) \times [0,\delta).
    $$
    By the horizontal collar condition on $W$, this is equal to $(W \cap (\Delta_i^p (\varepsilon) \times \mathbb{R}^{\infty}) \bigr) \times [0,\delta)$ which is precisely the corresponding part of the vertical collar $W \times [0,\delta)$.
    Consequently, they fit together to give a $\xi$-structured manifold over $(\Lambda^{p,j}(\varepsilon) \times \mathbb{R}^{\infty} \times [0,\infty)) \cup ( \Delta^p \times \mathbb{R}^{\infty} \times [0,\delta))$. The reference map is obtained by pulling back the maps $\wF_i$ on the horn part and by using $\hat{f} \circ \pi_W$ on $W \times [0,\delta)$. The collar conditions ensure that these definitions agree on the overlaps. 
    
    Choose an embedding 
    $$
    r=(r_1,r_2) \colon \Delta^p \times[0,\infty) \longrightarrow \bigl(\Lambda^{p,j}(\varepsilon) \times [0,\infty) \bigr) \cup \bigl(\Delta^p \times [0,\delta) \bigr)
    $$
which is the identity on a neighborhood of $\Lambda^{p,j} \times [0,\infty)$ and of $\Delta^p \times \{0\}$. Pulling the structured manifold back along $(r_1,\id_{\mathbb{R}^\infty},r_2)$ produces the required horn filler. Thus, every horn lifts, and the restriction map is a Kan fibration.

	Observe that the fibre over the empty manifold is isomorphic to $\on{Mock}(\xi, n+1)$. In the long exact sequence of this fibration, the connecting map
	$$\pi_i(\|\on{Mock}(\xi, n)\|) \to \pi_{i-1}(\|\on{Mock}(\xi, n+1)\|)$$ is an isomorphism. This uses the domain and target identification with the Lashof bordism groups $\Omega_{i+n}(X, \xi)$ in Corollary \ref{cor:fivefour}, and that the induced map between these identifications is the identity. In other words, the connecting map takes a closed  $\xi$-structured manifold to the same $\xi$-structured manifold, viewed as the vertical boundary of a collar. Thus, it is identity.  It follows that
		$\on{Mock}_\bd(\xi,n+1)$ is a contractible Kan semi-simplicial set and hence that there is a (weak) homotopy equivalence
	$$\Omega \|\on{Mock}(\xi, n)\| \xrightarrow{\simeq} \|\on{Mock}(\xi, n+1)\|.$$
	The result follows by passing to homotopy groups. In other words, these equivalences define the adjoint structure maps of an $\Omega$-spectrum whose $(\infty+n)$-space is weakly equivalent to $\|\on{Mock}(\xi, n)\|$. Corollary \ref{cor:fivefour} identifies its homotopy groups with the Lashof bordism groups.
\end{proof}

\begin{remark}
	An alternate approach to constructing the space-level models for the Lashof-type bordism theory can be based on \cite{Quinn:1995} or  \cite{Laures:2014}.
\end{remark}

\subsection{Construction of the square}
\label{subsection:semi-simplicial-set-and-construction-of-square}

Let  $M^n \subset \bbR^\infty$ be a closed, connected, topological $n$-manifold, $n \geq 4$,  with a preferred (local) orientation and stable normal microbundle $\nu_M \colon M \to BTOP$.
We have constructed two fibrations
$\xi_\ell\colon  \Ml \to BTOP$ and  $ \tilde{\xi}_\ell\colon  \Bl \to BTOP $, for $\ell \geq 2$,
by adapting the setting in Section \ref{sec:three} from  smooth to topological manifolds, and setting $Y = BTOP$ in diagram \eqref{diag:twotwo}.

\begin{definition}[The vertical maps]\label{def:kan_maps_two}
	The map of semi-simplicial sets
	$$\alpha\colon  \heqpt{\Ml} \to \on{Mock}(\xi_\ell, n)$$
	is defined by mapping a face-collared singular $d$-simplex $x_d\colon  \Delta^d \to \heqpt{\Ml}$ to its adjoint map $\hat x_d\colon  \Delta^d \times M \to \Ml$, which gives a $d$-simplex $(\Delta^d \times M,\hat{x}_d)$ in $\on{Mock}(\xi_\ell, n)$. 
	 To see that this map is well-defined, note first that $\Delta^d \times M$ is already a product near every face of $\Delta^d$. 
	
	Secondly, the face-collar condition on the simplex $x_d$ implies that the $\xi_{\ell}$ structure on $\bd_i^{\varepsilon}(\Delta^d \times M)$ given by $\hat{x}_d$ is pulled back from its restriction to $\bd_i(\Delta^d \times M)= \Delta^{d-1}_i \times M$ for each $i$. 
	
	Lastly, the stable normal microbundle of the embedding $\Delta^d \times M \subset \Delta^d \times \mathbb{R}^{\infty}$ is $p_M^* (\nu_M)$ which is in agreement with the fact that every $x_d(t) \in \heqpt{\Ml}$ satisfies $\xi_{\ell} \circ \hat{x}_d = \nu_M \circ p_M$ where $p_M \colon \Delta^d \times M \to M$ is the projection. 
	The map $$\beta\colon  \heqpt{\Bl} \to \on{Mock}(\wxi_\ell, n)$$ is defined similarly. 
\end{definition}

The induced map $\alpha_0 \colon \pi_0( \heqpt{\Ml}) \to \pi_0(\on{Mock}(\xi_\ell, n))$ is given by sending a self-equivalence $\hat f \colon M \to \Ml \in \heqpt{\Ml}$ to the bordism element $\alpha(\hat f) = (M, \hat f) \in \on{Mock}(\xi_\ell, n)$. We observe that there is a degree map  $\deg\colon \pi_0(\on{Mock}(\xi_\ell,n)) \to \bZ$,  defined for a $0$-simplex $\hat f \colon W^n \to \Ml$  by $\deg(\hat f) = \deg(f\colon W^n \to M)$, where $f= j \circ \hat f$ (see Diagram \ref{diag:threezero}).
Since any $\hat f \colon M \to \Ml \in \heqpt{\Ml}$ is locally orientation-preserving and degree is invariant under $\xi_{\ell}$-bordisms, the image of $\alpha_0$ is contained in components of degree one. This leads us to the following definition.

\begin{definition}
	\label{def:restricted-infty-plus-n-bordism-spaces}
	Let $\wMock(\xi_\ell, n)$ be the sub-semi-simplicial set of $\on{Mock}(\xi_\ell, n)$ consisting of all simplices whose vertices are of degree one. 
		\end{definition}

It follows that the map $\alpha$ restricts to a map
$$\alpha\colon  \heqpt{\Ml} \to \widehat{\on{Mock}}(\xi_\ell, n).$$
Since filling a horn cannot leave the connected component, $\widehat{\on{Mock}}(\xi_\ell, n)$ is a Kan semi-simplicial set.

\begin{definition}[The horizontal maps]\label{def:kan_maps_one}
	The maps of semi-simplicial sets
	$$\wpp_\ell\colon \heqpt{\Ml} \to \heqpt{\Bl} \text{ and }
	\wqq_\ell\colon \widehat{\on{Mock}}(\xi_\ell, n) \to \on{Mock}(\wxi_\ell, n)$$
	are defined as follows.	
	For the first map, we recall from Definition \ref{def:thickaut} and Definition \ref{def:thickautB} that the space $\heqpt{\Ml}$ is a subspace of $\on{Map}_\bullet(M, \Ml)$, and similarly $\heqpt{\Bl} \subseteq \on{Map}_\bullet(M, \Bl)$. Post-composition with $\tilde{p}_\ell\colon  \Ml \to \Bl$ yields a map of spaces $\heqpt{\Ml} \to \heqpt{\Bl}$.

	\begin{enumerate}
		\item The first map $\wpp_\ell$ is now given by post-composing a face-collared singular simplex $x_d\colon \Delta^d \to \heqpt{\Ml}$ with this map to obtain $\tilde{p}_\ell(x_d) \colon \Delta^d \to \heqpt{\Bl}$.
		\item The second map $\wqq_\ell$ is the map $\on{Mock}(\xi_\ell, n) \to \on{Mock}(\tilde \xi_\ell, n)$ induced by post-composing the $\xi_\ell$-structures $\hat{f}\colon W \to \Ml$ with $\tilde{p}_\ell\colon  \Ml \to \Bl$.
	\end{enumerate}
\end{definition}

\begin{lemma}
	\label{lem:semi-simplicial-model-main-square}
	The following square of Kan semi-simplicial sets commutes.
	\begin{center}
		\begin{tikzcd}
			\heqpt{\Ml} \arrow[d, "\alpha"] \arrow[rr,"\wpp_\ell"] && \heqpt{\Bl} \arrow[d, "\beta"]\\
			\widehat{\on{Mock}}(\xi_\ell, n) \arrow[rr, "\wqq_\ell"] & & \on{Mock}(\wxi_\ell, n)
		\end{tikzcd}
	\end{center}
\end{lemma}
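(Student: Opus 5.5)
The plan is a direct check on $d$-simplices. All four maps are defined simplexwise by explicit formulas, so I would show that both composites send a face-collared singular $d$-simplex $x_d\colon \Delta^d \to \heqpt{\Ml}$ to the same $d$-simplex of $\on{Mock}(\wxi_\ell, n)$. Then I would note that both composites commute with the face maps $d_i$; this is automatic, since each of the four maps is a map of semi-simplicial sets. Before that, I need two preliminary facts: the four maps are well defined, and $\wpp_\ell$ lands in face-collared simplices. The first is recorded in Definitions \ref{def:kan_maps_two} and \ref{def:kan_maps_one}. For the second, post-composing a face-collared simplex with the continuous map $\heqpt{\Ml} \to \heqpt{\Bl}$ induced by $\tilde p_\ell$ preserves the collar identity $x_d|_{\Delta^d_i(\varepsilon)} = x_d|_{\Delta^{d-1}_i}\circ \pi_i(\varepsilon)$. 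I would also check that $\tilde p_\ell \circ \hat f$ does lie in $\heqpt{\Bl}$. This holds because $\wxi_\ell \circ \tilde p_\ell = \xi_\ell$, so the normal-bundle condition is preserved. Moreover, $j\circ \tilde p_\ell \circ \hat f = c_\ell \circ f$ is an $(\ell+1)$-equivalence, since $f$ is a homotopy equivalence and $c_\ell$ is $(\ell+1)$-connected.

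Next comes the computation. Going down then across, $\alpha(x_d) = (\Delta^d \times M, \hat x_d)$, where $\hat x_d(t,m) = x_d(t)(m)$ is the adjoint. Then $\wqq_\ell$ post-composes the structure map with $\tilde p_\ell$, giving $(\Delta^d\times M, \tilde p_\ell\circ \hat x_d)$. Going across then down, $\wpp_\ell(x_d)$ is the simplex $t\mapsto \tilde p_\ell\circ x_d(t)$, and $\beta$ sends it to its adjoint $(t,m)\mapsto \tilde p_\ell(x_d(t)(m))$ on the same submanifold $\Delta^d \times M\subset \Delta^d\times\bR^\infty$. The two underlying manifolds are literally equal, and the two reference maps agree pointwise. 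The remaining datum of a $\wxi_\ell$-structure in Definition \ref{def:fiveone} is the microbundle trivialization of $\tau_W\oplus \hat f^*(\wxi_\ell)$. In both cases this is induced from the trivialization for $\xi_\ell$ via the identity $\xi_\ell = \wxi_\ell\circ\tilde p_\ell$, so it also agrees. Finally, the composite through $\wMock(\xi_\ell,n)$ makes sense because $\alpha$ factors through the degree-one part, as observed before Definition \ref{def:restricted-infty-plus-n-bordism-spaces}.

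The only point needing any care is the bookkeeping of the structure data, rather than the maps themselves. One must confirm that the $\xi$-structure in $\on{Mock}$ is determined by the reference map together with the canonical normal data of the product embedding. Otherwise one must check that $\wqq_\ell$ is defined to transport the trivialization exactly as $\beta$ constructs it. I would handle this by fixing, once and for all, the convention that a $\xi_\ell$-structure pushes forward along $\tilde p_\ell$ through the strict equality $\wxi_\ell\circ\tilde p_\ell=\xi_\ell$ of Lemma \ref{lem:normalk-type}. With that convention the square commutes strictly, not merely up to homotopy.
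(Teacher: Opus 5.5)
Your proposal is correct and follows the paper's proof: both composites send a $d$-simplex $x_d$ to $(\Delta^d\times M,\tilde p_\ell\circ \hat x_d)$, because taking adjoints commutes with post-composition by $\tilde p_\ell$. Your additional checks are also correct, though the paper treats them as part of the definitions of the maps. These are that post-composition with $\tilde p_\ell$ lands in face-collared simplices of $\heqpt{\Bl}$, and that the structure data are transported via the strict identity $\wxi_\ell\circ\tilde p_\ell=\xi_\ell$.
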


\begin{proof}
	This follows directly from the definitions of the maps, as follows.  On a $d$-simplex $x_d$ of $\heqpt{\Ml}$ we have 
    $$
    \tilde{q}_{\ell} \circ \alpha (x_d) = (\Delta^d \times M, \tilde{p}_{\ell} \circ \hat{x}_d), \quad \text{and} \quad \beta \circ \tilde{p}_{\ell} (x_d) = (\Delta^d \times M, \widehat{\tilde{p}_{\ell}\circ x_d}).
    $$ 
    These reference maps agree because taking adjoints commutes with postcomposition.
\end{proof}

We base the lower semi-simplicial sets at the point subcomplexes
$$
(\Delta^d \times M,\hatid_M \circ p_M)_{d\geq0} \quad \text{and} \quad
(\Delta^d \times M,\hat{\nu}_\ell \circ p_M)_{d\geq0},
$$
respectively. With these choices, the square is a square of pointed Kan semi-simplicial sets. 

The square of spaces appearing in Theorem A is the commutative square of spaces obtained by geometrically realizing the square in the preceding lemma. In particular, we define the bordism spaces appearing in it as follows.

\begin{definition}\label{def:sixthirteen}
 Let $\Omega^{\infty + n}Q(\xi_\ell) \coloneqq  \|\on{Mock}(\xi_\ell,n)\|$, $\widehat{\Omega}^{\infty + n}Q(\xi_\ell) \coloneqq  \|\widehat{\on{Mock}}(\xi_\ell,n)\|$ and $\Omega^{\infty + n}Q(\tilde\xi_\ell) \coloneqq  \|\on{Mock}(\tilde\xi_\ell,n)\|$. \end{definition}

\begin{remark}\label{rem:sixfourteen}
	Since degree is additive under disjoint union, the set
	$\pi_0(\widehat{\Omega}^{\infty + n}Q(\xi_\ell)) \cong \wOmega_{n}(\Ml)$ of components consisting of elements of degree one   is not a subgroup. Hence $\alpha_0 \colon \pi_0( \heqpt{\Ml}) \to \wOmega_{n}(\Ml)$ is just a map of pointed sets and not a group homomorphism. For $k \geq 1$, the induced map
	$\alpha_k \colon \pi_k( \heqpt{\Ml}) \to \pi_k( \wOmega^{\infty+n} Q(\xi_\ell)) \cong \wOmega_{n+k}(\Ml)$  is a group homomorphism, which can be expressed directly by the formula
	$$\alpha_k([u \colon S^k \to \heqpt{\Ml}]) = [\hat u \colon S^k \times M  \to \Ml],$$
    where $\hat{u}(s,m) = u(s)(m)$. Here $u$ is based  at the element $\hatid_M$, and the stable parallelizability of $S^k$ gives
$\nu_{S^k \times M} \simeq p_M^* \nu_M$, so $\hat{u}$ carries the required $\xi_\ell$-structure. Since $\heqpt{\Ml}$ is a group-like $H$-space, translation identifies the homotopy types of their connected components based at $\alpha(\hat m)$, for $\hat m \in \heqpt{\Ml}$.  A similar discussion holds for the map $\beta_k \colon \pi_k( \heqpt{\Bl}) \to \Omega_{n+k}(\Bl)$, where there is no component restriction.
\end{remark}

 Note that the higher homotopy groups are \emph{independent of the base point}, and hence $\wOmega_{n+k}(\Ml) \cong \Omega_{n+k}(\Ml)$ and $\wOmega_{n+k+1}(\Bl,\Ml) \cong 
\Omega_{n+k+1}(\Bl, \Ml)$, for $k\geq1$.

\subsection{Semi-simplicial models of vertical homotopy fibres and their homotopy groups}
In this section, we describe Kan semi-simplicial set models for the homotopy fibres of the two vertical maps in the square in Lemma \ref{lem:semi-simplicial-model-main-square} and identify their homotopy groups with the bordism groups described in Section \ref{sec:the-bordism-group-htildemk}.

\begin{definition}\label{def:homfibre}
	A Kan semi-simplicial model for the homotopy fibre of $\alpha$ over a base point  $\alpha(\wm) = (M, \wm)$, where $\wm \colon  M \to \Ml \in \heqpt{\Ml}$, is the semi-simplicial set $\htilde{\alpha, \wm}$ which has as its set of $k$-simplices
	$$\htilde{\alpha, \wm}_k = \{(x_k,\hat{y}_{k+1}) \in \heqpt{\Ml}_k \times \widehat{\on{Mock}}(\xi_\ell, n)_{k+1}  \vv \alpha(x_k) = d_{k+1} \hat{y}_{k+1}; (d_0)^{k+1} \hat{y}_{k+1} = \alpha(\wm)\}.$$ 	
	In other words, a $k$-simplex consists of a (face-collared) singular $k$-simplex
	$ x_k\colon  \Delta^k \to \heqpt{\Ml}$ and a $(k+1)$-simplex $\hat y_{k+1} \in \on{Mock}(\xi_\ell, n)$,  such that the $(k+1)$-th face of $\hat{y}_{k+1}$ is $\alpha(x_k)$, and the $(k+1)$-th vertex of
	$\hat y_{k+1}$ is $\alpha(\wm)$.  The face maps are $d_i(x_k, \hat{y}_{k+1})=(d_i x_k, d_i \hat{y}_{k+1})$ for $0 \leq i \leq k$. The point subcomplex (base point) in degree $k$ is $(\wm_k, (\Delta^{k+1} \times M, \wm \circ p_M))$ where $\wm_k$ is the constant $k$-simplex at $\wm$.  
    Similarly we define $\htilde{\beta, \wb}$ for a base point
	$\beta(\wb) = (M, \wb)$, where $\wb \colon  M \to \Bl \in \heqpt{\Bl}$.
\end{definition}

We can now prove the following lemma, which gives a geometric interpretation of the homotopy fibres $\htilde{\alpha}$ and $\htilde{\beta}$ in terms of the bordism groups defined in Section \ref{sec:the-bordism-group-htildemk}.

\begin{lemma}\label{lemma:geometric-homotopy-fiber}
	For each $k \geq 1$, $\wm \in \heqpt{\Ml}$, $\wb = \wpp_\ell(\wm) \in \heqpt{\Bl}$ there are natural isomorphisms
	$$\pi_{k}(\htilde{\alpha, \wm}) \xrightarrow{\cong} \htildek{k}{\Ml, \wm} \text{ and } \pi_{k}(\htilde{\beta, \wb})  \xrightarrow{\cong} \htildek{k}{\Bl, \wb}$$
	given by mapping the element represented by the $k$-simplex $(x_k,\hat y_{k+1})$ of the domain to the element of the codomain represented by
	$\hat y_{k+1}$.
	
	For $k=0$ there are natural bijections of pointed sets $\pi_{0}(\htilde{\alpha, \wm}) \xrightarrow{\cong} \htildek{0}{\Ml,\wm}$  and similarly for $\beta$.  
		\end{lemma}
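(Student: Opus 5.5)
The plan is to make the map on representatives explicit, check that it respects the relations on both sides, and then construct an inverse by straightening the boundary data. Let $(x_k,\hat y_{k+1})$ represent a class in $\pi_k(\htilde{\alpha,\wm})$. Then all faces $d_i(x_k,\hat y_{k+1})$, $0\le i\le k$, equal the base point simplex. So $x_k$ is a face-collared singular $k$-simplex, constant at $\wm$ on $\partial\Delta^k$. Also $\hat y_{k+1}=(W,\wF)$ is a $\xi_\ell$-manifold in $\Delta^{k+1}\times\bR^\infty$ with these faces:
\begin{itemize}
\item[(a)] the faces $d_i$, $i\le k$, are the products $(\Delta^k\times M,\wm\circ p_M)$;
\item[(b)] the $(k+1)$-th face is $(\Delta^k\times M,\hat x_k)$.
\end{itemize}

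Using the face collars, I will first show that $W$ is a compact $(n+k+1)$-manifold whose boundary is $\partial\Delta^{k+1}\times M\cong S^k\times M$, after rounding corners. I decompose this boundary as follows. The $(k+1)$-th face $\Delta^k\times M$ is identified with $D^k_+\times M$. The union of the other faces, a disk glued along $\partial\Delta^k\times M$, is identified with $D^k_-\times M$. Condition (a) gives $\wF|_{D^k_-\times M}=\wm\circ p_M$. Condition (b) gives that $\wF|_{\{x_+\}\times M}=x_k(x_+)\in\heqpt{\Ml}$ for every $x_+$. The normal-structure condition $\xi_\ell\circ\wF=\nu_W$ is built into $\on{Mock}$. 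Hence $(W,\wF)$ is a representative of $\htildek{k}{\Ml,\wm}$ as in Definition \ref{def:the-bordism-group-htildemk}, and the map is the one stated.

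Next I check well-definedness. A simplicial homotopy between two representatives is a $(k+1)$-simplex $(x_{k+1},\hat y_{k+2})$ of $\htilde{\alpha,\wm}$ with faces $d_k$, $d_{k+1}$ the two representatives and the other faces at the base point. The $(k+2)$-face of $\hat y_{k+2}$ is $\alpha(x_{k+1})$. After rounding corners, $\hat y_{k+2}$ is a $\xi_\ell$-manifold $Y$ with boundary $W\cup(\text{cylinder})\cup -W'$. Here the cylinder $\Delta^{k+1}\times M$ carries $\hat x_{k+1}$, which supplies the homotopy $\hat h$ through elements of $\heqpt{\Ml}$. It is constant at $\wm$ on the part corresponding to $D^k_-$. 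This is exactly a bordism in the sense of Definition \ref{def:the-bordism-group-htildemk}. For the group structure when $k\geq1$, I will compare with Section \ref{subsec:groups}. The simplicial product fills a horn, and the pair-of-pants construction realizes that same horn filling up to bordism. So the map is a homomorphism; for $k=0$ I only need a pointed bijection.

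For surjectivity, take a pair $(W,\wF)$ with $\partial W\cong S^k\times M$. I use a collar of $\partial W$ to reshape $W$ into a submanifold of $\Delta^{k+1}\times\bR^\infty$ meeting the faces as required. I make $\wF$ constant in the collar directions so that the face-collared conditions (iii) hold. The restriction to $D^k_+\times M$, after a reparametrization $D^k_+\cong\Delta^k$ collapsing a collar of the boundary to $\wm$, gives a face-collared $x_k$ with $\alpha(x_k)=d_{k+1}\hat y_{k+1}$. For injectivity, given a bordism $(Y,\hat h)$ to the base point representative $(D^{k+1}\times M,\wm\circ p_M)$, I straighten $Y$ in the same way into a $(k+2)$-simplex of $\on{Mock}$. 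Together with $\hat h$ reparametrized as a face-collared $(k+1)$-simplex of $\heqpt{\Ml}$, this is a null-homotopy. Here I use the Kan property (Lemma on $\on{Mock}$ and face-collared singular sets) to compare with the definition of $\pi_k$ via the simplicial relation.

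The case $\beta$ is identical with $\Bl$, $\wxi_\ell$ and $\wb$. For $k=0$ the argument is the same: a $0$-simplex is $(x_0,\hat y_1)$, with $\hat y_1$ a bordism $W$ from $(M,\wm)$ to $(M,x_0)$, and paths correspond to bordisms over $M\times I$. The main obstacle is the straightening step, where an abstract manifold with corners and boundary data is turned into a simplex satisfying the collar and transversality conditions of Definition \ref{def:fiveone}. I would handle it as in the Kan-property proof: apply an isotopy of $\Delta^{k+1}$ into a collared neighbourhood, as in \cite[Remark 2.2.2]{Hebestreit:2021}.
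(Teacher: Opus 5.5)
Your route is the paper's: the same map on representatives, the same reading of a simplicial homotopy as a bordism in the sense of Definition \ref{def:the-bordism-group-htildemk}, and surjectivity/injectivity by straightening manifolds into collared, transverse simplices of $\on{Mock}(\xi_\ell,n)$.

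\textbf{The gap is in injectivity.} You only treat a bordism from a representative to the base-point representative $(D^{k+1}\times M,\wm\circ p_M)$. That shows only that the preimage of the base point is trivial. For $k\geq 1$ this gives injectivity only once the map is known to be a homomorphism. That in turn rests on your one-line assertion that the pair-of-pants construction ``realizes the same horn filling up to bordism''. For $k=0$ it gives nothing. Here $\pi_0(\htilde{\alpha,\wm})$ is a priori just a pointed set, and a map of pointed sets with trivial preimage of the base point need not be injective. Yet you declare the $k=0$ case ``the same''.

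\textbf{The repair is what the paper does.} Take two arbitrary representatives $(x_k,\hat y_{k+1})$ and $(x'_k,\hat y'_{k+1})$, and a bordism $(\hat h,Y,\wG)$ between $\hat y_{k+1}$ and $\hat y'_{k+1}$. Straighten $Y$ into a $(k+2)$-simplex $\hat y_{k+2}$ whose faces are:
\begin{itemize}
\item $d_k\hat y_{k+2}=\hat y_{k+1}$ and $d_{k+1}\hat y_{k+2}=\hat y'_{k+1}$;
\item $d_i\hat y_{k+2}$ the base point for $i<k$;
\item $d_{k+2}\hat y_{k+2}=\alpha(x_{k+1})$, where $x_{k+1}$ is a face-collared reparametrization of the adjoint of $\hat h|_{D^k_+\times M\times I}$.
\end{itemize}
Then $(x_{k+1},\hat y_{k+2})$ is a simplicial homotopy between the two representatives. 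This gives bijectivity for every $k\geq 0$ without any appeal to group structures. The homomorphism property for $k\geq 1$ then becomes a separate compatibility statement.

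\textbf{Two smaller omissions.}
\begin{itemize}
\item In the surjectivity step, you should say why the straightened pair is bordant to the original one. The paper uses isotopy extension to produce a cylinder $W\times[0,1]$ carrying the homotopy of reference maps.
\item In the well-definedness step, the faces of $\hat y_{k+2}$ other than $d_k$ and $d_{k+1}$ form a disk, not an annulus. So the common corner $d_kd_{k+1}\hat y_{k+2}$ must be truncated before the boundary reads $W\cup(S^k\times M\times I)\cup -W'$.
\end{itemize}
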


\begin{proof}  We present the argument for the first isomorphism. The proof for the  second is similar, and will be left to the reader.

    \smallskip
    \noindent \textbf{The map is well-defined.}     
    For the case $k \geq 1$, let $(x_k,\hat{y}_{k+1}) \in  \htilde{\alpha, \wm}_k$ represent an element of $\pi_k(\htilde{\alpha, \wm})$, so that all its faces are the base point. Write $\hat{y}_{k+1}=(W^{n+k+1},\wF)$. Then
    $$
    d_i \hat{y}_{k+1}  = (\Delta^k \times M,\wm \circ p_M) \quad 0 \leq i \leq k, \qquad \text{and} \quad d_{k+1} \hat{y}_{k+1}  = \alpha(x_k)  = (\Delta^k \times M,\hat{x}_k),
    $$
    where $\hat{x}_k$ is the adjoint of $x_k$. Let $D_+^k = d_{k+1} \Delta^{k+1}$ and $D_-^k = \cup_{i=0}^{k} d_i\Delta^{k+1}$. Under the standard identification $\bd \Delta^{k+1} = D_-^k \cup D_+^k \cong S^k$, the face data identify $\bd W \cong S^k \times M$. The restriction of $\wF$ to $D_-^k \times M$ is $\wm \circ p_M$, while its restriction to $D_+^k \times M$ is $\hat{x}_k$. Thus $(W,\wF)$ satisfies the boundary conditions of Definition \ref{def:the-bordism-group-htildemk} and defines an element of $\htildek{k}{\Ml, \wm}$. 
    
    Suppose that two representatives are equivalent. By the description of semi-simplicial homotopy in Section \ref{subsec:semi-simplicial-sets-and-kan-condition}, there is a $(k+1)$-simplex $(x_{k+1},\hat{y}_{k+2}) \in \htilde{\alpha, \wm}_{k+1}$ whose $k$-th and $(k+1)$-st faces are the two representatives and whose earlier faces are the base point. Writing $\hat{y}_{k+2}=(Y,\widehat{G})$, the $k$-th and $(k+1)$-st faces give $W$ and $-W'$. After the standard truncation of their common corner and rounding corners, the remaining faces form $S^k \times M \times I$. On $D_-^k \times M\times I$ the map $\widehat{G}$ is the constant family $\wm$, while on $D_+^k \times M \times I$ it is adjoint to $x_{k+1}$, since $d_{k+2} \hat{y}_{k+2} = \alpha(x_{k+1})$. Hence $(Y,\widehat G)$ gives a bordism in the sense of Definition \ref{def:the-bordism-group-htildemk}, and the assignment depends only on the homotopy class.
    
     For $k=0$, recall that $\pi_0(\htilde{\alpha, \wm})$ is the set of vertices modulo the relation generated by edges. A vertex $(x_0,\hat{y}_1)$ satisfies $d_0 \hat{y}_1= \alpha(\wm)$ and $d_1 \hat{y}_1 = \alpha(x_0)$. 
    Thus, if $\hat{y}_1 = (W^{n+1},\wF)$, the two boundary components of $W$ carry respectively the prescribed structure $\wm$ and the structure determined by $x_0$. Hence $(W,\wF)$ defines an element of $\htildek{0}{\Ml,\wm})$. An edge $(x_1, \hat{y}_2)$ between two vertices gives, after rounding
corners, a bordism between the corresponding geometric cycles. Therefore the assignment is well-defined on $\pi_0$.

\smallskip
	\noindent
   The following surjectivity and injectivity arguments described for $k \geq 1$ also apply when $k=0$.

	\smallskip
	\noindent
	\textbf{The map is surjective:}  Let $(W, \wF)$ be a representative of an element $[(W, \wF)] \in \htildek{k}{\Ml, \wm}$, where $W \subset \bR^{q} \subseteq \bR^\infty$ is a compact $(n+k +1)$-submanifold with boundary. We need to produce a pair $(W^+, \wF^+)$ satisfying the conditions in Definition \ref{def:fiveone} that represents the same element as $(W, \wF)$ in $\htildek{k}{\Ml, \wm}$. To this end, the collaring theorem for topological manifolds \cite[Theorem 2]{Brown:1962} implies that there is a neighbourhood of $\bd W$ in $W$ homeomorphic to $\bd W \times [0,1]$. By using the collar coordinate, a bump function which is zero outside the collar, and the Whitney embedding theorem, we may construct a new, locally flat embedding $W^+ \subset  \Delta^{k+1} \times \bR^{q'}$ (for some $q' \gg 0$) of the manifold $W$ with the property that $\bd W^+ = W^+ \cap (\bd \Delta^{k+1} \times \bR^{q'})$.

    Using the chosen collar of $\partial W$, we first arrange that $W^+$ has the required product form near each face of $\bd \Delta^{k+1}$. We then apply the relative topological transversality theorem
    (see \cite[\S~9.6C]{Freedman:1990}, \cite[Essay~III, Theorem~1.1]{Kirby:1977}) successively to the faces
    $\Delta_i^k \times \mathbb{R}^{q'}$ for $0\leq i\leq k+1$. At each stage, the isotopy is chosen relative to a smaller boundary collar and relative to neighbourhoods of the faces already treated. Since there are only finitely many faces, the resulting embedding is topologically transverse to every $\Delta_i^k \times \mathbb{R}^{q'}$, while the previously arranged face intersections and product-collar structures remain unchanged. 
    
        If we choose $q''$ sufficiently large, the two embeddings $W$ and $W^+$ are isotopic in $\Delta^{k+1} \times \bR^{q''}$   and we can apply the isotopy extension theorem 
	\cite[Corollary 1.4]{Edwards:1971} to conclude that they are ambiently  isotopic in $\Delta^{k+1} \times \bR^{q''}$. This ambient isotopy produces a manifold $Y' \cong W \times [0,1]$ sitting in $\Delta^{k+1} \times \bR^{q''} \times [0,1]$ with boundary piece $W \times 0$ given by the original $W$, boundary piece $W \times 1$ given by the new embedding $W^+$, and remaining boundary $\partial W \times [0,1] \cong S^k \times M \times I$. 
	
	Furthermore, the map $\wF$ in the original pair $(W, \wF)$ extends along this ambient isotopy to a map $\wG'$ on $Y'$, and hence also induces a map $\wF'$ on $W^+$ as well as a homotopy $\hat h'$ on the $\partial W \times [0,1]$-part. Lastly, let $\wF^+$ denote a homotoped version of $\wF'$  (through $\xi_\ell$-structures) such that
	\begin{itemize}
		\item $\partial_i(\wF^+) = \wm \circ p_M|_{\Delta_i^k \times M}$ for all $i \neq k+1$;
		\item $\partial_{k+1}(\wF^+) = \hat{x}_k$ where $\hat x_k$ the adjoint of a face-collared singular simplex $x_k$ homotopic through $\xi_{\ell}$-structured families relative to boundary to $\wF|_{D^k_+\times M}$, where $D^k_+\times M \subset \partial W$ is the ``nontrivial'' boundary part in Definition \ref{def:the-bordism-group-htildemk};
		\item for any $i$ there exists some $\varepsilon > 0$ such that $\partial_i^{\varepsilon}(\wF^+)= \partial_i(\wF^+) \circ \pi_i(\varepsilon)$.
	\end{itemize}
	Let $h''$ denote the homotopy between $\wF'$ and $\wF^+$.
	By gluing
	 $Y' \cong W \times [0,1] \subseteq \Delta^{k+1} \times \bR^{q''} \times [0,1]$
and the embedded cylinder  $Y'' = W^+ \times [1,2] \subseteq
  \Delta^{k+1} \times \bR^{q''} \times [0,2]$ along $W^+ \times 1$, we obtain a manifold $Y$. By extending the function $\wG'$ on $Y'$ with $h''$ on $Y''$, we obtain a function $\wG$ on $Y$. By restricting $\wG$ to the $\partial W \times [0,1]$-part of $Y$, we obtain a homotopy $\hat h$. By construction the pair $(Y, \wG)$ now bounds the element
	$$
	[(W \cup_{\partial W= S^k \times M \times \{0\}} (S^k \times M \times I) \cup_{\partial W' = S^k \times M \times \{1\}}
	   -W^+, \wF \cup \hat{h} \cup \wF^+)] \in \Omega_{n+k+1}(\Ml ),
	$$
	proving that $(W, \wF)$ and $(W^+, \wF^+)$ represent the same element in $\htildek{k}{\Ml, \wm}$. Moreover, $(W^+, \wF^+)$ defines a $k$-simplex $(x_k, y_{k+1})$ in $\htilde{\alpha,\wm}$; $x_k$ is the face-collared singular $k$-simplex appearing in the construction of $\wF^+$ and $y_{k+1}$ is $(k+1)$-simplex of $\on{Mock}(\xi_\ell, n)$ defined by $(W^+, \wF^+)$. This simplex $(x_k, y_{k+1})$ represents an element in $\pi_{k}(\htilde{\alpha, \wm})$ that is mapped to $[(W, \wF)] = [(W^+, \wF^+)] \in \htildek{k}{\Ml, \wm}$, proving surjectivity.
	
	\smallskip
	\noindent
	\textbf{The map is injective:} If $(x_k, \hat y_{k+1})$ and $(x_k', \hat y_{k+1}')$ are $k$-simplices representing elements in $\pi_k(\htilde{\alpha,\wm})$
	and if $\hat y_{k+1}$ and $\hat y_{k+1}'$ are bordant in the sense of Definition \ref{def:the-bordism-group-htildemk} via $(\hat h, Y, \wG)$, then $(\hat h, Y, \wG)$ gives rise to a $(k+1)$-simplex $(x_{k+1},\hat y_{k+2})$. We have that
	$$\bd Y= W \cup (S^k \times M \times I) \cup -W' = W \cup ((D^k_{-} \cup D^k_{+}) \times M \times I) \cup -W'.$$
	under the following identifications,
	\begin{align*}
		(W, \wG|_{W}) &\cong d_k (x_{k+1}, \hat{y}_{k+2}) =(x_{k}, \hat{y}_{k+1}) \\
		(W', \wG|_{W'}) &\cong 
		d_{k+1} (x_{k+1}, \hat{y}_{k+2})   = (x_k', \hat{y}_{k+1}') \\
		(S^k \times M \times I, \hat{h}) &\cong d_{k+2}(\hat{y}_{k+2}) \bigcup \, \cup_{i=0}^{k-1} d_i(\hat{y}_{k+2})= \hat{x}_{k+1} \bigcup \, \cup_{i=0}^{k-1} d_i(\hat{y}_{k+2})
	\end{align*}
	where the restriction $h|_{D^k_{+} \times M \times I}$ is adjoint to $x_{k+1}$ and $h|_{D^k_{-} \times M \times I}$ corresponds to $\cup_{i=0}^{k-1} d_i(\hat{y}_{k+2})$. Following similar arguments as in the surjectivity part above, the manifold $Y$ together with $\wG$ can be embedded into 
	 $\Delta^{k+2} \times \bR^q$ for large $q$ satisfying boundary and collar conditions and hence, defines $\hat y_{k+2}$,
	proving that $(x_k, \hat y_{k+1})$ and $(x_k', \hat y_{k+1}')$ are equivalent in $\pi_k(\htilde{\alpha,\wm})$.
\end{proof}

\begin{remark} We note that the identifications provided by Lemma \ref{lemma:geometric-homotopy-fiber} for $k \geq 1$ are compatible with natural group operations on the geometric models $ \htildeMk{k} $ and $ \htildeBk{k}$ that we described in Section \ref{subsec:groups} and Section \ref{subsec:groupstwo}. 
\end{remark}

\section{The proof of Theorem A}\label{sec:five_a}
 Recall that the square of spaces appearing in Theorem A is the commutative square of spaces obtained by geometrically realizing the square of semi-simplicial sets in Lemma \ref{lem:semi-simplicial-model-main-square}. 
To prove Theorem A, we will check that the condition for being homotopy $(k+1)$-cartesian   in Lemma \ref{lem:p-cartesian-square} holds on the semi-simplicial level. 

More precisely, we will show that map $\htilde{\alpha, \wm} \to \htilde{\beta, \wpp_\ell(\wm)}$ between the Kan semi-simplicial set models $\htilde{\alpha, \wm}$ and $\htilde{\beta, \wpp_\ell(\wm)}$ of the vertical homotopy fibres described in Definition \ref{def:homfibre} is $(k+1)$-connected for all choices of base points $\wm \in \heqpt{\Ml}$. This will rely on the description of the homotopy groups of these homotopy fibres established in Lemma \ref{lemma:geometric-homotopy-fiber}. 
Note that by definition,  the image $\alpha(\wm)$ of $\wm \in \heqpt{\Ml}$ has degree one and hence is  contained in a connected component of $\widehat{\on{Mock}}(\xi_\ell,n)$. The following result shows that it is enough to verify the condition of Lemma \ref{lem:p-cartesian-square} over the points $\alpha(\wm)$. This uses  a generalization of \cite[Lemma 2.8]{Hambleton:2004}, which holds in both the smooth and topological settings.

\begin{lemma} For $\ell \geq [(n+1)/2]$, an element $x = [N, \hat g] \in \wOmega_n(\Ml)$ is in the image of $\alpha_0 = \pi_0(\alpha)$ if and only if the element 
$$\wqq_\ell(x) = [N, \wpp_\ell \circ \hat g]  \in \Omega_n(\Bl)$$ 
is in the image of $\beta_0= \pi_0(\beta)$.
\end{lemma}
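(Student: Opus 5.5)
The forward implication is formal. If $x=\alpha_0([\hat f])$ for some $\hat f\in\heqpt{\Ml}$, then commutativity of the square in Lemma \ref{lem:semi-simplicial-model-main-square} gives $\wqq_\ell(x)=\beta_0(\wpp_\ell(\hat f))$. So the real content is the converse, and I would prove it by a relative lifting argument, following the pattern of \cite[Lemma 2.8]{Hambleton:2004}.

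\textbf{Setup.} Suppose $[N,\wpp_\ell\circ\hat g]=[M,\wb]$ in $\Omega_n(\Bl)$ for some $\wb\in\heqpt{\Bl}$. Choose a $\wxi_\ell$-bordism $(W^{n+1},\wF)$ with $\bd W=N\sqcup -M$, where $\wF|_N=\wpp_\ell\circ\hat g$ and $\wF|_M=\wb$. Since $n+1\geq 5$, I can do surgery below the middle dimension on the interior of $W$, compatibly with the $\wxi_\ell$-structure. This makes $\wF\colon W\to\Bl$ an $r$-equivalence with $r=[(n+1)/2]$. It works because $\wxi_\ell$ is a fibration and $\Bl$ has the homotopy type of a finite-type Postnikov-like space, and in the topological case it uses topological surgery/handle theory in dimension $\geq 5$.

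\textbf{Building $W$ from $N$ with low handles.} By Definition \ref{def:thickautB}, $j\circ\wb$ is an $(\ell+1)$-equivalence, so $\wb\colon M\to\Bl$ is an $(\ell+1)$-equivalence. Since $\ell\geq r$, the pair $(W,M)$ is $r$-connected, so $W$ is built from $M$ using handles of index $>r$ only. Turning the handle decomposition upside down, $W$ is built from $N$ using handles of index $<n+1-r=\lceil (n+1)/2\rceil\leq \ell+1$. In particular $(W,N)$ has relative handles only of index $\leq \ell+1$.

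\textbf{Lifting to $\Ml$.} I then lift $\wF$ through $\wpp_\ell\colon\Ml\to\Bl$, relative to the given lift $\hat g$ on $N$. The lift must be compatible with the maps to $BTOP$; this is automatic because $\xi_\ell=\wxi_\ell\circ\wpp_\ell$ and I use the homotopy lifting property of the fibrations. The homotopy fibre of $\wpp_\ell$ is that of $c_\ell$, which is $\ell$-connected by Lemma \ref{lem:normalk-type}. The obstructions therefore lie in $H^{i+1}(W,N;\pi_i(\on{hofib}))$ with $i\geq \ell+1$, where local coefficients and Olum's theory are used as in Lemma \ref{lem:threesix}. These groups vanish because $(W,N)$ has no cells of dimension $\geq\ell+2$. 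The result is a $\xi_\ell$-bordism from $(N,\hat g)$ to $(M,\hat f)$, where $\hat f$ is the restriction of the lift to $M$ and $\wpp_\ell\circ\hat f\simeq\wb$.

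\textbf{Checking $\hat f\in\heqpt{\Ml}$.} Set $f=j\circ\hat f$. Then $c_\ell\circ f\simeq j\circ\wb$, which is an $(\ell+1)$-equivalence, and $c_\ell$ is an $(\ell+1)$-equivalence, so $f\colon M\to M$ is an $(\ell+1)$-equivalence. Degree is a $\xi_\ell$-bordism invariant and $x$ has degree one, so $f$ has degree one and is locally orientation preserving; it preserves $w_1$ because it is covered by structure over $BTOP$. A degree one map splits the twisted homology of universal covers, with surgery kernels $K_i(f)$. These vanish for $i\leq\ell$ by the connectivity of $f$, and by Poincar\'e duality (Remark \ref{rem:fourfive}) they also vanish for $i\geq n-\ell$. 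Since $\ell\geq[(n+1)/2]$, this covers every degree, so Whitehead's theorem makes $f$ a based homotopy equivalence and $x=\alpha_0([\hat f])$.

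I expect the main obstacle to be the surgery step: doing surgery rel boundary on a $\wxi_\ell$-manifold in the topological category while keeping the normal structure, and handling the edge case $n$ odd versus even in the index count. If that step has trouble, I would instead carry out the relative obstruction theory directly on a handle decomposition of $W$, after surgery only up to index $\ell$.
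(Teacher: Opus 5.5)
Your argument is essentially the paper's proof: surgery in the interior of the $\wxi_\ell$-bordism, then relative obstruction theory to lift through $\wpp_\ell$ rel $N$ with obstructions in $H^{i+1}(W,N;\pi_i(\Fl))$ for $i\ge\ell+1$, then degree one plus Poincar\'e duality; your upside-down handle count is the geometric form of the Lefschetz duality $H^{i+1}(W,N;\pi_i(\Fl))\cong H_{n-i}(W,M;\pi_i(\Fl))$ that the paper uses, and the paper's version avoids handle trading for topological $5$-manifolds. There is one harmless off-by-one: surgery makes $\wF$ only an $r$-equivalence, so $(W,M)$ is only $(r-1)$-connected, and $\wb$ and your $f$ are only known to be isomorphisms on $\pi_i$ for $i\le\ell$ (note $\pi_{\ell+1}(\Bl)\cong\pi_{\ell+1}(BTOP)$ need not be hit); this still bounds the relative handle indices of $(W,N)$ by $n+1-r=\lceil (n+1)/2\rceil\le\ell+1$, and $K_i(f)=0$ for $i\le\ell$ follows on twisted homology directly because $c_\ell\circ f\simeq j\circ\wb$ and $c_\ell$ are both $(\ell+1)$-equivalences.
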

\begin{proof} After applying $\pi_0$, the commutativity of the square in Lemma \ref{lem:semi-simplicial-model-main-square} gives the forward implication. Conversely, if there exists an element $y=[M, \hat b] \in \pi_0(\heqpt{\Bl})$ such that
$\beta_0(y) = [N, \wpp_\ell \circ \hat g] $, then there is a $\tilde{\xi}_{\ell}$-bordism $(W,\wG)$ from $(N,\tilde{p}_{\ell} \circ \hat{g})$ to $(M,\hat{b})$. By performing surgeries in the interior of $W$, we can ensure that $\wG \colon W \to \Bl$ is $[n/2]$-connected. The obstructions for lifting $\wG$ relative to $(N,\hat{g})$ lie in $H^{i+1}(W,N; \pi_i(\Fl))$ where $\Fl$ is the fiber of $\tilde{p}_{\ell}$. All obstructions vanish as in the ``surjectivity'' part of the proof for Proposition \ref{proposition:k-hml-iso-k-hbl} below. Thus, $\wG$ lifts to $\widehat{H} \colon W \to \Ml$ extending $\hat{g}$. Denote the restriction $\widehat{H}|_{M}$ by $\hat{f}$ and we have $\tilde{p}_{\ell}\circ \hat{f} = \hat{b}$. Then the definition of $\heqpt{\Bl}$ implies that $f \coloneqq j \circ \hat{f} \colon M \to M$ is $\ell$-equivalence. Moreover, $f$ is of degree one since $\on{deg}(j\circ \hat{g})=1$. Then using the same Poincar\'e duality argument, we conclude that $f$ is homotopy equivalence and so we obtain $\hat{f} \in \heqpt{\Ml}$. The lifted bordism $(W,\widehat{H})$ gives the equality $[(N,\hat{g})]=[(M,\hat{f})]= \alpha_0([\hat{f}])$.
\end{proof}

 We now proceed with the proof of  Theorem A.  The geometric realizations, $\|\htilde{\alpha, \wm}\|$ and $\|\htilde{\beta, \wpp_\ell(\wm)}\|$, are homotopy equivalent to the homotopy fibres of the maps
$\alpha \colon \|\heqpt{\Ml}\| \to \| \widehat{\on{Mock}}(\xi_\ell, n)\|$ and $\beta\colon  \|\heqpt{\Bl}\| \to  \| \on{Mock}(\tilde\xi_\ell, n)\|$,
at any base points $\alpha(\wm)$ and $\beta(\wpp_\ell(\wm))$, respectively (see \cite[p.~16]{Nicas:1982}). And since the semi-simplicial homotopy groups agree with those of the geometric realization (see Section \ref{subsec:semi-simplicial-sets-and-kan-condition}), Theorem A then follows  by showing that map $\htilde{\alpha, \wm} \to \htilde{\beta, \wpp_\ell(\wm)}$  defined in semi-simplicial degree $r$ by $\hat{p}_{\ell} (x_r, \hat{y}_{r+1}) = ( \tilde{p}_{\ell} \circ x_r, \, \tilde{q}_{\ell}(\hat{y}_{r+1}))$
is   $(k+1)$-connected. For this,  we give explicit calculations based on Lemma \ref{lemma:geometric-homotopy-fiber} and the bordism description of the homotopy groups of these two homotopy fibres given in Definition \ref{def:the-bordism-group-htildemk} and Definition \ref{def:the-bordism-group-htildebk}. 

\begin{proposition}\label{proposition:k-hml-iso-k-hbl}
	For all $k\geq 0$, $\ell \geq  \lmin\coloneqq [(n+k+1)/2]$ and $\wm \in \heqpt{\Ml}$, the map $\hatpp_\ell\colon \htilde{\alpha, \wm} \to \htilde{\beta, \wpp_\ell(\wm)}$ induces an isomorphism $\htildek{k}{\Ml, \wm} \cong \htildek{k}{\Bl, \wpp_\ell(\wm)}$ and an epimorphism
	$\htildek{k+1}{\Ml, \wm} \twoheadrightarrow \htildek{k+1}{\Bl, \wpp_\ell(\wm)}$. In particular, $\hatpp_\ell$ is   $(k+1)$-connected whenever $k \geq 0$ and $\ell \geq  \lmin\coloneqq [(n+k+1)/2]$. 
\end{proposition}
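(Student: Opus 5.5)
By Lemma \ref{lemma:geometric-homotopy-fiber}, the map $\hatpp_\ell$ on $\pi_k$ is identified with the forgetful map $\htildek{k}{\Ml,\wm}\to\htildek{k}{\Bl,\wpp_\ell(\wm)}$, $[(W,\wF)]\mapsto[(W,\wpp_\ell\circ\wF)]$. So I would argue entirely with the geometric bordism models of Definitions \ref{def:the-bordism-group-htildemk} and \ref{def:the-bordism-group-htildebk}. The statement "$(k+1)$-connected" then follows from Lemma \ref{lem:p-cartesian-square} and the subsequent remark once I prove three things: surjectivity in degrees $k$ and $k+1$, and injectivity in degree $k$. For $k\ge 1$ the degrees $j<k$ are covered because $\lmin$ only increases with $k$, so the same argument applies for smaller degrees.

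\textbf{Lifting step (surjectivity in degree $j$).} Take $(Z,\wG)$ representing an element of $\htildek{j}{\Bl,\wpp_\ell(\wm)}$, with $\dim Z=n+j+1$.
\begin{enumerate}
\item First I do $\wxi_\ell$-surgery in the interior of $Z$, which is possible below the middle dimension using the normal structure. This makes $\wG\colon Z\to\Bl$ an $[(n+j+1)/2]$-equivalence without changing the class.
\item I then lift $\wG$ to $\Ml$ relative to $D^j_-\times M$, where a lift $\wm\circ p_M$ is already given. The obstructions lie in $H^{i+1}(Z,D^j_-\times M;\pi_i(\Fl))$, where $\Fl=\on{hofib}(\wpp_\ell)$ is $\ell$-connected by Lemma \ref{lem:normalk-type}. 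So only the degrees $i+1\ge \ell+2$ matter.
\item By twisted Poincar\'e--Lefschetz duality (Remark \ref{rem:fourfive}), these groups are dual to $H_{n+j-i}(Z,D^j_+\times M)$ with local coefficients. Their vanishing follows if the pair $(Z,D^j_+\times M)$ is $(n+j-\ell-1)$-connected.
\item That connectivity comes from two facts. Each slice $\wG|_{x_+\times M}$ is an $(\ell+1)$-equivalence into $\Bl$, by Definition \ref{def:thickautB}. After surgery, $\wG$ is an $[(n+j+1)/2]$-equivalence. Together these make the pair $\min(\ell+1,[(n+j+1)/2])$-connected, and the hypothesis $\ell\ge\lmin$ is exactly what makes this numerology close up.
\end{enumerate}

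\textbf{Checking the lift lands in the right place.} On $D^j_+\times M$ the lifted slices have underlying maps $f\colon M\to M$ that are $(\ell+1)$-equivalences. They have degree one, because they are $\xi_\ell$-bordant through the family to $\wm$. Since $\ell+1>[n/2]$, Poincar\'e duality together with Whitehead's theorem (the same argument as in the preceding lemma for $\alpha_0$) shows each slice is a homotopy equivalence. The lifted family therefore lies in $\heqpt{\Ml}$. Finally, I adjust the lift on $D^j_+\times M$ by a homotopy to be face-collared, as in the surjectivity part of Lemma \ref{lemma:geometric-homotopy-fiber}.

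\textbf{Injectivity in degree $k$.} Suppose two classes in $\htildek{k}{\Ml,\wm}$ become equal in $\htildek{k}{\Bl,\wpp_\ell(\wm)}$ via a bordism $(Y,\wH,\hat h)$ of dimension $n+k+2$. I would run the same lifting argument for $Y$, now relative to $W\cup -W'\cup (D^k_-\times M\times I)$, where lifts are already prescribed; again I first do surgery on the interior of $Y$. The obstruction groups are dual to the homology of $(Y,D^k_+\times M\times I)$, and the lift on $D^k_+\times M\times I$ again consists of homotopy equivalences by the degree argument.

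I expect this relative obstruction count in dimension $n+k+2$ to be the main obstacle. The surgery on $Y$ only yields connectivity $[(n+k+2)/2]$, so one must check carefully that $\ell\ge[(n+k+1)/2]$ still kills every obstruction group. If it does not quite, my first fallback is to exploit that the obstruction in the single critical degree lies in a group that can be killed by one further interior surgery. I would also watch that the $\pi_1$-twisting in the local coefficients is compatible with the duality used.
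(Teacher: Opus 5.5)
Your strategy is the paper's. You identify $\pi_k$ with $\psi_k$ via Lemma~\ref{lemma:geometric-homotopy-fiber}, do interior surgery below the middle dimension, and lift relative to $D^j_-\times M$ (resp.\ relative to $W\cup -W'\cup D^k_-\times M\times I$). You then kill the obstructions by Lefschetz duality together with connectivity of the pair formed with the $+$-part of the boundary.

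As written, though, the proof is incomplete exactly where you flag it, and the connectivity count in your step (iv) is off by one. After surgery, $\wG\colon Z\to\Bl$ is only onto on $\pi_m$, where $m=[(n+j+1)/2]$, and it may have a kernel there. Since $\pi_m(D^j_+\times M)\to\pi_m(\Bl)$ is injective, the image of $\pi_m(D^j_+\times M)$ in $\pi_m(Z)$ meets that kernel trivially. So the pair $(Z,D^j_+\times M)$ is only $\min(\ell+1,m-1)$-connected, not $\min(\ell+1,m)$-connected; this is the paper's ``$(\lmin-1)$-connected''. Similarly, the lifted slices $M\to M$ are only known to be isomorphisms on $\pi_i$ for $i\le\ell$, not $(\ell+1)$-equivalences, but that still suffices for your Poincar\'e duality step.

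The corrected bound still works, but with no room to spare in dimension $n+k+2$. You need $H_r(Z,D^j_+\times M)=0$ for $r\le n+j-\ell-1$, and $\ell\ge\lmin$ gives $n+k-\ell\le[(n+k)/2]$. For $j=k$ the required range is $r\le[(n+k)/2]-1\le m-1$. For $j=k+1$ it is exactly $r\le[(n+k)/2]=m-1$.

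The injectivity count you left open needs no fallback surgery; it is literally the same count as surjectivity in degree $k+1$. After interior surgery rel $\partial Y$, the map $Y^{n+k+2}\to\Bl$ is an isomorphism on $\pi_i$ for $i\le[(n+k)/2]$. The slices over $D^k_+\times I$ are isomorphisms on $\pi_i$ for $i\le\ell$. Hence $(Y,D^k_+\times M\times I)$ is $[(n+k)/2]$-connected. For $i\ge\ell+1$ the obstruction groups are dual to $H_{n+k+1-i}(Y,D^k_+\times M\times I;\pi_i(\Fl))$ in degrees $\le n+k-\ell\le[(n+k)/2]$, so they vanish. With the corrected connectivity and this check filled in, your argument coincides with the paper's proof.
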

\begin{proof}
	The geometric models in  Lemma \ref{lemma:geometric-homotopy-fiber}  identify $\pi_k(\htilde{\alpha, \wm}) \xrightarrow{\ (\hatpp_\ell)_* \ } \pi_k(\htilde{\beta, \wpp_\ell(\wm)})$ with the map $\psi_k \colon \htildek{k}{\Ml, \wm} \to \htildek{k}{\Bl, \wpp_\ell(\wm)}$ given by sending an element $[(W,\wF)]$ to $[(W,\tilde{p}_\ell \circ \wF)]$ where $\tilde{p}_\ell \colon \Ml \to \Bl$.  We will show that $\psi_k$ is an isomorphism  and that $\psi_{k+1}$ is an epimorphism. Since the base points	$\wm$ and $\wpp_\ell(\wm)$ are fixed, we will mostly suppress them in the notation.

	\nr{Surjectivity of $\psi_k$:} \\
	Given $(Z,\wG) \in \htildek{k}{\Bl, \wpp_\ell(\wm)}$, our goal is to find $[(Z,\wF)] \in \htildek{k}{\Ml, \wm}$ such that
	$\tilde{p}_\ell \circ \wF = \wG$.
	 Since $\htildek{k}{\Bl, \wpp_\ell(\wm)}$ consists of bordism classes with maps to $\Bl$ which extend $\wpp_\ell(\wm) \circ p_M$ on $D^k_- \times M \subset S^k \times M$, we need to lift $\wG\colon Z \to \Bl$ to a map $\wF \colon Z \to \Ml$ such that $\tilde{p}_{\ell} \circ \wF = \wG$ and $\wF|_{D^k_- \times M}= \wm \circ p_M$.
	
	\nr{Step 1: Arranging the connectivity of $\wG$.} We first adjust $\wG\colon Z \to \Bl$ so that it induces isomorphism on $\pi_i$ for $i \leq \lmin -1$. In other words, we will replace $(Z,\wG)$ by a bordant representative relative to the boundary, such that $\wG$ is $\ell_{\min}$-connected.

    To see this consider the inclusion $\bd_-Z= D^k_- \times M \hookrightarrow Z$ and recall that $j \circ \wpp_\ell(\wm) \colon M \to P_{\ell}(M)$ is an $(\ell+1)$-equivalence. Since $\wG|_{\bd_-Z} = \wpp_\ell(\wm) \circ p_M$ induces isomorphism on $\pi_i$ for $i \leq \ell$, it follows that $\wG$ induces a surjection on $\pi_i$ for $i \leq \ell$.   Note that $\Bl$ can be modeled with a finite $\left\lfloor\frac{n+k}{2}\right\rfloor$-skeleton using the 
      $\ell$-connected map $\hat{\nu}_{\ell} \colon M \to \Bl$.
    
    By performing surgery below the middle dimension (see \cite[\S 3]{Kreck:1999} and \cite[\S~11.3]{Freedman:1990}, \cite[Essay~III, Appendix~C]{Kirby:1977} for the topological surgery version), 
     we can kill the kernels of these maps $\wG_{\ast} \colon \pi_i(Z) \to \pi_i(\Bl)$ for $i \leq \lmin -1$, making $\wG$ an $\lmin$-connected map. In particular, $\wG$ induces a surjection on $\pi_i$ for $i = \lmin$.
	
	\nr{Step 2: Obstructions for lifting $\wG$ to $\Ml$.} The obstructions to lifting $\wG$ to a map $\wF \colon Z \to \Ml$ relative to $\bd_-Z$ lie in the cohomology groups 
	 $H^{i+1}_t(Z,\bd_-Z; \pi_i(\Fl))$, where $\Fl$ is the fibre of $\Ml \to \Bl$ (see Lemma \ref{lem:normalk-type}). Since $\Fl$ is $\ell$-connected, $\pi_i(\Fl)=0$ for $i \leq \ell$, and there are no  obstructions in those degrees. By Lefschetz duality, the obstruction groups are isomorphic to the homology groups (see Remark \ref{rem:fourfive}): $$H^t_{n+k+1-(i+1)}(Z,\bd_+Z; \pi_i(\Fl))$$
	We need to show that these groups vanish for $i > \ell$. 
		
	\nr{Step 3: Computing the homology groups.} Consider the inclusion $\bd_+Z= D^k_+ \times M \hookrightarrow Z$. 
    Since $\wG$ is $\lmin$-connected and $\wG|_{\bd_+ Z}$ induces an isomorphism on $\pi_i$ for $i \leq \lmin$, the inclusion $\bd_+ Z \hookrightarrow Z$ is $(\lmin-1)$-connected. The relative Hurewicz theorem with local coefficients therefore gives $H^t_j(Z,\bd_+ Z; \pi_i(\Fl))=0$ for $j\leq \lmin-1$. 
     For $i > \ell$, the relevant degrees of the homology groups are of the form $n+k+1-(i+1)=n+k-i$. Since $i > \ell \geq \lmin$, we have:
	$$
	n+k-i < n+k -\lmin= \left\lceil\frac{n+k-1}{2}\right\rceil.
	$$
	This degree is less than or equal to $\lmin -1$, implying that $H^t_{n+k-i}(Z,\bd_+Z;\pi_i(\Fl))=0$ for $i > \ell$. Since all obstructions vanish, there exists a lift $\wF \colon Z \to \Ml$ such that $\tilde{p}_\ell \circ \wF = \wG$ and
	  $\wF|_{\bd_-Z}=\wm \circ p_M$. Therefore, $[(Z,\wF)] \in \htildeMk{k}$ and $\psi([(Z,\wF)])=[(Z,\wG)]$, proving surjectivity.
	
	\nr{Surjectivity of $\psi_{k+1}$:} \\
	In order to check that $\htildeMk{k+1} \to \htildeBk{k+1}$  is surjective as well,
 we need to show that $n+k+1 -i \leq \lmin$.
 To achieve this, we need to improve the connectivity of $\wG$ for a representative $\wG \colon Z^{n+k+2} \to \Bl$ in $\htildeBk{k+1}$, which is surjective on homotopy groups up to degree $\ell$. Note that in the case $n+k+2$ is even, we can perform surgery to make $\wG$ an $(\lmin+1)$-connected map. If $n+k+2$ is odd, $\wG$ can only be made  $\lmin$-connected. The obstruction groups still vanish for $i \leq \ell$. For $i > \ell$,
we have  $i >\ell \geq \lmin$ and  $
	n+k+1 -i  < n+k+1 - \lmin$. If $n+k+2$ is odd, we get $ n+k+1 - \lmin = \lmin$, and
hence the obstructions also vanish in this case.
If $n+k+2$ is even, we get $ n+k+1 - \lmin = \lmin+1$, and again the obstructions vanish. Hence in both cases we have  the claimed  surjection.

\nr{Additional details for surjectivity:}\\
We will focus on the surjectivity of $\psi_k$ here, the case $\psi_{k+1}$ is similar. Above, we constructed a lifting $\wF \colon Z \to \Ml$ such that $\tilde{p}_{\ell} \circ \wF = \wG$ and $\wF|_{D^k_- \times M}= \wm \circ p_M$. To check that the pair $(Z, \wF)$ actually defines an  element in $\htildeMk{k}$, we note that
for each $x_+ \in D^k_+$, the restriction $j \circ \wG|_{x_+ \times M}$ is an $(\ell+1)$-equivalence and $\ell \geq [(n+k+1)/2]$. After the lifting, the restriction
$j \circ \wF|_{x_+ \times M}$ is a based homotopy self-equivalence: If $k \geq 1$, the fact that $S^k$ is path-connected implies that $j \circ \wF|_{x_+ \times M}$ is homotopic to the base point $\wm$, which is a homotopy self-equivalence.

If $k=0$, let $f_+ =j \circ\wF|_{\bd_+ Z}\colon M \to M$. The identity $c_{\ell} \circ f_+ = j \circ \wG|_{\bd_+ Z}$ shows that $f_+$ is an $\ell$-equivalence. Moreover, $j \circ \wF \colon Z \to M$ is a bordism over $M$ between $j \circ \wm$ and $f_+$, so $f_+$ has degree one. Since $\ell \geq \left\lfloor\frac{n+1}{2}\right\rfloor$ Poincar\'e duality with local coefficients implies that $f_+$ is a homology equivalence, and hence a homotopy equivalence by Whitehead's theorem (compare the argument in  \cite[Lemma 2.6]{Hambleton:2004}).

	\nr{Injectivity of $\psi_k$:} \\
	Suppose $\psi_k([(W,\wF)])=\psi_k([(W',\wF')])$,  meaning that $[(W,\tilde{p}_{\ell} \circ \wF)]=[(W', \tilde{p}_{\ell}\circ \wF')]$ in $\htildeBk{k}$. Then there exists a base point preserving homotopy $\hat{h} \colon S^k \times M \times [0,1] \to \Bl$ between $(\wpp_\ell \circ \wF)|_{\bd W}$ and $(\wpp_\ell \circ \wF')|_{\bd W'}$ satisfying $\hat{h}|_{D^k_- \times M \times \{t\}}= \wpp_\ell(\wm) \circ p_M$ for all $t \in I$, and $\hat{h}|_{x_+ \times M \times t} \in \heqpt{\Bl}$ for each $x_+ \in D^k_+$ and $t \in [0,1]$, such that the manifold pair
	$$
	(W \cup_{\bd W} (S^k \times M \times [0,1]) \cup_{\bd W'} - W' , (\tilde{p}_\ell \circ \wF) \cup \hat{h} \cup(\tilde{p}_\ell \circ \wF'))
	$$
		represents zero in $\Omega_{n+k+1}(\Bl, \wxi_{\ell})$, i.e. there exists an $(n+k+2)$-manifold $T$ bounding this manifold and a map $\wG \colon T \to \Bl$ extending the described map above. Our goal is to lift $\wG$ relative to $W \cup (D^k_- \times M \times [0,1]) \cup -W'$  to a map $\wH \colon T \to \Ml$.
	
	\nr{Step 1: Arranging the connectivity of $\wG$.} Our first step is to reduce to the case that $\wG \colon T \to \Bl$ induces isomorphisms on $\pi_i$ for $i \leq \left\lfloor\frac{n+k}{2}\right\rfloor$. 
    By surgery below the middle dimension in the interior of $T$, relative to $\bd T$, we may replace $(T,\wG)$ by a bordant representative such that $\wG$ induces isomorphisms on $\pi_i$ for $i \leq \left\lfloor\frac{n+k}{2}\right\rfloor$.

    Since $D^k_+ \times I$ is contractible, the map $\hat{h}|_{D^k_+ \times M \times I}$ is homotopic to any one of its slices. Each slice belongs to $\heqpt{\Bl}$, so it induces isomorphisms on $\pi_i$ for $i \leq \ell$.
    
    Therefore, $\wG$ induces surjection on $\pi_i$ for $i \leq \ell$. As before, we perform surgery below the middle dimension on $T$ to ensure that $\wG$ induces isomorphisms on $\pi_i$ for $i \leq \left\lfloor\frac{n+k}{2}\right\rfloor$. Note that $\left\lfloor\frac{n+k}{2}\right\rfloor= \lmin$ when $n+k$ is even and $\left\lfloor\frac{n+k}{2}\right\rfloor= \lmin-1$ when $n+k$ is odd. 	
		
	\nr{Step 2: Obstructions for lifting $\wG$ to $\Ml$.} The obstructions to lifting $\wG$ to $\wH \colon T \to \Ml$ relative to $K=W \cup (D^k_- \times M \times I) \cup - W'$ lie in the cohomology groups: 
	$$
	H^{i+1}_t(T, K; \pi_i(\Fl)) =
	\begin{cases}
		0 \ & \text{ if } i \leq \ell, \\
		H^t_{n+k+2 - (i+1)}(T, D^k_+ \times M \times [0,1]; \pi_i(\Fl)) & \text{ if } i \geq \ell+1.
	\end{cases}
	$$
	Since $\Fl$ is $\ell$-connected, the obstruction groups vanish for $i \leq \ell$. For $i \geq \ell+1$, we use Lefschetz duality and get the corresponding relative homology groups.
	
	\nr{Step 3: Computing the relative homology groups.} Since $\pi_i(\wG)$ is an isomorphism for $i \leq \left\lfloor\frac{n+k}{2}\right\rfloor$, it follows that the inclusion $D^k_+ \times M \times I \hookrightarrow T$ induces isomorphisms on $\pi_i$ and hence on $H_i$ for $i \leq \left\lfloor\frac{n+k}{2}\right\rfloor$. Therefore, we have $H^t_{j}(T,D^k_+ \times M \times I;\pi_i(\Fl))=0$ for $j \leq \left\lfloor\frac{n+k}{2}\right\rfloor$. Since $i > \ell \geq \lmin$, we have
	$$
	n+k+1 -i < n+k+1 -\left\lfloor\frac{n+k+1}{2}\right\rfloor = \left\lceil\frac{n+k+1}{2}\right\rceil.
	$$
	This degree is less than or equal to $\left\lfloor\frac{n+k}{2}\right\rfloor$, implying $H^t_{n+k+1-i}(T,D^k_+\times M \times I, \pi_i(\Fl))=0$ for $i > \ell$. Since all obstruction groups vanish, there exists a lift $\wH\colon T \to \Ml$ extending $\wF$, $\wm \circ p_M$, and $\wF'$. Thus, $(W,\wF)$ and $(W',\wF')$ are bordant in $\htildeMk{k}$, showing that $\psi_k$  is injective.

    To see that $(T, \wH)$ satisfies the conditions in
      Definition \ref{def:the-bordism-group-htildemk}, note that connectivity of parameter space $D^k_+ \times I$ implies that every slice of $\wH|_{D^k_+ \times M \times I}$ is homotopic, through based maps, to one of the endpoint slices $\wF$ or $\wF'$, which is a homotopy self-equivalence. Therefore, every slice belongs to $\heqpt{\Ml}$ and normal bundle condition follows using the equalities $\xi_{\ell} \circ \wH =  \tilde{\xi}_{\ell} \circ \tilde{p}_{\ell} \circ \wH = \tilde{\xi}_{\ell} \circ \wG = \nu_T$.
   \end{proof}

\begin{remark}\label{rem:smooth_psi} In preparation for the proof of  Theorem B, we note that the bordism groups $ \htildek{k}{\Ml}$ and $\htildek{k}{\Bl}$ are defined for both the smooth and topological versions of the spaces $\Ml$ and $\Bl$
 (see Definitions  \ref{def:the-bordism-group-htildemk} and \ref{def:the-bordism-group-htildebk}). Moreover, the map $\psi_k \colon \htildek{k}{\Ml} \to \htildek{k}{\Bl}$ given by sending an element $[(W,\wF)]$ to $[(W,\tilde{p}_\ell \circ \wF)]$ applies in both settings.

The proof of Proposition \ref{proposition:k-hml-iso-k-hbl} applies equally in the smooth category: one performs the surgeries in the interior smoothly and uses the same obstruction-theoretic lifting argument with $BO$ in place of $BTOP$. Consequently, $\psi_k$ is an isomorphism and $\psi_{k+1}$ is surjective for the smooth fibre-bordism groups as well.
\end{remark}

\section{The proof of Theorem B}\label{subsec:corb}
  For topological manifolds and bordisms,  Theorem A  provides a (sign-commutative) braid of interlocking exact sequences in a range of dimensions depending on the parameters $k \geq 0$ and $\ell \geq \lmin$ as displayed in 
  Theorem B  (see Lemma \ref{lem:twoseven}). We will show that an analogous braid exists for smooth manifolds and smooth bordisms. Some extra work is required in this setting due to the current lack of a suitable space-level model for bordism (see Remark \ref{rem:smooth_model}).
  
  \subsection{The setting}
 For $M$ a smooth, closed, connected $n$-manifold, with $n\geq 4$  and $\ell \geq 2$, we have defined the spaces $\heqpt{\Ml}$ and $\heqpt{\Bl}$ to study the based homotopy self-equivalences of $M$ over the Postnikov $\ell$-sections of the stable normal bundle $\nu_M\colon M \to BO$. As above, we assume that $M$ is locally oriented. 
 
 Let $\MlT$ or $\BlT$ denote the homotopy pullbacks of $BTOP$ and $M$ or $P_\ell(M)$ over $P_\ell(BTOP)$, as given in Definition \ref{def:hpull} by setting $Y = BTOP$ instead of $Y= BO$. The natural map $i\colon BO \to BTOP$  induces  maps of spaces  $i_{\Ml} \colon \Ml \to \MlT$ and 
     $i_{\Bl} \colon \Bl \to \BlT$, and induces a comparison  map between their related homotopy pullback squares.  In the explicit path-space model from Definition \ref{def:hpull}, the first map is defined as $i_{\Ml}(y,x,\theta)= (i(y),x, P_{\ell}(i)\circ \theta)$ and the analogous formula defines $i_{\Bl}$. Note that the comparison maps preserve the orientation character and the associated local coefficient systems.

 We can combine both the smooth and topological spaces into a diagram  (see \eqref{diag:twotwo}):
\eqncount
\begin{equation}\label{diag:Ml_smooth}
	\vcenter{
		\xymatrix@C+5pt@R+5pt{
			\Ml \ar[r]^{j}\ar[d]^{\wpp_\ell} 
			& M\ar[d]^{c_\ell}& \MlT \ar[l]_j\ar[d]_{\wpp_\ell}\\
			\Bl \ar[r]^(0.5){j} & \post{M}{\ell}& \BlT\ar[l]_j
		}
	}
\end{equation}
where the left and right squares are homotopy pullbacks. 

\begin{lemma}\label{lem:cocartsquare} If $\ell \geq [(n+k+1)/2]$,  the left and right squares in \textup{\eqref{diag:Ml_smooth}} are at least  $(n+k+3)$-cocartesian.
\end{lemma}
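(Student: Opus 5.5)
We will use the Blakers--Massey theorem for squares in the form of \cite[Theorem 4.2.1]{MunsonVolic:2015}: if a square is homotopy cartesian, its two maps out of the initial vertex are $k_1$- and $k_2$-connected, and the map out of the final vertex is handled by the dual statement, then the square is $(k_1+k_2+1)$-cocartesian. Equivalently, since both squares in \eqref{diag:Ml_smooth} are homotopy pullbacks (they are $\infty$-cartesian by construction, as in Definition \ref{def:hpull} and diagram \eqref{diag:twotwo}), the dual Blakers--Massey theorem says that a homotopy cartesian square whose two maps into the terminal vertex are $a$- and $b$-connected is $(a+b+1)$-cocartesian. The plan is therefore to bound the connectivity of the two maps into $\post{M}{\ell}$ in each square and add.

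For the left square the two maps into the terminal vertex are $c_\ell\colon M \to \post{M}{\ell}$ and $j\colon \Bl \to \post{M}{\ell}$. First, $c_\ell$ is $(\ell+1)$-connected, since $\post{M}{\ell}$ is obtained from $M$ by attaching cells of dimension $\geq \ell+2$. Second, $j\colon \Bl\to\post{M}{\ell}$ is a pullback of $w_\ell\colon BO \to \post{BO}{\ell}$, so its homotopy fibre is that of $w_\ell$, which is $\ell$-connected; thus $j$ is $(\ell+1)$-connected. The same reasoning applies to the right square with $BTOP$ in place of $BO$, using $w_\ell\colon BTOP\to \post{BTOP}{\ell}$. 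Dual Blakers--Massey then gives that each square is $(2\ell+3)$-cocartesian.

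It remains to compare $2\ell+3$ with $n+k+3$. With $\ell \geq [(n+k+1)/2]$ we get $2\ell \geq n+k$ when $n+k$ is even, but only $2\ell\geq n+k$ as well when $n+k$ is odd (since $2[(n+k+1)/2]=n+k+1$ then), so in all cases $2\ell+3\geq n+k+3$, giving the claimed bound.

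The main obstacle I anticipate is bookkeeping of conventions: making sure that the version of (dual) Blakers--Massey in \cite{MunsonVolic:2015} is stated with the same indexing of ``$n$-connected map'' as the paper's Convention (fibre $(n-1)$-connected), and that it applies to spaces that are not simply connected. The dual Blakers--Massey theorem (\cite[Theorem 4.2.3]{MunsonVolic:2015}) requires no simple-connectivity hypothesis, so I would invoke it directly; if there is an off-by-one in the indexing, I would verify it on a trivial example (e.g. a product square) before concluding.
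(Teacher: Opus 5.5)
Your proposal is correct and is essentially the paper's own proof: both squares are homotopy cartesian, the two maps $c_\ell$ and $j$ into $P_\ell(M)$ are $(\ell+1)$-connected, so the dual Blakers--Massey theorem (which the paper cites as \cite[Theorem 4.2.2]{MunsonVolic:2015}) makes each square $(2\ell+3)$-cocartesian, and $2\ell+3\geq n+k+3$. Your opening sentence misstates the non-dual Blakers--Massey theorem, but the dual form you actually apply is the right one, using the connectivities of the maps into the terminal vertex, and it matches the paper's connectivity convention.
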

\begin{proof} The maps labelled $\wpp$, $c_\ell$ and $j$ in these diagrams are each $(\ell +1)$-connected, by Lemma \ref{lem:normalk-type} and the construction of Diagram \eqref{diag:twotwo}. By \cite[Theorem 4.2.2]{MunsonVolic:2015}, it follows that each square is $(2\ell +3)$-cocartesian, and  $2\ell +3 \geq n+k+3$.
\end{proof}

\begin{remark}\label{rem:ahss}  In the next section, we will use the Atiyah-Hirzebruch spectral sequence (AHSS) to  compute the Lashof bordism groups defined in Section \ref{subsec:lashoftwo}. The starting point is the isomorphism $\Omega_r(\Ml, \xi_\ell) \cong \pi^{S}_r(T(\xi_\ell))$ given by transversality, where $ \pi^{S}_*$ refers to the generalized homology theory ``stable homotopy'',  and $T(\xi_\ell)$ denotes the Thom space of the bundle $\xi_\ell$.

A similar expression holds for the bordism groups  $\Omega_r(\Bl, \wxi_\ell)$ and their analogues for $TOP$ bordism.  For simplicity,  we  use the same notation $\xi_\ell$ and $\wxi_\ell$ for the bundles over $BO$ or $BTOP$. Since the bundle $\xi_\ell = \wpp_\ell^*(\wxi_\ell)$ is pulled back from $\Bl$, we have an isomorphism $$\Omega_{n+k+1}(\Bl, \Ml) \cong \pi^S_{n+k+1}(T(\wxi_\ell), T(\xi_\ell)).$$
 The AHSS for a generalized homology theory $h_*$ computes $h_*(X,A)$ for a CW pair $(X,A)$ by starting with $E^2_{p,q} = H_p(X,A; h_q(pt))$ and converging to $h_{p+q}(X,A)$. The AHSS is natural with respect to maps of CW pairs (see \cite[Theorem 15.7]{Switzer:2002} for the construction of the spectral sequence and further details). 
 \end{remark}

\subsection{Input for the braid}\label{subsec:input_braid}
In the topological category, the braid of Theorem B follows from Theorem A by applying Lemma \ref{lem:twoseven} to the homotopy $(k+1)$-cartesian square. In the smooth category, we have not
constructed an analogous square of spaces.
  The braid in Theorem B in the smooth case follows formally from comparing  two ladder diagrams of long exact sequences in a commutative diagram, in which every third term is mapped isomorphically (as in the construction of the Mayer-Vietoris sequence).

  More precisely, we will proceed by showing that the maps labelled $\psi_k$ and $b_{k+1}$ in the following diagram are isomorphisms in our given range of dimensions  $\ell \geq \lmin = [(n+k+1)/2]$.

  \begin{definition} 
For each $k\geq0$, we define a map 
$$b_{k+1} \colon \pi_{k+1}( \heqpt{\Bl}, \heqpt{\Ml}) \to \wOmega_{n+k+1} (\Bl,\Ml)$$
as follows. Represent a relative class by a $D^{k+1}$-family in $\heqpt{\Bl}$ with a prescribed
$S^k$-family lift to $\heqpt{\Ml}$. If $\hat{u} \colon D^{k+1} \times M \to \Bl$, and $\bd \hat{u} \colon S^k \times M \to \Ml $ are the adjoint maps, we set $b_{k+1}[(u, \bd u)] = [(D^{k+1} \times M,\hat{u},\bd \hat{u})]$. This is well-defined under relative homotopy and is a homomorphism for $k\geq1$; for $k=0$ it is a map of pointed sets. All homotopy and relative homotopy groups below are taken at the compatible basepoints $\hatid_M$ and $\hat{\nu}_\ell=\tilde{p}_\ell(\hatid_M)$.
  \end{definition}
 \begin{proposition}\label{prop:two_ladders}
  	There exist two ladder diagrams of long exact sequences, which fit into a commutative diagram of the following form:
$$\xymatrix@R-9pt{&&\vdots\ar[d]&\vdots \ar[d]&\\
& & \htildeMk{k} \ar[r]^{\psi_k} \ar[d]& \htildeBk{k} \ar[d]& \\
\dots \ar[r]&\pi_{k+1}(\heqpt{\Bl}, \heqpt{\Ml}) \ar[r]^(0.6){\bd_{k+1}}\ar[d]^{b_{k+1}}&\pi_k(\heqpt{\Ml}) \ar[d]^{\alpha_k}\ar[r] & \pi_k(\heqpt{\Bl})\ar[r]\ar[d]^{\beta_k}& \dots\\
\dots \ar[r]& \wOmega_{n+k+1}(\Bl, \Ml) \ar[r]^(0.6){\bd_{k+1}}\ar@{-->}[ur]^{\gamma_k}& \wOmega_{n+k}(\Ml) \ar[r] \ar[d]& \Omega_{n+k}(\Bl) \ar[d] \ar[r]& \dots\\
&&\vdots&\vdots&
}$$ 
defined for all $k \geq 0$ and $n \geq 4$.  When $k=0$, the terms involving $\pi_0$ together with relative $\pi_1$, and the corresponding squares are interpreted in the category of pointed sets. For $k \geq 1$, all displayed terms are groups and all maps are homomorphisms. 

\end{proposition}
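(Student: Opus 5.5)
The plan is to build the diagram from two long exact sequences joined by the vertical maps $\alpha_*,\beta_*, b_*$, and then to add the vertical fibre sequences through $\htildeMk{k}$ and $\htildeBk{k}$.

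\textbf{The two rows.} The middle row is the long exact homotopy sequence of the pair $(\heqpt{\Bl},\heqpt{\Ml})$, taken with respect to the map $\wpp_\ell$ and based at $\hatid_M$ and $\hat\nu_\ell=\wpp_\ell(\hatid_M)$. The bottom row is the Lashof bordism exact sequence of Section \ref{subsec:lashoftwo} for the map of normal structures $\wpp_\ell\colon \Ml\to\Bl$, with hats imposed in degree $n$ as in Remark \ref{rem:sixfourteen}. The vertical maps are $\alpha_k$, $\beta_k$ (both described by adjunction in Remark \ref{rem:sixfourteen}) and $b_{k+1}$. I will check commutativity square by square, working with explicit representatives.

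\textbf{Commutativity of the ladder squares.} Take $u\colon S^k\to\heqpt{\Ml}$. Then $\beta_k(\wpp_\ell u)$ and $\wqq_\ell\alpha_k(u)$ are both $(S^k\times M,\wpp_\ell\circ\hat u)$, so that square commutes on the nose. For the square through $\bd$, take $(u,\bd u)$. Then
\[
\bd_{k+1} b_{k+1}[(u,\bd u)]=[(S^k\times M,\bd\hat u)]=\alpha_k(\bd_{k+1}[(u,\bd u)]).
\]
For the square $\pi_{k+1}(\heqpt{\Bl})\to\pi_{k+1}(\heqpt{\Bl},\heqpt{\Ml})$, regard a sphere family $D^{k+1}/S^k\to\heqpt{\Bl}$ as a disk family with constant boundary. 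Its image under $b_{k+1}$ is $D^{k+1}\times M$ with product boundary lifted by $\hatid_M$. Capping with the trivial bordism $D^{k+1}\times M$ identifies this with the image of $(S^{k+1}\times M,\hat u)$ in the relative group. I expect this step to need an explicit bordism that rounds corners. The dashed map $\gamma_k$ is defined as in Lemma \ref{lem:twoseven} once $b_{k+1}$ is known to be an isomorphism in the range, which is proved later. Here I only record that the diagram exists.

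\textbf{The vertical sequences.} For each column I use the semi-simplicial homotopy fibre $\htilde{\alpha,\hatid_M}$ of Definition \ref{def:homfibre}. This gives the long exact sequence
\[
\cdots\to\pi_k(\htilde{\alpha})\to\pi_k(\heqpt{\Ml})\to\pi_k(\wMock(\xi_\ell,n))\to\pi_{k-1}(\htilde{\alpha})\to\cdots
\]
I then identify the terms geometrically: Lemma \ref{lemma:geometric-homotopy-fiber} gives $\pi_k(\htilde{\alpha})\cong\htildeMk{k}$, and Corollary \ref{cor:fivefour} gives $\pi_k(\wMock)\cong\wOmega_{n+k}(\Ml)$. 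For smooth manifolds no Mock model is available, so I will construct the sequence directly, following Hambleton--Kreck. The maps are:
\begin{itemize}
\item $\htildeMk{k}\to\pi_k(\heqpt{\Ml})$, restriction to $D^k_+$ after collapsing $D^k_-$;
\item $\wOmega_{n+k+1}(\Ml)\to\htildeMk{k}$, disjoint union with $D^{k+1}\times M$;
\item $\pi_{k+1}(\heqpt{\Ml})\to\wOmega_{n+k+1}(\Ml)$, namely $\alpha_{k+1}$.
\end{itemize}
Exactness is then a hands-on bordism check at each term. For example, if $\alpha_k(u)=0$ via a null-bordism $W$, then $(W,\hat u)$ lies in $\htildeMk{k}$ and maps to $[u]$. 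The top square commutes because $\psi_k$ is postcomposition with $\wpp_\ell$.

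\textbf{Main obstacle and conventions.} The main obstacle is the $k=0$ and pointed-set bookkeeping. There the middle group uses the composition $\bullet$, the relative $\pi_1$ is only a pointed set, and the degree-one restriction must be carried consistently. I would use translation in the group-like $H$-space (Theorem \ref{thm-being-group}) to reduce exactness at other components to exactness at the basepoint. The remaining concern is sign conventions, namely orientation reversal on $-W'$, which I would fix once and then propagate through all of the squares.
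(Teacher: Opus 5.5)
Your proposal follows the paper's route: the rows are the homotopy sequence of $\wpp_\ell$ and the Lashof bordism sequence, the columns are the fibre sequences of $\alpha$ and $\beta$ (via $\htilde{\alpha,\hatid_M}$ in the topological case, and by hands-on bordism checks with the same maps as Lemmas \ref{lem:eightone} and \ref{lem:eighttwo} otherwise), and $\gamma_k$ is deferred until $b_{k+1}$ is known to be an isomorphism; your explicit verification of the ladder squares goes beyond what the paper writes down. The one point to pin down is the one you flag yourself: at $k=0$, with $b_1$ defined using the lift on all of $S^0$ (including $\hatid_M$ at $s_0$), $\bd_1 b_1[(u,\bd u)]$ is a difference $[(M,\hat f)]-[(M,\hatid_M)]$ of degree zero, so that square commutes only after normalizing by $[(M,\hatid_M)]$ (the maps $D$ and $E$ in the proof of Corollary \ref{cor:theorem-C}), i.e.\ after treating $\{s_0\}\times M$ as a basepoint end rather than as part of the lifted boundary.
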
 
\begin{proof} The two horizontal exact sequences are the maps induced by  the map $\Ml \to \Bl$ on the spaces of homotopy self-equivalences, or on the Lashof bordism groups, respectively. The two vertical exact sequences will be established below in Lemmas \ref{lem:eightone}  and \ref {lem:eighttwo}. The dashed map $\gamma_k$ is defined below under the hypothesis of the following proposition. \end{proof}

To complete the proof of Theorem B in the smooth case, we just need to verify the following proposition, and then we can argue exactly as in the proof of Lemma \ref{lem:twoseven}.

\begin{proposition}
	Suppose that  $k\geq 0$, $n \geq 4$, and $\ell \geq  \lmin\coloneqq [(n+k+1)/2]$. Then
	\begin{itemize}
		\item the map $\psi_k$ is an isomorphism and $\psi_{k+1}$ is surjective.
		\item the map $b_{k+1}$ is an isomorphism. 	\end{itemize}
\end{proposition}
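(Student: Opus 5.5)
The first bullet is Remark \ref{rem:smooth_psi}. Proposition \ref{proposition:k-hml-iso-k-hbl} was proved by surgery below the middle dimension in the interior of the representing manifolds, followed by obstruction theory for lifting $\wG$ through the $\ell$-connected fibre $\Fl$ of $\wpp_\ell$ with Lefschetz duality. Both steps work verbatim in the smooth category with $BO$ in place of $BTOP$, so $\psi_k$ is an isomorphism and $\psi_{k+1}$ is surjective in the stated range. The real content is the second bullet, that $b_{k+1}$ is bijective. I would prove this directly with the same two tools rather than by a five lemma argument, since the vertical sequences of Proposition \ref{prop:two_ladders} are not yet available.

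\textbf{Surjectivity of $b_{k+1}$.} Start with a triple $(W,\wF,\bd\wF)$ representing a class in $\wOmega_{n+k+1}(\Bl,\Ml)$, so $\dim W=n+k+1$. The aim is to convert it, through relative bordisms, into a product $D^{k+1}\times M$ whose boundary family lies in $\heqpt{\Ml}$ and whose interior family lies in $\heqpt{\Bl}$.
\begin{enumerate}
\item Perform surgery in the interior of $W$ so that $\wF\colon W\to\Bl$ becomes $\lmin$-connected. This is possible because $\Bl$ has a finite $[(n+k)/2]$-skeleton via $\hat\nu_\ell$.
\item Perform surgery below the middle dimension on the boundary $\bd W$, extended over $W$ by the trace, so that $\bd\wF$ is highly connected as a map to $\Ml$.
\item Lift and compare, using the obstruction-theoretic argument of the surjectivity step of Proposition \ref{proposition:k-hml-iso-k-hbl}, and adapting the degree argument in the lemma preceding it.
\end{enumerate}
Surgery alone will not turn $W$ into $D^{k+1}\times M$, so I would not aim for that. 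Instead I would map the relative class to $\htildeMk{k}$ and $\htildeBk{k}$. Gluing $D^{k+1}\times M$ onto a representative gives exactly an element of $\htildeBk{k}$ whose $\Ml$-lift on the boundary is prescribed. The comparison $\psi_k$, which is an isomorphism by the first bullet, then identifies the difference with an element in the image of $\pi_{k+1}(\heqpt{\Bl},\heqpt{\Ml})$.

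\textbf{Injectivity.} Suppose $b_{k+1}[(u,\bd u)]$ is null, witnessed by a relative bordism $(Y,\widehat T,\bd\widehat T)$. Surger $Y$ relative to its boundary to make $\widehat T$ $[(n+k+1)/2]$-connected. Then lift $\widehat T|_V$ to $\Ml$, using obstruction groups $H^{i+1}_t(\,\cdot\,;\pi_i(\Fl))$ that vanish by Lefschetz duality exactly as in Step 3 of that proof. This exhibits $(u,\bd u)$ as compressible into $\heqpt{\Ml}$, hence trivial. In the case $k=0$, the slices over $x_+$ are only $(\ell+1)$-equivalences of degree one. I would upgrade them to homotopy equivalences via Poincaré duality with local coefficients, as in the additional-details paragraph, which needs $\ell\geq[(n+1)/2]$.

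\textbf{Main obstacle.} The hardest step is surjectivity of $b_{k+1}$. It requires realizing an arbitrary relative bordism class as a product family, which means showing that the gluing construction really factors $b_{k+1}$ through the diagram chase with $\psi_k$. If that fails, my fallback is to establish Lemmas \ref{lem:eightone} and \ref{lem:eighttwo} first, so that the columns are exact. Commutativity of the ladder would then give that $b_{k+1}$ is an isomorphism by the five lemma, using $\psi_k$ bijective and $\psi_{k+1}$ onto. For $k=0$ one works with pointed sets, using the group actions supplied by Lemmas \ref{lem:fivenine} and \ref{lem:fiveten}.
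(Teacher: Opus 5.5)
Your treatment of the first bullet matches the paper (Remark~\ref{rem:smooth_psi}). The second bullet, which carries the real content, has a genuine gap: surjectivity of $b_{k+1}$.

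\textbf{The direct route.} A class in $\wOmega_{n+k+1}(\Bl,\Ml)$ is represented by $(W,\wF,\bd\wF)$, where $\bd W$ is an \emph{arbitrary} closed $(n+k)$-manifold with a $\xi_\ell$-structure. So ``gluing $D^{k+1}\times M$ onto a representative'' does not produce an element of $\htildeBk{k}$, and there is no evident map $\wOmega_{n+k+1}(\Bl,\Ml)\to\htildeBk{k}$. To get one, you would first have to replace $(W,\wF,\bd\wF)$, within its relative class, by a representative with $\bd W\cong S^k\times M$ and $\bd\wF$ a based $S^k$-family in $\heqpt{\Ml}$. That is essentially the surjectivity you are trying to prove. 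Surgery below the middle dimension on $\bd W$ cannot achieve it: it only raises the connectivity of $\bd\wF$ and never identifies the manifold with $S^k\times M$. So the argument is circular at the critical step.

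Your injectivity sketch is repairable, with two corrections. You must lift $\widehat T$ over $Y$ relative to $V$; $\widehat T|_V$ is already lifted. You should also surger $\widehat T$ to be $[(n+k+2)/2]$-connected, so that the dual groups $H^t_{n+k+1-i}(Y,D^{k+1}\times M;\pi_i(\Fl))$ vanish for $i>\ell$ even when $n+k$ is even. Even so, this gives only half the claim. For $k=0$, where $b_1$ is only a map of pointed sets, a trivial-kernel argument does not by itself give injectivity.

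\textbf{The fallback.} The five lemma does not apply. In the horizontal ladder of Proposition~\ref{prop:two_ladders}, $b_{k+1}$ sits between $\beta_{k+1}$ and $\alpha_k$, which are not isomorphisms. The maps $\psi_k,\psi_{k+1}$ compare fibre terms of the vertical sequences, which do not involve $b_{k+1}$. In fact no diagram chase on these ladders can work. For $k\geq 1$, replace $b_{k+1}$ by $b_{k+1}+\epsilon$, where $\epsilon$ vanishes on the image of $\pi_{k+1}(\heqpt{\Bl})$ and takes values in $\ker\bd_{k+1}$. This preserves all commutativity and exactness, yet abstractly such perturbations need not be bijective. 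That $\psi$ controls $b$ is a space-level fact, since both measure the total homotopy fibre of the square.

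\textbf{The missing idea.} The paper uses this space-level fact by reducing to the topological case. It shows that the source and target of $b_{k+1}$ are unchanged under $BO\to BTOP$:
\begin{itemize}
\item For the source (Lemma~\ref{lem:eight_iso_two}), the strict pullback squares \eqref{diag:Ml_E} over $\heqpt{M}\to\heqpt{P_\ell(M)}$ identify the homotopy fibre of $\heqpt{\Ml}\to\heqpt{\Bl}$ with one independent of the category.
\item For the target (Lemma~\ref{lem:eight_iso_one}), the pair $(\Bl,\Ml)\to(P_\ell(M),M)$ is highly connected. The relative Atiyah--Hirzebruch spectral sequences therefore agree through degree $n+k+1$.
\end{itemize}
Then $b_{k+1}=(i_*)^{-1}\circ b^{TOP}_{k+1}\circ i_*$. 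Here $b^{TOP}_{k+1}$ is bijective because Theorem~A makes the map of horizontal homotopy fibres $(k+1)$-connected, hence bijective on $\pi_k$, that is, on the relative $\pi_{k+1}$. Your ladders lack exactly this input.
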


\begin{proof}
The argument for the maps $\psi_k$ is completely analogous to the proof of Proposition \ref{proposition:k-hml-iso-k-hbl} (see Remark \ref{rem:smooth_psi}). Once this result is established, the maps $\gamma_k$ needed for the braid are defined by 
$\gamma_k \coloneqq \bd_{k+1}\circ (b_{k+1})^{-1}$  (see the proof of Lemma \ref{lem:twoseven} for the topological case).

The argument for the  maps $b_{k+1}$ is a reduction to topological case:   in Lemma \ref{lem:eight_iso_one}  and Lemma \ref{lem:eight_iso_two}, we will show an interesting result, namely that 
the relative groups involved,  $\pi_{k+1}(\heqpt{\Bl}, \heqpt{\Ml})$ and $\wOmega_{n+k+1}(\Bl, \Ml)$, are independent of the change of bundle data. 
More precisely, we will establish a commutative diagram
\eqncount
\begin{equation}\label{eq:b-iso}
\vcenter{\xymatrix{
\pi_{k+1}(\heqpt{\Bl},\heqpt{\Ml}) \ar[r]^{b_{k+1}} \ar[d]^{i_*}_{\cong} & \wOmega_{n+k+1}(\Bl,\Ml) \ar[d]_{i_*}^{\cong}  \\ 
\pi_{k+1}(\heqpt{\BlT},\heqpt{\MlT}) \ar[r]^{b_{k+1}^{TOP}} & \wOmega_{n+k+1}(\BlT,\MlT) 
}}
\end{equation}
showing that $b_{k+1}= (i_*)^{-1} \circ b_{k+1}^{TOP} \circ i_*$. 
Since we have already established in Theorem A  that $b_{k+1}^{TOP} $ is an isomorphism, 
 the result follows. 
\end{proof}

\subsection{Comparison of the smooth and topological relative groups} 
First, for the range of the map $b_{k+1}$, we compare the smooth and topological relative bordism groups.

\begin{lemma}\label{lem:eight_iso_one}  For $k \geq 0$, $n \geq 4$ and $\ell \geq [(n+k+1)/2]$, there is an isomorphism
$$i_*\colon \Omega_{n+k+1}(\Bl, \Ml) \to \Omega_{n+k+1}(\BlT, \MlT)$$
induced by the map of pairs  $(i_{\Bl},i_{\Ml})\colon (\Bl,\Ml) \to (\BlT, \MlT))$. 
\end{lemma}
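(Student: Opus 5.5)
We compute both relative groups stably via Thom spaces, as in Remark~\ref{rem:ahss}, and compare them using the relative Atiyah--Hirzebruch spectral sequence. By transversality (smooth, and topological for $n+k+1\geq 5$), we have
$$\Omega_{n+k+1}(\Bl,\Ml)\cong \pi^S_{n+k+1}(T(\wxi_\ell),T(\xi_\ell)),$$
and likewise for the $TOP$ versions. Here $\xi_\ell=\wpp_\ell^*\wxi_\ell$, and similarly over $BTOP$. The map $i_*$ is induced by the map of Thom space pairs covering $(i_{\Bl},i_{\Ml})$, which is a bundle map because $i\colon BO\to BTOP$ pulls back the universal stable microbundle to the universal stable vector bundle. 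The Thom isomorphism with twisted coefficients $\bZ^w$, using the compatible Thom classes of Proposition~\ref{prop:stableThom}, identifies the $E^2$-terms of the two AHSS as
$$E^2_{p,q}=H_p(\Bl,\Ml;\pi^S_q\otimes\bZ^w)\quad\text{and}\quad H_p(\BlT,\MlT;\pi^S_q\otimes\bZ^w).$$
By naturality of the AHSS and of the Thom isomorphism, $i_*$ is induced by the map of pairs on $E^2$.

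The main step is to show that $(i_{\Bl},i_{\Ml})\colon(\Bl,\Ml)\to(\BlT,\MlT)$ induces an isomorphism on twisted relative homology in degrees $p\leq n+k+1$ and a surjection in degree $n+k+2$. Then the comparison theorem for spectral sequences, applied to the first quadrant and all $q\geq 0$, gives the isomorphism in total degree $n+k+1$.

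For this we use diagram \eqref{diag:Ml_smooth}. Both horizontal maps $j$ into the pair $(\post{M}{\ell},M)$ are compatible with $i$, since $j\circ i_{\Ml}=j$ and $j\circ i_{\Bl}=j$. So it suffices to show that each of
$$j\colon(\Bl,\Ml)\to(\post{M}{\ell},M)\quad\text{and}\quad j\colon(\BlT,\MlT)\to(\post{M}{\ell},M)$$
is a relative homology isomorphism through degree $n+k+1$ and a surjection in degree $n+k+2$. This is where Lemma~\ref{lem:cocartsquare} enters. Each square is $(n+k+3)$-cocartesian, so the induced map of homotopy cofibres $\Bl/\Ml\to \post{M}{\ell}/M$ is $(n+k+3)$-connected. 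Equivalently, the total homotopy cofibre is $(n+k+2)$-connected. By the relative Hurewicz theorem with local coefficients, pulled back from $K(\pi_1(M),1)$ (all maps are $\pi_1$-isomorphisms since $\ell\geq 2$), the map on relative twisted homology is an isomorphism for $p\leq n+k+2$ and onto for $p=n+k+3$. This is more than we need. The result then follows by the two-out-of-three property for isomorphisms.

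The expected obstacle is the local-coefficient bookkeeping. We must check that the twisted Thom isomorphisms for the smooth and topological fibrations are compatible under $i$, and that the orientation characters agree, which holds because $w_1$ pulls back correctly under $BO\to BTOP$. We must also check that cocartesianness yields twisted (not merely integral) homology isomorphisms. For the latter, we pass to universal covers, which is legitimate since all spaces in \eqref{diag:Ml_smooth} have fundamental group $\pi_1(M)$ and the squares remain cocartesian after pulling back covers. We then apply Hurewicz equivariantly.

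Finally, we must check the low-dimensional transversality needed for the $TOP$ Pontryagin--Thom identification. The case $n+k+1\geq 5$ is automatic since $n\geq 4$, while bordisms of dimension $4$ require topological transversality in dimension $4$, as in Freedman--Quinn.
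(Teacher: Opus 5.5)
Your proposal is correct and follows essentially the same route as the paper: you identify the relative groups with the relative stable homotopy of Thom pairs, compare the two relative Atiyah--Hirzebruch spectral sequences, and obtain the needed range of twisted homology isomorphisms by mapping both pairs to $(P_\ell(M),M)$, using the $(n+k+3)$-cocartesian squares of Lemma~\ref{lem:cocartsquare} together with the Hurewicz theorem. Your passage to universal covers, which justifies the twisted-coefficient Hurewicz step, makes explicit a point the paper leaves implicit; there is one harmless off-by-one slip, since the total cofibre is in fact $(n+k+3)$-connected rather than $(n+k+2)$-connected, but this stronger statement is exactly what your range $p\leq n+k+2$ uses, and either range suffices for the conclusion.
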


\begin{proof}
By diagram \eqref{diag:Ml_smooth}, it follows that the homotopy fibre of the map $\Ml \to \Bl$ is weakly equivalent to the homotopy fibre of the map $M \to P_\ell(M)$, from the left-hand square. Similarly, 
the homotopy fibre of the map $\MlT \to \BlT$ is also weakly equivalent to the homotopy fibre of the map $M \to P_\ell(M)$, from the right-hand square.
Therefore  the map  $i\colon BO \to BTOP$   provides a map of pairs 
$(\Bl, \Ml)  \to (\BlT, \MlT)$, which induces isomorphisms $\pi_* (\Bl,\Ml) \cong \pi_*(\BlT, \MlT)$
 on all relative homotopy groups.

Now we can apply the Atiyah-Hirzebruch spectral sequence for stable homotopy
 \cite[Chapter XIII, Theorem 4.9]{Whitehead:1978}, and the relative Thom isomorphism to identify

\medskip
$$E^2_{p,q} = H_p(T(\wxi_\ell), T(\xi_\ell); \pi^S_q(pt)) \cong 
H_p(\Bl, \Ml;\pi^S_q(pt)\otimes \bZ^w)$$
in the AHSS for smooth bordism, and similarly
$$E^2_{p,q}(TOP) = H_p(T(\wxi_\ell), T(\xi_\ell); \pi^S_q(pt)) \cong 
H_p(\BlT, \MlT;\pi^S_q(pt)\otimes \bZ^w)$$
for the AHSS in topological bordism.

 The map $i \colon BO \to BTOP$ together with $i_{\Ml}$ and $i_{\Bl}$ induces a map of (smooth and topological) Thom pairs. Hence, by naturality of AHSS, we get a map of spectral sequences $E^2_{p,q}\to  E^2_{p,q}(TOP)$, which for each index $q$ has  the form
$$H_p(\Bl, \Ml;\cB) \longrightarrow H_p(\BlT, \MlT;\cB)$$ 
for the local coefficient system  $\cB =\pi^S_q(pt)\otimes \bZ^w$ induced from the action of $\pi_1(M)$.  Lemma \ref{lem:cocartsquare} gives that each of the squares in \eqref{diag:Ml_smooth} is at least $(n+k+3)$-cocartesian and by \cite[Proposition 3.7.24]{MunsonVolic:2015}, the induced map between homotopy cofibers is $(n+k+3)$-connected. Then Hurewicz theorem gives that $H_p(\Bl,\Ml;\cB) \to H_p(P_{\ell}(M), M;\cB)$  is an isomorphism for $p \leq n+k+2$ and surjection for $p =n+k+3$, and similarly for $(\BlT,\MlT)$. Therefore, we conclude that the spectral sequence map above is an isomorphism for $p \leq  n+k+2$, and surjective for $p = n+k+3$. Since  $\pi^S_q(pt)=0$ for $q<0$, both spectral sequences are first quadrant, and it follows that the induced maps
$$
\Omega_{i}(\Bl, \Ml) \xrightarrow{\cong} \Omega_{i}(\BlT, \MlT)
$$ 
are isomorphisms for $i \leq n+k+1$ and surjection for $i=n+k+2$.  Note that the skeleta filtration on the output $\Omega_*(K)$ of a bordism spectral sequence over a CW complex $K$ shows that
 there is an isomorphism
 $\Omega_{r}(K^{(r+1)}) \to \Omega_{r}(K)$.
Alternately, argue directly with the differentials $d_r\colon E^r_{p,q} \to E^r_{p-r, q +r -1}$ which decrease total degree $p +q \mapsto p+q -1$. Note that  if $d^r_{n-q+r, q-r+1}$ is non-zero, then $q+1 \geq r$. But the comparison map on homology in degree $n-q+r$ is an isomorphism unless $n-q+r \geq n+2$, implying that $r \geq q+2$ (contradiction). Therefore the only non-zero differentials hitting the $n$-line lie in the range where the homology comparison maps are isomorphisms.
\end{proof}

\begin{remark}
Note that Lemma \ref{lem:eight_iso_one} implies the ``hat'' version. In particular, for $k\geq 1$, the groups are already isomorphic to the unhatted versions. For $k=0$, the comparison map $i_{\Ml}$ preserves the degree, since $j_{TOP} \circ i_{\Ml} = j$, so $\on{deg}(j \circ \hat{f})= \on{deg}(j_{TOP} \circ i_{\Ml} \circ \hat{f})$  and similarly for the relative boundary map. 
\end{remark}

For the domain of $b_{k+1}$, we have: 

\begin{lemma}\label{lem:eight_iso_two} For $r \geq 0$ and $n \geq 4$, there is an isomorphism
$$ i_*\colon \pi_{r+1}(\heqpt{\Bl}, \heqpt{\Ml})  \to \pi_{r+1}(\heqpt{\BlT}, \heqpt{\MlT})$$
 induced by the natural map $i\colon BO \to BTOP$.
 \end{lemma}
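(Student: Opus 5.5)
The plan is to identify the relative homotopy groups with the homotopy groups of the homotopy fibre of $\wpp_\ell\colon \heqpt{\Ml}\to\heqpt{\Bl}$. I will then show that the comparison $i$ induces a weak equivalence on these fibres.

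More precisely, $\pi_{r+1}(\heqpt{\Bl},\heqpt{\Ml}) \cong \pi_r(\on{hofib}(\wpp_\ell))$, with fibre taken over the base point $\hat\nu_\ell$, and similarly in the $TOP$ setting. The map $i_*$ is induced by postcomposition with $i_{\Ml}$ and $i_{\Bl}$. These maps commute with $\wpp_\ell$ and preserve $j$, because $j_{TOP}\circ i_{\Ml}=j$. They also preserve the normal bundle conditions, since $\xi_\ell^{TOP}\circ i_{\Ml}=i\circ\xi_\ell$ and $\nu_M^{TOP}=i\circ\nu_M$. So they give a map of homotopy fibres, and the five lemma is not needed: it suffices to show that this map of fibres is a weak equivalence.

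Next I describe the fibre. Regard $\heqpt{\Ml}$ as the space of lifts of $\nu_M\colon M\to BO$ through $\xi_\ell$ whose $j$-component is a homotopy equivalence. It is thus a union of components of the section space of $\hat f^*(\Ml)\to M$. A point of $\on{hofib}_{\hat\nu_\ell}(\wpp_\ell)$ is a lift $\hat f\colon M\to\Ml$ together with a path in $\heqpt{\Bl}$ from $\wpp_\ell\circ\hat f$ to $\hat\nu_\ell$. Since $\wpp_\ell$ is a fibration with fibre $\Fl$, the homotopy lifting property lets me replace such a point, up to homotopy, by a strict lift of $\hat\nu_\ell$ through $\wpp_\ell$.

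The strict-lift fibre is the space of based sections of the pullback fibration $\hat\nu_\ell^*(\wpp_\ell)\colon \hat\nu_\ell^*\Ml\to M$. Here the homotopy-equivalence condition is open and closed, so it cuts out a union of components. The key point is that this section space depends only on the fibration $\hat\nu_\ell^*(\wpp_\ell)$ over $M$. By the left square of \eqref{diag:Ml_smooth} and the fact that $j\circ\hat\nu_\ell=c_\ell$, this fibration is fibre homotopy equivalent to the pullback $c_\ell^*(c_\ell)$. The same square shows $c_\ell^*(c_\ell)$ is the pullback of $M\to P_\ell(M)$ along $c_\ell$.

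The same argument with the right square gives the identical description in the $TOP$ case. The fibration map induced by $i_{\Ml}$ commutes with the projections to $M$ and $P_\ell(M)$ through $j$. So it is compatible with these identifications and is a fibre homotopy equivalence. Hence the induced map on section spaces is a weak equivalence, and the condition on degree or homotopy equivalence is detected on $j$, so it is preserved and reflected. This gives the isomorphism for all $r\ge 0$. For $r=0$ the statement is a bijection of pointed sets.

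The main obstacle is the replacement of the homotopy fibre by the strict section space. One must check that the two-component structure of $\heqpt{\Bl}$ (a map and a prescribed homotopy) fibres correctly over $\heqpt{\Ml}$, that is, that $\wpp_\ell\colon\heqpt{\Ml}\to\heqpt{\Bl}$ is a fibration onto its image components. I would first try to deduce this directly from $\wpp_\ell$ being a fibration, since it is a pulled-back path-space fibration, applied to mapping spaces $\Map_\bullet(M,-)$. If that fails, I would compare the fibres explicitly through the path-space model of Definition \ref{def:hpull}, where $\Ml$ and $\Bl$ differ only in the $M$ versus $P_\ell(M)$ coordinate and $BO$ enters only as a common factor.
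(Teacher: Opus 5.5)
Your overall plan is the right one, and it matches the paper in spirit: identify the relative groups with $\pi_r$ of the homotopy fibre of $(\wpp_\ell)_*\colon\heqpt{\Ml}\to\heqpt{\Bl}$, and show that this fibre only sees $c_\ell\colon M\to\post{M}{\ell}$. The gap is in how you compute the fibre, which rests on a false premise. In the model of Definition~\ref{def:hpull}, $\wpp_\ell(y,x,\theta)=(y,c_\ell(x),\theta)$ is the base change of $c_\ell$, and $c_\ell$ is the cellular \emph{inclusion} $M\hookrightarrow\post{M}{\ell}$. So $\wpp_\ell$ is injective and is not a fibration; it is not a pulled-back path-space fibration. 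The fibrations in this model are $\xi_\ell$, $\wxi_\ell$ and the two maps $j$. Hence your step ``by homotopy lifting, replace a point of the homotopy fibre by a strict lift of $\hat\nu_\ell$ through $\wpp_\ell$'' has no justification. Moreover, the strict object you then compute is degenerate: $\hat\nu_\ell^*\Ml=M\times_{\Bl}\Ml\cong M\times_{\post{M}{\ell}}M\cong M$, and its based section space is the single point $\hatid_M$. This does not model the homotopy fibre at all. Likewise ``$c_\ell^*(c_\ell)$'' only makes sense after $c_\ell$ has been converted into a fibration.

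The paper instead exploits the maps that genuinely are fibrations. Because $\Ml=\Bl\times_{\post{M}{\ell}}M$ on the nose, the squares \eqref{diag:Ml_E} are strict pullbacks. Their horizontal maps are $j_*\colon\heqpt{\Ml}\to\heqpt{M}$ and $j_*\colon\heqpt{\Bl}\to\heqpt{\post{M}{\ell}}$, the latter target being the space of $(\ell+1)$-equivalences $M\to\post{M}{\ell}$. Since $j$ is a fibration, so is $j_*$ on these subspaces: you extend the path coordinate along a homotopy of the $\post{M}{\ell}$-coordinate while keeping the $BO$-coordinate equal to $\nu_M$. Hence both squares are homotopy cartesian, and $\on{hofib}((\wpp_\ell)_*)\simeq\on{hofib}(c_*\colon\heqpt{M}\to\heqpt{\post{M}{\ell}})$ for $BO$ and $BTOP$ alike. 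This is compatible with $i$ because $j_{TOP}\circ i_{\Ml}=j$. Your fallback of comparing fibres through the path-space model points in this direction, but it is missing the decisive ingredient: the fibration to use is $j_*$, not $\wpp_\ell$.

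Alternatively, you can rescue your own route. First replace $c_\ell$ by a fibration $E\to\post{M}{\ell}$, and replace $\Ml$ by $\Bl\times_{\post{M}{\ell}}E$; this is weakly equivalent to $\Ml$ over $\Bl$ because $j$ is a fibration. Then $\wpp_\ell$ becomes a fibration and your strict-lift reduction is legitimate. The fibre becomes the space of lifts of $c_\ell$ through $E\to\post{M}{\ell}$, which is visibly independent of $BO$ versus $BTOP$. You would still need to check that this change of model alters $\heqpt{\Ml}$ only up to weak equivalence. That holds because $\xi_\ell$ is a fibration, so the strict condition $\xi_\ell\circ\hat f=\nu_M$ cuts out a homotopy fibre of $\Map_\bullet(M,\Ml)\to\Map_\bullet(M,BO)$.
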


\begin{proof}
By Definition \ref{def:thickaut}, the space $\heqpt{\Ml}$ is a subspace of  $\Map_{\bullet}(M,\Ml)$, and similarly $\heqpt{\Bl}$ is a subspace of  $\Map_{\bullet}(M,\Bl)$. 
Hence we obtain a diagram
 \eqncount
\begin{equation}\label{diag:Ml_E}
	\vcenter{
		\xymatrix@C+5pt@R+5pt{
			\heqpt{\Ml}\ar[r]^{j_*}\ar[d]^{(\wpp_\ell)_*} 
			& \heqpt{M}\ar[d]^{c_*}& \heqpt{\MlT} \ar[l]_{j_*}\ar[d]^{(\wpp_\ell)_*}\\
			\heqpt{\Bl}\ar[r]^(0.5){j_*} & \heqpt{\post{M}{\ell}}& \heqpt{ \BlT}\ar[l]_{j_*}
		}
	}
\end{equation}
in which both squares are cartesian (strict pullback squares), where we let  $\heqpt{P_\ell(M)}$ denote the subspace of $\Map_\bullet(M, P_\ell(M))$ consisting of the maps which are $(\ell+1)$-equivalences. The cartesian property follows directly from the definitions. 

Moreover, by \cite[Prop 3.2.5]{MunsonVolic:2015} the structure maps labelled $j$ in these diagrams are fibrations in the model for the homotopy pullbacks used in Definition \ref{def:hpull}. It then follows that the induced map $j_*\colon \heqpt{\Bl} \to \heqpt{P_\ell(M)}$ is a fibration (by verifying the adjoint form of the homotopy lifting property,  exactly as in  \cite[Prop 2.1.11]{MunsonVolic:2015}), and by 
 \cite[Prop 3.2.13]{MunsonVolic:2015} both
squares are homotopy pullbacks.

 Hence we conclude that  the homotopy fibre of the map $\heqpt{\Ml} \to \heqpt{\Bl}$ is weakly equivalent to the homotopy fibre of the map $\heqpt{M} \to \heqpt{\post{M}{\ell}}$. The same statement holds for the homotopy fibre of the map $\heqpt{\MlT} \to \heqpt{\BlT}$. Since the  homotopy fibre of the map $\heqpt{M} \to \heqpt{\post{M}{\ell}}$ does not depend on $BO$ or $BTOP$, the  resulting map  of pairs
$(\heqpt{\Bl}, \heqpt{\Ml}) \to (\heqpt{\BlT}, \heqpt{\MlT})$ induces an  isomorphism on all homotopy groups.
\end{proof}

\subsection{Homotopy groups and the maps $\alpha_*$ and $\beta_*$}\label{subsec:fiveseven}
We establish the exact sequences needed in Proposition \ref{prop:two_ladders},  relating the homotopy groups $\pi_k(\heqpt{\Ml})$ and $\pi_k(\heqpt{\Bl})$ with the modified bordism groups $\Omega_{n+k}(\Ml )$ and $\Omega_{n+k}(\Bl)$, respectively (see Lemma \ref{lem:eightone} and Lemma \ref{lem:eighttwo}).  For the topological case, these sequences are an immediate consequence of the existence of the vertical maps of spaces in the square in Theorem A. We currently do not know how to produce such maps of spaces in the smooth case, but nevertheless may produce the corresponding ``long exact sequence of homotopy groups'' needed for the proof of Theorem B in the smooth case by hand.

Elements of $\pi_k(\heqpt{\Ml})$ are represented by basepoint preserving maps $\hat{u} \colon (S^k,s_0) \to (\heqpt{\Ml},\hatid_M)$. Each such map has an adjoint $\hat{u}^{\#} \colon S^k \times M \to \Ml$ satisfying
\begin{itemize}
    \item $\hat{u}^{\#}(s_0,m)=\hatid_M(m)$ for all $m \in M$
    \item $\hat{u}^{\#}|_{\{s\}\times M} \in \heqpt{\Ml}$ for all $s \in S^k$.
\end{itemize}
Similarly, elements of $\pi_k(\heqpt{\Bl})$ are represented by basepoint preserving maps $\hat{v} \colon (S^k,s_0)\to (\heqpt{\Bl},\hat\nu_{\ell})$, with adjoint $\hat{v}^{\#}\colon S^k \times M \to \Bl$ satisfying $\hat{v}^{\#}(s_0,m)=\hat{\nu}_\ell(m)$ for all $m \in M$ and $\hat{v}^{\#}|_{\{s\}\times M} \in \heqpt{\Bl}$ for all $s \in S^k$.

\begin{definition}\label{def:alpha-and-beta-map}
In degree zero ($k=0$), we define
\begin{align*}
  \alpha_0 \colon \pi_0(\heqpt{\Ml}) &\longrightarrow \wOmega_{n}(\Ml) \quad \quad \quad &\beta_0 \colon \pi_0(\heqpt{\Bl}) &\longrightarrow \Omega_{n}(\Bl) \\
  [\hat{f} \colon M \to \Ml] &\longmapsto [(M, \hat{f})] &[\hat{g} \colon M \to \Bl] &\longmapsto [(M, \hat{g})].
\end{align*}
Here $\wOmega_{n}(\Ml)$ and $\Omega_n(\Bl)$ are regarded as pointed sets with base points $[(M,\hatid_M)]$ and 
$[(M,\widehat\nu_\ell)]$. For $k \geq 1$, we define  group homomorphisms
\begin{align*}
  \alpha_k \colon \pi_k(\heqpt{\Ml}) &\longrightarrow \wOmega_{n+k}(\Ml) \quad \quad \quad &\beta_k \colon \pi_k(\heqpt{\Bl}) &\longrightarrow \Omega_{n+k}(\Bl) \\
  [\hat{u} \colon S^k \to \heqpt{\Ml}] &\longmapsto [(S^k \times M, \hat{u}^{\#})] &[\hat{v} \colon S^k \to \heqpt{\Bl}] &\longmapsto [(S^k \times M, \hat{v}^{\#})].
\end{align*}
\end{definition}
The basepoint conditions and stable parallelizability of $S^k$ ensure that $\hat{u}^{\#}$ and $\hat{v}^{\#}$ are well-defined maps into $\Ml$ and $\Bl$, respectively, compatible with the normal data via $\xi_\ell$ and $\wxi_{\ell}$.  
Observe that a based homotopy of families produces the bordism $S^k \times M \times I$, so the assignment depends only on the homotopy class and hence the maps $\alpha_*$ and $\beta_*$ are well-defined.  For $k \geq 1$, compatibility of group operations follows from the pair of pants model for the pinch map, and the bijection $\wOmega_{n+k}(\Ml) \cong \O_{n+k}(\Ml)$  (see Remark \ref{rem:sixfourteen}). 

We need to define two additional maps:
\begin{enumerate}
\item There is a homomorphism $\wOmega_{n+k+1}(\Ml ) \to \htildeMk{k}$ defined by sending a closed $(n+k+1)$-manifold $N$ equipped with a map  $\wF\colon N \to \Ml$ to the class $[N \sqcup (D^{k+1}\times M), \wF \sqcup (\widehat{\text{id}}_M \circ p_M)]$. This map is well-defined since a $\xi_{\ell}$-bordism between two closed representatives extends by the product $\xi_{\ell}$-bordism on the boundary of $D^{k+1} \times M$. 
\item In addition, there is a natural map from $\htildeMk{k}$ to $\pi_k(\heqpt{\Ml})$, given by sending the class $[(W,\wF)]$ to the homotopy class  of the adjoint of the restriction map $(\wF|_{\bd W}) \colon S^k \times M \to \Ml$.  This map is well-defined since a boundary homotopy in the definition of bordism in $\htildeMk{k}$ gives a based homotopy of the adjoint boundary families.
\end{enumerate}

\begin{lemma}\label{lem:eightone} For $k \geq 1$, 
 there is a long exact sequence of groups  
\begin{align*}
 \pi_{k+1}(\heqpt{\Ml}) \xrightarrow{\ \alpha_*\ } \wOmega_{n+k+1}(\Ml) \to \htildeMk{k} \to \pi_k(\heqpt{\Ml})
  \xrightarrow{\ \alpha_*\ } \wOmega_{n+k}(\Ml)
\end{align*}
where all the maps are homomorphisms. For $k=0$, the same statement holds, except for the last term where the map $\alpha_0$ is a map of pointed sets. 
\end{lemma}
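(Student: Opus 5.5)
We prove exactness by hand at each of the three interior terms, in the order in which the maps are composed, using only the geometric descriptions of $\htildeMk{k}$ (Definition \ref{def:the-bordism-group-htildemk}), of $\alpha_k$ (Definition \ref{def:alpha-and-beta-map}), and of the two auxiliary maps $\wOmega_{n+k+1}(\Ml)\to\htildeMk{k}$ and $\htildeMk{k}\to\pi_k(\heqpt{\Ml})$ defined just before the statement. The homomorphism property for $k\geq 1$ comes from the pair-of-pants descriptions. Gluing $P^{k+1}\times M$ is compatible with disjoint union plus a pinch, so the two auxiliary maps are homomorphisms; for $k=0$ this is Lemma \ref{lem:fivenine}. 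That $\alpha_k$ is a homomorphism was already noted after Definition \ref{def:alpha-and-beta-map}.

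\textbf{Exactness at $\pi_k(\heqpt{\Ml})$.} Take a class $[(W,\wF)]$. Its boundary family $\hat u^{\#}=\wF|_{\bd W}$ satisfies $\alpha_k[\hat u]=[(S^k\times M,\hat u^{\#})]$, and this class is null because $W$ is a $\xi_\ell$-null-bordism of it. Conversely, suppose $\alpha_k[\hat u]=0$ (for $k=0$ this means $[(M,\hat f)]=[(M,\hatid_M)]$). Then there is a $\xi_\ell$-manifold $W$ with $\bd W=S^k\times M$ restricting to $\hat u^{\#}$. Because $\hat u$ is based at $\hatid_M$, we first homotope $\hat u$ so that it is constant at $\hatid_M$ on a disc $D^k_-$. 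For $k=0$ we glue the null-bordism to a collar to get boundary $-M\sqcup M$. The resulting $(W,\wF)$ lies in $\htildeMk{k}$ and maps to $[\hat u]$.

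\textbf{Exactness at $\htildeMk{k}$.} The image of $[N,\wF]$ is represented by $N\sqcup D^{k+1}\times M$. Its boundary family is constant at $\hatid_M$, so the composite is trivial. Conversely, suppose the boundary family of $(W,\wF)$ is null-homotopic as a based map. The null-homotopy gives a homotopy $\hat h$ of the kind allowed in Definition \ref{def:the-bordism-group-htildemk}, from $\wF|_{\bd W}$ to the constant family. We use $\hat h$ on a collar $S^k\times M\times I$ and then cap off with $-D^{k+1}\times M$. The resulting closed manifold $N=W\cup(S^k\times M\times I)\cup -(D^{k+1}\times M)$ carries a $\xi_\ell$-structure. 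The definition of bordism in $\htildeMk{k}$, with the cylinder, says that $[(W,\wF)]$ and $[(N\sqcup D^{k+1}\times M,\dots)]$ agree, and one checks this by the same cylinder gluing. For $k=0$ we instead use the group structure of Subsection \ref{subsec:groupstwo}: the condition $\hat f\simeq\hatid_M$ lets us close up $W$ along $M\times I$.

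\textbf{Exactness at $\wOmega_{n+k+1}(\Ml)$.} This is the step I expect to be the main obstacle, because the hemisphere decomposition has to be handled with care. First, the composite is zero: $\alpha_{k+1}[\hat v]=[S^{k+1}\times M,\hat v^{\#}]$ maps to $[S^{k+1}\times M\sqcup D^{k+1}\times M]$. We view $D^{k+2}_+$ as a $D^{k+1}$-parameter family of $(D^{k+1}\times M)$-shaped slices, use the family $\hat v$ on $D^{k+2}\times M$ after a reparametrization, and exhibit an explicit bordism $Y=D^{k+1}\times M\times I\cup\ldots$ to the identity element $(D^{k+1}\times M,\hatid_M\circ p_M)$.

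Conversely, suppose $N\sqcup D^{k+1}\times M$ bounds in the sense of Definition \ref{def:the-bordism-group-htildemk}. Then there is a homotopy $\hat h$ on $S^k\times M\times I$ from the constant family to itself, through self-equivalences and fixed on $D^k_-$. This $\hat h$ is a based map $S^k\times I\to\heqpt{\Ml}$ that is constant on $\bd(S^k\times I)\cup D^k_-\times I$, so it is a based map from a quotient $\simeq S^{k+1}$, which defines $[\hat v]\in\pi_{k+1}(\heqpt{\Ml})$. The bordism $Y$ then splits as $\bd Y=N\sqcup(\text{closed-up cylinder}\cong S^{k+1}\times M)$, and hence $[N]=\pm\alpha_{k+1}[\hat v]$. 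To make this identification of the cylinder rigorous, I would invoke Lemma \ref{lemma:geometric-homotopy-fiber} in the smooth setting. Failing that, I would carry out an explicit corner-rounding, checking the orientation conventions so that the sign is absorbed into the choice of $\hat v$.
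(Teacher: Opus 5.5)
Your plan is the paper's: exactness is checked by hand at the three interior terms by cutting and gluing representatives of $\htildeMk{k}$. Your arguments at $\pi_k(\heqpt{\Ml})$ and at $\htildeMk{k}$ match the paper's. So does your proof that the kernel of $\wOmega_{n+k+1}(\Ml)\to\htildeMk{k}$ lies in the image of $\alpha_{k+1}$: you close up the boundary self-homotopy $\hat h$ of the constant family via $S^{k+1}\cong\Sigma S^k$, and read $Y$ as a bordism from $N$ to $S^{k+1}\times M$.

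\textbf{The gap.} The step that fails as written is the other inclusion at $\wOmega_{n+k+1}(\Ml)$, namely that $[S^{k+1}\times M\sqcup D^{k+1}\times M,\ \hat v^{\#}\sqcup \hatid_M\circ p_M]$ is trivial in $\htildeMk{k}$.
\begin{itemize}
\item You propose to use the family $\hat v$ on $D^{k+2}\times M$. But $\hat v\colon S^{k+1}\to\heqpt{\Ml}$ extends over $D^{k+2}$ only when $[\hat v]=0$.
\item More fundamentally, the glued closed manifold in Definition \ref{def:the-bordism-group-htildemk} represents $\alpha_{k+1}[\hat v]$ plus the class of the closed-up side cylinder $D^{k+1}\times M\cup(S^k\times M\times I)\cup -(D^{k+1}\times M)$.
\item That second class is $\pm\alpha_{k+1}$ of the element of $\pi_{k+1}(\heqpt{\Ml})$ determined by the side homotopy. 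With the constant side homotopy you are left with $\alpha_{k+1}[\hat v]$, which need not vanish.
\end{itemize}
The missing idea is that the boundary homotopy allowed in the bordism relation must be $\hat v$ itself, suitably oriented.

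\textbf{The fix.} This is what the paper does:
\begin{itemize}
\item Deform $\hat v$, rel base point, to a family $h_1$ that is constant at $\hatid_M$ off a collar $S^k\times I$ of the equator $D^{k+1}_+\cap D^{k+1}_-$ and on $D^k_-\times I$.
\item Use $h_1^{\#}$ on $S^k\times M\times I$ as the boundary homotopy.
\item Take $Y=S^{k+1}\times M\times I$ with reference map $h_1^{\#}\circ p$, where $p$ is the projection to $S^{k+1}\times M$.
\item Split $S^{k+1}\times M\times\{1\}=(D^{k+1}\times M)\cup(S^k\times M\times I)\cup(D^{k+1}\times M)$ into the second component of the source, the side cylinder, and the target $D^{k+1}\times M$.
\end{itemize}
This is just your own converse construction run backwards.

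\textbf{A smaller point.} Lemma \ref{lemma:geometric-homotopy-fiber} is not available for the corner identification. It concerns the topological semi-simplicial model, and the present lemma is needed precisely in the smooth case, where no such model exists. Your fallback of explicit corner rounding, with the sign absorbed into the choice of $\hat v$, is what is required and is what the paper implicitly does.
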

\begin{proof} We will outline the check of exactness at each term of the sequence,  and refer to Lemma \ref{lem:fiveseven} for the group structure on  $\htildeMk{k}$ for $k \geq 1$ (see also Lemma \ref{lem:fivenine}). Note that 
$ \wOmega_{n+k+1}(\Ml) =  \Omega_{n+k+1}(\Ml)$ if $k \geq 0$ (see Remark \ref{rem:sixfourteen}).

\medskip
\noindent
\textbf{Exactness at $\htildeMk{k}$:} The image of an element $(N,\wF) \in \O_{n+k+1}(\Ml)$ in $\htildeMk{k}$ is $[N \sqcup (D^{k+1} \times M),\wF \sqcup \hatid_M \circ p_M]$ which maps to the identity element $(\hatid_M \circ p_M|_{S^k \times M})^{\#} \in \pi_k(\heqpt{\Ml})$. 

Conversely,  let $[(W,\wF)] \in \htildeMk{k}$  and assume that  $[(W,\wF)]$ maps to the identity $[(\hatid_M \circ p_M)^{\#}] \in \pi_k(\heqpt{\Ml})$. Then, there is a homotopy $\hat{h} \colon S^k \times M \times I \to \Ml$ from $\wF|_{\bd W}$  to $\hatid_M \circ p_M$ (this based homotopy can be chosen relative to $D^k_-$ since $D^k_- \hookrightarrow S^k$ is a cofibration), and a closed $(n+k+1)$-manifold 
$$
N := W \cup (S^k \times M \times I) \cup (D^{k+1}\times M),$$ with reference map $\wG$ given by adjoining $\wF$, the homotopy $\hat{h} \colon S^k \times M \times I \to \Ml$, and $\hatid_M \circ p_M$ over $D^{k+1}\times M$. 
 Therefore $[(W,\wF)] = [(N, \wG) \sqcup (D^{k+1}\times M, \hatid_M \circ p_M) ] \in \htildeMk{k}$, by interpreting the manifold $(N \times I, \wG\circ p_N)$ as a relative bordism between $(W, \wF)$ and $(N, \wG) \sqcup (D^{k+1}\times M, \hatid_M \circ p_M)$.

\medskip
\noindent
\textbf{Exactness at $\pi_k(\heqpt{\Ml})$:} Note first that every based class in $\pi_k(\heqpt{\Ml})$ can be represented by a map which is constant on $D^k_-$. For $k \geq 1$, an element  $[\wF^{\#}] \in \pi_k(\heqpt{\Ml})$ lies in the kernel of $\alpha_*$ if and only if there exists an $(n+k+1)$-dimensional compact manifold $W$ with boundary $S^k \times M$ and a map $\wG\colon W \to \Ml$ extending $\wF$ and satisfying $\xi_{\ell} \circ \wG = \nu_W$. For $k=0$, a class $[\hat{f}]$ lies in 
 $\alpha_*^{-1}([M, \hatid_M])$ precisely when $[(M,\hat{f})]=[(M,\hatid_M)]$ in the pointed set $\wOmega_n(\Ml)$. Such a bordism is exactly a representative of $\htildeMk{0}$ whose nontrivial boundary is $\hat{f}$.

\medskip
\noindent
\textbf{Exactness at $\wOmega_{n+k+1}(\Ml)$:} Let $(N^{n+k+1}, \wF)$ be a representative of a bordism class in $\wOmega_{n+k+1}(\Ml)$ which maps to zero in  $\htildeMk{k}$. Then we have 
$$[(N \sqcup (D^{k+1} \times M), \wF \sqcup \widehat{\text{id}}_M \circ p_M)]= [(D^{k+1}\times M,\widehat{\text{id}}_M \circ p_M)]$$ which implies the existence of a homotopy $\hat{h} \colon S^{k} \times M \times I \to \Ml$ from $\widehat{\text{id}}_M \circ p_M$ to itself satisfying $\hat{h}|_{D^{k}_- \times M \times I}=\widehat{\text{id}}_M \circ p_M$ and $\hat{h}|_{\{x_+\} \times M \times \{t\}} \in \heqpt{\Ml}$ for all $x_+ \in D^{k}_+$ and $t \in [0,1]$,  where $S^{k}=D^{k}_- \cup_{S^{k-1}} D^{k}_+$.  In addition, we have  a relative $(n+k+2)$-bordism  $(Y, \wG)$ whose boundary  
$$\bd Y = N \sqcup (D^{k+1} \times M) \cup (S^k \times M \times I)\cup -(D^{k+1}\times M),$$
 and $\wG$ extends the maps to $\Ml$. 
The first condition implies that $\hat{h}$ preserves basepoints and these conditions allow us to define $\hat{h}\colon S^{k+1} \times M \to \Ml$ whose adjoint  represents an element in a homotopy class $[\hat{h}^{\#}] \in \pi_{k+1}(\heqpt{\Ml})$, using the fact that $S^{k+1}$ is reduced suspension of $S^{k}$. 
Moreover,  by considering $Y$ as a bordism between $N$ and 
$$S^{k+1} \times M =  (D^{k+1}_{+} \times M) \cup (S^k \times M \times I)\cup (D^{k+1}_{-}\times M),$$ 
 it follows that $\alpha_{k+1}([\hat{h}^{\#}])   \in \pi_{k+1}(\heqpt{\Ml})$ equals $[(N^{n+k+1}, \wF)]$. 
 
 Conversely, let $h \colon S^{k+1} \to \heqpt{\Ml}$ be a representative of an element in $\pi_{k+1}(\heqpt{\Ml})$ with image $[(S^{k+1} \times M,h^{\#})] \in \Omega_{n+k+1}(\Ml)$. Then we have 
 $$[(S^{k+1} \times M \sqcup D^{k+1}\times M, h^{\#} \sqcup \widehat{\text{id}}_M \circ p_M)]=[(D^{k+1} \times M, \widehat{\text{id}}_M \circ p_M)] \in  \htildeMk{k} $$ 
 by (i) trivializing $h \simeq h_1$ over the complement of a collar neighbourhood  of 
 $D^{k+1}_{+} \cap D^{k+1}_{-} \subset S^{k+1}$ to obtain a homotopy $h^\#_1 \colon S^{k}\times M \times I \to \Ml$ from $\widehat{\text{id}}_M \circ p_M$ to itself, and (ii) regarding the $(n+k+2)$-manifold $S^{k+1}\times M\times I$, 
 and reference map  $h_1^{\#} \circ p_{S^{k+1}\times M}\colon S^{k+1}\times M\times I \to \Ml$, as a relative bordism from 
 $S^{k+1} \times M \sqcup D^{k+1} \times M$ to $D^{k+1}\times M$.
\end{proof}

Similarly, there is a homomorphism $\O_{n+k+1}(\Bl ) \to \htildeBk{k}$ sending a closed manifold $N$ equipped with a map $\wG$ to $\Bl$ to the class $[N \sqcup (D^{k+1} \times M), \wG \sqcup (\hat{\nu}_\ell \circ p_M)]$. We also have a homomorphism $\pi_{k+1}(\heqpt{\Bl}) \to \O_{n+k+1}(\Bl, \wxi_{\ell})$ sending a homotopy class $[\hat{h} \colon S^{k+1} \to \heqpt{\Bl}]$ to the bordism class $[(S^{k+1} \times M, \hat{h}^{\#})]$. 
\begin{lemma}\label{lem:eighttwo}
For $k \geq 1$, 
 there is a long exact sequence of groups 
\begin{align*}
  \pi_{k+1}(\heqpt{\Bl}) \xrightarrow{\beta_*} \O_{n+k+1}(\Bl) \to \htildeBk{k} \to \pi_k(\heqpt{\Bl}) \xrightarrow{\beta_*} \O_{n+k}(\Bl ).
\end{align*} 
 For $k=0$, the same statement holds, except for the last term where the map $\beta_0$ is a map of pointed sets. 
\end{lemma}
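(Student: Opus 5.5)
The plan is to transcribe the proof of Lemma \ref{lem:eightone} verbatim, with $\Ml$, $\xi_\ell$, $\hatid_M$ and $\heqpt{\Ml}$ replaced by $\Bl$, $\wxi_\ell$, $\hat\nu_\ell$ and $\heqpt{\Bl}$. The maps in the sequence are the ones just defined: $\beta_*$ from Definition \ref{def:alpha-and-beta-map}, the map $\O_{n+k+1}(\Bl)\to\htildeBk{k}$ given by disjoint union with $(D^{k+1}\times M,\hat\nu_\ell\circ p_M)$, and the boundary-restriction map $\htildeBk{k}\to\pi_k(\heqpt{\Bl})$, $[(Z,\wG)]\mapsto[(\wG|_{\bd Z})^{\#}]$. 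First I will check that these are well defined, using the same arguments as in the $\Ml$ case: product bordisms on the $D^{k+1}\times M$ summand, and boundary homotopies in Definition \ref{def:the-bordism-group-htildebk} give based homotopies of families. Next I will check that they are homomorphisms, using the pair-of-pants group structure of Section \ref{subsec:groups} and Lemma \ref{lem:fiveeight} for $k\ge1$, and Lemma \ref{lem:fiveten} for $k=0$.

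Exactness will then be checked at the three interior terms.

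\textbf{At $\htildeBk{k}$.} The composite is trivial because the boundary family of $N\sqcup(D^{k+1}\times M)$ is constant at $\hat\nu_\ell$. Conversely, suppose the boundary family is null. Choose a based null-homotopy $\hat h$ relative to $D^k_-$, which is possible since $D^k_-\hookrightarrow S^k$ is a cofibration. Cap off to obtain the closed manifold $N=Z\cup(S^k\times M\times I)\cup(D^{k+1}\times M)$, and use $N\times I$ as the relative bordism.

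\textbf{At $\pi_k(\heqpt{\Bl})$.} A class lies in $\ker\beta_k$ exactly when some $\wxi_\ell$-manifold $Z$ bounds $(S^k\times M,\hat v^{\#})$, and such a $Z$ is by definition an element of $\htildeBk{k}$. For $k=0$ this is the statement about the preimage of the base point $[(M,\hat\nu_\ell)]$.

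\textbf{At $\O_{n+k+1}(\Bl)$.} If $(N,\wG)$ maps to zero, the bordism data supply a self-homotopy $\hat h$ of $\hat\nu_\ell\circ p_M$ on $S^k\times M$, fixed on $D^k_-$. This defines a family on $S^{k+1}$, viewed as the reduced suspension of $S^k$. The relative bordism $Y$ then shows that $\beta_{k+1}$ of this class equals $[(N,\wG)]$. Conversely, $\beta_{k+1}[h]$ maps to zero: trivialize $h$ away from a collar of the equator and use $S^{k+1}\times M\times I$ as the bordism.

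The one step I expect to need care is specific to $\Bl$. The space $\heqpt{\Bl}$ has no natural $H$-space structure, so gluing and composition constructions must go through $\heqptkc{B}{\ell}$ and Theorem \ref{thm:threeeight}, which gives $\pi_k(\heqpt{\Bl})\cong\pi_k(\heqptkc{B}{\ell})$ for $\ell\ge2$. I will also need that each slice of the homotopies produced remains in $\heqpt{\Bl}$, i.e.\ is an $(\ell+1)$-equivalence. This holds because the parameter spaces $D^k_+$, $D^k_+\times I$ are connected and each slice is homotopic to a base slice, and being an $(\ell+1)$-equivalence is a homotopy-invariant condition. This replaces the homotopy-equivalence check of Lemma \ref{lem:eightone}, and is in fact simpler, since no Poincar\'e duality argument is required.
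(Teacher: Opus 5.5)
Your proposal is correct and is essentially the paper's own argument. The paper's proof just says the details are those of Lemma \ref{lem:eightone} with $\Ml$ replaced by $\Bl$, citing Lemmas \ref{lem:fiveeight} and \ref{lem:fiveten} for the group structures, and your routing of the $k=0$ group structure through $\heqptkc{B}{\ell}$ via Theorem \ref{thm:threeeight} matches how the paper defines the operation on $\htildeBk{0}$. One small correction: the ``slice'' check you flag is automatic, since every homotopy you use is either a path in $\heqpt{\Bl}$ or part of the bordism data of Definition \ref{def:the-bordism-group-htildebk}, and the proof of Lemma \ref{lem:eightone} contains no Poincar\'e duality step for it to replace.
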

\begin{proof} The details are similar to the proof of Lemma \ref{lem:eightone} and are left to the reader (see Lemma \ref{lem:fiveeight} and Lemma \ref{lem:fiveten}). 
\end{proof}

\section{The Hambleton-Kreck braid for $\hept{\Ml}$}\label{sec:HKthmB}
The braid of interlocking sequences constructed by Hambleton--Kreck  \cite[Theorem 3.15]{Hambleton:2004} to study $\hept{M}$  can be derived from  Theorem B, when we restrict $M$ to be a closed, oriented $4$-manifold and take $k=0$. 
 The only difference in the two braids is the ``hat'' condition on the Lashof bordism groups, but this has the consequence that our bordism ``sets'' are in bijection with bordism subgroups in the HK-version, and the bordism strand in that braid becomes a long exact sequence of groups and homomorphisms. This is useful for computations, so we outline the comparison in detail  and generalize the  HK-version to all manifolds  of dimension $n \geq 4$.
 
 To derive our braids from the square of spaces in Theorem A we imposed a ``degree one'' condition to define a space 
 $\wOmega^{\infty+n} Q(\xi_\ell)$, whose homotopy groups give the bordism groups 
 $\wOmega_{n+k}(\Ml) = \pi_k(\wOmega^{\infty+n} Q(\xi_\ell))$,  for $k \geq 0$. The homotopy groups are based at the element $\alpha(\hatid_M)$, and since 
  the spaces involved are group-like $H$-spaces, translation identifies the homotopy types of their connected components. In particular, the ``hat'' condition restricting the components only affects the bordism groups for $k=0$ (see Remark \ref{rem:sixfourteen}).

In the $4$-manifold setting of \cite[Theorem 3.15]{Hambleton:2004}, the three possible normal $2$-types for $M$ are distinguished by the $w_2$-types: (I) $w_2(\wM) \neq 0$,  (II) $w_2(M) = 0$, and (III) $w_2(M) \neq 0$, but   
 $w_2(\wM) =0$. In our notation, these $w_2$-types are encoded by  $\hept{\Mw} = \pi_0(\heqpt{\Ml})$ and $\hept{\Bw} = \pi_0(\heqpt{\Bl})$, for $\ell =2$. 
There is a map 
$$\alpha^{HK}\colon \hept{\Mw} \to \wOmega^{HK}_4(\Mw)$$
 analogous to $\alpha_0$, which has as its codomain the subgroup $\wOmega^{HK}_4(\Mw) \subseteq \Omega_4(\Mw)$  of the Lashof bordism group, given by imposing a \emph{degree zero} condition on the reference map. We generalize this map to all dimensions $n \geq 4$ by imposing the same condition on the codomain of $\alpha_0$.
 
 \begin{definition} For $n \geq 4$ and $\ell \geq 2$, let
  $\wOmega^{HK}_n(\Ml) \subseteq \Omega_n(\Ml)$ 
 denote the subgroup consisting of the elements 
   $[N,\hat{f}]$, where $N$ is a closed $n$-manifold and $\hat f\colon N \to \Ml$ is a map with $\on{deg}(j \circ \hat f) = 0$. Similarly, let $$\wOmega^{HK}_{n+1}(\Bl, \Ml) \subseteq \Omega_{n+1}(\Bl, \Ml)$$
 denote the subgroup defined on an element $[(W, \bd W), (\hat g, \bd \hat g)]$ by requiring $\on{deg}(j \circ  \bd \hat g) = 0$.
 
 Let 
 $$\alpha^{HK}\colon \hept{\Ml} \to \wOmega^{HK}_n(\Ml)$$
 be defined by $\alpha^{HK}(\hat f) = [M, \hat f] - [M, \hatid_M]$, for $[\hat f] \in \hept{\Ml}$. 
 \end{definition}
 
 Then $\alpha^{HK}(\hat f) = [M, \hat f] - [M, \hatid_M]  = \alpha_0([\hat f]) -  [M, \hatid_M] $ gives the map used in the braid and a similar definition gives the map $\beta^{HK} \colon \hept{\Bw} \to \wOmega^{HK}_4(\Bw)$.

	Even though there is no degree constraint on the codomain of $\beta_0$, there is a similar relationship between the induced $\beta_0$ map and the map  $\beta^{HK}$ map in \cite{Hambleton:2004},  relating the two different versions   $\wOmega_{n+1}(\Bl, \Ml)$ and   $\wOmega_{n+1}^{HK}(\Bw, \Mw)$ of the relative bordism group,
	given explicitly by 
	$[(W,\wF,\bd \wF)] \mapsto \left[\big(W \sqcup -(D^1 \times M), \wF \sqcup (\hat{\nu}_{\ell} \circ p_M),\bd \wF \sqcup \hatid_M \sqcup \hatid_M \big) \right]$.

\begin{corollary}
	\label{cor:theorem-C}
	Let $M$ be a closed, smooth or topological manifold of dimension $n \geq 4$. For   $\ell \geq  [(n+1)/2]$,	
  there exists an interlocking braid of exact sequences: 
	\vskip .2cm
		$$\vcenter{\xymatrix@R-15pt@C-28pt{
					\Omega_{n+1}(\Ml)    \ar[dr] \ar@/^2pc/[rr]   &&
					\widetilde{ \cH}_0(\Ml)  \ar[dr] \ar@/^2pc/[rr] &&
					\hept{\Bl}\ar[dr]^{\beta}  \\
					& \Omega_{n+1}(\Bl) \ar[dr] \ar[ur]  &&
					\hept{\Ml}  \ar[dr]^{\alpha}\ar[ur] &&  \Omega_{n}(\Bl) \\
					\pi_{1}(\heqpt{\Bl}) \ar[ur] \ar@/_2pc/[rr] &&  \wOmega^{HK}_{n+1}(\Bl,\Ml)
					\ar[ur]^{\gamma} \ar@/_2pc/[rr]&&
					 \wOmega^{HK}_{n}(\Ml)\ar[ur] 
		}}		$$
	\vskip .7cm
	All the maps in the diagram are group homomorphisms, except that the maps $\alpha\coloneqq  \alpha^{HK}$, $\beta\coloneqq \beta^{HK}$ and $\gamma\coloneqq\gamma^{HK}$ are only maps of pointed sets.
\end{corollary}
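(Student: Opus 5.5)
The plan is to deduce the corollary from Theorem B with $k=0$; that is legitimate because $\ell \geq [(n+1)/2] = \lmin$ when $k=0$. Only the degree-zero terms need to change. The braid of Theorem B already contains $\Omega_{n+1}(\Ml)$ (equal to its hatted version, by Remark \ref{rem:sixfourteen}), $\Omega_{n+1}(\Bl)$, $\pi_1(\heqpt{\Bl})$, $\widetilde{\cH}_0(\Ml)$, $\hept{\Ml}$, $\hept{\Bl}$ and $\Omega_n(\Bl)$, together with all the maps among them. What remains is to replace the pointed sets $\wOmega_n(\Ml)$ and $\wOmega_{n+1}(\Bl,\Ml)$ by the subgroups $\wOmega^{HK}_n(\Ml)$ and $\wOmega^{HK}_{n+1}(\Bl,\Ml)$, and to replace $\alpha_0,\beta_0,\gamma_0$ by $\alpha^{HK},\beta^{HK},\gamma^{HK}$, while keeping exactness.

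\textbf{Step 1: translation bijections.} Translation by $-[M,\hatid_M]$ gives a bijection $\tau\colon \wOmega_n(\Ml)\to \wOmega^{HK}_n(\Ml)$ with $[N,\hat g]\mapsto [N,\hat g]-[M,\hatid_M]$. It is well defined because degree is additive under disjoint union and is a bordism invariant. Its inverse is $x\mapsto x+[M,\hatid_M]$, and it sends the basepoint to $0$. In the same way, translation by $-[M,\hat\nu_\ell]$ acts on $\Omega_n(\Bl)$, and this identifies $\beta^{HK}$ with $\tau\circ\beta_0$. On the relative group, I would use the map displayed just before the corollary, which subtracts the product cobordism $(D^1\times M,\hat\nu_\ell\circ p_M)$ with both ends lifted by $\hatid_M$. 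This class is the image of $[M,\hatid_M]$ under the natural map $\Omega_n(\Ml)\to\Omega_{n+1}(\Bl,\Ml)$. The task here is to check that subtraction gives a bijection $\wOmega_{n+1}(\Bl,\Ml)\to\wOmega^{HK}_{n+1}(\Bl,\Ml)$ carrying the degree-one boundary condition to the degree-zero one.

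\textbf{Step 2: transport the maps and check commutativity.} Set $\alpha^{HK}=\tau\circ\alpha_0$, $\beta^{HK}=\tau\circ\beta_0$, and $\gamma^{HK}=\gamma_0\circ\tau^{-1}$. I then define the remaining maps involving the new terms as follows. The map $\wOmega^{HK}_n(\Ml)\to\Omega_n(\Bl)$ is the restriction of the homomorphism $(\wqq_\ell)_*$. The boundary $\wOmega^{HK}_{n+1}(\Bl,\Ml)\to \wOmega^{HK}_n(\Ml)$ is the restriction of $\partial$, which is a homomorphism. The map $\Omega_{n+1}(\Bl)\to\wOmega^{HK}_{n+1}(\Bl,\Ml)$ is the usual inclusion, which lands in the degree-zero part since closed manifolds have empty boundary. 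Each triangle and square of the braid of Theorem B then transforms into the corresponding one for the HK braid. The translation constants are compatible because $(\wqq_\ell)_*[M,\hatid_M]=[M,\hat\nu_\ell]$, and because $\partial$ applied to the subtracted cylinder class is $[M,\hatid_M]-[M,\hatid_M]=0$ (orientations reversed). Exactness is preserved because the translations are bijections that preserve basepoints, and because on the group terms the relevant sequences are long exact Lashof sequences restricted to subgroups containing all the images.

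\textbf{Step 3, the main obstacle: exactness at the two HK terms as groups.} One point is exactness of $\Omega_{n+1}(\Bl)\to\wOmega^{HK}_{n+1}(\Bl,\Ml)\to\wOmega^{HK}_n(\Ml)\to\Omega_n(\Bl)$. This is inherited from the full Lashof sequence once one checks that the kernel of $\partial$ on the subgroup coincides with the kernel on the whole group, which is immediate. The other point is that exactness at $\wOmega^{HK}_n(\Ml)$ in the homotopy strand is a statement about preimages: $\alpha^{HK}(\hat f)$ lies in the image of $\gamma^{HK}$ precisely when its image in $\Omega_n(\Bl)$ vanishes. I would verify this using the translated square of Lemma \ref{lem:twoseven}, keeping track of the translation constants. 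For the smooth case the whole argument is unchanged, because Theorem B covers that case as well.
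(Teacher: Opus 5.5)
Your route is the paper's: Theorem B at $k=0$, then the difference bijection $D$ (your $\tau$) with inverse $E$, and transport of the maps. The absolute part is fine. The gap is the relative term.

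\textbf{Why your relative translation fails.} The class you subtract, $(D^1\times M,\hat\nu_\ell\circ p_M,\hatid_M\sqcup\hatid_M)$, has $\partial=0$, as you note in Step~2. It is in fact zero in $\Omega_{n+1}(\Bl,\Ml)$: its reference map $\wpp_\ell\circ\hatid_M\circ p_M$ lifts to $\Ml$ over all of $D^1\times M$, compatibly with the boundary lift. For the same reason, any cylinder-type map $\Omega_n(\Ml)\to\Omega_{n+1}(\Bl,\Ml)$ is zero. So your subtraction is the identity and cannot turn degree-one boundaries into degree-zero ones, contradicting your Step~1 claim.

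What the braid actually needs is a correction with boundary $-[M,\hatid_M]$, so that $\partial\circ\tau_{\mathrm{rel}}=\tau\circ\partial$. Without it, the braid triangle $\alpha^{HK}\circ\gamma^{HK}=\partial$ (with $\partial$ the restricted Lashof boundary) fails by $[M,\hatid_M]$. No class of $\Omega_{n+1}(\Bl,\Ml)$ has boundary $[M,\hatid_M]$ unless $[M,\hat\nu_\ell]=0$ in $\Omega_n(\Bl)$. That fails, for instance, for $\ell\geq n$, where $(\hat\nu_\ell)_*[M]\neq 0$ in $H_n(\Bl;\bZ^w)$. In that case no relative class has a degree-one boundary at all. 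So the relative term of Theorem~B is not a subset of $\Omega_{n+1}(\Bl,\Ml)$ cut out by a degree condition.

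\textbf{The fix.} Use what Lemma~\ref{lem:twoseven} actually gives: the relative term is $\pi_0(\on{hofib}_z(\wqq_\ell))$ with $z=(M,\hat\nu_\ell)$. Its elements are classes of $\wxi_\ell$-bordisms $W$ from a degree-one $(N,\hat g)$ to $(M,\hat\nu_\ell)$, where the $M$-end carries only its $\Bl$-structure and no lift. Define $\tau_{\mathrm{rel}}$ by lifting that end via $\hatid_M$; this is the geometric form of translating by $-z$ in the infinite loop space. Then $\partial\,\tau_{\mathrm{rel}}[W]=[N,\hat g]-[M,\hatid_M]=\tau[N,\hat g]$, which has degree zero. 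The inverse adds $(M\times I,\hat\nu_\ell\circ p_M)$ with one end lifted by $\hatid_M$ and the other left unlifted. The formula displayed before the corollary has to be read this way, as attaching to the unlifted end rather than as a disjoint union in $\Omega_{n+1}(\Bl,\Ml)$.

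Set $\gamma^{HK}=\gamma_0\circ\tau_{\mathrm{rel}}^{-1}$. The transported map from $\Omega_{n+1}(\Bl)$ becomes the natural map to the relative group, and the map from $\pi_1(\heqpt{\Bl})$ becomes its composite with $\beta_1$; both are homomorphisms. All commutativity and exactness are then inherited formally. So the second point of your Step~3 needs no separate argument; as written it is also misphrased, since $\gamma^{HK}$ lands in $\hept{\Ml}$, not in $\wOmega^{HK}_n(\Ml)$.
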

\begin{proof}
To relate the maps used in   \cite[Theorem 3.15]{Hambleton:2004} to those in Theorem B, observe that there is a difference map 
$$D\colon \wOmega_n(\Ml) \to \wOmega^{HK}_n(\Ml)$$
defined by mapping the connected component of ``degree one'' elements, represented by 
$[(N,  \hat g)]$,  to the bordism element 
	$[(N \sqcup -M, \hat g \sqcup \hatid_M)] \in \wOmega^{HK}_n(\Ml)$.    
	Since $\hat g$ has degree one, the difference element has degree zero, and we have the relation $\alpha^{HK} = D \circ \alpha_0$.
	 Note that the map $D$ is a bijection, with inverse map $E\colon \wOmega^{HK}_n(\Ml) \to 
	\wOmega_n(\Ml)$ defined by $[N', \hat g'] \mapsto [(N', \hat g') \sqcup (M, \hatid_M)]$. One can check that $E\circ D = \id$ and $D\circ E = \id$.  A similar discussion holds for the relative groups 
	$ \wOmega^{HK}_{n+1}(\Bl,\Ml)$. We can now use the maps $D$ and $E$ to replace the bordism groups in Theorem B by the HK-variants. It is easy to check that the resulting interlocking braid agrees with that in \cite[Theorem 3.15]{Hambleton:2004} when specialized to that setting. 
\end{proof}
\begin{remark}\label{rem:pamuk} Pamuk \cite[Theorem 2.17]{Pamuk:2026} gives a related braid for ordinary based self-equivalences $\hept{M}$ of closed, oriented $BO\langle r+1\rangle$-manifolds $M$ of dimension $2r$ or $2r-1$, under some additional assumptions, together with  some calculations for products and connected sums of spheres.
\end{remark}

\section{Applications and further questions}\label{sec:nine}
We provide three  new applications of the braid (Theorem \ref{thm:sixone}, Theorem \ref{thm:sixtwo} and Theorem \ref{thm:sevensixteen}), and  a short survey of the literature, together with some related questions. As above, we will give the statements for smooth manifolds, but the corresponding results also hold in the topological manifold setting. 

\subsection{Computations}
 So far the braid in Theorem B has only been computed for $\hept{M}$ (the case $k=0$)  under some special assumptions.
  For example, the structure of $\hept{M}$ is known if  $M$ is any closed, oriented, smooth (or topological) $4$-manifold with odd order finite fundamental group (see \cite[Theorem A and \S 4]{Hambleton:2004}). In addition, an approach to the classification of $4$-manifolds based on calculations of the braid was introduced by Pamuk \cite{Hegenbarth:2015a,Pamuk:2009,Pamuk:2010}, and further developed in \cite{Hambleton:2023}. We expect that the braid will lead to new information  in the future about the higher homotopy groups of the spaces of homotopy self-equivalences.

\subsection{Stability of $\heqpt{\Bl}$ as $\ell$ increases}
\label{subsection:space-of-self-equivalences-and-postnikov}
As $\ell$ increases, the Postnikov sections $P_\ell(M)$ converge to $M$ up to homotopy. 
The natural transformations $P_{\ell+1}(-) \to P_{\ell}$ induce compatible transition maps 
$$
M \langle \ell+1 \rangle \to \Ml \quad \text{and} \quad B \langle \ell+1 \rangle \to \Bl,
$$
and hence towers of the corresponding spaces of self-equivalences.

A result of Dror, Dwyer and Kan \cite[Theorem 1.1]{Dror:1979} shows that the spaces $B\heqpt{P_\ell(M)}$ converge to $B\heqpt{M}$ (by taking the ``identity component'' of  a suitable homotopy inverse limit).

Let $\heqpt{\Bl}_{c_*[M]}$ denote the subspace of $\heqpt{\Bl}$ consisting of all connected components that contain an ``orientation preserving'' homotopy self-equivalence,  meaning an element $(\phi, H_\phi) \in \heqptkc{B}{\ell}$  such that $\phi_*(c_\ell [M]) = c_\ell [M] \in H^t_n(P_\ell(M); \bZ)$.  Using the braid diagram, we obtain a result similar to \cite[Theorem 1.1]{Dror:1979} relating the homotopy inverse limit of $\heqpt{\Bl}_{c_*[M]}$ to the space $\heqpt{M, \nu_M}$ of homotopy self-equivalences of $M$ which preserve the stable normal bundle up to prescribed homotopy.

\begin{theorem}\label{thm:sixone}
The compatible maps induced by Postnikov truncation define a natural map
$$
\Theta \colon \heqpt{M,\nu_M} \longrightarrow \holim \heqpt{\Bl}_{c_{ *}[M]}.
$$
This map is a weak equivalence. 
\end{theorem}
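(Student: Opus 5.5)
The strategy is to compare the tower $\{\heqpt{\Bl}_{c_*[M]}\}_\ell$ with the tower $\{\heqpt{\Ml}\}_\ell$, and then to identify the homotopy limit of the latter with $\heqpt{M,\nu_M}$. First define $\Theta$: a point of $\heqpt{M,\nu_M}$ is a pair $(f,H_f)$ with $H_f$ a homotopy $\nu_M\simeq\nu_M\circ f$ into $BO$. Composing $H_f$ with $w_\ell$ gives a point of $\heqpt{\Ml}$, and applying $\wpp_\ell$ gives a point of $\heqpt{\Bl}$. These are strictly compatible with the transition maps induced by $P_{\ell+1}(-)\to P_\ell(-)$, because the Postnikov tower is functorial. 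The image lands in the orientation-preserving components, since $f_*[M]=[M]$. This defines $\Theta$ as the composite
$$\heqpt{M,\nu_M}\to \holim \heqpt{\Ml}\to \holim \heqpt{\Bl}_{c_*[M]}.$$

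\textbf{Step 1.} I would show the second map is a weak equivalence by showing that, for each fixed homotopy degree $r$, the map $\wpp_\ell\colon \heqpt{\Ml}\to\heqpt{\Bl}_{c_*[M]}$ is eventually an isomorphism on $\pi_r$. For this I would use the braid of Theorem B with $k=r+1$ and $\ell\ge[(n+r+2)/2]$, in the form of the homotopy fibre identification from Lemma \ref{lem:eight_iso_two}: the homotopy fibre of $\wpp_\ell$ agrees with that of $\heqpt{M}\to\heqpt{P_\ell(M)}$. An alternative route is the bordism version: the relative groups $\Omega_{n+r+1}(\Bl,\Ml)\cong\pi^S_{n+r+1}(T(\wxi_\ell),T(\xi_\ell))$ vanish once $\ell+1>n+r+1$, because the pair $(\Bl,\Ml)$ is $(\ell+1)$-connected (AHSS, Remark \ref{rem:ahss}). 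In that range the braid gives $\pi_r(\heqpt{\Ml})\cong\pi_r(\heqpt{\Bl})$ for $r\ge1$. For $\pi_0$ the braid only gives an injection, so I would check surjectivity onto the components $c_*[M]$ by the Poincaré duality argument from the lifting lemma in Section \ref{sec:five_a}. Given $(\phi,H_\phi)$ with $\phi_*c_\ell[M]=c_\ell[M]$ and $\ell>n$, lift $\phi\circ c_\ell$ through $c_\ell$ (the obstructions vanish because $(P_\ell(M),M)$ has no cells below $\ell+2>n+1$). The lift is a degree one $(\ell+1)$-equivalence, hence a homotopy equivalence. Because the towers are eventually constant on each $\pi_r$, the $\lim^1$ terms in the Milnor sequence vanish, and the map of holims is a weak equivalence.

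\textbf{Step 2.} I would show $\heqpt{M,\nu_M}\to\holim_\ell\heqpt{\Ml}$ is a weak equivalence. Using the strict pullback description, $\heqpt{\Ml}$ is the pullback of $\heqpt{M}\times \Map_\bullet(M,BO)$ over $\Map_\bullet(M,P_\ell(BO))$, with the components constrained by $\nu_M$. Holims commute with pullbacks and with $\Map_\bullet(M,-)$. Moreover $\holim_\ell \Map_\bullet(M,P_\ell BO)\simeq\Map_\bullet(M,BO)$, since Postnikov towers converge. So the holim is the homotopy fibre over $\nu_M$ of the diagonal-type map, and this fibre is exactly the space of pairs $(f,H_f)$ with $H_f\colon\nu_M\simeq\nu_M f$, namely $\heqpt{M,\nu_M}$. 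Alternatively, I would repeat the $\lim^1$ argument: $\heqpt{\Ml}\to\heqpt{M\la\ell'\ra}$ is highly connected for $\ell'>\ell\gg r$, since its fibre is controlled by $\Map_\bullet(M,\mathrm{hofib}(P_{\ell'}BO\to P_\ell BO))$, which is $(\ell-n)$-connected.

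\textbf{Main obstacle.} I expect the main difficulty to be the $\pi_0$ and component bookkeeping: showing that the image of $\holim$ on components is exactly the orientation-preserving part, and that the Milnor $\lim^1$ on $\pi_1$ of the non-simply-connected, non-abelian groups vanishes. I would handle this by the eventual-constancy just established, which gives Mittag-Leffler.
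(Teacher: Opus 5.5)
Your proposal is correct and follows essentially the same route as the paper. The shared steps are: vanishing of $\wOmega_r(\Bl,\Ml)$ for $r\le \ell+1$ via the relative AHSS, fed into the braid; a separate obstruction-theory and Poincar\'e duality argument for surjectivity onto the $c_*[M]$ components; the Milnor $\varprojlim^1$ comparison for a map of towers whose connectivity tends to infinity; and the same comparison for $\heqpt{M,\nu_M}\to\heqpt{\Ml}$.

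One small correction: Step~1 alone only shows that the comparison map is eventually an isomorphism on each $\pi_r$, not that the towers themselves are eventually constant; eventual constancy comes from your Step~2 connectivity estimate. This causes no harm, because that comparison already identifies both $\varprojlim$ and $\varprojlim^1$, which is exactly how the paper argues.
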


\begin{proof}
All homotopy groups below are based at the identity elements. Since the spaces involved are group-like $H$-spaces, translation identifies the homotopy types of their connected components.

Since $\wpp_{\ell} \colon \Ml \to \Bl$ is $(\ell+1)$-connected, relative Hurewicz with local coefficients gives
$H_p(\Bl,\Ml;\mathcal{B})=0$ for $p \leq \ell+1$ for every local coefficient system $\mathcal{B}$. The relative
Atiyah--Hirzebruch spectral sequence (see Remark \ref{rem:ahss}) therefore gives $\Omega_r(\Bl,\Ml)=0$ for $r \leq \ell+1$ and hence $\wOmega_r(\Bl,\Ml)=0$ for $r \leq \ell+1$ (see Section \ref{subsec:input_braid}).

For $k>0$, the braid contains the exact sequence
$$
\widehat{\Omega}_{n+k+1}(\Bl,\Ml)  \longrightarrow \pi_k(\heqpt{\Ml})  \longrightarrow \pi_k(\heqpt{\Bl})
 \longrightarrow \wOmega_{n+k}(\Bl,\Ml).
$$
Thus the middle map is an isomorphism for $k \leq \ell-n$ and is surjective for $k = \ell - n+1$. It remains to identify its image on $\pi_0$. Let $(\phi,H_\phi) \in \heqptkc{B}{\ell}$ satisfy $\phi_*(c_*[M])=c_*[M]$. For $\ell \geq n= \dim(M)$, obstruction theory applied to the homotopy fibre of $c_{\ell} \colon M \to P_{\ell}(M)$ provides a based map $f \colon M \to M$ and a based homotopy $H_2 \colon c_{\ell} \circ f \simeq \phi \circ c_{\ell}$. This homotopy implies that $f$ is an $\ell$-equivalence. Moreover, since $c_*$ is an isomorphism in degree $n$,
$$
(c_* \circ f_*)[M]  = \phi_* (c_{ *}[M])  = c_{*}[M]
$$
implies $f_*[M]=[M]$. By Poincar\'e duality with local coefficients, $f$ is a homology equivalence, and hence a homotopy equivalence. Define a homotopy $H_f = (\nu_{\ell} \circ H_2) \ast \bigl(H_{\phi} \circ (c_{\ell} \times \id_I)\bigr)$ from $\gamma_{\ell} \circ f$ to $\nu_{\ell}\circ c_{\ell} =\gamma_{\ell}$. Then $\hat{f}= (f,H_f)$ is an element of $\heqpt{\Ml}$, and the homotopy-pullback description of $\heqpt{\Bl}$ shows that
$\wpp_{\ell}(\hat f)$ lies in the connected component represented by $(\phi,H_\phi)$. Consequently, $\heqpt{\Ml} \to \heqpt{\Bl}_{c_{\ell *}[M]}$ is an $(\ell-n+1)$-equivalence.

We use the standard consequence of Milnor's $\varprojlim^1$-sequence (see \cite[Lemma 2]{Milnor:1962} and \cite[Chap.~IX, Theorem 3.1]{Bousfield:1972}). In particular, if $X_{\ell} \to Y_{\ell}$ is a map of towers whose connectivity tends to infinity with $\ell$, then $\holim  X_{\ell} \to \holim  Y_\ell$ is a weak equivalence. Indeed, for each fixed $q$, the maps on $\pi_q$ and $\pi_{q+1}$ are isomorphisms on a cofinal tail of the towers, and hence induce isomorphisms on both the $\varprojlim$ and $\varprojlim^1$ terms in the Milnor exact sequences. It follows that
$$
\holim  \heqpt{\Ml}  \xrightarrow{\ \approx\ } \holim  \heqpt{\Bl}_{c_{\ell *}[M]}.
$$
Finally, the maps $\heqpt{M,\nu_M}\to \heqpt{\Ml}$ induced by $BO \to P_\ell(BO)$ have connectivity tending to infinity. Indeed, this follows from obstruction theory, since the fibre of $BO \to P_\ell(BO)$ is $\ell$-connected and $M$ is a finite $n$-dimensional CW complex. Applying the same homotopy inverse limit
argument gives
$$
\heqpt{M,\nu_M} \xrightarrow{\ \approx\ } \holim  \heqpt{\Ml}.
$$
Combining the last two weak equivalences proves the result.
\end{proof}

\subsection{The group of homotopy self-equivalences of manifolds}
The groups $\hept{M^n}$ have previously been described in terms of a short exact sequence of more familiar groups  in a meta-stable range of dimensions, where $M^n$ is $(k-1)$-connected, with $k \geq 2$, and $n \leq 3k-2$,
in some cases under the assumption that $M$ is stably parallelizable (see \cite{Barcus:1958,Baues:1996a,Crowley:2011,Kahn:1969,Krylov:2003}).

We would like to explain how to recover  some of these explicit results from the braid diagrams,  for example when $n = 2k$, and provide a generalization. The problem in the braid setting is that the computation of the bordism groups
$\Omega_{*}(M\la k \ra)$ appears to be difficult. On the other hand, it is clear from the answers obtained by other authors that the image of
$\gamma_0\colon \widehat{\Omega}_{2k+1}(B \la k \ra, M\la k\ra) \to \hept{M\la k \ra}$ appearing in Theorem B and \cite{Hambleton:2004}
 must account for the ``pinch map'' elements arising
from  the homotopy groups $\pi_{2k}(S^k)$ and $\pi_k(M)$.

\begin{example} 
If $M$ is a $(k-1)$-connected stably parallelizable $2k$-dimensional manifold, then $\nu_M \simeq \ast$ implies that   $[(f,H_f)] = [(f,H_f')] \in \hept{M \la k \ra}$ for any $f \in\hept{M}$ and for any two homotopies $H_f, H_f' \colon \gamma_k \simeq \gamma_k \circ f$,  since $[\Sigma M, P_k(BO)]=\{ \ast \}$. Therefore,  $\hept{M \la k \ra}=\hept{M}$ and $\hept{P_k(M)}=\text{Aut}(\pi_k(M))$, since $P_k(M)$ is a $K(\pi_k(M),k)$-space and each connected component of $\heqpt{K(\pi_k(M),k}$ is contractible (see \cite[19.1.3]{Rutter:1997}). 
\end{example}

The work of Baues \cite[Theorem 8.14]{Baues:1996a} suggests that we should consider the complement of a small embedded $n$-disk in $M^n$, for $n \geq 4$,  and consider the homotopy self-equivalences which restrict to the identity on the complement.
That choice is adequate for simply-connected manifolds, but in general we let  $M_0 \coloneqq M - \mathring H$, where
 $\mathring H$ denotes the interior of a thickening $H$ of a suitable embedded $1$-complex representing a  finite set  of generators for $\pi_1(M, x_0)$.

 More precisely, let $\phi\colon F \to \pi_1(M, x_0)$ be a surjection from a finitely generated free group $F$ with generators
$\{\alpha_1, \dots, \alpha_t\}$, such that their images $\{g_i = \phi(\alpha_i)\}$ give a  finite set of generators for $\pi_1(M, x_0)$. Let $K \subset M$ be an embedded (locally flat) wedge of circles at the base point $x_0 \in M$, representing  the given set of generators $S \coloneqq\{g_i \in \pi_1(M, x_0)\}$. Then we let $H$ denote a small regular neighbourhood of $K$ in $M$. 
 \begin{example}
If $M$ is simply-connected, we can  let $H = D^n$. If $M$ is smooth, we could also let $H$ be the union of the handles of index $\leq 1$ provided by a Morse function.
\end{example}
 In general, $H \cong \natural (S^1 \widetilde{\times} D^{n-1})_j$ is a boundary connected sum of possibly non-trivial bundles over $S^1$,  since the normal bundles of the core circles could be non-trivial if $M$ is non-orientable. We can \emph{stabilize} this construction by connected sum with  $S^n = (S^1 \times D^{n-1} ) \cup (D^2 \times S^{n-2})$, which adjoins an additional (null-homotopic) circle to $K$, and stabilizes the homotopy type of $M_0$ by wedge sum with $S^{n-2}$ while keeping that of $M$ unchanged.

\begin{lemma}\label{lem:seventhree} For $n \geq 4$, the inclusion $i\colon M_0 \subset M$ is an $(n-2)$-equivalence, and after stabilization as above (if necessary),  there is a homotopy equivalence of  finite CW pairs $(M, M_0)\simeq (P, Q)$, where $\dim Q \leq n-2$ and $\dim \sigma \geq n-1$, for all cells $\sigma \subseteq P - Q$.
\end{lemma}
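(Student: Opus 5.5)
The plan is to prove the two claims separately: the connectivity of $i$ by general position, and the cell structure of $(M,M_0)$ by a duality/handle argument applied to the complementary handlebody $H$.

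\textbf{Connectivity of $i$.} The complement $M-M_0=\mathring H$ deformation retracts onto the $1$-complex $K$. Consider a map of pairs $(D^{r},S^{r-1})\to (M,M_0)$ with $r\leq n-2$. By general position it can be homotoped (rel $S^{r-1}$) to miss $K$, since $r+1<n$. In the topological case we use the locally flat tameness of $K$, or work in a smooth structure on a neighbourhood of $H$. Pushing radially away from the core then moves the map off $\mathring H$ and into $M_0$. Hence $\pi_r(M,M_0)=0$ for $r\leq n-2$, so $i$ is an $(n-2)$-equivalence.

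\textbf{Dual handle decomposition.} For the cell structure, regard $(M,M_0)$ as $M_0\cup H$, glued along $\partial H=\partial M_0$. Turn $H$ upside down: $H$ consists of one $0$-handle and $t$ one-handles. Viewed from $\partial H$, this gives $(H,\partial H)$ a relative handle structure with $t$ handles of index $n-1$ and one handle of index $n$. Hence $(M,M_0)\simeq (M_0\cup \text{cells of dim } n-1,n)$, and every cell of $P-Q$ has dimension $\geq n-1$.

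\textbf{Making $\dim Q\leq n-2$.} Since $M_0$ is a compact $n$-manifold with nonempty boundary, it is homotopy equivalent to a CW complex of dimension $\leq n-1$. In the smooth case we use a handle decomposition with no $n$-handles. In the topological case we use, for $n\geq 6$, handle structures relative to the boundary, and otherwise Wall's results on finiteness and dimension of Poincaré pairs. Two issues must be handled.

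\emph{(a) Removing the top cells of $Q$.} By Lefschetz duality, $H^{n-1}(M_0;\cB)\cong H_1(M_0,\partial M_0;\cB)$ for every local system $\cB$. By excision, $H_1(M_0,\partial M_0)\cong H_1(M,H)$. This group vanishes because $K$ generates $\pi_1(M)$: the pair $(M,H)$ is $1$-connected. So $H^{n-1}(M_0;\cB)=0$ for all $\cB$. By Wall's criterion (valid since $n-1\geq 3$), $M_0$ is then homotopy equivalent to a complex of dimension $\leq n-2$. This needs the pair $(M_0,\partial M_0)$ to have the appropriate connectivity, which is where we may need $\pi_1(\partial M_0)\to\pi_1(M_0)$ to be onto.

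\emph{(b) Stabilization.} Wall's criterion for dimension $n-2$ holds only up to a finiteness/Whitehead-type obstruction, and the required surjectivity may fail. Stabilizing by connected sum with $S^n$ adds a free generator to $K$, a trivial $1$-handle to $H$, and replaces $M_0\simeq M_0\vee S^{n-2}$. This gives room to perform the handle trades (cancel $(n-1)$-handles of $M_0$ against new $(n-2)$-handles) needed to eliminate the remaining $(n-1)$-cells. This is standard Wall/Smale handle trading in the smooth case with $n\geq 5$. For $n=4$ it must be done at the level of CW complexes, using the homotopy-theoretic version of Wall's argument.

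\textbf{Main obstacle.} The hardest step is (a)–(b): making $\dim Q\leq n-2$ via Wall's criterion, especially when $\pi_1$ is nontrivial and $n=4$, or in the topological category. The first attempt would be Wall's theorem on complexes of dimension $\leq m$ with $m=n-2\geq 2$, using the vanishing of $H^{n-1}(M_0;\cB)$. Stabilization would be invoked to kill any residual obstruction, by replacing a free summand: the stable class is realized by wedging on $S^{n-2}$.
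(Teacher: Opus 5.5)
Your connectivity argument and your check of Wall's cohomological condition (Lefschetz duality, then excision to the $1$-connected pair $(M,H)$) match the paper. Your dual handle decomposition of $H$ relative to $\partial H$ is a valid, more direct substitute for the paper's use of \cite[Lemma 15]{Whitehead:1950} for the cells of $P-Q$: once $M_0\simeq Q$, you just transport the attaching maps of the $(n-1)$- and $n$-cells to $Q$.

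The genuine gap is the step ``by Wall's criterion (valid since $n-1\geq 3$), $M_0$ is homotopy equivalent to a complex of dimension $\leq n-2$.'' \cite[Theorem E]{Wall:1965} converts the condition $D_m$ into an $m$-dimensional complex only for $m\geq 3$, and here $m=n-2$. So your argument works for $n\geq 5$. For $n=4$, however, it asserts that a finite $3$-complex satisfying $D_2$ is homotopy equivalent to a $2$-complex, which is Wall's open D2 problem. Your part (b) does not close this gap. Trading $3$-handles for $2$-handles is not available in dimension $4$. Likewise, ``the homotopy-theoretic version of Wall's argument'' is exactly the cellular realization step that breaks down when the target dimension is $2$.

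What is missing is the \emph{stable} form of the D2 problem. First use Wall to reduce $M_0$ to a finite $3$-complex satisfying $D_2$. Such a complex becomes homotopy equivalent to a finite $2$-complex after wedging on finitely many copies of $S^2$; the paper cites \cite[Lemma 2.1]{Hambleton:2019} for this. This is precisely what the stabilization in the statement is for: each connected sum with $S^4$ adds one $S^2$ wedge summand to $M_0$ and leaves $M$ unchanged.

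Your stated reasons for stabilizing are also misplaced. For $n\geq 5$ no stabilization is needed: $M_0$ is already a finite complex, so its finiteness obstruction vanishes and Wall's theorem applies directly. Nor is any surjectivity of $\pi_1(\partial M_0)\to\pi_1(M_0)$ used: the excision $H_1(M_0,\partial M_0;\mathcal B)\cong H_1(M,H;\mathcal B)$ holds without it.
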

\begin{proof}
 By general position, the homotopy groups $\pi_i(M, M_0) = 0$ for $i \leq n-2$, and  $\pi_1(M_0) \cong \pi_1(M)$ since $n \geq 4$. Hence the inclusion $i\colon M_0 \subset M$ is an $(n-2)$-equivalence.

 In the following, we let $\Lambda \coloneqq \bZ \pi_1(M)$ and $\Gamma \coloneqq\bZ \pi_1(H)$  denote the integral group rings, and note that we may regard a right $\La$-module $\cB$ as a right $\Gamma$-module by restriction along the surjection $\pi_1(H) \to  \pi_1(M)$.

  Since $M_0$ is a compact manifold with boundary, it is has the homotopy type of a finite complex. To show that $M_0$ is homotopy equivalent to a finite complex $Q$ with $\dim Q \leq n-2$, the first step is to check that  $M_0$ satisfies Wall's $D(n-2)$-condition \cite[\S 2]{Wall:1965}. In other words, we must verify that $H_i(M_0;\La) = 0$ for $i \geq n-1$ and that $H^{n-1}(M_0; \cB) = 0$ for any right $\Lambda$-module $\cB$.
 
  We start with an observation. Since $H$ and $M$ are connected, the surjectivity of $\pi_1(H) \to  \pi_1(M)$ implies that $\pi_i(M,H) = 0$ for $i = 0,1$ (i.e.\ the pair $(M,H)$ is $1$-connected). By \cite[Lemma 15, p.~46]{Whitehead:1950}, the pair $(M,H)$ is therefore homotopy equivalent, relative to its 1-dimensional core $K\simeq H$,  
  to a pair $(Z, K)$ with no cells in $Z-K$ of dimension $\leq 1$. 
  
  Now, to show that $H_i(M_0;\La) = 0$ for $i \geq n-1$, it is enough to consider $i \in \{n-1, n\}$ since the dimension of $M_0$ is $n$. Poincar\'e duality for the pair $(M_0, \partial M_0)$ and excision yields an isomorphism $H_{i}(M_0; \Lambda) \cong  H^{n-i}_t(M_0, \bd M_0; \Lambda)\cong H^{n-i}_t(M, H; \Lambda)$. The cohomology groups on the right side are trivial for $n-i \in \{0, 1\}$, because the pair $(M, H)$ has (up to homotopy equivalence) no cells in $M-H$ of dimension $\leq 1$ by the preceding observation. Similarly, to show that $H^{n-1}(M_0; \cB) = 0$ for any right $\La$-module $\cB$, we again apply Poincar\'e duality, excision and the observation to obtain $H^{n-1}(M_0; \cB)\cong H_1^t(M, H; \cB) = 0$.
 
 If $n \geq 5$, we can directly apply \cite[Theorem E]{Wall:1965} to conclude that $M_0$ is homotopy equivalent to a finite complex 
 $Q$ with $\dim Q \leq n-2$ (and hence its Wall finiteness obstruction vanishes). If $n=4$, Wall's $D2$-condition (verified in the previous paragraph) implies that $M_0$ is homotopy equivalent to a finite $3$-complex (see  \cite[p.~64]{Wall:1965}).
    Hence, we may assume (up to homotopy equivalence) that $M_0$ is a finite, connected $3$-complex satisfying Wall's $D2$-condition. This is precisely the setting of \cite[Section 2]{Hambleton:2019}, and therefore \cite[Lemma 2.1]{Hambleton:2019} applies to show that $M_0$ is (simple) homotopy equivalent to a finite $2$-complex after stabilizing with finitely many copies of $S^2$.

 The last statement about the cells in $P-Q$ follows from the first conclusion of \cite[Lemma 15, p.~46]{Whitehead:1950}, which applies with $r = n-2$.  \end{proof}

\begin{assumption}
For the rest of this subsection, we will assume that $M_0$ is stabilized in the sense above, so that it satisfies the conclusions of Lemma \ref{lem:seventhree}.
\end{assumption}

 Let $\hept{\Ml\vv M_0} $ be defined as
 $$\hept{\Ml\vv M_0} = \{[\hat f] \in \hept{\Ml}  \vv   \hat f|_{M_0}\simeq \hatid_{M_0} \},$$
 where $\hat f|_{M_0} = \hat f \circ i \colon M_0 \to \Ml$ and $\hatid_{M_0} = \hatid_M \circ \, i \colon M_0 \to \Ml$ denote the restrictions of the domain along the inclusion $i \colon M_0 \to M$, and
     the homotopy is constant over $BO$, so that  $\xi_{\ell} \circ h_t = \nu_M|_{M_0}$ for all $t \in I$, with no extra condition imposed on $\hat f|_{\bd M_0}$.
 Note that if $ \hat f_1|_{M_0}\simeq \hatid_{M_0}$ and  $\hat f_1 \simeq \hat f_2$
  then $ \hat f_2|_{M_0}\simeq \hatid_{M_0}$, so the  subset
 $\hept{\Ml\vv M_0}$ is well-defined.  We next point out an equivalent form of the definition (expressed  in the notation of Remark \ref{rem:threethree}): it then follows that $\hept{\Ml\vv M_0}$  is a subgroup of $\hept{\Ml}$.

 \begin{lemma}\label{lem:sevenfive}
  $\hept{\Ml\vv M_0} = \{[(f, H_f)]   \vv   f\circ i = i \text{\, and\, } H_f|_{M_0 \times I} = \gamma_\ell \circ i \circ p_{M_0}\}$. 
   \end{lemma}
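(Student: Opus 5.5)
The plan is to prove the two inclusions separately. The inclusion $\supseteq$ is immediate. If $f\circ i = i$ and $H_f|_{M_0\times I} = \gamma_\ell\circ i\circ p_{M_0}$, then the restriction $\hat f\circ i$ is, as a triple, exactly $(\nu_M|_{M_0}, i, C_{\gamma_\ell\circ i})$, which is $\hatid_M\circ i = \hatid_{M_0}$. So the restriction is homotopic to $\hatid_{M_0}$ by the constant homotopy, which lies over $BO$ as required.

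For $\subseteq$, start with $[\hat f]$ such that $\hat f|_{M_0}\simeq \hatid_{M_0}$ via a homotopy $\hat h_t\colon M_0\to \Ml$ with $\xi_\ell\circ \hat h_t = \nu_M|_{M_0}$. Unwinding the path-space model of Definition \ref{def:hpull}, $\hat h_t$ is a pair $(h_t, K_t)$. Here $h_t\colon M_0\to M$ is a based homotopy from $f\circ i$ to $i$. The map $K\colon M_0\times I\times I\to P_\ell(BO)$ is a family of homotopies from $\gamma_\ell\circ i$ to $\gamma_\ell\circ h_t$, with $K_0 = H_f|_{M_0\times I}$ and $K_1$ constant at $\gamma_\ell\circ i$. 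The pair $(M, M_0)$ is a cofibration (collared boundary), and so is $(M\times I, M_0\times I\cup M\times\partial I)$. We will extend the homotopy of triples from $M_0$ to all of $M$. This is a homotopy extension problem for maps into the total space $\Ml$ with $\xi_\ell$-component fixed at $\nu_M$. Since $\xi_\ell$ is a fibration (Lemma \ref{lem:normalk-type}), the relative homotopy lifting property over the fixed map $\nu_M$ applies, together with the HEP of $(M,M_0)$.

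Concretely, the first step uses the HEP for $(M,M_0)$ with target $M$. It extends $h_t$ to a based homotopy $F_t\colon M\to M$ with $F_0=f$, so $F_1$ is a homotopy self-equivalence with $F_1\circ i = i$. The second step extends the two-parameter family $K$ over $M\times I\times I$, with $K$ prescribed on $M\times I\times\{0\}$ by $H_f$ and constrained on the edges $s=0$ and $s=1$ by $\gamma_\ell$ and $\gamma_\ell\circ F_t$. This again uses the HEP, now for the cofibration $M_0\times I\subset M\times I$ relative to $M\times\partial I$. It produces $H'_t$ from $\gamma_\ell$ to $\gamma_\ell\circ F_t$ with $H'_0=H_f$ and $H'_1|_{M_0\times I}=K_1$, which is constant. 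Then $t\mapsto (F_t,H'_t)$ is a path in $\heqpt{\Ml}$ from $\hat f$ to $(F_1,H'_1)$, and the latter satisfies both strict conditions. Both homotopies stay based because the base point lies in $M_0$ (we take $x_0\in K\subset H$ pushed into $M_0$, or choose it on $\partial M_0$). Local orientation is preserved throughout, since orientation preservation is homotopy invariant.

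The main obstacle is the second step. The extension of $K$ must be compatible on all faces at once: the $t=0$ face, the $s=0$ and $s=1$ edges, and the prescribed values on $M_0$. I would handle this by viewing the data as a single map from $M_0\times I^2\cup M\times(\{t=0\}\cup\{s=0\}\cup\{s=1\})\times I$ and using that the inclusion into $M\times I^2$ is a trivial cofibration up to a retraction of the square onto three of its sides. This is the standard ``box'' argument, which allows a retraction $M\times I^2\to$ (that subspace). Afterwards, the remark that $\hept{\Ml\vv M_0}$ is a subgroup follows at once from the strict description, since composition of pairs preserves both strict conditions.
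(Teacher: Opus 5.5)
Your proposal is correct and follows essentially the same route as the paper. The inclusion $\supseteq$ is the trivial direction, and for $\subseteq$ the paper also uses the homotopy extension theorem twice: first to deform $f$ so that $f\circ i = i$, then to deform $H_f$ so that it is constant on $M_0\times I$. You supply details the paper leaves implicit, namely the compatibility of the data on three sides of the square (via the cofibration $M_0\times I\cup M\times\partial I\subset M\times I$) and placing the base point on $\partial M_0$.
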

   \begin{proof} If $\hat f \in \heqpt{\Ml}$ and $ \hat f|_{M_0} \simeq \hatid_{M_0}$ via a homotopy $\hat h_0 \colon M_0 \times I \to \Ml$, then by the homotopy extension theorem we get $\hat g \simeq \hat f$  in $\heqpt{\Ml}$ such that $\hat g|_{M_0} = \hatid_{M_0}$.  Similarly we get 
   $H_g \simeq H_f$, with $H_g|_{M_0\times I} = \gamma_\ell \circ i \circ p_{M_0}$.  The converse is obvious.
   \end{proof}
Our goal is to identify this subgroup with the image of
 $$\gamma_0\colon \wOmega_{n+1}(B \la \ell \ra, M\la \ell \ra) \to \hept{M\la \ell \ra}$$ 
  for $\ell = n-2$, by showing how it fits into the braid diagram of Corollary \ref{cor:theorem-C}.

\medskip
The next result is a generalization of the work of Barcus and Barratt
\cite[\S 6]{Barcus:1958} and Baues \cite[Theorem 8.14]{Baues:1996a}  from the simply-connected setting. This will involve the group $\hept{\Bl}_{c_*[M]} = \pi_0(\heqpt{\Bl}_{c_*[M]})$, where $\heqpt{\Bl}_{c_*[M]}$ is as in Section \ref{subsection:space-of-self-equivalences-and-postnikov}. More explicitly, it is the subgroup of $\hept{\Bl} \cong \pi_0(\heqptkc{B}{\ell})$ given by
$$\hept{\Bl}_{c_*[M]}=\left \{ [\hat{\phi} = (\phi, H_{\phi})]\colon   \phi_*(c_\ell[M]) = c_\ell[M]  \in H^t_n(P_\ell(M); \bZ)\right \}
$$
in the notation of Definition \ref{def:thickautBc}.

 \begin{theorem}\label{thm:sixtwo} If $\dim M = n \geq 4$ and $\ell = n-2$, there is an exact sequence of groups
 $$1 \to \hept{\Ml\vv M_0}  \to \hept{\Ml} \xrightarrow{(\wpp_\ell)_*} \hept{\Bl}_{c_*[M]}\to 1.$$
 \end{theorem}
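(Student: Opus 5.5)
We will prove three things: the image of $(\wpp_\ell)_*$ lies in $\hept{\Bl}_{c_*[M]}$, the map is onto that subgroup, and its kernel is exactly $\hept{\Ml\vv M_0}$. Here $\ell=n-2$.

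\textbf{Image.} Let $[(f,H_f)]\in\hept{\Ml}$. By Theorem~\ref{thm:threeeight} it corresponds to a class $(\phi,H_\phi)\in\heqptkc{B}{\ell}$ with $\phi\circ c_\ell\simeq c_\ell\circ f$. Since $f_*[M]=[M]$, naturality gives $\phi_*(c_\ell[M])=c_\ell[M]$. So $(\wpp_\ell)_*$ takes values in $\hept{\Bl}_{c_*[M]}$.

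\textbf{Surjectivity.} We reuse the obstruction-theory argument from the proof of Theorem~\ref{thm:sixone}, but now with $\ell=n-2$ instead of $\ell\ge n$. Start with $(\phi,H_\phi)$ satisfying $\phi_*(c_\ell[M])=c_\ell[M]$. We want a map $f\colon M\to M$ with $c_\ell\circ f\simeq\phi\circ c_\ell$. Use Lemma~\ref{lem:seventhree}: up to homotopy $M=Q\cup(\text{cells of dim}\ge n-1)$ with $\dim Q\le n-2$.
\begin{itemize}
\item On $Q$, the lift exists because $c_\ell$ is $(n-1)$-connected and $\dim Q\le n-2$.
\item Over the cells of dimension $n-1$ and $n$, the obstructions lie in $H^{j}(M,Q;\pi_{j-1}(\on{hofib}\,c_\ell))$ for $j\in\{n-1,n\}$.
\item Since $\on{hofib}(c_\ell)$ is $(n-2)$-connected, the only possibly nonzero obstruction is in $H^{n}(M,Q;\pi_{n-1}(\on{hofib}\,c_\ell))$, with $\pi_{n-1}(\on{hofib}\,c_\ell)\cong\pi_{n-1}(M)$.
\end{itemize}
This top obstruction is the one we expect to be the main difficulty. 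To kill it, we will use the freedom to redefine $f$ on $Q$ (respectively on the $(n-1)$-cells), in the spirit of the pinch-map arguments of Barcus--Barratt and Baues. If that does not work directly, we fall back on the braid of Corollary~\ref{cor:theorem-C} with $\ell=n-2\ge[(n+1)/2]$ (valid because $n\ge4$). Then surjectivity onto $\hept{\Bl}_{c_*[M]}$ reduces to showing that $\beta([\hat\phi])$ lies in the image of $\wOmega_n(\Ml)\to\Omega_n(\Bl)$. The relative term $\wOmega_n(\Bl,\Ml)$ is computed from the AHSS. Because $\wpp_\ell$ is $(n-1)$-connected, Remark~\ref{rem:ahss} leaves only the $E^2_{n,0}$-type term $H_n(\Bl,\Ml;\bZ^w)\cong H_n(P_\ell M,M;\bZ^w)$. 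We will check that $\beta[\hat\phi]$ maps to $\phi_*c_\ell[M]-c_\ell[M]=0$ there. Once such an $f$ exists, the degree condition gives $f_*[M]=[M]$. Then $f$ is an $(n-2)$-equivalence of degree one, so by Poincar\'e duality it is a homotopy equivalence. We finish by building $H_f$ as the concatenation $(\nu_\ell\circ H_2)*(H_\phi\circ(c_\ell\times\id))$, exactly as in the proof of Theorem~\ref{thm:sixone}.

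\textbf{Kernel.} First, $\hept{\Ml\vv M_0}$ maps trivially. Take $\hat f$ in it and use Lemma~\ref{lem:sevenfive} to assume $f|_{M_0}=i$ and that $H_f$ is constant on $M_0$. Then $c_\ell\circ f$ and $c_\ell$ agree on $M_0$, and $(M,M_0)$ has only cells of dimension $\ge n-1$ while $P_{n-2}(M)$ has no homotopy above degree $n-2$. So obstruction theory in $H^{j}(M,M_0;\pi_j(P_\ell M))=0$ gives a homotopy rel $M_0$, and the homotopy of $H$-data follows the same way. Conversely, suppose $(\wpp_\ell)_*[\hat f]=1$. Restricting to $Q\simeq M_0$ gives a homotopy $c_\ell\circ f|_{M_0}\simeq c_\ell\circ i$ over $P_\ell(BO)$. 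Because $\dim Q\le n-2$ and $c_\ell$ is $(n-1)$-connected, maps $Q\to M$ and $Q\to P_\ell M$ correspond bijectively on homotopy classes. The same holds for the $\Ml\to\Bl$ version, since $\wpp_\ell$ is $(\ell+1)$-connected. Hence $\hat f|_{M_0}\simeq\hatid_{M_0}$ with a homotopy constant over $BO$. Together these give exactness in the middle and at the left.
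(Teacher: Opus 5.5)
\textbf{What is fine.} Your image argument and both halves of the kernel computation agree with the paper. The paper uses obstruction theory over $(M,M_0)$: the relative cells have dimension $\ge n-1$, while $\pi_i(P_{n-2}(M))=\pi_i(P_{n-2}(BO))=0$ for $i\ge n-1$. It then lifts $\hat h\circ i$ from $\Bl$ to $\Ml$ relative to $M_0\times\partial I$, using $\dim M_0\le n-2$. Do this last step as a relative lift rather than via bijections of homotopy classes, so the homotopy is automatically over $BO$.

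\textbf{The gap is surjectivity.} In your obstruction-theoretic route, the class in $H^n(M,Q;\pi_{n-1}(M))$ cannot be removed by the freedom you invoke:
\begin{itemize}
\item since $\dim Q\le n-2$ and $\on{hofib}(c_\ell)$ is $(n-2)$-connected, the lift over $Q$ is unique up to homotopy;
\item changing the lift on the $(n-1)$-cells changes the obstruction cocycle only by a coboundary;
\item pinch-map modifications alter an extension over the top cell only once one exists.
\end{itemize}
So the obstruction is an invariant of $\phi$, and the real content of the statement is that it vanishes when $\phi_*(c_\ell[M])=c_\ell[M]$. The paper does not reprove this. It imports the realization theorem of Baues--Bleile \cite[Theorem 3.1, Corollary 3.3]{Baues:2008} (cf.\ \cite{Hambleton:1988}), which gives a homotopy equivalence $f$ with $\phi\circ c_\ell\simeq c_\ell\circ f$, and then builds $H_f$ by concatenation as you do.

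Also, ``the degree condition gives $f_*[M]=[M]$'' is false for $\ell=n-2$, because $c_*\colon H_n(M;\bZ^w)\to H_n(P_{n-2}(M);\bZ^w)$ need not be injective. For $M=S^1\times S^{n-1}$ one has $P_{n-2}(M)\simeq S^1$ and $c_*[M]=0$, and $\id\times(\text{reflection})$ covers $\phi=\id$ with degree $-1$. The lift must be chosen, not merely found.

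\textbf{The fallback.} It could become a genuinely different, self-contained proof, but not as written. Two things are missing.
\begin{itemize}
\item Exactness of the braid in Corollary~\ref{cor:theorem-C} does not give your reduction. The strand through $\hept{\Ml}\to\hept{\Bl}$ ends at $\hept{\Bl}$, and pointed-set exactness of $\widetilde\cH_0(\Ml)\to\hept{\Bl}\to\Omega_n(\Bl)$ only describes $\beta^{-1}$ of the base point. You need the unbased statement: $\hat b$ lifts to $\heqpt{\Ml}$ whenever $\beta_0[\hat b]=\wqq_\ell(x)$ for some $x$ of degree one. That is the $\pi_0$-surjectivity on every homotopy fibre in Theorem~A (topological case). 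In general it is the surgery-and-lifting argument of the lemma opening Section~\ref{sec:five_a}, whose proof produces $\hat f$ with $\wpp_\ell\circ\hat f=\hat b$.
\item Your AHSS computation gives $x\in\Omega_n(\Ml)$ with $\wqq_\ell(x)=\beta_0[\hat b]$, but only with $(\deg x-1)\,c_*[M]=0$. You must still correct the degree by adding $\partial z$ for some $z\in\Omega_{n+1}(\Bl,\Ml)$. This works because $\Omega_{n+1}(\Bl,\Ml)\to H_{n+1}(P_\ell(M),M;\bZ^w)$ is onto (same AHSS), and the boundary map to $H_n(M;\bZ^w)$ has image exactly $\ker c_*$.
\end{itemize}
With these two additions, the bordism route replaces the Baues--Bleile citation by a surgery argument. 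Without them, surjectivity is not established.
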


\begin{proof}  Let $i \colon M_0 \to M$ denote the inclusion map, such that $\hatid_{M_0} = \hatid_M \circ i \colon M_0 \to \Ml$.
	 We first show that $\hept{\Ml\vv M_0}  \subseteq \ker (\wpp_\ell)_*$, and let $(\hat f \colon M \to \Ml) \in \hept{\Ml\vv M_0}$. We wish to construct a homotopy  $\hat h_2 = (\wpp_\ell(\hat f) \simeq  \hat \nu_\ell) \colon M \times I \to \Bl$. Since $\hat f \in \hept{\Ml\vv M_0}$, there exists a homotopy
	 $$\hat h_0 = (\hat f \circ i \simeq \hatid_M \circ i) \colon M_0 \times I \to \Ml.$$
	Post composition with $c_{\ell} \circ j \colon\Ml \to P_{\ell}(M)$ and unraveling the definition of mapping into $\Ml$ leads to a homotopy $h_1 = c_{\ell} \circ j \circ \hat h_0\colon M_0 \times I \to P_\ell(M)$ together with a homotopy $H_1\colon (M_0 \times I) \times I \to P_{\ell}(BO)$ between $\nu_{\ell} \circ h_1$ and $\gamma_{\ell}|_{M_0} \circ \on{pr_{M_0}} \colon M_0 \times I \to P_\ell(BO)$.
	
	We will now perform two obstruction theory arguments. First, we wish to extend the map $h_1\colon M_0 \times I \to P_\ell(M)$ along the inclusion $M_0 \times I \hookrightarrow M \times I$, relative to $M \times 0 \cup M_0 \times I \cup M \times 1$,  with  $c_{\ell} \circ f$ on $M \times 0$, $h_1$ on $M_0 \times I$ and $c_{\ell}$ on $M \times 1$. The obstructions to  extending $h_1$ to $h_1'=(c_{\ell} \circ f \simeq c_{\ell})$ lie in 
	 $$H^{i+1}_t(M \times I, M \times \bd I \cup M_0 \times I;\pi_i(P_{\ell}(M)))
	=H^i_t(M, M_0; \pi_i(P_{\ell}(M))), $$ 
	where $\pi_i(P_{\ell}(M)) = 0$ for $i \geq n-1$ (since $\ell = n-2$).  However, the pair $(M, M_0)$ is homotopy equivalent to a CW pair with additional cells only in dimensions $\geq n-1$ (by Lemma \ref{lem:seventhree}), so all the obstructions vanish.
    
    In the second step, we wish to use $h_1'=(c_{\ell} \circ f \simeq c_{\ell})$ as an input to extend the homotopy $H_1\colon (M_0 \times I) \times I \to P_{\ell}(BO)$ along the inclusion $(M_0 \times I) \times I \hookrightarrow (M \times I) \times I$ and relative to 
       $(M \times I) \times \bd I \cup (M_0 \times I) \times I$  with $\nu_{\ell} \circ h_1'$ on $(M \times I) \times 0$, $H_1$ on $(M_0 \times I) \times I$ and
      $\gamma_\ell \circ \on{pr}_M$ on $(M \times I) \times 1$. The obstructions to  extending $H_1$ to 
     $H_1'=(\nu_{\ell} \circ h_1' \simeq \gamma_\ell \circ \on{pr}_M)$
     lie in $H^i(M \times I, M_0 \times I; \pi_i(P_{\ell}(BO)))$, where $\pi_i(P_{\ell}(BO)) = 0$ for $i \geq n-1$ (since $\ell = n-2$). However, the pair $(M \times I, M_0 \times I)$ is homotopy equivalent to a CW pair with additional cells only in dimensions $\geq n-1$ (since by Lemma \ref{lem:seventhree} this holds for $(M, M_0)$), so again  all the obstructions vanish.  As a result we have a map $h_2 = (h_1', H_1') \colon M \times I \to \Bl$, which is the claimed homotopy  $\wpp_\ell(\hat f) \simeq \hat \nu_\ell$.
	
	Next we show that the inclusion $\hept{\Ml\vv M_0} \to \ker (\wpp_\ell)_*$ is an isomorphism. Suppose that $\hat f \in \ker (\wpp_\ell)_*$, so that there exists a homotopy $\hat h \colon M \times I \to \Bl$  in $\heqpt{\Bl}$ giving $\wpp_\ell \circ \hat f \simeq \hat \nu_\ell$. The obstruction to lifting the homotopy $\hat h\circ i \colon M_0 \times I \to \Bl$ to $\Ml$,  relative to $M_0 \times \bd I$, lie in $H^{i+1}(M_0 \times I, M_0 \times \bd I; \pi_i(\Fl)) \cong \bar H^{i}(M_0; \pi_i(\Fl))$, where $\Fl$ is the homotopy fibre of the map  $\wpp_{\ell} \colon \Ml \to \Bl$. But $\pi_i(\Fl) = 0$ for $i \leq n-2$ by Lemma \ref{lem:normalk-type} and by Lemma \ref{lem:seventhree} we may assume that $M_0$ is homotopy equivalent to a CW complex with $\dim M_0 \leq n-2$, so all the obstructions vanish and $\hat f\circ i \simeq \hatid_{M_0}$.
	 Therefore
 $\hept{\Ml\vv M_0}  = \ker (\wpp_\ell)_*$.

The surjectivity  in the extension follows from \cite[Theorem 3.1]{Baues:2008} and \cite{Hambleton:1988,Hambleton:2018} since $\ell  = n-2$. If $\phi\colon P_\ell(M) \to P_\ell(M)$ is a homotopy equivalence, then the condition implies that there exists  $f \colon M \to M$ a homotopy equivalence such that 
  $H_2 = (\phi\circ c_\ell \simeq c_\ell \circ f)$ (see \cite[Corollary 3.3]{Baues:2008}). If $(\phi, H_\phi) \in \heqpt{\Bl}_{c_*[M]}$, then 
  $\nu_\ell \simeq \nu_\ell \circ \phi$ via $H_\phi$, and hence   
 $(f, H_f ) \in \heqpt{\Ml}$, where $H_f \coloneqq (H_{\phi} \circ (c_{\ell} \times \id_I)) \ast (\nu_{\ell}\circ H_2)$ concatenates the two homotopies.
 In the homotopy pullback model of $\heqpt{\Bl}$, the homotopy $H_2$, together with this choice of $H_f$, gives a path $\wpp_{\ell}(\hat{f}) \simeq (\phi,H_{\phi})$. 
\end{proof}
\begin{corollary}\label{cor:im-gamma} For $n \geq 4$ and $\ell = n-2$, the subgroup $\hept{\Ml\vv M_0} \cong \Image (\gamma_0)$ is independent of the choice of a finite generating set for $\pi_1(M, x_0)$. 
\end{corollary}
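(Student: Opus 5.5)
The plan is to identify both sides with $\ker\bigl((\wpp_\ell)_*\colon \hept{\Ml}\to\hept{\Bl}\bigr)$ and then observe that this kernel does not involve the generating set. For $\ell=n-2$, set $k=0$. If $n\geq 4$, then $n-2\geq [(n+1)/2]$, so Corollary \ref{cor:theorem-C} applies and gives the braid with $k=0$. In that braid, the strand through $\gamma$ is the exact sequence
$$\wOmega^{HK}_{n+1}(\Bl,\Ml)\xrightarrow{\ \gamma\ }\hept{\Ml}\xrightarrow{(\wpp_\ell)_*}\hept{\Bl}.$$
This is the image of the $k=0$ version of the sequence $\pi_{1}(Z,X)\cong\pi_1(Y,W)\to\pi_0(W)\to\pi_0(Y)$ coming from the homotopy cartesian square in Lemma \ref{lem:twoseven} (or from the ladder in Proposition \ref{prop:two_ladders} in the smooth case). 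The bijection $D$, $E$ between $\wOmega_{n+1}(\Bl,\Ml)$ and its HK-variant does not change the image. So $\Image(\gamma_0)=\ker(\wpp_\ell)_*$, where $\gamma_0=\bd_1\circ b_1^{-1}$ and $b_1$ is an isomorphism.

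The first point to verify is that this kernel is a genuine subgroup and equals the preimage of the identity. Exactness at $\pi_0$ of the fibration-type sequence $\pi_1(\heqpt{\Bl},\heqpt{\Ml})\xrightarrow{\bd}\pi_0(\heqpt{\Ml})\to\pi_0(\heqpt{\Bl})$ gives that the image of $\bd$ is exactly the set of components mapping to the base component. Since $\wpp_\ell$ is induced by postcomposition with $\tilde p_\ell$, it is compatible with the $H$-space products: in the $c$-model of Theorem \ref{thm:threeeight} it is a monoid-type map. Hence $(\wpp_\ell)_*$ is a homomorphism, which is also used implicitly in Theorem \ref{thm:sixtwo}, and the preimage of the identity is its kernel.

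Next, I would invoke Theorem \ref{thm:sixtwo}. It identifies $\hept{\Ml\vv M_0}$ with $\ker(\wpp_\ell)_*$ for $\ell=n-2$, under the standing assumption that $M_0$ has been stabilized as in Lemma \ref{lem:seventhree}. Combining this with the first step gives $\hept{\Ml\vv M_0}\cong\Image(\gamma_0)$. In fact this is an equality of subgroups of $\hept{\Ml}$, and I would check that the isomorphism is the identity inclusion into $\hept{\Ml}$.

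Finally, the independence statement. The group $\ker(\wpp_\ell)_*\subseteq\hept{\Ml}$ is defined purely from $M$, $\nu_M$ and $\ell$, with no reference to $K$, $H$ or $M_0$. Thus for any two finite generating sets $S$ and $S'$, with associated $M_0$ and $M_0'$ (stabilized if necessary), both subgroups $\hept{\Ml\vv M_0}$ and $\hept{\Ml\vv M_0'}$ equal this same kernel. The main obstacle I expect is making sure stabilization does not alter the subgroup. Stabilization changes $M_0$, but $M$ and $\Ml$ are unchanged, and Theorem \ref{thm:sixtwo} is applied to the stabilized complement. If the corollary is intended for unstabilized $M_0$, I would add an argument that restriction to $M_0$ versus to $M_0\natural(\text{stabilizing piece})$ gives the same condition. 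Both conditions are equivalent to lying in $\ker(\wpp_\ell)_*$, by rerunning the first half of the proof of Theorem \ref{thm:sixtwo}, which only needs $(M,M_0)$ to be $(n-2)$-connected.
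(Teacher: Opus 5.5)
Your argument is correct and is essentially the intended one: Theorem~\ref{thm:sixtwo} gives $\hept{\Ml\vv M_0}=\ker(\wpp_\ell)_*$ for the stabilized $M_0$, and exactness of the $k=0$ braid (available since $n-2\geq[(n+1)/2]$) gives $\ker(\wpp_\ell)_*=\Image(\gamma_0)$, a subgroup whose definition never refers to the generating set. One caveat on your optional final remark: rerunning the first half of the proof of Theorem~\ref{thm:sixtwo} only gives $\hept{\Ml\vv M_0}\subseteq\ker(\wpp_\ell)_*$, so for an unstabilized $M_0$ you would also need the second half, with the dimension bound replaced by Wall's $D(n-2)$ vanishing $H^{n-1}(M_0;\cB)=0$ from the proof of Lemma~\ref{lem:seventhree}; this is not needed for the corollary, since the paper assumes $M_0$ is stabilized.
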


\begin{remark} If we omit the normal structure conditions on  based, locally oriented homotopy self-equivalences,
we obtain an exact sequence
\eqncount
\begin{equation}\label{eq:seveneight}
1 \to \hept{M\vv M_0}  \to \hept{M} \xrightarrow{c_*} \hept{P_\ell(M)}_{c_*[M]} \to 1
\end{equation} for $n \geq 4$ and $\ell = n-2$. The arguments are similar to the ones above.   Here $\hept{P_\ell(M)}_{c_*[M]}$ denotes the subgroup of self-equivalences preserving the orientation local system $\bZ^w$ and fixing the class $c_{*}[M] \in H_n(P_{\ell}(M);\bZ^w)$, see Remark \ref{rem:foursix}.
\end{remark}

\begin{remark}
In Baues \cite[Theorem 8.14]{Baues:1996a} the group $\hept{M\vv M_0}$ with $M_0 = M - D^{2k}$ is  identified with $H_k(M;\bZ) \otimes T_k$, where $M^{2k}$ is an almost parallelizable $(k-1)$-connected $2k$-manifold. Here $T_k = {\rm Tors}(\pi_{2k+1}(S^{k+1}))$. This group is related by a calculation to the ``pinching maps'' given by elements in $\pi_{2k}(S^k)$:  see \cite[Theorem 3.5]{Baues:1996a} combined with \cite[Proposition 1.3]{Baues:1996a} which gives a surjection
 $1^{+}\colon \pi_{2k}(M_0) \twoheadrightarrow \hept{M\vv M_0}$ and $\pi_{2k}(M_0) \cong H_k(M;\bZ) \otimes \pi_{2k}(S^k)$.
 \end{remark}
 
 \subsection{Commensurability properties of $\hept{\Ml}$}
We would like to study the set of components $\pi_0(\heqpt{\Ml}) \cong \hept{\Ml}$ for manifolds with finite fundamental group. The use of the braids leads to generalizations of related work on 
$\hept{M}$ (see  \cite[Theorem 2]{Triantafillou:1995}, \cite[Theorem A]{Hambleton:2004},  \cite[Theorem D]{Bustamante:2024}), investigating its commensurability up to finite kernel with an arithmetic group. Here is a preliminary result.

\begin{lemma}\label{lem:seveneight}
For $n \geq 4$ and $\ell \geq 2$, there is an exact sequence of groups 
$$
[\Sigma (M, M_0), P_\ell(BO)]_{\gamma_\ell} \to \hept{\Ml\vv M_0} \xrightarrow{\ j_* \ }  \hept{M\vv M_0}
$$
relating $\hept{\Ml\vv M_0}$ to the usual group of homotopy self-equivalences. 
\end{lemma}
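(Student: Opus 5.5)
The plan is to use the description of $\hept{\Ml\vv M_0}$ from Lemma \ref{lem:sevenfive}: a class is represented by a pair $(f,H_f)$ with $f\circ i = i$ and $H_f|_{M_0\times I} = \gamma_\ell\circ i\circ p_{M_0}$. The map $j_*$ forgets $H_f$ and sends $[(f,H_f)]$ to $[f]\in\hept{M\vv M_0}$. It is a homomorphism, because composition of the $f$'s is compatible with concatenating the homotopies in the product $\bullet$ defined before Theorem \ref{thm-being-group}.

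The first map is an action by difference elements. Take a class in $[\Sigma(M,M_0),P_\ell(BO)]_{\gamma_\ell}$. I interpret it as a self-homotopy $\Theta\colon M\times I\to P_\ell(BO)$ from $\gamma_\ell$ to $\gamma_\ell$ that is constant on $M_0\times I$ and on the base point. Homotopy classes are taken relative to these constraints, with the group structure given by concatenation. Send $\Theta$ to $[(\id_M,\Theta)]$. This lies in $\hept{\Ml\vv M_0}$ by Lemma \ref{lem:sevenfive}, and the concatenation formula for $H_{g\circ f}$ shows the map is a homomorphism. Its composite with $j_*$ is $[\id_M]$, the identity.

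The main step is exactness at the middle term. Suppose $j_*[(f,H_f)]$ is trivial in $\hept{M\vv M_0}$. Here I need the kernel to be detected by a based homotopy $K\colon f\simeq\id_M$ that is relative to $M_0$. If the intended definition of $\hept{M\vv M_0}$ only gives a homotopy on $M_0$, I would first use homotopy extension along the cofibration $M_0\subset M$, as in Lemma \ref{lem:sevenfive}, to arrange that $K$ is stationary on $M_0$. Then $(K_s,\ H_f * (\gamma_\ell\circ \overline{K}|_{[s,1]}))$ is a path in $\heqpt{\Ml}$, built as in the group-like argument of Theorem \ref{thm-being-group}. It runs from $(f,H_f)$ to $(\id_M,\Theta)$ with $\Theta = H_f*(\gamma_\ell\circ\overline K)$, a self-homotopy of $\gamma_\ell$. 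On $M_0\times I$ the map $\Theta$ is constant, because $H_f$ is constant there by Lemma \ref{lem:sevenfive} and $K$ is stationary on $M_0$. Hence $\Theta$ defines an element of $[\Sigma(M,M_0),P_\ell(BO)]_{\gamma_\ell}$ mapping to $[(f,H_f)]$.

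The delicate point is the homotopy-relative-to-$M_0$ bookkeeping. The path must stay inside the representatives allowed by Lemma \ref{lem:sevenfive}, or at least the endpoint class must be well defined in $\hept{\Ml\vv M_0}$. Since membership in $\hept{\Ml\vv M_0}$ is a property of the component, it suffices that each stage is a path in $\heqpt{\Ml}$. The only real constraint is then that $\Theta$ be constant on $M_0$, which the construction above guarantees. I expect the main obstacle to be exactly this: converting an unrelative nullhomotopy of $f$ into one fixing $M_0$. If this fails, I would enlarge the source group to self-homotopies that are only constant on $M_0$ up to a correction. I would control that correction by obstruction theory, using that $(M,M_0)$ has relative cells only in dimensions $\ge n-1$ by Lemma \ref{lem:seventhree}.
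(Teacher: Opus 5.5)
Your outline is the paper's. The paper gets the absolute sequence $[(M\times I,M\times\bd I),P_\ell(BO)]_{\gamma_\ell}\to\hept{\Ml}\to\hept{M}$ from the fibre sequence $\Omega P_\ell(BO)\to\Ml\to M\times BO$, and then asserts that the version relative to $M_0$ ``follows directly from the definitions''. Your path from $(f,H_f)$ to $(\id_M,\Theta)$ is the by-hand form of that absolute exactness. The first map, the homomorphism claims and the triviality of the composite are fine.

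\textbf{The gap.} It is the relative step you flag, and your proposed fix does not work. Because $\hept{M\vv M_0}$ is a subgroup of $\hept{M}$, the hypothesis $j_*[(f,H_f)]=1$ only gives a based homotopy $K$ from $f$ to $\id_M$. Its restriction $\kappa=K|_{M_0\times I}$ is a loop at $i$ in $\Map_\bullet(M_0,M)$. Homotopy extension along $M_0\subset M$ cannot make $K$ stationary on $M_0$. From the restriction fibration $\Map_\bullet(M,M)\to\Map_\bullet(M_0,M)$, a homotopy $f\simeq\id_M$ rel~$M_0$ exists if and only if $[\kappa]$ lies in the image of $\pi_1(\heqpt{M},\id_M)\to\pi_1(\Map_\bullet(M_0,M),i)$. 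That is an additional condition which the hypotheses do not supply. In Lemma~\ref{lem:sevenfive}, homotopy extension moves a representative along a homotopy already given on $M_0$; it never alters a given homotopy. Hence $\Theta=H_f*(\gamma_\ell\circ K)$ restricts on $M_0\times I$ to the loop $\gamma_\ell\circ\kappa$, not to a constant. Your claim that the construction guarantees constancy on $M_0$ is therefore unjustified.

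\textbf{What is actually needed.} The condition is weaker than a homotopy rel~$M_0$, but still nontrivial. The class $[(f,H_f)]$ determines $\Theta$ only up to replacing it by $\Theta*(\gamma_\ell\circ L)$, for loops $L$ in $\heqpt{M}$ at $\id_M$. So $[(f,H_f)]$ lies in the image of $[\Sigma(M,M_0),P_\ell(BO)]_{\gamma_\ell}$ exactly when the following holds: $[\gamma_\ell\circ\kappa]\in\pi_1(\Map_\bullet(M_0,P_\ell(BO)),\gamma_\ell|_{M_0})$ lies in the image of $\pi_1(\heqpt{M},\id_M)$ under restriction to $M_0$ followed by composition with $\gamma_\ell$. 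Once that holds, apply homotopy extension to $\Theta$ rather than to $K$, for the cofibration $M\times\bd I\cup M_0\times I\subset M\times I$. This produces a representative constant on $M_0\times I$, and the rest of your argument goes through.

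Neither of your fallbacks supplies this condition. Enlarging the source group changes the statement. The cell structure of Lemma~\ref{lem:seventhree} only shows, for $\ell=n-2$, that loops in $\Map_\bullet(M_0,P_\ell(BO))$ extend to loops in $\Map_\bullet(M,P_\ell(BO))$. It does not show that they come from loops of self-equivalences of $M$.

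You need a separate input. Either of the following would do:
\begin{itemize}
\item $[\Sigma M_0,P_\ell(BO)]=0$ in the case at hand;
\item every $f$ with $f|_{M_0}=i$ and $f\simeq\id_M$ is homotopic to $\id_M$ rel~$M_0$.
\end{itemize}
The paper's one-line passage to $M_0$ does not provide this ingredient either.
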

\begin{proof}
The homotopy pullback $(BO \to P_{\ell}(BO) \leftarrow M)$ in  diagram (\ref{diag:threezero}) yields a fibre sequence $\Omega P_{\ell}(BO) \to \Ml \to M \times BO$ (see \cite[Example 3.3.14]{MunsonVolic:2015}) from which we obtain an  exact sequence
$$ [(M\times I, M \times \bd I), P_\ell(BO)]_{\gamma_\ell} \to \hept{\Ml} \xrightarrow{\ j_* \ }  \hept{M}$$
of groups (generalizing \cite[Lemma 3.16]{Hambleton:2004} for $n=4$ and $\ell =2$)  where the first group is equivalent to $\pi_1(\on{Map}(M,P_{\ell}(BO)),\gamma_{\ell})$. The required exact sequence for the pair $(M, M_0)$ follows directly from the definitions. 
\end{proof}

\begin{theorem}\label{thm:seventhirteen} Let $n \geq 4$ and $\ell = n-2$. If $\pi_1(M)$ is finite,  the group $\hept{\Ml\vv M_0} $ is finite. 
\end{theorem}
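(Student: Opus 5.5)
The plan is to use the exact sequence of Lemma \ref{lem:seveneight},
$$
[\Sigma (M, M_0), P_\ell(BO)]_{\gamma_\ell} \to \hept{\Ml\vv M_0} \xrightarrow{\ j_* \ }  \hept{M\vv M_0},
$$
and to show that both outer terms are finite when $\pi_1(M)$ is finite and $\ell = n-2$. Finiteness of $\hept{\Ml\vv M_0}$ then follows at once, since an extension of a subgroup of a finite group by the image of a finite group is finite.

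For the left-hand term, I regard it as a set of homotopy classes of maps from the relative suspension of the pair $(M,M_0)$ into $P_{n-2}(BO)$. By Lemma \ref{lem:seventhree}, the pair $(M,M_0)$ is homotopy equivalent to a finite CW pair $(P,Q)$ whose relative cells all have dimension $\geq n-1$. So $\Sigma(P,Q)$ has relative cells only in dimensions $\geq n$. Since $\pi_i(P_{n-2}(BO)) = 0$ for $i \geq n-1$, obstruction theory (or a cellular induction) shows that every map is homotopic to the base map $\gamma_\ell$ rel $\Sigma Q$. Hence this group is trivial, or at worst finite, because the homotopy groups of $BO$ are finitely generated and there are only finitely many cells.

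The main work is the finiteness of $\hept{M\vv M_0}$. I would use the exact sequence \eqref{eq:seveneight} together with the obstruction-theoretic description of self-equivalences that restrict to the identity on $M_0$. Up to homotopy, such an $f$ agrees with $\id$ on the $(n-2)$-skeleton $Q$. Its difference with $\id$ on the top relative cells (dimensions $n-1$ and $n$) is then measured by classes in groups of the form $H^{n-1}(M,M_0;\pi_{n-1}(M))$ and $H^{n}(M,M_0;\pi_n(M))$, with twisted coefficients, modulo the indeterminacy coming from self-homotopies.

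Concretely, I would produce a surjection from a set of difference classes, in the spirit of Baues's surjection $\pi_{n}(M_0)\twoheadrightarrow\hept{M\vv M_0}$ quoted in the remark above, onto $\hept{M\vv M_0}$. I would then prove that its image is finite. Since $\pi_1(M)$ is finite, the universal cover $\wM$ is a finite complex, so all $\pi_i(M)=\pi_i(\wM)$ are finitely generated, and these cohomology groups are finitely generated abelian groups. The remaining issue is rational: I must show the image is torsion.

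This is the step I expect to be the main obstacle. My first attempt would be to pass to the universal cover and argue rationally. A self-equivalence of $M$ fixing $M_0$ lifts to a $\pi_1$-equivariant self-equivalence of $\wM$ fixing $\wM_0$. Rationally, the relevant difference classes would live in $H^{n}(\wM,\wM_0;\pi_n(\wM)\otimes\bbQ)$ and $H^{n-1}(\wM,\wM_0;\pi_{n-1}(\wM)\otimes\bbQ)$. I would then use the degree-one (orientation-preserving) condition together with Poincar\'e duality to show that any rationally nontrivial difference element changes the induced map on rational homology or the fundamental class. That would contradict $f$ being a homotopy equivalence fixing $M_0$, forcing the image to be torsion and hence finite.

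If this direct argument proves awkward, the fallback is to compare with $\hept{P_{n-2}(M)}$ via \eqref{eq:seveneight}, and to invoke Sullivan–Wilkerson-type arithmeticity for the equivariant rational models (compare \cite[Theorem 2]{Triantafillou:1995}). The kernel of the map to an arithmetic group that acts faithfully on rational homotopy is then finite, and $\hept{M\vv M_0}$ lies in that kernel because it acts trivially through $P_{n-2}(M)$.
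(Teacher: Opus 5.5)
Your treatment of the outer term $[\Sigma(M,M_0),P_\ell(BO)]_{\gamma_\ell}$, and the reduction to finiteness of $\hept{M\vv M_0}$, agree with part (iii) of the paper's proof. The gap is in the finiteness of $\hept{M\vv M_0}$ itself: the mechanism you propose for making the difference classes torsion cannot work. Already for $\pi_1(M)=1$ there are rationally nontrivial difference classes that change neither homology nor the fundamental class.

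Take $M=\#_3\mathbb{CP}^2$, so $n=4$ and $M_0=M\setminus\mathring D^4\simeq\bigvee_3 S^2$. Up to homotopy rel $M_0$, the extensions of the inclusion $M_0\subset M$ are the maps $\id+\alpha$ given by the coaction $M\to M\vee S^4$, with $\alpha\in\pi_4(M)\cong H^4(M,M_0;\pi_4(M))$. A minimal model computation gives $\pi_4(M)\otimes\bbQ\cong\bbQ^5$. Since $H^2(M;\bbQ)\neq 0$, no map $S^4\to M$ has nonzero degree. So the Hurewicz image of every $\alpha$ vanishes, and each $\id+\alpha$ induces the identity on $H_*(M;\bbQ)$ and fixes $[M]$. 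The contradiction you want to derive from Poincar\'e duality and the degree-one condition is therefore not available.

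These maps are nonetheless homotopic to the identity up to finitely many classes. The reason is that the indeterminacy coming from self-homotopies of the inclusion $M_0\subset M$ rationally absorbs the kernel of the Hurewicz map. That is the Aubry--Lemaire theorem \cite{Aubry:1991}, the input behind \cite[Theorem 1.4]{Baues:1996a}. It is a genuinely rational-homotopy statement, not a consequence of duality on homology.

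Your fallback has the same hole. Acting trivially on $P_{n-2}(M)$ does not place an element in the finite kernel of the map from $\hept{M}$ to (equivariant) rational self-equivalences supplied by Sullivan--Wilkerson or Triantafillou. For the maps $\id+\alpha$, deciding rational triviality is again exactly the Aubry--Lemaire statement.

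The paper does not try to prove the simply connected case; it imports it, and uses the universal cover only to control what lies over it. It splits $\hept{M\vv M_0}$ along $p^*\colon \hept{M\vv M_0}\to\hept{\wM\vv \wM_0}$.

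\begin{itemize}
\item The target is finite by \cite[Theorem 1.4]{Baues:1996a}, together with an engulfing argument comparing $\wM\setminus\widetilde H$ with the complement of a disc.
\item For the kernel of $p^*$, the Hu--Olum enumeration is run on extensions $f$ of the inclusion whose lifts satisfy $\tilde f\simeq \id_{\wM}$ rel $\wM_0$.
\item Because $\wM_0$ can be taken to be the $(n-2)$-skeleton, the indeterminacy subgroups upstairs vanish for $q\le n$.
\item Hence the admissible sets lie in $\ker \bar p^*$, which is the image of $\ker\bigl(p^*\colon H^q(M,M_0;\pi_q(M))\to H^q(\wM,\wM_0;\pi_q(\wM))\bigr)$.
\item The transfer identity $\trf^*\circ p^*=|\pi_1(M)|$ makes this kernel torsion and finitely generated, hence finite.
\end{itemize}

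So the torsion argument you were reaching for is valid only for classes that die in the universal cover. The classes that survive there need the simply connected finiteness theorem as an external input.
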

\begin{proof} The proof is a reduction to the simply connected case via the universal covering map $p \colon \wM \to M$.     We choose a base point in $M_0 \cap H$, and proceed in three steps:
\begin{enumerate}
\item The kernel $\cK = \ker \left   \{ p^*\colon \hept{M\vv M_0} \to \hept{\widetilde{M} \vv \widetilde{M}_0} \right \}$ is finite;

\item 
The  group $\hept{M\vv M_0} $ is finite;
\item 
 There is an injection
$\hept{\Ml\vv M_0} \to  \hept{M\vv M_0}$.
\end{enumerate}
The proof of part (i) is based on obstruction theory as in  \cite[Chapter VI, Sections 7 \& 12]{Hu:1959} to estimate the size of $\cK$ (see \cite[\S 26]{Olum:1950} for an extension of this theory to non-simple spaces). The details of this part will be given in the proof of Lemma \ref{lem:partone} below. 

The proof of part (ii) is now a consequence of part (i) and the result for the simply-connected case,  namely that  $\hept{\widetilde{M} \vv \widetilde{M} - \mathring D^n} $ is a finite group, which is proved in \cite[Theorem 1.4]{Baues:1996a} (relying on \cite[Addendum: Theorem 2.7']{Aubry:1991}).   To apply the simply-connected result, we  use an engulfing argument (after a isotopy in $\wM$)  for the complements  of open sets $\widetilde H_0 \subset e^n \subset \widetilde H $ embedded in $\wM$. Here $e^n$ is a small $n$-ball in the interior of $\widetilde H$, and $\widetilde H_0 $ is an isotoped copy of $\widetilde H$ in the interior of $e^n$. Then we have the homomorphisms 
$$ \hept{\widetilde{M} \vv \widetilde{M}  \setminus \widetilde H_0}\to\hept{\widetilde{M} \vv \widetilde{M} \setminus e^n} \to  \hept{\widetilde{M} \vv  \widetilde{M}  \setminus \widetilde H} =  \hept{\widetilde{M} \vv \widetilde{M}_0} $$
induced by the inclusions of the complements, and the  composition of the two maps in the sequence is an isomorphism.

  For part (iii), we use the exact sequence
$$ [\Sigma (M, M_0),P_\ell(BO)]_{\gamma_\ell} \to \hept{\Ml\vv M_0} \xrightarrow{\ j_* \ }  \hept{M\vv M_0}$$
given by Lemma \ref{lem:seveneight}. Since $ n\geq 4$ and $\ell = n-2$, obstruction theory shows that the group $ [\Sigma (M, M_0), P_\ell(BO)]_{\gamma_\ell}$ is trivial: the obstructions to a null-homotopy for an element in this group lie in $H^{i-1}(M, M_0; \pi_i(P_\ell(BO)))$, and these vanish since the coefficients $\pi_i(P_\ell(BO)) = 0$ for $i > \ell = n-2$, and the pair $(M, M_0)$ is homotopy equivalent to a CW pair with relative cells only in  dimensions $\geq n-1$ (see Lemma \ref{lem:seventhree}). 
  Alternately, note that $[\Sigma(M/M_0),P_{n-2}(BO)]=\{ \ast\}$ since $M/M_0$ is $(n-2)$-connected.  Hence $\hept{\Ml\vv M_0} $ is a subgroup of the finite  group $\hept{M\vv M_0} $, and the proof is complete. 
\end{proof}

If $M$ is simply-connected, it follows from part (iii) above and \cite[Proposition 1.3]{Baues:1996a} that  $\hept{\Ml\vv M_0}$ is an abelian group. This holds whenever the map $\gamma_0$ is a group homomorphism, for example when $M$ is an oriented $4$-manifold and $\pi_1(M)$ is finite of odd order,  but perhaps not in general (see \cite[p.~169]{Hambleton:2004} for a discussion). 

\begin{question}
	Under which conditions is $\gamma_0$ a group homomorphism ? Is $\hept{\Ml\vv M_0}$ always an abelian group ?
\end{question}

We now return to explain the proof of part (i) in Theorem \ref{thm:seventhirteen}. For expositional purposes we follow the notation and presentation in  \cite[Chapter VI]{Hu:1959} in the setting of simple spaces. The generalization to non-simple spaces is provided by \cite[\S 26]{Olum:1950}. 

\begin{lemma}\label{lem:partone}
In the setting of \textup{Theorem \ref{thm:seventhirteen}}, the kernel $$\cK\coloneqq \ker \left   \{ p^*\colon \hept{M\vv M_0} \to \hept{\widetilde{M} \vv \widetilde{M}_0} \right \}$$
 induced by the universal covering $p \colon \wM \to M$  is finite. 
\end{lemma}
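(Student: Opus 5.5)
We will show that every element of $\cK$ is determined, up to finitely many choices, by obstruction-theoretic difference classes with values in finite groups. By Lemma \ref{lem:sevenfive}, applied without the normal data, we represent $[f]\in\hept{M\vv M_0}$ by a based self-equivalence with $f\circ i = i$. Then $[f]\in\cK$ means that the lift $\tilde f\colon \wM\to\wM$ is homotopic to the identity, and the homotopy can be taken relative to $\wM_0$. The lift is chosen to fix the base point in $M_0\cap H$, so it is equivariant and restricts to the identity on $\wM_0=p^{-1}(M_0)$.

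We compare $f$ with $\id_M$ relative to $M_0$. Since $f|_{M_0}=\id$ and $(M,M_0)$ has relative cells only in dimensions $n-1$ and $n$ (Lemma \ref{lem:seventhree}), obstruction theory in the form of \cite[Chapter VI]{Hu:1959}, extended to non-simple targets by \cite[\S 26]{Olum:1950}, gives difference cochains with local coefficients:
\begin{itemize}
\item a primary difference $d^{n-1}(f,\id)\in H^{n-1}(M,M_0;\pi_{n-1}(M))$;
\item when this vanishes after a homotopy rel $M_0$, a secondary difference $d^{n}(f,\id)\in H^{n}(M,M_0;\pi_n(M))$, well defined modulo the indeterminacy coming from the previous stage.
\end{itemize}
Since there are only two relevant cell dimensions, $\hept{M\vv M_0}$ acquires a two-step filtration whose successive subquotients inject into subquotients of these two groups. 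Here $\pi_i(M)=\pi_i(\wM)$ carries its $\pi_1$-action.

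Next we compare with the same construction on the universal cover. Transfer gives $H^*(M,M_0;\cB)\cong H^*_{\pi}(\wM,\wM_0;\cB)$, the equivariant cohomology, and the difference classes of $f$ map to those of $\tilde f$ under the forgetful map from equivariant to ordinary cohomology $H^*(\wM,\wM_0;\pi_*(\wM))$. For $f\in\cK$ the images for $\tilde f$ vanish. So the classes of $f$ lie in the kernel of the forgetful map
$$H^*(\wM,\wM_0;\cB)^{\pi}\text{-level} \;\leftarrow\; H^*_\pi(\wM,\wM_0;\cB).$$
Because $\pi=\pi_1(M)$ is finite, this kernel is controlled by the Cartan--Leray (or transfer) spectral sequence: it is built from $H^p(\pi; H^q(\wM,\wM_0;\cB))$ with $p>0$. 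These groups are annihilated by $|\pi|$ and are finitely generated, since $(\wM,\wM_0)$ is a finite complex and $\pi_{n-1},\pi_n$ of the simply connected finite complex $\wM$ are finitely generated. Hence they are finite.

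Combining the two steps, $\cK$ has a filtration of length two whose subquotients inject into finite groups, so $\cK$ is finite. The main obstacle is making the two-stage argument rigorous in the non-simple, equivariant setting. The delicate point is the second stage, where the indeterminacy of $d^n$ and the choice of homotopy rel $\wM_0$ for $\tilde f\simeq\id$ must be handled equivariantly. I would first try to bypass this by using Olum's equivariant obstruction theory directly on $\wM$, identifying $\hept{M\vv M_0}$ with equivariant homotopy classes rel $\wM_0$. The finiteness then reduces to the statement that equivariant classes which are trivial non-equivariantly form a $|\pi|$-torsion, finitely generated group.
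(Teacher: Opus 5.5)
Your overall strategy is the same as the paper's: difference classes relative to $M_0$ in degrees $n-1$ and $n$, comparison with the universal cover, and then $|\pi|$-torsion plus finite generation. However, the step you flag as ``the main obstacle'' is left open, and that is exactly where the proof needs an idea.

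Concretely, the assertion ``for $f\in\cK$ the images for $\tilde f$ vanish'' is unjustified at the second stage. The lifted class $p^*\delta^n(f,\id;h)=\delta^n(\tilde f,\id;\tilde h)$ is computed with the equivariant lift $\tilde h$ of your chosen homotopy on the $(n-1)$-skeleton. But $\tilde f\simeq\id$ rel $\wM_0$ only shows that $\delta^n(\tilde f,\id;k)=0$ for some possibly non-equivariant $(n-1)$-homotopy $k$. These two classes differ by an element of the indeterminacy subgroup $J^n_{[\id_{\wM}]}$. The same issue arises for $(n-1)$-classes other than that of $\id$, where you must compare with a base representative $f_0$ whose lift is also homotopic to the identity.

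The missing observation is that this indeterminacy is zero, so no equivariant obstruction theory is needed. By Lemma \ref{lem:seventhree}, $\wM_0$ is (up to homotopy) the $(n-2)$-skeleton of $\wM$, and all homotopies are taken rel $\wM_0$. Take any $(q-1)$-homotopy $h_t$ rel $\wM_0$ from $\id_{\wM}$ to itself, with $q\le n$. Its restriction to the $(q-2)$-skeleton lies in $\wM_0$ and is stationary, so it trivially extends to a $q$-homotopy. By \cite[Chapter VI, Theorem 8.3]{Hu:1959} this gives $\delta^q(\id,\id;h_t)=0$, hence $J^q_{[\id_{\wM}]}=0$.

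Consequently the lifted difference class vanishes on the nose, whatever homotopy was used. The downstairs admissible sets then lie in $\ker\bar p^*$, and $\ker p^*\to\ker\bar p^*$ is surjective. This is the content of the longest part of the paper's proof.

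A smaller point: difference classes are invariants of homotopy classes \emph{rel $M_0$}, not of based homotopy classes. So they do not define a filtration of $\hept{M\vv M_0}$ or of $\cK$ as you state. Instead, count rel-$M_0$ classes of representatives, which is the set $\cW$ in the paper; these surject onto $\cK$, and that suffices for finiteness.

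Your Cartan--Leray argument that the kernel of $p^*$ is finite is correct. The paper instead uses the transfer identity $\trf^*\circ p^*=|\pi|$, which gives the $|\pi|$-torsion in one line.
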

\begin{proof} Any homotopy class in $\hept{M\vv M_0}$ has a representative fixing $M_0$ point-wise (see Lemma \ref{lem:sevenfive}). Let
$$
\cW = \{f\colon M \to M \vv f|_{M_0} = \mu \text{\ and\ } \tilde f \simeq \id_{\wM}\  (\text{rel\ } \wM_0)\}
$$  
be the set of all possible extensions of the inclusion $\mu= i_{M_0}\colon M_0 \to M$,  with the additional property that their lifts to the universal cover satisfy $\tilde f \simeq \id_{\wM}$. Note that every $f \in \cW$ is a homotopy self-equivalence of $M$.
  
The maps in $\cW$ are divided into distinct homotopy classes (relative to $M_0$), and the method of \cite[Chapter VI, Theorem 12.2]{Hu:1959} provides a way to show that $\cK$ is finite by enumerating these classes.  For each $q \geq 1$, the relation of $q$-homotopy relative to $M_0$  (see \cite[Chapter VI, Section 7]{Hu:1959}) divides $\cW$ into disjoint $q$-\emph{homotopy classes relative to $M_0$}. Then every  $(q-1)$-homotopy class $\theta_{q-1}$ relative to $M_0$ contains a certain number of $q$-homotopy classes $\theta_q$ relative to $M_0$,  and we let  $\cW_q(\theta_{q-1})$ denote  the set of  these $q$-homotopy classes.  By 
\cite[Chapter VI, Lemma 9.2]{Hu:1959}, every $(q-1)$-homotopy class $\theta_{q-1}$ determines a subgroup
$J_{\theta_{q-1}}^q(M, M_0; \pi_q(M))$ of the cohomology group $H^q(M, M_0; \pi_q(M))$, and hence a quotient group
  $$Q_{\theta_{q-1}}^q(M, M_0; \pi_q(M)) \coloneqq H^q(M, M_0; \pi_q(M))/ J_{\theta_{q-1}}^q(M, M_0; \pi_q(M)).$$
  The set $\cW_q(\theta_{q-1})$ now determines an \emph{$\theta_{q-1}$-admissible set} $A^q_{\theta_{q-1}}$
  in $Q_{\theta_{q-1}}^q(M, M_0; \pi_q(M))$ (see \cite[p.~188]{Hu:1959}). We now state  the enumeration step (see \cite[Theorem 26.2]{Olum:1950} for the extension to non-simple spaces):
  
  \begin{nonum}[{\cite[Chapter VI, Theorem 12.2]{Hu:1959}}]
  The elements in $\cW_q(\theta_{q-1})$ are in bijection with the elements of the $\theta_{q-1}$-admissible set 
  $A^q_{\theta_{q-1}}$ in $Q_{\theta_{q-1}}^q(M, M_0; \pi_q(M))$.
  \end{nonum}

	To apply this theory, we identify $(M, M_0)$ with the CW-pair $(P, Q)$ in Lemma \ref{lem:seventhree}. In particular, we assume that up to homotopy $M_0$ is the $(n-2)$-skeleton of $M$. This implies that there is a unique $(n-2)$-homotopy class $\theta_{n-2}$ in $\cW$, namely $\theta_{n-2} \coloneqq [\id_M]$, which is the starting point of our count. 
For the  induction step, we let $n \geq q > n-2$ and assume that we have already shown that there are only finitely many  $(q-1)$-homotopy classes in $\cW$. We pick one such $(q-1)$-class $\theta_{q-1}$ and  claim that the set of $q$-classes $\cW_q(\theta_{q-1})$ it contains is finite (details are given below).  After two steps ($q=n-1$ and $q=n$),  this implies that there are only finitely many $n$-homotopy classes in $\cW$. To finish, we note that $H^{q}(M, M_0;\pi_q(M)) = 0$ for $q > n$ because $M$ has dimension $n$. This implies that the finite set of $n$-homotopy classes in $\cW$ equals the set of all homotopy classes in $\cW$, and completes the proof. We now explain the key step for $n \geq q > n-2$ in detail.

	We start by noting that the universal covering $p\colon  \wM \to M$ induces a comparison map 
 $$p^*\colon  H^{q}(M, M_0;\pi_q(M)) \cong H^{q}(M, M_0;\pi_q(\wM)) \to H^{q}(\wM, \wM_0;\pi_q(\wM)),$$
  where $q > 1$, since $n \geq 4$. Let $\tilde{\theta}_{q-1} \coloneqq[\id_{\wM}]$ denote the $(q-1)$-homotopy class relative to $\widetilde{M}_0$ in the set $\widetilde{\cW}$ of lifts to $\wM$ of elements in $\cW$. It follows from \cite[Chapter VI, Proposition 10.2, Lemma 10.5]{Hu:1959} and covering theory,  that $p^*$ sends obstruction sets to obstruction sets. This implies that $p^*$ descends to a comparison map of quotient groups 
 $$\bar p^*\colon Q_{\theta_{q-1}}^q(M, M_0; \pi_q(M)) \to Q_{\tilde{\theta}_{q-1}}^q(\wM, \wM_0; \pi_q(\wM)).$$

	Next, we claim that $J_{\tilde\theta_{q-1}}^q(\wM, \wM_0; \pi_{q}(\wM)) = \{0\}$,  for $q \leq n$. Recall that we identified $(M, M_0)$ with the CW-pair $(P, Q)$ in Lemma \ref{lem:seventhree}. Passing to the universal covers of $(M, M_0)$ and $(P, Q)$, respectively, shows that the inclusion $\wM_0 \subset \wM$ is an $(n-2)$-equivalence as well and yields a compatible homotopy equivalence $(\wM, \wM_0) \simeq (\widetilde{P}, \widetilde{Q})$ to a CW-pair with the properties in Lemma \ref{lem:seventhree}. In particular, we may assume that $\wM_0$ is the $(n-2)$-skeleton of $\wM$. Since $\tilde\theta_{q-1} = [\id_{\wM}]$ for all $q$, it follows from \cite[Chapter VI, Lemma 9.1 and Lemma 9.2]{Hu:1959} that $J_{\tilde\theta_{q-1}}^q(\wM, \wM_0; \pi_{q}(\wM))$ is the set of all $\delta^q(\id_{\wM}, \id_{\wM}; h_t)$ where $h_t$ is any $(q-1)$-homotopy relative $\wM_0$ from $\id_{\wM}$ to $\id_{\wM}$. 
	
	By \cite[Chapter VI, Theorem 8.3]{Hu:1959}, we have $\delta^q(\id_{\wM}, \id_{\wM}; h_t) = 0$ if and only if there exists an $q$-homotopy $h_t^*$ relative $\wM_0$ from $\id_{\wM}$ to $\id_{\wM}$ with the property that $h_t$ and $h_t^*$ restrict to the same $(q-2)$-homotopy. Our assumption, $n \geq q$, implies that $(q-2)$ is at most $(n-2)$. Since we identified $\wM_0 \simeq \widetilde{Q}$ with the $(n-2)$-skeleton of $\wM \simeq \widetilde{P}$ and we work relative to $\wM_0$, it follows that \emph{any} $q$-homotopy relative to $\wM_0$ from $\id_{\wM}$ to $\id_{\wM}$ is a valid choice of $h_t^*$. And, because $\tilde\theta_{q-1} = [\id_{\wM}]$ for all $q$, such an $h_t^*$ exists as long as $q \leq n$. 
	
	The fact that $J_{\tilde\theta_{q-1}}^q(\wM, \wM_0; \pi_{q}(\wM)) = \{0\}$, for $q \leq n$, implies that the induced map 
	$\ker(p^*) \to \ker(\bar p^*) $
	is \emph{surjective}.
	Since 
	$\tilde{f} \simeq \id_{\wM}$ for all $f \in \cW$ by assumption, the lift $\tilde{f}$ of any map $f \in \theta_{q-1}$ satisfies $\tilde{f} \in \tilde{\theta}_{q-1}$ and $\tilde{\theta}_{q-1} = \tilde{\theta}_{q}$. Hence, it follows by
	\cite[Chapter VI, Theorem 11.1]{Hu:1959} that the set $A^q_{\theta_{q-1}} \subseteq Q_{\theta_{q-1}}^q(M, M_0; \pi_q(M))$ is contained in  $\ker(\bar p^*)$.
	Now, $\cW_q(\theta_{q-1})$ is finite if $A^q_{\theta_{q-1}}$ is finite, so by the  surjectivity statement just established,  it is  enough to show that $\ker(p^*)$ is finite.
	
	To see this, we observe there is a transfer map $$\trf^*\colon  H^{q}(\wM, \wM_0;\pi_q(\wM)) \to H^{q}(M, M_0;\pi_q(\wM))$$
   with the property that $\trf^*\circ p^* = |\pi_1(M)|$, because $\pi_1(M)$ is finite. It follows that every element in the subgroup $\ker(p^*) \subseteq H^{q}(M, M_0;\pi_q(M))$ is  torsion annihilated by a power of $|\pi_1(M)|$.  The finiteness of $\pi_1(M)$ furthermore implies that $\pi_q(M)$ is finitely generated, and hence that $H^{q}(M, M_0;\pi_q(M))$ is finitely generated abelian. We conclude that  $\ker(p^*)$ is  finitely generated abelian and, since every element is torsion, $\ker(p^*)$ has to be finite. \end{proof}

As a consequence, we obtain the following generalization  of \cite[Theorem 2]{Triantafillou:1995}
  (compare \cite[Theorem D]{Bustamante:2024}).  Our statement applies to $\hept{\Ml}$ for normal types $\ell = n-2$, but includes a restrictive cohomological assumption for manifolds of dimensions $\geq 6$.

\begin{theorem}\label{thm:sevensixteen}
Let $M$ be a closed connected smooth  or topological $n$-manifold with finite fundamental group, where $n \geq 4$, and let $\ell = n -2$. Suppose that $H^{4i-1}(M ; \bbQ)=0$ for every $i \geq 1$ with $4i \leq \ell$. Then $\hept{\Ml}$ is commensurable up to finite kernel with an arithmetic group. 
\end{theorem}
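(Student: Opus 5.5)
We plan to use the exact sequence of Theorem \ref{thm:sixtwo} for $\ell = n-2$,
$$1 \to \hept{\Ml\vv M_0} \to \hept{\Ml} \xrightarrow{(\wpp_\ell)_*} \hept{\Bl}_{c_*[M]} \to 1 .$$
By Theorem \ref{thm:seventhirteen} the kernel $\hept{\Ml\vv M_0}$ is finite when $\pi_1(M)$ is finite. So $\hept{\Ml}$ is commensurable up to finite kernel with an arithmetic group as soon as $\hept{\Bl}_{c_*[M]}$ is. The whole problem therefore reduces to the Postnikov stage: we must show that $\pi_0$ of the space of self-equivalences of $P_\ell(M)$ over $P_\ell(BO)$ fixing $c_*[M]$ is commensurable up to finite kernel with an arithmetic group. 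Here we use the model $\heqptkc{B}{\ell}$ of Theorem \ref{thm:threeeight}.

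For that stage we will argue as follows. First, Lemma \ref{lem:seveneight} has an analogue for the pullback $\Bl$, obtained from the fibre sequence $\Omega P_\ell(BO) \to \Bl \to P_\ell(M)\times BO$. This gives an exact sequence
$$[\Sigma P_\ell(M), P_\ell(BO)]_{\nu_\ell} \to \hept{\Bl}_{c_*[M]} \to \hept{P_\ell(M)}_{\nu_\ell, c_*[M]} \to 1,$$
where the right-hand term is the subgroup of $\hept{P_\ell(M)}$ of classes preserving the homotopy class of $\nu_\ell$ and the class $c_*[M]$.

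The key step is to pass to the universal cover. The space $\widetilde{P_\ell(M)} = P_\ell(\wM)$ is a simply connected finite Postnikov piece, and its homotopy groups are finitely generated because $\pi_1(M)$ is finite. For such spaces, Sullivan's and Wilkerson's arithmeticity theorems for self-equivalences of finite Postnikov pieces apply. So do their equivariant refinements, following Triantafillou's argument for nilpotent or finite $\pi_1$, in which the self-equivalences of $P_\ell(M)$ are compared with $\pi_1$-equivariant self-equivalences of the minimal model with a finite-kernel map. These give that $\hept{P_\ell(M)}$ is commensurable up to finite kernel with the $\bZ$-points of the automorphism group $\mathrm{Aut}(\mathcal{M}_\ell)^{\pi_1}$ of the $\pi_1(M)$-equivariant rational minimal model of $P_\ell(M)$. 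Fixing the class $c_*[M]$ is an algebraic condition, so the stabilizer is again arithmetic. Fixing $\nu_\ell$ is also algebraic: rationally, $P_\ell(BO)$ is a product of $K(\bbQ,4i)$ for $4i \le \ell$, and the rational data of $\nu_\ell$ are its Pontryagin classes $p_i \in H^{4i}$, whose stabilizer is an algebraic subgroup.

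The main obstacle is the kernel term $[\Sigma P_\ell(M), P_\ell(BO)]_{\nu_\ell}$, together with the fibres of the map to its rational counterpart. We need this kernel to be finite. Rationally it is $\prod_{4i\le \ell} H^{4i-1}(P_\ell(M);\bbQ)$. In the range $4i-1 \le \ell = n-2$, the map $c_\ell$ is $(\ell+1)$-connected, so this equals $\prod H^{4i-1}(M;\bbQ)$. The cohomological hypothesis $H^{4i-1}(M;\bbQ)=0$ is exactly what makes it vanish. What remains is a finitely generated group, because $P_\ell(M)$ has finite type (finite $\pi_1$) and $\pi_*(P_\ell(BO))$ is finitely generated. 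A finitely generated group with trivial rationalization of this abelian-by-nilpotent type is finite. We would check this using an induction up the Postnikov tower of $P_\ell(BO)$, with abelian layers $H^{j}(P_\ell(M);\pi_{j+1}(BO))$.

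Finally, commensurability up to finite kernel is preserved under the two steps used: extensions by finite kernels, and passing to the finite-index stabilizers that appear when we restrict to the component of $\nu_\ell$. This gives the conclusion for $\hept{\Ml}$.
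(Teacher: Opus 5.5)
Your reduction is sound as far as it goes. Theorems \ref{thm:sixtwo} and \ref{thm:seventhirteen} show that $\hept{\Ml}\to\hept{\Bl}_{c_*[M]}$ is onto with finite kernel. The fibration over $\Map_\bullet(P_\ell(M),P_\ell(BO))$ gives your exact sequence. The kernel term is harmless: $[\Sigma P_\ell(M),P_\ell(BO)]\cong[\Sigma M,P_\ell(BO)]$, since the relative cells of $(P_\ell(M),M)$ have dimension $\ge\ell+2$, and the hypothesis kills its rationalization.

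The gap is the step you pass over in one sentence, which is the heart of the matter. You need that the group of self-equivalences of $P_\ell(M)$ acting trivially on $\pi_1$, and fixing $c_*[M]$ and the rational Pontryagin data of $\nu_\ell$, is commensurable up to finite kernel with an arithmetic group. None of the results you invoke gives this:
\begin{itemize}
\item Sullivan's and Wilkerson's theorems cover simply connected finite complexes and finite Postnikov pieces.
\item The finite-$\pi_1$ results of \cite{Triantafillou:1995} and \cite[Proposition 2.1]{Bustamante:2024} (as used in the paper) are for finite CW complexes.
\end{itemize}
$P_\ell(M)$ is neither simply connected nor finite. Applying Sullivan--Wilkerson to $P_\ell(\wM)$ is not enough. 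The hard part is passing from $\pi_1$-trivial self-equivalences of $P_\ell(M)$ to $\pi_1$-equivariant automorphisms of a model of the cover, with finite kernel and arithmetic image. That is exactly what Triantafillou proves for finite complexes, and you would have to prove it anew for Postnikov pieces. As it stands, your argument rests on an unproved theorem, while the point you call the main obstacle is routine.

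The detour is also unnecessary. By the same two theorems, $\hept{\Ml}$ and $\hept{\Bl}_{c_*[M]}$ are commensurable up to finite kernel. Passing to $\Bl$ therefore gains nothing; it only replaces the finite complex $M$ by the infinite one $P_\ell(M)$. The paper runs your fibration argument on $M$ itself:
\begin{itemize}
\item The map $\heqpt{M}^\pi_+\to\Map_\bullet(M,P_\ell(BO))$, $f\mapsto\gamma_\ell\circ f$, has homotopy fibre over $\gamma_\ell$ equal to the $\pi_1$-trivial part of $\heqpt{\Ml}$ (Remark \ref{rem:threethree}).
\item Hence $\hept{\Ml}^\pi$ maps onto $\on{Stab}([\gamma_\ell])$. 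The kernel is a quotient of $[\Sigma M,P_\ell(BO)]$, which is finite by your rational argument together with Sullivan's finite-to-one theorem \cite[Theorem 10.2]{Sullivan:1977} for the finite complex $\Sigma M$.
\item The subgroup of $\hept{M}^\pi$ fixing $[M]_\bbQ$ and the classes $\gamma_\ell^*(\bar p_i)$ is commensurable up to finite kernel with an arithmetic group by \cite[Proposition 2.1]{Bustamante:2024}, since $M$ is a finite complex with finite fundamental group.
\item $\on{Stab}([\gamma_\ell])$ has finite index in that subgroup, by orbit--stabilizer on the finite fibre of $[M,P_\ell(BO)]\to[M,P_\ell(BO)_\bbQ]$.
\end{itemize}
This uses only citable inputs and needs neither Theorem \ref{thm:sixtwo} nor Theorem \ref{thm:seventhirteen}. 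To keep your route, you would have to prove the finite-$\pi_1$ arithmeticity theorem for finite Postnikov pieces, including that fixing $c_*[M]$ and $[\nu_\ell]$ is cut out algebraically.
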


\begin{proof}
We will apply an argument of Triantafillou \cite[Proposition 12, Corollary 13 and Theorem 14]{Triantafillou:1995}, which has been generalized by Bustamante--Krannich--Kupers \cite[Proposition 2.1 and Proposition 2.7]{Bustamante:2024}.	
	
\nr{Step 1:} Let $\pi = \pi_1(M, x_0)$ denote the fundamental group of $M$.
\begin{enumerate}
\item Since $\pi$ is finite, $\Aut(\pi)$  is a finite group and both kernels 
\begin{align*}
   \hept{M}^{\pi} \coloneqq \ker (\hept{M} \to \Aut(\pi))  \quad 
    \hept{\Ml}^{\pi} \coloneqq \ker (\hept{\Ml} \to \Aut(\pi))
\end{align*}
are finite index subgroups. 
\item Let $V = \bbQ^w \oplus \bbQ$ be the direct sum of the $w$-twisted and the trivial representation. Then $\hept{M}^\pi$ acts naturally  on $H^*(M; V)$ and on $H_*(M; V)$. 
\end{enumerate}
We consider the subgroup $\hept{M}^\pi_h$ whose elements stabilize the (co)homology classes:
\begin{enumerate}
	\item $h_*= \{[M]_{\bbQ} \} \subset H_n(M; \bbQ^w)$, and 
	\item $h^* = \{ \gamma_{\ell}^*(\bar{p}_i) \vv 0 < 4i \leq \ell \} \subset H^*(M; V)$,
	\end{enumerate}
where $\bar{p}_{i} \in H^{4i}(P_\ell(BO); \bbQ)$ is the unique class satisfying $w_{\ell}^*(\bar{p}_i)=p_i \in H^{4i}(BO; \bbQ)$, the universal i-th Pontrjagin class. Since $M$ is a finite CW-complex, \cite[Proposition 2.1]{Bustamante:2024} gives that $\hept{M}^\pi_h$ is commensurable up to finite kernel to an arithmetic group.	

\nr{Step 2:}
Consider a map $q \colon  [M, P_\ell(BO)] \to [M, P_\ell(BO)_{\bbQ}]$ induced by post composition with the rationalization $P_\ell(BO) \to P_\ell(BO)_{\bbQ}$. Since $M$ has the homotopy type of a finite CW-complex, \cite[Theorem 10.2(i)]{Sullivan:1977} shows that $q$ is finite-to-one. Similarly, we have a finite-to-one map $[\Sigma M, P_{\ell}(BO)] \to [\Sigma M, P_{\ell}(BO)_{\bbQ}]$.
	
\nr{Step 3:} 	
The rationalization $P_{\ell}(BO)_{\bbQ}$ is homotopy equivalent to $\prod_{0 < 4i \leq \ell} K(\bbQ, 4i)$. Consequently, $[M, P_{\ell}(BO)_{\bbQ}] \cong \bigoplus_{0 <4i \leq \ell} H^{4i}(M; \bbQ)$ and the class $q([\gamma_{\ell}])$ corresponds to the tuple $( \gamma_{\ell}^* (\bar{p}_i))_i$. A self-equivalence $[f] \in \hept{M}^\pi$ preserves $\gamma_{\ell}$ rationally; that is, $q([\gamma_{\ell} \circ f])= q([\gamma_{\ell}])$, if and only if $f^* \gamma_{\ell}^*(\bar{p}_i) = \gamma_{\ell}^*(\bar{p}_i)$ for every $4i \leq n-2$. It follows that every element of $\hept{M}^\pi_h$ preserves $q([\gamma_{\ell}])$. The stabilizer subgroup is defined as 
$$
\on{Stab}_{\Aut(M)^{\pi}_h}([\gamma_{\ell}]) \coloneqq \{ [f] \in \Aut(M)^{\pi}_h \ \vv \ [\gamma_{\ell} \circ f] = [\gamma_{\ell}] \}.
$$
We will show that $\Aut(\Ml)^{\pi}$ maps onto $\on{Stab}_{\Aut(M)^{\pi}_h}([\gamma_{\ell}])$ with finite kernel, and that $\on{Stab}_{\Aut(M)^{\pi}_h}([\gamma_{\ell}])$ has finite index in $\hept{M}^\pi_h$.  

Note that $H_n(M; \bbZ^w) \cong \bbZ$ implies the injectivity of $H_n(M; \bbZ^w) \to H_n(M; \bbQ^w)$. Therefore, if $[f] \in \hept{M}^\pi$ preserves the rational fundamental class, it also preserves the integral fundamental class. Moreover, $f^*(w)=w$ since $\pi_1(f)$ is the identity map. Thus, elements of $\Aut(M)^{\pi}_h$, in particular $[\gamma_{\ell}]$-stabilizers, preserve local orientation.

\nr{Step 4:} 
Let $\heqpt{M}^{\pi}_+$ be the space of based homotopy self-equivalences of $M$ that induce the identity on $\pi$ and preserve $[M]$. Consider the map
\begin{align*}
    a \colon \heqpt{M}^{\pi}_+ \to \on{Map}_{\bullet}(M, P_{\ell}(BO)), \qquad a(f)= \gamma_{\ell} \circ f.
\end{align*}
By Remark \ref{rem:threethree}, the homotopy fiber of $a$ over $\gamma_{\ell}$ is the subspace of $\heqpt{\Ml}$ whose underlying self-equivalences induce identity on $\pi$. The corresponding exact sequence gives
$$
\pi_1(\heqpt{M}^{\pi}_+, \id_M) \to \pi_1 (\on{Map}_{\bullet}(M, P_{\ell}(BO)), \gamma_{\ell}) \to \hept{\Ml}^{\pi} \xrightarrow{F} \pi_0(\heqpt{M}^{\pi}_+).
$$
Note that the image of $F$ is $\on{Stab}_{\Aut(M)^{\pi}_h}([\gamma_{\ell}])$. Next, we prove that $\pi_1 (\on{Map}_{\bullet}(M, P_{\ell}(BO)), \gamma_{\ell})$ is finite, which implies that $\ker (F)$ is finite.

The space $P_{\ell}(BO)$ inherits a group like infinite loop space structure from $BO$. This gives that $\on{Map}_{\bullet}(M, P_{\ell}(BO))$ is also group like and so $\pi_1(\on{Map}_{\bullet}(M, P_{\ell}(BO)),\gamma_{\ell})\cong [\Sigma M, P_{\ell}(BO)]_{\bullet}$ where the latter group is identified with the fundamental group of the mapping space based at the constant map. Then applying rationalization and using the hypothesis, we have 
$$
[\Sigma M, P_{\ell}(BO)_{\bbQ}]_{\bullet} \cong \bigoplus_{0 < 4i \leq \ell} \widetilde{H}^{4i}(\Sigma M; \bbQ) \cong \bigoplus_{0 < 4i \leq \ell} H^{4i-1}(M; \bbQ) = 0.
$$
From Step 2, we have a finite-to-one map $[\Sigma M, P_{\ell}(BO)] \to [\Sigma M, P_{\ell}(BO)_{\bbQ}]=\{0\}$. Therefore, the map $F \colon \hept{\Ml}^{\pi} \twoheadrightarrow \on{Stab}_{\Aut(M)^{\pi}_h}([\gamma_{\ell}])$ has finite kernel.

\nr{Step 5:} 
By Step 3, every element of $\hept{M}^\pi_h$ fixes $q([\gamma_{\ell}])$ and this group acts on the finite set $q^{-1}\big( q([\gamma_{\ell}]) \bigr)$. The stabilizer of $\gamma_{\ell}$ is precisely $\on{Stab}_{\Aut(M)^{\pi}_h}([\gamma_{\ell}])$ which is a finite index subgroup of $\hept{M}^\pi_h$ by the orbit-stabilizer theorem. Hence, $\hept{\Ml}^{\pi}$ and so $\hept{\Ml}$ are commensurable up to finite kernel with an arithmetic group. 
\end{proof}

We conclude this section with some further questions and remarks.
\subsection{Finite type of the classifying space of $\heqpt{\Ml}$}
Another result of Dror, Dwyer and Kan \cite[Theorem 3.4]{Dror:1981}  states that if $Y$ is a connected virtually nilpotent finite CW complex, then the classifying space of the topological monoid $\heqpt{Y}$ is of finite type. Moreover, each of  its homotopy groups is of finite type. 
In general, we have control of the bordism groups in the braid but not the Postnikov terms $\heqpt{\Bl}$. In order to use the braid to establish a similar result for the spaces of self-equivalences considered in this paper, one would need to extend  \cite[Theorem 3.4]{Dror:1981} to handle the space  $\heqpt{\Bl}$ via an analysis of its Postnikov terms (see
 \cite{Rutter:1992,Rutter:1992b}).
 
\begin{question}
	If $M$ has finite fundamental group, does it hold that   $B\heqpt{\Ml}$ is of finite type? 
		\end{question}
	We remark that in the paper cited above, the result deals with the classifying space of a topological monoid  $\heqpt{Y}$ which is strictly associative and unital, so the bar construction can be used to construct $B \heqpt{Y}$. In our setting $\heqpt{\Ml}$ is a group-like $H$-space and associativity holds only up to a homotopy. Therefore, by $B\heqpt{\Ml}$ we mean that we first strictify $\heqpt{\Ml}$  to a topological monoid and then construct its classifying space.
    
 \begin{remark} We expect that this question will have a positive answer for $\ell = n-2$, under the restrictive cohomological assumptions of Theorem \ref{thm:sevensixteen}, but leave this for an interested reader. 
\end{remark}

\subsection{Stability of $\hept{\Ml}$ under connected sums  $M \# (S^n \times S^n)$}
Several authors have studied homotopy self-equivalences of manifolds with non-empty boundary, particularly in connection with homological stability questions for diffeomorphism groups  (see, for example
\cite{Baues:1996a,Berglund:2013,Berglund:2020,Botvinnik:2017,Galatius:2017,Galatius:2018,Galatius:2020}). One can consider self-equivalences $\hept{M, \bd M}$ which preserve the boundary of $M$,  or variants which  restrict to the identity on the boundary.

 It is natural to ask whether the braid diagrams for a $2n$-dimensional manifold admit compatible stabilization maps under connected sum with $S^n \times S^n$.

It should be interesting to relate the information in the braid diagrams to the diffeomorphisms of $M$.
The odd-dimensional stabilization operation has been less studied so far. If $\dim M = 2n$, let $M_g\coloneqq M \# g(S^n \times S^n)$ denote the $g$-fold connected sum. There is an evident stabilization of the normal $n$-type, denoted $M_g\la n\ra$, by using the standard normal $n$-type of $S^n \times S^n$  (compare \cite[p.~721]{Kreck:1999}). This induces stabilization homomorphisms $\hept{M_g\la n\ra} \to \hept{M_{g+1}\la n\ra}$
compatible with connected sum (see \cite[Section 7.2]{Berglund:2020}). 

\begin{question} Let $M^{2n}$ be a smooth, closed $2n$-dimensional manifold. For a fixed $k > 0$, is there  a stability range for  the homology groups $H_k(\hept{M_g\la n\ra})$ depending on $g$ and $M$?
\end{question}

\begin{remark} Some results in this direction are given in \cite[Theorem 7.6]{Berglund:2020}  and \cite{Bustamante:2024}. 
The braid relates 
 $\hept{M_g\la n\ra}$
 to the ``arithmetic''
group
$\hept{B_g\la n\ra}$, where $B_g\la n\ra$ is the pullback defined by the Postnikov section $P_n(M_g)$.
The stability for  the homology groups $H_k(\hept{B_g\la n\ra})$ can perhaps be studied by extending the known techniques for sequences of arithmetic groups.
\end{remark}

\begin{acknowledgement}
In the final stages of preparation of this manuscript, the authors used ChatGPT solely to locate typos in an earlier draft and to help clarify some details in the proofs of Theorem \ref{thm:sixone} and  Theorem \ref{thm:sevensixteen}. The authors have carefully reviewed all arguments presented here and take full responsibility for the mathematical content and its correctness.
\end{acknowledgement}

\begin{thebibliography}{10}

\bibitem{Aubry:1991}
M.~Aubry and J.-M. Lemaire, \emph{Sur certaines \'{e}quivalences d'homotopies},
  Ann. Inst. Fourier (Grenoble) \textbf{41} (1991), 173--187.

\bibitem{Ayala:2025}
D.~Ayala and J.~Francis, \emph{A parametrized {P}ontryagin--{T}hom theorem},
  arxiv:2512.10274, 2025.

\bibitem{Barcus:1958}
W.~D. Barcus and M.~G. Barratt, \emph{On the homotopy classification of the
  extensions of a fixed map}, Trans. Amer. Math. Soc. \textbf{88} (1958),
  57--74.

\bibitem{Baues:1996a}
H.~J. Baues, \emph{On the group of homotopy equivalences of a manifold}, Trans.
  Amer. Math. Soc. \textbf{348} (1996), 4737--4773.

\bibitem{Baues:2008}
H.~J. Baues and B.~Bleile, \emph{Poincar\'{e} duality complexes in dimension
  four}, Algebr. Geom. Topol. \textbf{8} (2008), 2355--2389.

\bibitem{Berglund:2013}
A.~Berglund and I.~Madsen, \emph{Homological stability of diffeomorphism
  groups}, Pure Appl. Math. Q. \textbf{9} (2013), 1--48.

\bibitem{Berglund:2020}
\bysame, \emph{Rational homotopy theory of automorphisms of manifolds}, Acta
  Math. \textbf{224} (2020), 67--185.

\bibitem{Botvinnik:2017}
B.~Botvinnik and N.~Perlmutter, \emph{Stable moduli spaces of high-dimensional
  handlebodies}, J. Topol. \textbf{10} (2017), 101--163.

\bibitem{Bousfield:1972}
A.~K. Bousfield and D.~M. Kan, \emph{Homotopy limits, completions and
  localizations}, Lecture Notes in Mathematics, Vol. 304, Springer-Verlag,
  Berlin-New York, 1972.

\bibitem{Brown:1962}
M.~Brown, \emph{Locally flat imbeddings of topological manifolds}, Ann. of
  Math. (2) \textbf{75} (1962), 331--341.

\bibitem{Buoncristiano:1976}
S.~Buoncristiano, C.~P. Rourke, and B.~J. Sanderson, \emph{A geometric approach
  to homology theory}, London Mathematical Society Lecture Note Series, No. 18,
  Cambridge University Press, Cambridge-New York-Melbourne, 1976.

\bibitem{Bustamante:2024}
M.~Bustamante, M.~Krannich, and A.~Kupers, \emph{Finiteness properties of
  automorphism spaces of manifolds with finite fundamental group}, Math. Ann.
  \textbf{388} (2024), 3321--3371.

\bibitem{Crowley:2011}
D.~J. Crowley, \emph{On the mapping class groups of {$\#_r(S^p\times S^p)$} for
  {$p=3,7$}}, Math. Z. \textbf{269} (2011), 1189--1199.

\bibitem{Dror:1979}
E.~Dror, W.~G. Dwyer, and D.~M. Kan, \emph{Self-homotopy equivalences of
  {P}ostnikov conjugates}, Proc. Amer. Math. Soc. \textbf{74} (1979), 183--186.

\bibitem{Dror:1981}
\bysame, \emph{Self-homotopy equivalences of virtually nilpotent spaces},
  Comment. Math. Helv. \textbf{56} (1981), 599--614.

\bibitem{Ebert:2019}
J.~Ebert and O.~Randal-Williams, \emph{Semisimplicial spaces}, Algebr. Geom.
  Topol. \textbf{19} (2019), 2099--2150.

\bibitem{Edwards:1971}
R.~D. Edwards and R.~C. Kirby, \emph{Deformations of spaces of imbeddings},
  Ann. of Math. (2) \textbf{93} (1971), 63--88.

\bibitem{Eilenberg:1952}
S.~Eilenberg and N.~Steenrod, \emph{Foundations of algebraic topology},
  Princeton University Press, Princeton, NJ, 1952.

\bibitem{Eilenberg:1950}
S.~Eilenberg and J.~A. Zilber, \emph{Semi-simplicial complexes and singular
  homology}, Ann. of Math. (2) \textbf{51} (1950), 499--513.

\bibitem{Freedman:1990}
M.~H. Freedman and F.~Quinn, \emph{Topology of 4-manifolds}, Princeton
  University Press, Princeton, NJ, 1990.

\bibitem{Galatius:2014}
S.~Galatius and O.~Randal-Williams, \emph{Stable moduli spaces of
  high-dimensional manifolds}, Acta Math. \textbf{212} (2014), 257--377.

\bibitem{Galatius:2017}
\bysame, \emph{Homological stability for moduli spaces of high dimensional
  manifolds. {II}}, Ann. of Math. (2) \textbf{186} (2017), 127--204.

\bibitem{Galatius:2018}
\bysame, \emph{Homological stability for moduli spaces of high dimensional
  manifolds. {I}}, J. Amer. Math. Soc. \textbf{31} (2018), 215--264.

\bibitem{Galatius:2020}
\bysame, \emph{Moduli spaces of manifolds: a user's guide}, Handbook of
  homotopy theory, CRC Press/Chapman Hall Handb. Math. Ser., CRC Press, Boca
  Raton, FL \copyright 2020, pp.~443--485.

\bibitem{Hambleton:2019}
I.~Hambleton, \emph{Two remarks on {W}all's {D}2 problem}, Math. Proc.
  Cambridge Philos. Soc. \textbf{167} (2019), 361--368.

\bibitem{Hambleton:2023}
\bysame, \emph{A stability range for topological 4-manifolds}, Trans. Amer.
  Math. Soc. \textbf{376} (2023), 8769--8793.

\bibitem{Hambleton:1988}
I.~Hambleton and M.~Kreck, \emph{On the classification of topological
  {$4$}-manifolds with finite fundamental group}, Math. Ann. \textbf{280}
  (1988), 85--104.

\bibitem{Hambleton:2004}
\bysame, \emph{Homotopy self-equivalences of 4-manifolds}, Math. Z.
  \textbf{248} (2004), 147--172.

\bibitem{Hambleton:2009}
\bysame, \emph{Erratum: {H}omotopy self-equivalences of 4-manifolds}, Math. Z.
  \textbf{262} (2009), 473--474.

\bibitem{Hambleton:2018}
\bysame, \emph{On the classification of topological 4-manifolds with finite
  fundamental group: corrigendum}, Math. Ann. \textbf{372} (2018), 527--530.

\bibitem{Hatcher:2002}
A.~Hatcher, \emph{Algebraic topology}, Cambridge University Press, Cambridge,
  2002.

\bibitem{Hebestreit:2021}
F.~Hebestreit, M.~Land, W.~L\"{u}ck, and O.~Randal-Williams, \emph{A vanishing
  theorem for tautological classes of aspherical manifolds}, Geom. Topol.
  \textbf{25} (2021), 47--110.

\bibitem{Hegenbarth:2015a}
F.~Hegenbarth, M.~Pamuk, and D.~Repov\v{s}, \emph{{$s$}-cobordism
  classification of 4-manifolds through the group of homotopy
  self-equivalences}, Mediterr. J. Math. \textbf{12} (2015), 1107--1121.

\bibitem{Hu:1959}
S.-t. Hu, \emph{Homotopy theory}, Pure and Applied Mathematics, Vol. VIII,
  Academic Press, New York-London, 1959.

\bibitem{Kahn:1969}
P.~J. Kahn, \emph{Self-equivalences of {$(n-1)$}-connected {$2n$}-manifolds},
  Math. Ann. \textbf{180} (1969), 26--47.

\bibitem{Kan:1957}
D.~M. Kan, \emph{On c. s. s. complexes}, Amer. J. Math. \textbf{79} (1957),
  449--476.

\bibitem{Kirby:1977}
R.~C. Kirby and L.~C. Siebenmann, \emph{Foundational essays on topological
  manifolds, smoothings, and triangulations}, Annals of Mathematics Studies,
  No. 88, Princeton University Press, Princeton, NJ; University of Tokyo Press,
  Tokyo, 1977, With notes by John Milnor and Michael Atiyah.

\bibitem{Kister:1964}
J.~M. Kister, \emph{Microbundles are fibre bundles}, Ann. of Math. (2)
  \textbf{80} (1964), 190--199.

\bibitem{Kreck:1999}
M.~Kreck, \emph{Surgery and duality}, Ann. of Math. (2) \textbf{149} (1999),
  707--754.

\bibitem{Krylov:2003}
N.~A. Krylov, \emph{On the {J}acobi group and the mapping class group of
  {$S^3\times S^3$}}, Trans. Amer. Math. Soc. \textbf{355} (2003), 99--117.

\bibitem{Lashof:1963}
R.~Lashof, \emph{Poincar\'{e} duality and cobordism}, Trans. Amer. Math. Soc.
  \textbf{109} (1963), 257--277.

\bibitem{Laures:2014}
G.~Laures and J.~E. McClure, \emph{Multiplicative properties of {Q}uinn
  spectra}, Forum Math. \textbf{26} (2014), 1117--1185.

\bibitem{Laures:2024}
\bysame, \emph{Commutativity properties of {Q}uinn spectra}, Proc. Roy. Soc.
  Edinburgh Sect. A \textbf{154} (2024), 1848--1936.

\bibitem{May:1992}
J.~P. May, \emph{Simplicial objects in algebraic topology}, Chicago Lectures in
  Mathematics, University of Chicago Press, Chicago, IL, 1992, Reprint of the
  1967 original.

\bibitem{Milnor:1964}
J.~Milnor, \emph{Microbundles. {I}}, Topology \textbf{3} (1964), 53--80.

\bibitem{Milnor:1962}
J.~Milnor, \emph{On axiomatic homology theory}, Pacific Journal of Mathematics
  \textbf{12} (1962), 337--341.

\bibitem{MunsonVolic:2015}
B.~A. Munson and I.~Voli\'c, \emph{Cubical homotopy theory}, New Mathematical
  Monographs, vol.~25, Cambridge University Press, Cambridge, 2015.

\bibitem{Nicas:1982}
A.~J. Nicas, \emph{Induction theorems for groups of homotopy manifold
  structures}, Mem. Amer. Math. Soc. \textbf{39} (1982), vi+108.

\bibitem{Olum:1950}
P.~Olum, \emph{Obstructions to extensions and homotopies}, Ann. of Math. (2)
  \textbf{52} (1950), 1--50.

\bibitem{Pamuk:2009}
M.~Pamuk, \emph{Homotopy self-equivalences of 4-manifolds with {${\rm PD}_2$}
  fundamental group}, Topology Appl. \textbf{156} (2009), 1445--1458.

\bibitem{Pamuk:2010}
\bysame, \emph{Homotopy self-equivalences of 4-manifolds with free fundamental
  group}, Canad. J. Math. \textbf{62} (2010), 1387--1403.

\bibitem{Pamuk:2026}
\bysame, \emph{Homotopy self-equivalences of manifolds}, preprint (2026),
  arXiv:2610.02244.

\bibitem{Quinn:1995}
F.~Quinn, \emph{Assembly maps in bordism-type theories}, Novikov conjectures,
  index theorems and rigidity, {V}ol. 1 ({O}berwolfach, 1993), London Math.
  Soc. Lecture Note Ser., vol. 226, Cambridge Univ. Press, Cambridge, 1995,
  pp.~201--271.

\bibitem{Randal-Williams:2024}
O.~Randal-Williams, \emph{The family signature theorem}, Proc. Roy. Soc.
  Edinburgh Sect. A \textbf{154} (2024), 2024--2067.

\bibitem{Rourke:1970}
C.~P. Rourke and B.~J. Sanderson, \emph{On topological neighbourhoods},
  Compositio Math. \textbf{22} (1970), 387--424.

\bibitem{Rourke:1971}
\bysame, \emph{{$\Delta$}-sets. {I}. {H}omotopy theory}, Quart. J. Math. Oxford
  Ser. (2) \textbf{22} (1971), 321--338.

\bibitem{Rourke:1982}
\bysame, \emph{Introduction to piecewise-linear
  topology}, Springer Study Edition, Springer-Verlag, Berlin-New York, 1982,
  Reprint.

\bibitem{Rutter:1992}
J.~W. Rutter, \emph{The group of homotopy self-equivalences of
  non-simply-connected spaces using {P}ostnikov decompositions}, Proc. Roy.
  Soc. Edinburgh Sect. A \textbf{120} (1992), 47--60.

\bibitem{Rutter:1992b}
\bysame, \emph{The group of homotopy self-equivalences of
  non-simply-connected spaces using {P}ostnikov decompositions. {II}}, Proc.
  Roy. Soc. Edinburgh Sect. A \textbf{122} (1992), 127--135.

\bibitem{Rutter:1997}
\bysame, \emph{Spaces of homotopy self-equivalences}, Lecture Notes in
  Mathematics, vol. 1662, Springer-Verlag, Berlin, 1997, A survey.

\bibitem{Stong:1968}
R.~E. Stong, \emph{Notes on cobordism theory}, Mathematical notes, Princeton
  University Press, Princeton, N.J., 1968.

\bibitem{Sullivan:1977}
D.~Sullivan, \emph{Infinitesimal computations in topology}, Inst. Hautes
  \'{E}tudes Sci. Publ. Math. \textbf{47} (1977), 269--331.

\bibitem{Switzer:2002}
R.~M. Switzer, \emph{Algebraic topology---homotopy and homology}, Classics in
  Mathematics, Springer-Verlag, Berlin, 2002, Reprint of the 1975 original
  [Springer, New York; MR0385836 (52 \#6695)].

\bibitem{Taylor:2008}
L.~R. Taylor, \emph{Unoriented geometric functors}, Forum Math. \textbf{20}
  (2008), 457--467.

\bibitem{Triantafillou:1995}
G.~Triantafillou, \emph{Automorphisms of spaces with finite fundamental group},
  Trans. Amer. Math. Soc. \textbf{347} (1995), 3391--3403.

\bibitem{Wall:1965}
C.~T.~C. Wall, \emph{Finiteness conditions for {C}{W}-complexes}, Ann. of Math.
  (2) \textbf{81} (1965), 56--69.

\bibitem{Wall:1966}
\bysame, \emph{On the exactness of interlocking sequences}, Enseign. Math. (2)
  \textbf{12} (1966), 95--100.

\bibitem{Wall:1999}
\bysame, \emph{Surgery on compact manifolds}, second ed., Mathematical Surveys
  and Monographs, vol.~69, American Mathematical Society, Providence, RI, 1999,
  Edited and with a foreword by A. A. Ranicki.

\bibitem{Whitehead:1978}
G.~W. Whitehead, \emph{Elements of homotopy theory}, Graduate Texts in
  Mathematics, vol.~61, Springer-Verlag, New York-Berlin, 1978.

\bibitem{Whitehead:1950}
J.~H.~C. Whitehead, \emph{Simple homotopy types}, Amer. J. Math. \textbf{72}
  (1950), 1--57.

\end{thebibliography}
\providecommand{\bysame}{\leavevmode\hbox to3em{\hrulefill}\thinspace}
\providecommand{\MR}{\relax\ifhmode\unskip\space\fi MR }
\providecommand{\MRhref}[2]{%
  \href{http://www.ams.org/mathscinet-getitem?mr=#1}{#2}
}
\providecommand{\href}[2]{#2}

\end{document}